\newtheorem{Proposition}{Proposition}[section]
  \newtheorem{Remark}[Proposition]{Remark}
  \newtheorem{Corollary}[Proposition]{Corollary}
  \newtheorem{Lemma}[Proposition]{Lemma}
  \newtheorem{Theorem}{Theorem}[section]
 \newtheorem{Definition}[Proposition]{Definition}
 \newtheorem{Note}[Proposition]{Note}
 \newtheorem{Assumptions}{Assumptions}
\newcommand {\z}{{\noindent}}
\def\blackslug{\hbox{\hskip 1pt \vrule width 4pt height 8pt depth 1.5pt
\hskip 1pt}}
\def\qed{\quad\blackslug\lower 8.5pt\null\par}
\def\CC{\mathbb{C}}
\def\RR{\mathbb{R}}
\def\TT{\mathbb{T}}
\def\NN{\mathbb{N}}
\def\Re{\mathrm{Re}}
\def\Im{\mathrm{Im}}
\def\ds{\text{\raisebox{-2pt}{$\stackrel{_{\displaystyle *}}{*}$}}}
\begin{document}

\author{O. Costin$^1$, G. Luo$^{1}$ and S.  Tanveer$^1$ }\title{
Integral formulation of 3-D Navier-Stokes and longer time existence of
smooth solutions.}
\gdef\shortauthors{O.  Costin, G. Luo \& S. Tanveer}
\gdef\shorttitle{Navier-Stokes Equation}
\thanks{$1$.  Department of Mathematics, Ohio State University.}
\maketitle

\today

\bigskip

\begin{abstract}

We consider the 3-D Navier-Stokes  initial value problem,
$$
v_t - \nu \Delta v = -\mathcal{P} \left [ v \cdot \nabla v \right ] + f
~~,~~v(x, 0) = v_0 (x), ~~x \in \mathbb{T}^3\ \ \ (*)
$$
where $\mathcal{P}$ is the Hodge projection to divergence-free vector
fields. We assume that the Fourier transform norms $ \| {\hat f} \|_{l^1
(\mathbb{Z}^3 )}$ and $\| {\hat v}_0 \|_{l^{1} (\mathbb{Z}^3)}$ are
finite.  Using an inverse Laplace transform approach, similar to the
earlier work \cite{NS16}, we prove that an integral equation equivalent to
(*) has a unique solution ${\hat U} (k, q)$, exponentially bounded for $q$
in a sector centered on $\RR^+$, where $q$ is the inverse Laplace dual to
$1/t^n$ for $n \ge 1$.

This implies in particular local existence of a classical solution to (*)
for $t \in (0, T)$, where $T$ depends on  $\| {\hat v}_0 \|_{l^{1}}$ and
$\| {\hat f} \|_{l^1}$. Global existence of the solution to NS follows if
$\| {\hat U} (\cdot, q) \|_{l^1}$ has subexponential bounds as
$q\to\infty$.

If $f=0$, then the converse is also true: if NS has global solution, then
there exists $n \ge 1 $ for which $\| {\hat U} (\cdot, q) \|$ necessarily
decays.  More generally, if the exponential growth rate in $q$ of $ {\hat
U}$ is $\alpha$, then a classical solution to NS exists for $t\in\left (0,
\alpha^{-1/n} \right )$.

We show that $\alpha$ can be better estimated based on the values of
${\hat U}$ on a finite interval $[0,q_0]$. We also show how the integral
equation  can be solved numerically with controlled errors.

Preliminary numerical calculations of the integral equation over a modest
$[0, 10]$ $q$-interval for $n=2$ corresponding to Kida (\cite{Kida})
initial conditions, though far from being optimized or rigorously
controlled, suggest that this approach gives an existence time for 3-D
Navier-Stokes that substantially exceeds classical estimate.

\end{abstract}

\section{Introduction}

We consider the 3-D Navier-Stokes (NS)  initial value problem
\begin{equation}\label{nseq0}
v_t - \nu \Delta v = -\mathcal{P} [ v \cdot \nabla v ] + f(x),\ \ v(x, 0)
= v_0 (x), \ \ x\in\TT^3 [0,2\pi] ,\ \ t\in\RR^+
\end{equation}
where $v$ is the fluid velocity and $\mathcal{P} = I -\nabla \Delta^{-1}
(\nabla \cdot )$ is the Hodge projection operator to the space of
divergence-free vector fields. For simplicity we assume that the forcing
$f$ is time-independent.

In Fourier space, (\ref{nseq0}) can be written as
\begin{equation}\label{nseq}
{\hat v}_t + \nu |k|^2 {\hat v} = - i k_j P_k \left [ {\hat v}_j {\hat *}
{\hat v}  \right ] + {\hat f} ~~,~~{\hat v} (k, 0)= {\hat v}_0,
\end{equation}
where ${\hat v} (k, t) = \mathcal{F} \left [ v (\cdot, t) \right ] (k)$ is
the Fourier transform of the  velocity, ${\hat *}$ denotes Fourier
convolution, a repeated index $j$ indicates summation over $j =1,2,3$ and
$P_k=\mathcal{F}(\mathcal{P})$ is the Fourier space representation of the
Hodge projection operator on the space of divergence-free vector fields,
given explicitly by
\begin{equation}\label{8.0}
P_k \equiv 1 - \frac{k ( k \cdot )}{|k|^2}.
\end{equation}
We assume that $ {\hat v}_0$ and $ {\hat f} \in l^1 (\mathbb{Z}^3)$ and,
without loss of generality, that the average velocity and force in the
periodic box are zero, and hence ${\hat v} (0, t) =0={\hat f} (0)$.

Global existence of smooth solutions to the 3-D Navier-Stokes problem
remains a formidable open mathematical problem, even for zero forcing,
despite extensive research in this area.  The problem is important not
only in mathematics but it has wider impact, particularly if singular
solutions exist.  It is known \cite{Beale} that the singularities can only
occur if $\nabla v$ blows up.  This means that near a potential blow-up
time, the relevance of NS to model fluid flow becomes questionable, since
the linear approximation in the constitutive stress-strain relationship,
the assumption of incompressibility and even the continuum hypothesis
implicit in derivation of NS become doubtful.  In some physical problems
(such as inviscid Burger's equation) the singularity of an idealized
approximation is mollified by inclusion of regularizing effects. It may be
expected that if 3-D NS solutions exhibited blow up, then actual fluid
flow, on very small time and space scales, has to involve parameters other
than those considered in NS. This could profoundly affect our
understanding of small scale in turbulence.  In fact, some 75 years back,
Leray \cite{Leray1}, \cite{Leray2}, \cite{Leray3} was motivated to study
weak solutions of 3-D NS, conjecturing that turbulence was related to
blow-up of smooth solutions.

The typical method used in the mathematical analysis of NS, and of more
general PDEs, is the so-called energy method. For NS, the energy method
involves {\it a priori} estimates on the Sobolev $\mathbb{H}^m$ norms of
$v$. It is known that if $ \| v (\cdot, t) \|_{\mathbb{H}^1}$ is bounded,
then so are all the higher order energy norms $\| v (\cdot, t)
\|_{\mathbb{H}^m}$ if they are bounded initially.  The condition on $v$
has been further weakened \cite{Beale} to $\int_0^{t}\| \nabla \times v
(\cdot, t) \|_{L^\infty}dt <\infty$. Prodi \cite{Prodi} and Serrin
\cite{Serrin} have found a family of other controlling norms for classical
solutions \cite{Lady}. In particular, no singularity is possible if $ \| v
(\cdot, t) \|_{L^\infty}$ is bounded.  The ${L^{3}} $ norm is also
controlling, as has been recently shown in \cite{Sverak}. For classical
solutions, global existence proofs exist only for small initial data and
forcing or for large viscosity ({\it i.e.}  when the non-dimensional
Reynolds number is small). On a sufficiently small initial interval the
solution is classical and unique.  Global weak solutions (possibly
non-unique) are only known to exist \cite{Leray1}, \cite{Leray2},
\cite{Leray3} in a space of functions for which $\nabla v$ can blow-up on
a small set in space-time\footnote{The 1-D Hausdorff measure of the set of
blow-up points in space-time is known to be zero \cite{Caffarelli}}.
However, when $f=0$ (no forcing), a time $T_c$ may be estimated in terms
of the $\| v_0 \|_{H^1} $ beyond which any weak Leray solution becomes
smooth again. Such an estimate, which also follows directly from Leray's
observation on the cumulative dissipation being bounded, is worked out in
the Appendix. \footnote{We are grateful to Alexey Cheskidov for pointing
out the fact that classical estimates are easily obtainable.}

Classical energy methods have so far failed to give global existence
because of failure to obtain conservation laws involving any of the
controlling norms \cite{Tao}.

Numerical solutions to (\ref{nseq0})  are physically revealing but do not
shed enough light into the existence issue.  Indeed, the numerical errors
in Galerkin/finite-difference/finite-element approximations depend on
derivatives of $v$ that are not known to exist {\it a priori} beyond an
initial time interval.

This paper introduces a new method in approaching these issues. In our
formulation, the velocity $v (x, t)$ is obtained as a Laplace transform:
\begin{equation}\label{intro.1}
v (x, t) = v_0 (x) + \int_0^\infty U (x, q) e^{-q/t^n} dq ,\qquad n\ge 1
\end{equation}
where $U$ satisfies an integral equation (IE) which always has a unique
acceptable smooth solution.  Looking for $v$ in this form is motivated by
our earlier work \cite{NS16} showing that small $t$ formal series
solutions, which exist for analytic initial conditions and forcing, are
Borel summable to actual solutions. In that case, the actual solution is
indeed in the form (\ref{intro.1}), with $n=1$. However, the
representation (\ref{intro.1}), the IE for ${\hat U} (k, q) \equiv
\mathcal{F} \left [ U (\cdot, q) \right ] (k)$,(\ref{IntUeqn}), and its
properties important to (\ref{intro.1}) are valid even when $f$, $v_0 $
and the corresponding solutions are not analytic in $x$.  An overview of
our approach and nature of results is given in \cite{JTS}.

\begin{Note}\label{fn1}
For general initial data and forcing, $U$ is in $L_1 \left (\mathbb{R}^+,
e^{-\alpha q} dq \right ) $, as defined in (\ref{eq:eqL}). If $n>1$, then
$U$ is analytic in $q$ in an open sector. For $n = 1$, the solution is
$q$-analytic in a neighborhood of $\mathbb{R}^+ \cup \{0 \}$
\footnote{This together with the $L^1$ estimate proves Borel summability
of the small $t$ series.} iff ${\hat v} (x,0)$ and ${\hat f} (x)$ are
analytic in $x$.
\end{Note}

As it will be seen later, using $n>1$ is advantageous for some initial
data.

In Fourier space, (\ref{intro.1}) implies
\begin{equation}\label{intro.1.1}
{\hat v} (k,t) = {\hat v}_0 (k) + \int_0^\infty {\hat U} (k, q) e^{-q/t^n}
dq.
\end{equation}
{\bf Notation.} Variables in the Fourier domain are marked with a hat
$\hat{}$, Laplace convolution is denoted by $*$, Fourier-convolution by
${\hat *}$, while $\ds$ denotes Fourier followed by Laplace convolution
(their order is unimportant).

As seen in \S4, $\hat{U}$ satisfies  the following IE:
\begin{equation}\label{IntUeqn}
{\hat U} (k, q) = -i k_j \int_0^q \mathcal{G} (q, q'; k) {\hat H}_j (k,
q') dq' + {\hat U}^{(0)} (k, q) =:\mathcal{N} \left [ {\hat U} \right ]
(k, q),
\end{equation}
where
\begin{equation}\label{7.2}
{\hat H}_j (k, q) = P_k \left [ {\hat v}_{0,j} {\hat *} {\hat U} + {\hat
U}_j {\hat *} {\hat v}_0 + {\hat U}_j  \ds {\hat U} \right ] (k, q).
\end{equation}
The kernel $\mathcal{G}$, the inhomogeneous term ${\hat U}^{(0)} (k, q)$
and their essential properties are given in (\ref{kernelG}) and
(\ref{eqU0}) in \S4.

\begin{Note}{
\rm The solutions of (\ref{IntUeqn}), needed on $\RR^+$, are very regular,
see Note \ref{fn1}. The existence time of $\hat{v}$ is determined by the
behavior of $\hat{U}$ for large $q$. In this formulation, global existence
of $\hat{v}$ is equivalent to subexponential behavior of $\hat{U}$.}
\end{Note}

The IE formulation was first introduced in \cite{NS16} in a narrower
context, and provides a new approach towards solving IVPs.

\section{Main results}
We define
\begin{equation}\label{eq:eqL}
L_{1}(\RR^+,e^{-\alpha q}dq)=\left\{g:\RR^+\mapsto \CC ~\Big|
\int_0^{\infty} e^{-\alpha q} |g(q)| dq < \infty\right\}.
\end{equation}
\begin{Assumptions}\label{A11}
In the following, unless otherwise specified, we assume that ${\hat v}_0$
and ${\hat f}$ are in $l^1 \left (\mathbb{Z}^3 \right )$, ${\hat v}_0 (0)
= 0 = {\hat f} (0)$,   $n\ge 1$, $\nu>0$ and $\alpha$ in (\ref{eq:eqL}) is
large enough (see  Proposition \ref{propthm01}).
\end{Assumptions}

\smallskip

\begin{Theorem}
\label{Thm01}

(i) Eq. (\ref{IntUeqn}) has a unique solution ${\hat U} (\cdot, q)\in
L_{1}(\RR^+,e^{-\alpha q}dq)$. For $n > 1$ this solution is analytic in an
open sector, cf. Note \ref{fn1}. We let $U(x,q)=\mathcal{F}^{-1} \left [
{\hat U} (\cdot, q) \right ] (x)$.

(ii) With this ${\hat U} $, $\hat{v}$ in (\ref{intro.1.1}) ($v(x,t)$ in
(\ref{intro.1}) respectively) is a classical solution of (\ref{nseq}) (
(\ref{nseq0}), resp.) for $t \in \left ( 0, \alpha^{-1/n} \right )$.

(iii) Conversely, any classical solution of (\ref{nseq0}), $v (x, t)$,
$t\in (0,T_0)$ has a Laplace representation of the form (\ref{intro.1})
with $ U$ as in (i) and with
$$
{\hat U} (k, q) := \mathcal{L}^{-1} \left [ \mathcal{F} [ v (\cdot,
\tau^{-1/n}) ] (k) - \mathcal{F} [v_0 ] (k) \right ] (q)
$$
a solution of (\ref{IntUeqn}) in $L_{1}(\RR^+,e^{-\alpha q}dq)$, $\alpha>
T_0^{-n}$.
\end{Theorem}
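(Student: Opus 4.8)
The plan is to establish Theorem \ref{Thm01} in three pieces, mirroring its statement. Part (i) is a contraction-mapping argument. Working in the Banach space $L_1(\RR^+,e^{-\alpha q}dq)$ with values in $l^1(\ZZ^3)$, I would show that $\mathcal{N}$ in (\ref{IntUeqn}) maps a ball into itself and is a contraction, provided $\alpha$ is large. The point is that the convolution structure in $q$ — both the Laplace convolution against the kernel $\mathcal{G}(q,q';k)$ and the mixed Fourier--Laplace convolution $\ds$ in (\ref{7.2}) — becomes small in the $e^{-\alpha q}$-weighted norm as $\alpha\to\infty$, by the standard fact that $\|g*h\|_{L_1(e^{-\alpha q}dq)}\le\|g\|\,\|h\|$ in that norm and that $\int_0^q\mathcal{G}(q,q';k)\,dq'$ carries a decaying factor in $\alpha$ uniformly in $k$ (using the explicit bounds on $\mathcal{G}$ promised in (\ref{kernelG})). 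The quadratic term $\hat U_j\ds\hat U$ is controlled on a fixed ball once $\alpha$ is large enough relative to $\|\hat v_0\|_{l^1}$ and $\|\hat f\|_{l^1}$; the linear terms involving $\hat v_0$ shrink likewise. For $n>1$ the analyticity in a sector is inherited from analyticity of $\mathcal{G}$ and $\hat U^{(0)}$ in $q$ together with uniformity of the fixed-point iteration, so the limit is analytic; this is the content referenced in Note \ref{fn1}.

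For part (ii), given the solution $\hat U$ from (i), I would define $\hat v$ by (\ref{intro.1.1}) and verify directly that it solves (\ref{nseq}) on $t\in(0,\alpha^{-1/n})$. The key computation is to apply $\mathcal{L}$ (Laplace transform in the variable $q$, dual to $1/t^n$) to the IE (\ref{IntUeqn}): the convolution theorem turns the $q$-integral against $\mathcal{G}$ into a product, and $\mathcal{G}$ is constructed precisely so that this product reproduces the Duhamel/integrating-factor form of (\ref{nseq}) with the nonlinearity $-ik_jP_k[\hat v_j\hat*\hat v]$ and forcing $\hat f$. Convergence of the integral in (\ref{intro.1.1}) for $t<\alpha^{-1/n}$ is exactly the condition $e^{-q/t^n}\in L_\infty$ against the weight $e^{\alpha q}$, i.e. $1/t^n>\alpha$. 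Term-by-term differentiation in $t$ (hence classical regularity in $t$) is justified by the $l^1$ bound on $\hat U(\cdot,q)$ and dominated convergence, while $l^1$ summability in $k$ gives that $v(x,t)=\mathcal{F}^{-1}\hat v$ is continuous, and the algebra of $l^1(\ZZ^3)$ under $\hat*$ upgrades this to smoothness in $x$ — so $v$ is classical.

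Part (iii) is the converse. Starting from a classical solution $v(x,t)$ on $(0,T_0)$, I would change variables $\tau=1/t^n$ and take the inverse Laplace transform to define $\hat U(k,q)$ as in the statement; the first task is to show this inverse transform exists and lands in $L_1(\RR^+,e^{-\alpha q}dq)$ for any $\alpha>T_0^{-n}$. This requires an analyticity/growth estimate: a classical solution on $(0,T_0)$ should, after the substitution, extend analytically in $\tau$ into a half-plane $\Re\tau>T_0^{-n}$ (or a suitable region), with controlled growth, so that the Laplace inversion contour can be placed to give the weighted-$L_1$ bound. Once $\hat U$ is defined, one runs the computation of part (ii) in reverse: since $\hat v$ solves (\ref{nseq}), its transform $\hat U$ must satisfy (\ref{IntUeqn}). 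Finally, uniqueness from part (i) pins $\hat U$ down as the same function, so the two representations coincide.

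**The main obstacle** I anticipate is in part (iii): proving that an arbitrary classical solution, with no a priori analyticity in $x$, nonetheless has the required $\tau$-analyticity and decay after the substitution $\tau=1/t^n$ so that its inverse Laplace transform exists in the weighted $L_1$ space. Unlike the forward direction, where $\hat U$ is handed to us by a contraction, here we must extract analyticity and quantitative bounds from the sole hypothesis ``classical on $(0,T_0)$''. The natural route is to re-derive, from the PDE itself, that $t\mapsto\hat v(k,t)$ extends to complex $t$ in a region whose image under $t\mapsto1/t^n$ contains a half-plane to the right of $T_0^{-n}$, with at most the growth permitted by a weighted-$L_1$ Laplace inversion; this is essentially a smoothing/analytic-continuation statement for NS that must be made uniform in $k\in\ZZ^3$ so as to control the $l^1$ norm. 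Parts (i) and (ii), by contrast, should be routine given the explicit properties of $\mathcal{G}$ and $\hat U^{(0)}$ asserted in \S4.
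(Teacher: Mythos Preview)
Your plan for parts (i) and (ii) matches the paper's: a contraction in the weighted $L^1$ space (the paper's $\mathcal{A}_1^{\alpha;\phi}$, via Lemmas~\ref{lemC2}, \ref{NUnormbound} and Proposition~\ref{propthm01}), followed by Laplace-transforming the integral equation back to (\ref{nseq}) (Lemma~\ref{connection}). One claim in your part (ii) is wrong, though: you write that ``the algebra of $l^1(\ZZ^3)$ under $\hat*$ upgrades this to smoothness in $x$.'' The algebra property only keeps the nonlinearity $\hat v_j\hat*\hat v$ in $l^1$; it does not yield $|k|^2\hat v(\cdot,t)\in l^1$, which is what a \emph{classical} solution requires. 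The paper supplies this via an instantaneous-smoothing step (Lemma~\ref{instsmooth} in the Appendix), bootstrapping the heat factor $e^{-\nu|k|^2(t-\tau)}$ in the Duhamel formula to gain powers of $|k|$ for any $t>0$. Without this parabolic regularization you only have a mild $l^1$ solution, not a classical one.

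For part (iii) your proposed route---establishing directly that an arbitrary classical solution extends analytically in $\tau=1/t^n$ to a half-plane so that Bromwich inversion makes sense---is harder than needed, and the paper sidesteps it. The paper runs the uniqueness in the opposite direction from yours: it invokes uniqueness of \emph{classical Navier--Stokes solutions} (a standard PDE fact) to conclude that the given $v$ coincides, on their common interval of existence, with the solution already constructed in (i)--(ii). Since the constructed solution is by definition of the form (\ref{intro.1.1}), so is $v$; the inverse-Laplace formula in the statement then simply recovers the already-built $\hat U$, and the derivation in \S\ref{A1} shows it solves (\ref{IntUeqn}). Uniqueness of the IE solution \emph{without} the small-ball restriction is then a consequence of NS uniqueness, not an input. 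The ``main obstacle'' you anticipate is thus dissolved by leaning on the PDE side rather than attacking the complex analysis head-on.
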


\z The proof is given at the end of \S5.

\begin{Remark}
{\rm Proposition \ref{propthm01} below provides (relatively rough)
estimates on $\alpha$.  Theorem \ref{Thm03} gives sharper bounds in terms
of the values of ${\hat U}$ on a finite interval $[0, q_0]$.  Smaller
bounds on $\alpha$ entail smooth solutions of (\ref{nseq0}) over a longer
time.}

\smallskip

We have the following result which, in a sense, is a converse of Theorem
(\ref{Thm01}).
\begin{Theorem}\label{Thm02}
For $f = 0$, if (\ref{nseq0}) has a global classical solution, then for
all sufficiently large $n$, $ U (x, q)=O(e^{-C_nq^{1/(n+1)}})$ as $q \to
+\infty$,  for some $C_n>0$.
\end{Theorem}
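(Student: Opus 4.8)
The plan is to invert the Laplace representation. Suppose (\ref{nseq0}) with $f=0$ has a global classical solution $v(x,t)$. The starting point is the a priori bound from Leray's theory (as worked out in the Appendix): there is a finite time $T_c = T_c(\|v_0\|_{H^1})$ after which $v$ is smooth and, in fact, all norms decay; in particular $\|\hat v(\cdot,t)\|_{l^1}\to 0$ as $t\to\infty$, and more importantly $\hat v(k,t)$ is real-analytic in $t$ for $t>0$ and extends analytically into a complex neighborhood of $(0,\infty)$ whose width can be taken to grow (or at least stay bounded below) as $t\to\infty$. By Theorem \ref{Thm01}(iii), writing $\tau = 1/t^n$, the function ${\hat U}(k,q) = \mathcal L^{-1}[\hat v(k,\tau^{-1/n}) - \hat v_0(k)](q)$ solves (\ref{IntUeqn}); the claimed bound $U(x,q)=O(e^{-C_n q^{1/(n+1)}})$ is then a statement about the decay of this inverse Laplace transform, which by the standard Phragmén–Lindelöf / saddle-point heuristic is governed by how far $\hat v(k,\tau^{-1/n})-\hat v_0(k)$ extends analytically in $\tau$ near $\tau = 0^+$ (i.e. $t=+\infty$) and how it grows there.

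The key steps, in order. First, establish a quantitative analyticity/growth statement for the global solution near $t=\infty$: there exist constants such that $t\mapsto \hat v(k,t)$ extends analytically to a region $\{\,|\arg(t-t_0)|<\theta,\ |t|\ \text{large}\,\}$ (for some fixed $t_0>T_c$, $\theta>0$) with $\|\hat v(\cdot,t)\|_{l^1}$ bounded there — this follows from Gevrey/analytic-smoothing estimates for NS once $\|v(\cdot,t)\|_{H^1}$ is controlled, combined with the dissipative decay. Second, transfer this to the $\tau$-variable: under $t = \tau^{-1/n}$, the neighborhood of $t=\infty$ maps to a sector-neighborhood of $\tau=0$; pick $n$ large enough that the map $\tau\mapsto\tau^{-1/n}$ pulls the analyticity sector in $t$ back into a sector of half-opening $>\pi/2$ at $\tau=0$ (this is exactly why ``all sufficiently large $n$'' appears). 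Third, write ${\hat U}(k,q)=\frac1{2\pi i}\int_{\mathcal C}e^{q\tau}\bigl(\hat v(k,\tau^{-1/n})-\hat v_0(k)\bigr)\,d\tau$ and deform the contour $\mathcal C$ to exploit the decay of the integrand for $\Re\tau<0$ and its controlled growth near $\tau=0$; a Watson-type estimate then yields $|{\hat U}(k,q)|\le C_k e^{-c q^{1/(n+1)}}$, the exponent $1/(n+1)$ arising from balancing $q\tau$ against the presumed algebraic blow-up rate of the analytic continuation as $\tau\to 0$ along the deformed contour. Fourth, sum over $k$: because $\|\hat v(\cdot,t)\|_{l^1}$ is uniformly bounded on the analyticity region, the constants $C_k$ are summable (after perhaps shrinking $c$ slightly and absorbing a $k$-dependent prefactor), giving $\|{\hat U}(\cdot,q)\|_{l^1}=O(e^{-C_n q^{1/(n+1)}})$ and hence, passing through $\mathcal F^{-1}$, the stated bound on $U(x,q)$.

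The main obstacle is the second and third steps together: one needs genuinely quantitative control — not just qualitative analyticity — of the global solution in a complex $t$-neighborhood of $+\infty$, including the rate at which the analyticity radius and the $l^1$-norm behave as $t\to\infty$, since these rates determine both the admissible opening angle of the $\tau$-sector (hence which $n$ work) and the precise exponent and constant in the final bound. In particular one must rule out that the analytic continuation of $\hat v(k,\tau^{-1/n})$ blows up faster than any power of $1/\tau$ as $\tau\to 0$ along the contour; this is where the dissipative decay of the global solution and a Gevrey-class estimate with explicitly tracked constants are essential, and getting the bookkeeping right so that the $k$-sum converges uniformly is the delicate part. The remaining ingredients — the contour-deformation Watson estimate and the change of variables — are standard once this quantitative input is in hand.

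\begin{proof}[Proof sketch]
Assume $f=0$ and that (\ref{nseq0}) has a global classical solution $v(x,t)$. By the Appendix estimates (Leray's cumulative-dissipation bound), there is $T_c=T_c(\|v_0\|_{H^1})$ such that for $t>T_c$ the solution is smooth and all Sobolev norms — hence $\|\hat v(\cdot,t)\|_{l^1}$ — decay; moreover the standard analytic-smoothing (Gevrey) estimates for NS give a complex neighborhood of $(T_c,\infty)$ into which $t\mapsto\hat v(k,t)$ extends analytically with $\|\hat v(\cdot,t)\|_{l^1}$ uniformly bounded there, and with a controlled (at least power-law) bound on its growth as one approaches $t=+\infty$.

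Fix $t_0>T_c$ and set $\tau=1/t^n$, so $t=\tau^{-1/n}$. By Theorem \ref{Thm01}(iii), ${\hat U}(k,q)=\mathcal L^{-1}\bigl[\hat v(k,\tau^{-1/n})-\hat v_0(k)\bigr](q)$ solves (\ref{IntUeqn}) and lies in $L_1(\RR^+,e^{-\alpha q}dq)$ for any $\alpha>t_0^{-n}$. The change of variables $\tau\mapsto\tau^{-1/n}$ sends the analyticity region in $t$ near $t=\infty$ to a sector-neighborhood of $\tau=0$; choosing $n$ large enough makes this sector have half-opening exceeding $\pi/2$, so that $\hat v(k,\tau^{-1/n})-\hat v_0(k)$ is analytic and bounded in a sectorial neighborhood of the origin cut along $\RR^-$, with an explicit algebraic bound as $\tau\to0$.

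Writing ${\hat U}(k,q)=\frac1{2\pi i}\int_{\mathcal C}e^{q\tau}\bigl(\hat v(k,\tau^{-1/n})-\hat v_0(k)\bigr)\,d\tau$ on a vertical contour $\mathcal C$ at $\Re\tau=t_0^{-n}$, we deform $\mathcal C$ into the cut plane, using the decay of the integrand for $\Re\tau<0$ and its controlled growth near $\tau=0$. A Watson-type saddle-point estimate, balancing $q\tau$ against the algebraic blow-up rate near $\tau=0$, yields $|{\hat U}(k,q)|\le C_k\,e^{-c\,q^{1/(n+1)}}$ for all large $n$; summing over $k\in\ZZ^3$ (the $C_k$ are summable because $\|\hat v(\cdot,t)\|_{l^1}$ is uniformly bounded on the analyticity region, after mildly shrinking $c$) gives $\|{\hat U}(\cdot,q)\|_{l^1}=O(e^{-C_nq^{1/(n+1)}})$, whence $U(x,q)=\mathcal F^{-1}[{\hat U}(\cdot,q)](x)=O(e^{-C_nq^{1/(n+1)}})$ uniformly in $x$.
\end{proof}
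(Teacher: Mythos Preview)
Your overall architecture --- invert the Laplace representation, use analyticity of $t\mapsto \hat v(k,t)$ in a sector around $\RR^+$, choose $n$ large so that under $t=\tau^{-1/n}$ the sector opens past half-angle $\pi/2$, then deform the Bromwich contour onto its edges --- is exactly the paper's approach. The gap is in the mechanism you invoke for the exponent $1/(n+1)$.

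You repeatedly describe the input near $\tau=0$ (i.e.\ $t\to\infty$) as an \emph{algebraic} bound and say the stretched exponential comes from ``balancing $q\tau$ against the algebraic blow-up rate near $\tau=0$.'' That would not work: if along the deformed rays one only had $|\hat v(k,\tau^{-1/n})|\le C|\tau|^{-\beta}$, the integral $\int_0^\infty e^{q r\cos(n\phi)}\,r^{-\beta}\,dr$ (with $\cos(n\phi)<0$) gives at best polynomial decay in $q$, not $e^{-Cq^{1/(n+1)}}$. The paper's actual input is the \emph{exponential} dissipative decay $\|\hat v(\cdot,t)\|_{l^1}\le C e^{-\frac34\nu\,\Re t}$ in the sector (Theorem~\ref{Testimate}). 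Under $t=\tau^{-1/n}$ this becomes $\|\hat v(\cdot,\tau^{-1/n})\|_{l^1}\le C\exp\bigl(-\tfrac34\nu\,\Re(\tau^{-1/n})\bigr)$, an essential-singularity-type bound at $\tau=0$. On the rays $\arg\tau=\pm n\phi$ one then estimates
\[
\|\hat U(\cdot,q)\|_{l^1}\ \le\ C\int_0^\infty \exp\Bigl(-\tfrac34\nu\,r^{-1/n}\cos\phi\ +\ qr\cos(n\phi)\Bigr)\,dr,
\]
and it is the saddle of $-A r^{-1/n}-Bqr$ (both $A,B>0$) at $r\sim q^{-n/(n+1)}$ that produces $e^{-C_n q^{1/(n+1)}}$. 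So the ``delicate bookkeeping'' you flag is not about ruling out super-polynomial blow-up; it is about \emph{establishing exponential decay} of $\hat v$ in the complex sector, and that is what drives the result.
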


\end{Remark}

\z The proof is given in \S7.
\begin{Corollary}
Theorems \ref{Thm01} and \ref{Thm02} imply that global existence is
equivalent to an asymptotic problem: $\hat{v}$ exists for all time iff
$\hat{U}$ decays in $q$ for some $n \in \mathbb{Z}^+$.
\end{Corollary}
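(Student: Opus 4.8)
The plan is to read the biconditional off directly from Theorems \ref{Thm01} and \ref{Thm02}; the only genuine work is pinning down what ``$\hat U$ decays'' should mean and tracking the $f=0$ hypothesis through the reverse implication.

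For the forward direction, suppose that for some $n\in\ZZ^+$ the solution $\hat U$ of (\ref{IntUeqn}) furnished by Theorem \ref{Thm01}(i) satisfies $\|\hat U(\cdot,q)\|_{l^1(\ZZ^3)}\to 0$ as $q\to+\infty$ (it suffices that this norm have subexponential growth). Then $\hat U(\cdot,q)\in L_{1}(\RR^+,e^{-\alpha q}dq)$ for \emph{every} $\alpha>0$, and by the uniqueness in Theorem \ref{Thm01}(i) it is the solution for each such $\alpha$. Theorem \ref{Thm01}(ii) then yields a classical solution of (\ref{nseq0}) on $(0,\alpha^{-1/n})$ for every $\alpha>0$; letting $\alpha\downarrow 0$ gives a classical solution on $(0,\infty)$, i.e. global existence. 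Equivalently, one invokes the sharper statement that the existence time is $\alpha^{-1/n}$ with $\alpha$ the exponential type of $\hat U$ in $q$, which is $0$ once $\hat U$ decays.

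For the reverse direction, assume $f=0$ and that (\ref{nseq0}) has a global classical solution, so $T_0=\infty$. By Theorem \ref{Thm01}(iii), the function $\hat U(k,q)=\mathcal L^{-1}[\mathcal F[v(\cdot,\tau^{-1/n})](k)-\hat v_0(k)](q)$ solves (\ref{IntUeqn}) and lies in $L_{1}(\RR^+,e^{-\alpha q}dq)$ for all $\alpha>0$; by itself this only says the exponential type is zero. To upgrade this to genuine decay, appeal to Theorem \ref{Thm02}: for all sufficiently large $n$ one has $U(x,q)=O(e^{-C_nq^{1/(n+1)}})$, hence in particular $\|\hat U(\cdot,q)\|_{l^1}\to 0$ as $q\to+\infty$ for such $n$. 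Thus there exists $n\in\ZZ^+$ for which $\hat U$ decays, and the equivalence is complete.

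There is no analytic obstacle here, since the content is contained entirely in Theorems \ref{Thm01} and \ref{Thm02}. The one point requiring care is that the two directions use the hypotheses asymmetrically: the forward implication holds for general forcing with $\hat f\in l^1(\ZZ^3)$, whereas the converse as stated rests on Theorem \ref{Thm02} and hence on $f=0$; the corollary is to be read with this understood (or restricted to the unforced case).
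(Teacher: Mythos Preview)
Your argument is correct and matches the paper's approach exactly: the paper gives no separate proof for this corollary, treating it as an immediate consequence of Theorems \ref{Thm01} and \ref{Thm02}, which is precisely how you read off the two directions. Your observation about the asymmetric role of the hypothesis $f=0$ is accurate and worth making explicit, even though the paper leaves it tacit.
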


The existence interval $\left ( 0, \alpha^{-1/n} \right )$ guaranteed by
Theorem~\ref{Thm01} is suboptimal. It does not take into account the fact
that the initial data $v_0 $ and forcing $f$ are real valued. (Blow up of
Navier-Stokes solution for complex initial conditions is known to occur
\cite{Sinai}).  Also, the estimate ignores possible cancellations in the
integrals.

In the following we address the issue of sharpening the estimates, in
principle arbitrarily well, based on more detailed knowledge of the
solution of the IE on an interval $[0,q_0]$.  This knowledge may come, for
instance, from  computer assisted estimates or from rigorous bounds based
on optimal truncation of asymptotic series. If this information shows that
the solution is sufficiently small for $q$ near the right end of the
interval, then $\alpha$ can be shown to be small. This in turn results in
longer times of guaranteed existence, possibly global existence for $f=0$
if this time exceeds $T_c$, the time after which it is known that a weak
solution becomes classical.

\section{Sharpening the estimates; rigorous numerical analysis}
Let ${\hat U} (k, q)$ be the solution of (\ref{IntUeqn}), provided by
Theorem~\ref{Thm01}. Define
\begin{equation}\label{eq:eq456}
\hat {U}^{(a)} (k, q) =
\begin{cases}
  \hat{U} (k, q) & \mbox{for $q \in (0, q_0] \subset \RR^+$} \\
  0 & \mbox{otherwise}
\end{cases},
\end{equation}
\begin{multline*}
\hat{U}^{(s)}(k,q) = -ik_{j} \int_{0}^{\min \{ q, 2 q_0 \}}
\mathcal{G}(q,q';k) \hat{H}_{j}^{(a)}(k,q')\,dq' + \hat{U}^{(0)}(k,q), \\
\hat{H}_{j}^{(a)}(k,q)  = P_{k} \Bigl[ \hat{v}_{0,j} \hat{*} \hat{U}^{(a)}
+ \hat{U}_{j}^{(a)} \hat{*} \hat{v}_{0} + \hat{U}_{j}^{(a)} \ds
\hat{U}^{(a)} \Bigr](k,q).
\end{multline*}

Using (\ref{eq:eq456}) we introduce the following functionals of ${\hat
U}^{(a)} (k, q)$, ${\hat v}_0$ and ${\hat f}$:
\begin{align}\label{ext3.1.0}
& b: = \alpha^{1/2+1/(2n)} \int_{q_{0}}^{\infty} e^{-\alpha q}
\|\hat{U}^{(s)}(\cdot,q)\|_{l^1}\,dq, \\
& \epsilon_1 = \Gamma \biggl( \frac{1}{2}+\frac{1}{2n} \biggr) \biggl[
B_{1} + \int_{0}^{q_{0}} e^{-\alpha q'} B_{2}(q')\,dq' \biggr], \\
& \epsilon = \Gamma \biggl( \frac{1}{2}+\frac{1}{2n} \biggr)\, B_{3},
\end{align}
where
\begin{multline*}
B_{1} = 4\sup_{k \in \mathbb{Z}^{3}} \big\{ |k|B_{0}(k) \big\}
\|\hat{v}_{0}\|_{l^1}, \qquad B_{0}(k)  = \sup_{q_{0} \leq q' \leq q}
\Big\{ (q-q')^{1/2-1/(2n)} |\mathcal{G}(q,q';k)| \Big\}, \\
B_{2}(q) = 4\sup_{k \in \mathbb{Z}^{3}} \big\{ |k|B_{0}(k) \big\}
\|\hat{U}^{(a)}(\cdot,q)\|_{l^1},  \qquad B_{3}  = 2\sup_{k \in
\mathbb{Z}^{3}} \big\{ |k|B_{0}(k) \big\}.
\end{multline*}

\bigskip

\begin{Theorem}\label{Thm03}
The exponential growth rate $\alpha$ of $\hat{U}$ is estimated in terms of
the restriction of  $\hat{U}$ to $[0,q_0]$ as follows.
\begin{equation}\label{ext9}
\text{\rm If \ \ \ }\alpha^{1/2+1/(2n)} >\epsilon_{1} + 2 \sqrt{\epsilon
b}\ \ \text{\rm then\ \ }\int_0^{\infty} \| \hat{U} (\cdot,q)\|_{l^1}
e^{-\alpha q}dq <\infty.
\end{equation}
\end{Theorem}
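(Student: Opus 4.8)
The plan is to reformulate the problem as a fixed-point estimate for the *tail* of $\hat U$, i.e., for the part of $\hat U$ living on $(q_0,\infty)$, and to show that the weighted $L^1$ norm of this tail is finite whenever the stated quadratic inequality holds. Write $\hat U = \hat U^{(a)} + \hat U^{(t)}$, where $\hat U^{(a)}$ is the known restriction to $[0,q_0]$ (extended by zero) from \eqref{eq:eq456}, and $\hat U^{(t)}$ is the unknown tail supported on $(q_0,\infty)$. Substituting this decomposition into the integral equation \eqref{IntUeqn} and using the bilinearity of $\hat H_j$ in \eqref{7.2}, one gets an equation of the schematic form $\hat U^{(t)} = \hat U^{(s)} + \mathcal L[\hat U^{(t)}]$, where $\hat U^{(s)}$ is the explicit ``source'' term defined just before \eqref{ext3.1.0} (it collects $\hat U^{(0)}$ together with all the terms involving only $\hat U^{(a)}$, whose convolutions are supported on $q' \le 2q_0$, hence the $\min\{q,2q_0\}$ cutoff), and $\mathcal L$ is the remaining operator, which is affine in $\hat U^{(t)}$: it has a part linear in $\hat U^{(t)}$ (coming from $\hat v_{0,j}\hat * \hat U^{(t)}$, $\hat U^{(t)}_j \hat * \hat v_0$, and the cross terms $\hat U^{(a)}_j \ds \hat U^{(t)} + \hat U^{(t)}_j \ds \hat U^{(a)}$) and a genuinely quadratic part $\hat U^{(t)}_j \ds \hat U^{(t)}$.

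Next I would work in the Banach space $X_\alpha$ of functions $g$ on $(q_0,\infty)$ with norm $\|g\| := \alpha^{1/2+1/(2n)}\int_{q_0}^\infty e^{-\alpha q}\,\|g(\cdot,q)\|_{l^1}\,dq$; the normalizing power of $\alpha$ is chosen precisely so that the kernel estimate produces an $\alpha$-independent constant. The key analytic input is the pointwise bound on $\mathcal G$ encoded in $B_0(k)$, namely $|\mathcal G(q,q';k)| \le (q-q')^{-1/2+1/(2n)} B_0(k)$, combined with the Laplace-convolution identity $\int_{q_0}^\infty e^{-\alpha q}\big(\int\!(q-q')^{-1/2+1/(2n)}\phi(q')\,dq'\big)dq = \Gamma(\tfrac12+\tfrac1{2n})\,\alpha^{-1/2-1/(2n)}\int e^{-\alpha q'}\phi(q')\,dq'$, which is exactly where the Gamma factor in $\epsilon_1,\epsilon$ comes from. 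Carrying this through term by term: the source term satisfies $\|\hat U^{(s)}\| \le b$ by definition; the linear-in-$\hat U^{(t)}$ part of $\mathcal L$ has operator norm at most $\epsilon_1$ (the $B_1$ piece bounds the $\hat v_0$-convolutions and the $\int_0^{q_0}e^{-\alpha q'}B_2(q')dq'$ piece bounds the cross terms with $\hat U^{(a)}$); and the quadratic part is controlled by $\epsilon$, in the sense that $\|\hat U^{(t)}_j \ds \hat U^{(t)}\| \le \epsilon \|\hat U^{(t)}\|^2$, using submultiplicativity of the weighted norm under $\ds$ (Fourier convolution is submultiplicative on $l^1$, Laplace convolution on $L_1(e^{-\alpha q}dq)$). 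So $\mathcal N$, restricted to the tail, maps a ball of radius $\rho$ into itself provided $b + \epsilon_1\rho + \epsilon\rho^2 \le \rho$, and is a contraction there for $\epsilon_1 + 2\epsilon\rho < 1$.

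Finally I would optimize the radius: the inequality $\epsilon\rho^2 - (1-\epsilon_1)\rho + b \le 0$ has a real solution $\rho$ exactly when the discriminant is nonnegative, $(1-\epsilon_1)^2 \ge 4\epsilon b$, equivalently $1 > \epsilon_1 + 2\sqrt{\epsilon b}$ (assuming $\epsilon_1<1$); choosing $\rho$ to be the smaller root then automatically gives $\epsilon_1 + 2\epsilon\rho = \epsilon_1 + (1-\epsilon_1) - \sqrt{(1-\epsilon_1)^2-4\epsilon b} < 1$, so the contraction condition is met simultaneously. Since $\epsilon_1,\epsilon$ scale like $\alpha^{-1/2-1/(2n)}$ (note $b$ is normalized to be $O(1)$), the hypothesis $\alpha^{1/2+1/(2n)} > \epsilon_1 + 2\sqrt{\epsilon b}$ as written in \eqref{ext9} is just this discriminant condition after clearing the $\alpha$-powers — here $\epsilon_1$ and $\epsilon$ in \eqref{ext9} denote the $\alpha$-independent constants $\Gamma(\cdots)[B_1 + \int_0^{q_0}e^{-\alpha q'}B_2\,dq']$ and $\Gamma(\cdots)B_3$. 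The unique tail fixed point $\hat U^{(t)} \in X_\alpha$ then has finite weighted norm, and since $\hat U^{(a)}$ is supported on the bounded interval $[0,q_0]$ its weighted integral is trivially finite, so $\int_0^\infty \|\hat U(\cdot,q)\|_{l^1}e^{-\alpha q}\,dq < \infty$. I also need to check that this $\hat U^{(t)}$ reassembles with $\hat U^{(a)}$ into the genuine solution of \eqref{IntUeqn} furnished by Theorem~\ref{Thm01}; this follows from uniqueness in $L_1(\RR^+,e^{-\alpha q}dq)$ once $\alpha$ is also taken large enough for Proposition~\ref{propthm01} to apply, which we may do without weakening \eqref{ext9} since that hypothesis is monotone in $\alpha$.

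The main obstacle I anticipate is bookkeeping rather than conceptual: correctly splitting $\hat H_j(\hat U^{(a)}+\hat U^{(t)})$ so that every term involving only $\hat U^{(a)}$ lands in $\hat U^{(s)}$ with the right $\min\{q,2q_0\}$ support, and verifying that the supremum over $q_0 \le q' \le q$ in the definition of $B_0(k)$ really does dominate the kernel uniformly in the relevant range — in particular that $\sup_k |k|B_0(k)$ is finite, which relies on the decay of $\mathcal G$ in $|k|$ stated (but not yet seen in this excerpt) in \S4. The submultiplicativity of the $\ds$ operation on the weighted norm, while standard, also needs the precise normalization of the $\alpha$-power to be consistent across the linear and quadratic terms, and getting that exponent bookkeeping exactly right is where a careless error would most easily creep in.
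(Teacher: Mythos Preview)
Your decomposition, tail equation, and quadratic discriminant condition all match the paper's (the paper writes $\hat U^{(b)}$ for your $\hat U^{(t)}$). The difference is in logical structure. You run a contraction in $X_\alpha$ to \emph{construct} a tail, then invoke global uniqueness of (\ref{IntUeqn}) to identify it with the actual $\hat U^{(t)}$. The paper instead uses a continuity (a~priori) argument directly on the known solution: it bounds the truncated integral $L_{q_0,M} := \int_{q_0}^M e^{-\alpha q}\|\hat U^{(b)}(\cdot,q)\|_{l^1}\,dq$, which is automatically finite for each $M$ since $\hat U$ already lives in some $L_1(e^{-\alpha'q}dq)$, derives the same inequality $\epsilon L_{q_0,M}^2 - (\alpha^{1/2+1/(2n)}-\epsilon_1)L_{q_0,M} + b \ge 0$, and observes that since $M\mapsto L_{q_0,M}$ is continuous with $L_{q_0,q_0}=0$, it is trapped below the smaller root for all $M$. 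This is cleaner: no fixed-point construction, and no identification step---the latter, as you note, leans on the \emph{global} uniqueness in $\mathcal A_1^{\alpha'}$, which the paper obtains only by the detour through NS in the proof of Theorem~\ref{Thm01}, so it is sound but not as self-contained as the continuity route. One bookkeeping slip in your writeup: with your normalization $\|g\| = \alpha^{1/2+1/(2n)}\int_{q_0}^\infty e^{-\alpha q}\|g\|_{l^1}\,dq$, the linear part of $\mathcal L$ has operator norm $\alpha^{-1/2-1/(2n)}\epsilon_1$ (not $\epsilon_1$) and the quadratic coefficient is $\alpha^{-1-1/n}\epsilon$; the final condition (\ref{ext9}) still falls out, but your intermediate inequality ``$b+\epsilon_1\rho+\epsilon\rho^2\le\rho$'' is missing these powers of $\alpha$. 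Your worry about $\sup_k|k|B_0(k)<\infty$ is exactly Lemma~\ref{L2.4}.
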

\z The proof of Theorem \ref{Thm03} is given in \S\ref{S6}.
\begin{Remark}{
\rm In \S \ref{furtherest}, it is shown that for a given {\em global
classical} solution to (\ref{nseq0}), in adapted variables, the quantity
$\epsilon_{1} + 2 \sqrt{\epsilon b}$ is small for large $q_0$.}
\end{Remark}

\bigskip

\begin{Remark}{
\rm In the proof it is also seen that if $\| {\hat U}^{(a)} (\cdot, q)
\|_{l^1}$ is small enough in a sufficiently large subinterval $[q_d,
q_0]$, then the right side of (\ref{ext9}) is small, implying a large
existence time $\left (0, \alpha^{-1/n} \right )$ of a classical solution
$v$. The guaranteed existence time  is larger if  $q_0$ is larger. If for
$f=0$, the estimated $ \alpha^{-1/n}$ exceeds $T_c$, the time for Leray's
weak solution to become  classical again (see Appendix), then global
existence of a classical solution $v$ follows.}
\end{Remark}

Since the improved estimates in Theorem \ref{Thm03} rely on the values of
$\hat{U}$ on a sufficiently large initial interval, we analyze the
properties of a discretized scheme for numerical computation of $\hat{U}$
with {\em controlled errors}.
\begin{Definition}\label{defDNorm}
We introduce the following norm on functions defined on a $\delta $-grid
in $q$
$$
\| {\hat W} \|^{(\alpha, \delta)} = \sup_{m_s \le m \in \mathbb{Z}^{+}}
m^{1-1/n} \delta^{1-1/n} (1+m^2 \delta^2) e^{-\alpha m \delta} \| {\hat W
} (\cdot, m\delta) \|_{l^1}.
$$
\end{Definition}
\begin{Theorem}\label{Thm04}
Consider a discretized integral equation consistent with (\ref{IntUeqn}) (
cf. definition \ref{defconsis}) based on Galerkin truncation to $[-N,N]^3$
Fourier modes and uniform discretization in $q$,
$$
{\hat U}_{\delta}^{(N)} = \mathcal{N}_{\delta}^{(N)} \left [ {\hat
U}_\delta^{(N)} \right ],
$$
see (\ref{discret2}) below. Then, the error ${\hat U} - {\hat
U}^{(N)}_\delta$ at the points $q = m \delta$, satisfies
\begin{multline}\label{eq:eu1}
\| {\hat U} (\cdot, m \delta) - {\hat U}^{(N)}_\delta (\cdot, m\delta )
\|_{l^1} \\
\le \left [ 2 \| T_{E,N} \|^{(\alpha, \delta)} + 2 \| T_{E,\delta}^{(N)}
\|^{(\alpha, \delta)} + \| (I-\mathcal{P}_N) {\hat U} \|^{(\alpha,
\delta)} \right ] \frac{ e^{\alpha m \delta} }{ m^{1-1/n} \delta^{1-1/n}
(1+m^2 \delta^2)}
\end{multline}
for $m \ge m_s \in \mathbb{Z}^+$, where $m_s \delta=:q_m > 0$ is
independent of $\delta$. In (\ref{eq:eu1}), $T_{E,N}$ is the truncation
error due to Galerkin projection $\mathcal{P}_N$ and $T_{E,\delta}^{(N)}$
is the truncation error due to the $\delta$-discretization in $q$ for a
given $N$. We have $\| T_{E,N} \|^{(\alpha, \delta)}, \| (I-\mathcal{P}_N)
{\hat U} \|^{(\alpha, \delta)} \to 0$ as $N \to \infty$ for any $\delta$
and $ \| T^{(N)}_{E,\delta} \|^{(\alpha, \delta)} \to 0 $ as $\delta \to
0$, uniformly in $N$.
\end{Theorem}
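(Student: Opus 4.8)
\textbf{Proof proposal for Theorem \ref{Thm04}.}

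The plan is to view the discretized equation ${\hat U}_\delta^{(N)} = \mathcal{N}_\delta^{(N)}[{\hat U}_\delta^{(N)}]$ as a perturbation of the exact fixed-point equation ${\hat U} = \mathcal{N}[{\hat U}]$ restricted to the $\delta$-grid, and to estimate the error ${\hat U} - {\hat U}_\delta^{(N)}$ by a Lipschitz-contraction argument in the weighted norm $\|\cdot\|^{(\alpha,\delta)}$ of Definition \ref{defDNorm}. First I would set up the consistency framework: write $\mathcal{P}_N {\hat U}$ for the Galerkin projection of the exact solution evaluated on the grid, and split the error as $({\hat U} - \mathcal{P}_N{\hat U}) + (\mathcal{P}_N {\hat U} - {\hat U}_\delta^{(N)})$. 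The first piece is exactly $(I-\mathcal{P}_N){\hat U}$, contributing the third term on the right of (\ref{eq:eu1}) after dividing by the weight. For the second piece, I would plug $\mathcal{P}_N {\hat U}$ into the discrete operator $\mathcal{N}_\delta^{(N)}$ and record the defect: $\mathcal{P}_N {\hat U} - \mathcal{N}_\delta^{(N)}[\mathcal{P}_N {\hat U}] = T_{E,N} + T_{E,\delta}^{(N)}$, where $T_{E,N}$ collects the error from replacing $\mathcal{N}$ by its Galerkin-projected version $\mathcal{N}^{(N)}$ (i.e. the convolution tails and the projected kernel applied to the truncated modes), and $T_{E,\delta}^{(N)}$ collects the quadrature error from approximating the $q'$-integral $\int_0^q \mathcal{G}(q,q';k){\hat H}_j(k,q')\,dq'$ by its $\delta$-Riemann/product-rule sum. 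The key structural fact I need from \S4--\S5 is that $\mathcal{N}$ (hence $\mathcal{N}_\delta^{(N)}$) is a contraction, or at least has small Lipschitz constant, on the ball of radius controlled by $\alpha$ large, in the weighted norm; this is what makes both the exact and the discrete fixed points well-defined and lets me invert $(I - D\mathcal{N})$ with norm $\le 2$, producing the factor $2$ in front of $\|T_{E,N}\|^{(\alpha,\delta)}$ and $\|T_{E,\delta}^{(N)}\|^{(\alpha,\delta)}$.

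The heart of the argument is then the standard Lax-type estimate
\[
\|\mathcal{P}_N{\hat U} - {\hat U}_\delta^{(N)}\|^{(\alpha,\delta)} \le \frac{1}{1-L}\,\|T_{E,N} + T_{E,\delta}^{(N)}\|^{(\alpha,\delta)} \le 2\,\|T_{E,N}\|^{(\alpha,\delta)} + 2\,\|T_{E,\delta}^{(N)}\|^{(\alpha,\delta)},
\]
valid once the Lipschitz constant $L$ of $\mathcal{N}_\delta^{(N)}$ is below $1/2$ on the relevant ball, which holds for $\alpha$ large per Assumptions \ref{A11}. Combining with the projection term and converting from the $\|\cdot\|^{(\alpha,\delta)}$-norm back to the $l^1$-norm at the single point $q=m\delta$ — which just means dividing by $m^{1-1/n}\delta^{1-1/n}(1+m^2\delta^2)e^{-\alpha m\delta}$ — gives precisely (\ref{eq:eu1}). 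The role of the factor $1+m^2\delta^2$ in the norm is to absorb the polynomial growth of $q\mapsto q\,\|{\hat H}_j\|$ inside the Laplace-convolution estimate so that the discrete convolution sums converge uniformly; the factor $m^{1-1/n}\delta^{1-1/n} = q^{1-1/n}$ compensates for the $(q-q')^{-(1-1/n)}$-type integrable singularity of the kernel $\mathcal{G}$ near the diagonal (this is the same power appearing in $B_0(k)$ in \S3), which is why $m_s\delta = q_m > 0$ must be bounded away from $0$.

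Finally I would establish the convergence claims. For $\|(I-\mathcal{P}_N){\hat U}\|^{(\alpha,\delta)} \to 0$: since ${\hat U}(\cdot,q) \in l^1(\mathbb{Z}^3)$ with the $e^{-\alpha q}$-integrability from Theorem \ref{Thm01}, the tail $\sum_{|k|>N}|{\hat U}(k,q)| \to 0$, and one checks this convergence is compatible with the $\sup_m$ weight — likely via a dominated-convergence / equicontinuity argument using that $q^{1-1/n}(1+q^2)e^{-\alpha q}\|{\hat U}(\cdot,q)\|_{l^1}$ is bounded uniformly in $q$ (regularity from Note \ref{fn1}). The same bound controls $\|T_{E,N}\|^{(\alpha,\delta)} \to 0$ as $N\to\infty$ for fixed $\delta$, since $T_{E,N}$ is built from convolution tails of an $l^1$-decaying sequence and from $(I-\mathcal{P}_N)$ applied to the kernel term. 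For $\|T_{E,\delta}^{(N)}\|^{(\alpha,\delta)} \to 0$ as $\delta \to 0$ uniformly in $N$: the quadrature error is a sum of local errors, each bounded by the modulus of continuity of $q'\mapsto \mathcal{G}(q,q';k){\hat H}_j^{(N)}(k,q')$ times $\delta$; uniformity in $N$ follows because the Galerkin-truncated integrand is bounded by the full (non-truncated) one in $l^1$, which has $N$-independent regularity. The main obstacle I anticipate is precisely this last uniformity-in-$N$ estimate for the $q$-quadrature error near the kernel singularity: one must show the product-integration rule handles the $(q-q')^{-(1-1/n)}$ weight correctly and that the resulting constant does not degrade as more Fourier modes are retained — this requires a careful choice of quadrature scheme (the "consistent" discretization of Definition \ref{defconsis}) and an $N$-uniform modulus-of-continuity bound on ${\hat H}_j^{(N)}$, which in turn rests on the $l^1$-algebra property of Fourier convolution and the $q$-regularity of ${\hat U}$ from Note \ref{fn1}.
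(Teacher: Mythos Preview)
Your overall strategy---consistency plus contractivity (Lax) in the weighted discrete norm $\|\cdot\|^{(\alpha,\delta)}$---is exactly right and matches the paper.  The main structural difference is in the decomposition.  You split $\hat U - \hat U_\delta^{(N)} = (I-\mathcal{P}_N)\hat U + (\mathcal{P}_N\hat U - \hat U_\delta^{(N)})$ and plug $\mathcal{P}_N\hat U$ directly into the fully discrete operator $\mathcal{N}_\delta^{(N)}$, reading off a combined defect.  The paper instead introduces an intermediate object $\hat U^{(N)}$, the fixed point of the \emph{continuous-in-$q$} Galerkin operator $\mathcal{N}^{(N)}$ (Lemma~\ref{lemNUN}), and splits $\hat U - \hat U_\delta^{(N)} = (\hat U - \hat U^{(N)}) + (\hat U^{(N)} - \hat U_\delta^{(N)})$.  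Two separate contraction arguments then give $\|\hat U - \hat U^{(N)}\|^{(\alpha)} \le \|(I-\mathcal{P}_N)\hat U\|^{(\alpha)} + 2\|T_{E,N}\|^{(\alpha)}$ (Lemma~\ref{lemTEN}) and $\|\hat U^{(N)} - \hat U_\delta^{(N)}\|^{(\alpha,\delta)} \le 2\|T_{E,\delta}^{(N)}\|^{(\alpha,\delta)}$ (Lemma~\ref{lemTENd}).  This matters because in the paper $T_{E,\delta}^{(N)} := \mathcal{N}^{(N)}\hat U^{(N)} - \mathcal{N}_\delta^{(N)}\hat U^{(N)}$ is the quadrature defect evaluated on $\hat U^{(N)}$, not on $\mathcal{P}_N\hat U$; your identity $\mathcal{P}_N\hat U - \mathcal{N}_\delta^{(N)}[\mathcal{P}_N\hat U] = T_{E,N} + T_{E,\delta}^{(N)}$ is therefore not literally true with the paper's definitions, though your version would work with your own definition of the quadrature defect.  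The two-step route buys a cleaner separation of the $N\to\infty$ and $\delta\to 0$ limits.

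Two smaller points.  First, the kernel singularity near the diagonal is $(q-q')^{-1/2+1/(2n)}$ (Lemma~\ref{L2.4}), not $(q-q')^{-(1-1/n)}$; the weight $q^{1-1/n}$ in the norm is chosen to match the small-$q$ behavior $\hat U^{(0)}\sim q^{-1+1/n}$ (Lemma~\ref{lemU0}), not the diagonal singularity.  Second, for the convergence claims the paper is more quantitative than your dominated-convergence sketch: it uses the bound $\||k|\hat U\|^{(\alpha)} < \infty$ (Lemma~\ref{lemkU}, under the extra hypothesis $|k|^3\hat v_0,\,|k|\hat f\in l^1$) to get an explicit $O(1/N)$ rate for both $\|(I-\mathcal{P}_N)\hat U\|$ and $\|T_{E,N}\|$, and uses a~priori bounds on $q$-derivatives of $\hat U$ (Lemma~\ref{lemqder}, via Cauchy's formula on the analyticity sector) to control $\|T_{E,\delta}^{(N)}\|$---the latter being precisely the mechanism that resolves the ``main obstacle'' you anticipated.
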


\begin{Remark}{
\rm For small $q$, independent of $\delta$, an asymptotic expansion of
${\hat U}$ exists, and solving the equation numerically for $q \in [0,
q_m]$ can be avoided. For this reason we start with $q=q_m$.}
\end{Remark}

\section{Integral Equation (\ref{IntUeqn}) and its properties}

We define ${\hat u}$ through the decomposition
\begin{equation}\label{hatudef}
{\hat v} (k, t) = {\hat v}_0 (k) + {\hat u} (k, t).
\end{equation}
Then, (\ref{nseq}) implies
\begin{multline}\label{hatueq}
{\hat u}_t + \nu |k|^2 {\hat u} = - i k_j P_k \left [ {\hat v}_{0,j} {\hat
*} {\hat u} + {\hat u}_j {\hat *} {\hat v}_0 + {\hat u}_j {\hat *} {\hat
u} \right ] + {\hat v}_1 (k) =:-i k_j {\hat h}_j (k, t) + {\hat v}_1 (k),
\end{multline}
where ${\hat v}_1$ is given by (\ref{4}). Using ${\hat v} (k, 0) = {\hat
v}_0$, we have ${\hat u} (k, 0) = 0$ and we obtain from (\ref{hatueq}),
\begin{equation}\label{5}
{\hat u} (k, t) = -i k_j \int_0^t e^{-\nu |k|^2 (t-s)} {\hat h}_j (k, s)
ds + {\hat v}_1 (k) \left ( \frac{1-e^{-\nu |k|^2 t} }{\nu |k|^2}  \right
).
\end{equation}
We look for $\hat{u}$ in the form of a  Laplace transform
\begin{equation}\label{65}
{\hat u} (k, t) = \int_0^\infty {\hat U} (k, q) e^{-q/t^n} dq;\ \ n\ge 1
\end{equation}
We apply the inverse Laplace transform of (\ref{5}) with respect to $\tau
= 1/t^n$ (justified at the end of the proof of Lemma \ref{connection},
with more details in the Appendix) to obtain (\ref{IntUeqn}). The inverse
Laplace transform of $f$ is given, as usual, by
\begin{equation}\label{eq:deflaplace}
\left [ \mathcal{L}^{-1}f \right ] (p) = \frac{1}{2\pi
i}\int_{c-i\infty}^{c+i\infty} f(s)e^{ps}ds,
\end{equation}
where $c$ is chosen so that $f$ is analytic and has suitable asymptotic
decay for  $\Re ~s \ge  c$.

For $n=1$ the kernel $\mathcal{G}$ is given by, see \cite{NS16},
\begin{multline}\label{kernelG1}
\mathcal{G} (q, q'; k) = \frac{\pi z'}{z} \left ( J_1 (z) Y_1 (z') - J_1
(z') Y_1 (z) \right )\\{\rm where} ~z = 2 |k| \sqrt{\nu q} ~~,~~z'= 2 |k|
\sqrt{\nu q'},   \ \ (n=1)
\end{multline}
$J_1$ and $Y_1$ are Bessel functions of order 1, and
\begin{equation}\label{eqU01}
\hat{U}^{(0)} (k,q) = 2 {\hat{v}_{1}(k)} \frac{J_1 (z)}{z}  ~~,~~{\rm
where} ~z= 2 |k| \sqrt{\nu q}
\end{equation}
For $n \ge 2$ the kernel has the form (derived in the Appendix, see
(\ref{Gdefineac}))
\begin{multline}\label{kernelG}
\mathcal{G}(q,q';k) = \int_{(q'/q)^{1/n}}^1 \left \{ \frac{1}{2 \pi i}
\int_{c-i \infty}^{c+i \infty} \tau^{-1/n} \exp \left [ - \nu |k|^2
\tau^{-1/n} (1-s) + (q-q' s^{-n} ) \tau \right ] d\tau \right \} ds \\
= \frac{\gamma^{1/n}}{\nu^{1/2} |k| q^{1-1/(2n)}} \int_{1}^{\gamma^{-1/n}}
(1-s^{-n})^{1/(2n)-1} (1-s\gamma^{1/n})^{-1/2} \mu^{1/2} F(\mu)\,ds,
\end{multline}
where
$$
\gamma = \frac{q'}{q}, \qquad \mu = \nu |k|^{2} q^{1/n} (1-s\gamma^{1/n})
(1-s^{-n})^{1/n},
$$
\begin{equation}\label{eqF}
F(\mu) = \frac{1}{2\pi i} \int_{C} \zeta^{-1/n} e^{\zeta-\mu
\zeta^{-1/n}}\,d\zeta,
\end{equation}
and $C$ is a contour starting at $\infty e^{-i \pi}$ and ending at $\infty
e^{i \pi}$ turning around the origin counterclockwise. The function
$\hat{U}^{(0)}(k,q)$ in (\ref{IntUeqn}) is defined by
\begin{equation}\label{eqU0}
\hat{U}^{(0)}(k,q) = \frac{{\hat v}_1 (k)}{\nu |k|^2} \mathcal{L}^{-1}
\left \{ 1 - \exp \left [ - \nu |k|^2 \tau^{-1/n} \right ] \right \} (q) =
\frac{\hat{v}_{1}(k)}{\nu |k|^{2} q} G(\nu |k|^{2} q^{1/n}),
\end{equation}
where
\begin{equation}\label{intG}
G(\tilde{\mu}) = -\frac{1}{2\pi i} \int_{C} e^{\zeta-{\tilde \mu}
\zeta^{-1/n}}\,d\zeta.
\end{equation}

\subsection{Properties of $F$, $G$, $\mathcal{G}$, ${\hat U}^{(0)} $ and
the relation between the IE and NS}

\begin{Lemma}\label{lemG}
The functions $F$, $G$ in (\ref{eqF}) and (\ref{intG}) are entire and
$G^\prime (\mu) = F(\mu)$. Furthermore $F(0) = \frac{1}{\Gamma (1/n)}$,
$G(0) = 0$ and, for $n\ge 2$, their asymptotic behavior for large positive
$\mu$ is given by
\begin{equation}\label{ar1}
F(\mu) \sim
\begin{cases}
\displaystyle \sqrt{\frac{2}{\pi (n+1)}} n^{\frac{3}{2 (n+1)}}
\mu^{\frac{n-2}{2 (n+1)} } \Im \left \{ \exp \left [ \frac{3 i \pi}{2
(n+1)} \right ] e^{-z} \right \} & {\rm if } \arg\mu=0 \\
\displaystyle -i \sqrt{\frac{1}{2 \pi (n+1)} } n^{\frac{3}{2 (n+1)}}
\mu^{\frac{n-2}{2 (n+1)} } \exp \left [ \frac{3 i \pi}{2 (n+1)} \right ]
e^{-z} & {\rm if } \arg \mu \in (0, \frac{n+3}{2 n} \pi ) \\
\displaystyle  i \sqrt{\frac{1}{2 \pi (n+1)} } n^{\frac{3}{2 (n+1)}}
\mu^{\frac{n-2}{2 (n+1)} } \exp \left [ \frac{-3 i \pi}{2 (n+1)} \right ]
e^{-{\hat z}} & {\rm if } \arg\mu \in (-\frac{n+3}{2 n} \pi, 0 )
\end{cases}
\end{equation}
\begin{equation}\label{ar2}
G(\mu) \sim
\begin{cases}
\displaystyle -\sqrt{\frac{2}{\pi (n+1)} } n^{\frac{1}{2 (n+1)}}
\mu^{\frac{n}{2 (n+1)} } \Im \left \{ \exp \left [ \frac{i \pi}{2 (n+1)}
\right ] e^{-z} \right \} & {\rm if } \arg\mu=0 \\
\displaystyle i \sqrt{\frac{1}{2 \pi (n+1)} } n^{\frac{1}{2 (n+1)}}
\mu^{\frac{n}{2 (n+1)} } \exp \left [ \frac{i \pi}{2 (n+1)} \right ]
e^{-z} & {\rm if }  \arg \mu \in (0, \frac{n+3}{2 n} \pi ) \\
\displaystyle -i \sqrt{\frac{1}{2 \pi (n+1)} } n^{\frac{1}{2 (n+1)}}
\mu^{\frac{n}{2 (n+1)} } \exp \left [ \frac{-i \pi}{2 (n+1)} \right ]
e^{-{\hat z}} & {\rm if } \arg\mu \in (-\frac{n+3}{2 n} \pi, 0 )
\end{cases}
\end{equation}
where
\begin{equation}\label{eq:z}
z =\xi_0 \mu^{n/(n+1)} e^{i \pi/(n+1)} ~,~ ~~\xi_0 = n^{-n/(n+1)} ~(n+1);\
{\hat z} = \xi_0 \mu^{n/(n+1)} e^{-i \pi/(n+1)}.
\end{equation}
\end{Lemma}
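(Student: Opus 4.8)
The plan is to prove the three assertions in turn: entirety of $F$ and $G$ together with $G'=F$; the values $F(0)=1/\Gamma(1/n)$, $G(0)=0$; and the large-$\mu$ asymptotics \eqref{ar1}--\eqref{ar2}, the last via the saddle-point method.

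For entirety I would fix a concrete realization of the contour $C$ in \eqref{eqF}, \eqref{intG}, say the two rays $\arg\zeta=\pm(\pi-\epsilon)$ ($\epsilon$ small) joined by a unit circular arc; then $C$ avoids $\zeta=0$, while $\Re\zeta\to-\infty$ and $|\zeta^{-1/n}|=|\zeta|^{-1/n}\to 0$ along $C$. On $C$ the integrand of \eqref{eqF} is bounded by $|\zeta|^{-1/n}\exp(\Re\zeta+|\mu|\,|\zeta|^{-1/n})$, which is integrable with a bound locally uniform in $\mu$; the same holds for \eqref{intG} and for the $\mu$-derivatives of both integrands. Hence $F$ and $G$ are entire by differentiation under the integral sign, and differentiating \eqref{intG} gives $G'(\mu)=-\tfrac1{2\pi i}\int_C(-\zeta^{-1/n})e^{\zeta-\mu\zeta^{-1/n}}\,d\zeta=F(\mu)$. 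For the special values, $F(0)=\tfrac1{2\pi i}\int_C\zeta^{-1/n}e^{\zeta}\,d\zeta$ is exactly Hankel's loop representation of $1/\Gamma(s)$ at $s=1/n$, while $G(0)=-\tfrac1{2\pi i}\int_C e^{\zeta}\,d\zeta=0$ since the integrand is entire and the loop collapses (equivalently, it is the $s=0$ value $1/\Gamma(0)=0$).

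The heart of the matter is the asymptotics, for which I would apply steepest descent to $\phi(\zeta)=\zeta-\mu\zeta^{-1/n}$. The saddle points solve $\phi'(\zeta)=1+\tfrac{\mu}{n}\zeta^{-(n+1)/n}=0$, i.e. $\zeta^{(n+1)/n}=-\mu/n$, whose two relevant roots are $\zeta_{s,\pm}=(\mu/n)^{n/(n+1)}e^{\pm i\pi n/(n+1)}$ (complex conjugates for $\mu>0$). Using $\zeta_s^{-1/n}=-n\zeta_s/\mu$ one obtains the compact identities $\phi(\zeta_s)=(n+1)\zeta_s$ and $\phi''(\zeta_s)=\tfrac{n+1}{n\zeta_s}$; comparing with \eqref{eq:z} gives $\{\phi(\zeta_{s,+}),\phi(\zeta_{s,-})\}=\{-z,-\hat z\}$, which yields the exponential factors $e^{-z},e^{-\hat z}$ (and explains the hypothesis $n\ge2$: $\Re z>0$ for $\mu>0$ requires $\pi/(n+1)<\pi/2$). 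Deforming $C$ onto the steepest-descent path through the dominant saddle and invoking the Laplace formula $\tfrac1{2\pi i}\int\zeta^{-1/n}e^{\phi}\,d\zeta\sim\tfrac1{2\pi i}\zeta_s^{-1/n}e^{\phi(\zeta_s)}\sqrt{2\pi/(-\phi''(\zeta_s))}$, with the square-root branch fixed by the descent direction, the powers of $\mu$ collapse to $\mu^{(n-2)/(2(n+1))}$ for $F$ and, with $g\equiv1$, to $\mu^{n/(2(n+1))}$ for $G$, with the constants $n^{3/(2(n+1))}$, $n^{1/(2(n+1))}$ and phases $e^{\pm 3i\pi/(2(n+1))}$, $e^{\pm i\pi/(2(n+1))}$ exactly as in \eqref{ar1}--\eqref{ar2}. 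For $\arg\mu$ in the open sector $(0,\tfrac{n+3}{2n}\pi)$ a single saddle (the one with $\phi=-z$) is dominant, the other being exponentially subdominant, which gives the second lines of \eqref{ar1}--\eqref{ar2}; the reflected sector gives the third lines. On $\arg\mu=0$ exactly, both conjugate saddles contribute equally, and since $F,G$ are real there (from $\overline{F(\bar\mu)}=F(\mu)$, valid because $\overline{C}=-C$), the two contributions add to $2\,\Re\{-iA\,e^{3i\pi/(2(n+1))}e^{-z}\}=2A\,\Im\{e^{3i\pi/(2(n+1))}e^{-z}\}$, which is precisely the first line (the factor $2$ turning $\sqrt{1/(2\pi(n+1))}$ into $\sqrt{2/(\pi(n+1))}$).

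The main obstacle is the bookkeeping in the steepest-descent step: selecting the branches of $\zeta^{-1/n}$ and of $\sqrt{-\phi''(\zeta_s)}$ forced by the orientation of the descent path, checking that $C$ can indeed be deformed through the relevant saddle(s) compatibly with its prescribed endpoints $\infty e^{\pm i\pi}$, identifying $\arg\mu=\pm\tfrac{n+3}{2n}\pi$ as the lines delimiting the regime where the subdominant saddle's contribution stays negligible, and bounding the residual arcs at strictly lower order uniformly on compact $\arg\mu$-subsectors. Once the branches are pinned down, matching the explicit constants and phases in \eqref{ar1}--\eqref{ar2} is a direct computation; the $G$-asymptotics could alternatively be derived by integrating those of $F$ via $G'=F$, but performing the saddle analysis directly on \eqref{intG} avoids having to justify term-by-term integration of an asymptotic relation.
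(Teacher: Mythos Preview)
Your proposal is correct and largely parallel to the paper's argument, but differs in two places worth flagging. For entirety the paper does not argue via dominated convergence and differentiation under the integral; instead it expands $e^{-\mu\zeta^{-1/n}}$ termwise and uses Hankel's formula to obtain the explicit Taylor coefficients $F_j=(-1)^j/(j!\,\Gamma((j+1)/n))$ and $G_j=-(-1)^j/(j!\,\Gamma(j/n))$, from which entirety, $F(0)$, $G(0)$ and $G'=F$ are all read off at once. For the asymptotics the paper, like you, begins with steepest descent (after writing $F=\tfrac1\pi\,\Im I_1$ and changing variables to a real Laplace-type integral, then invoking Watson's lemma), but it does \emph{not} attempt to push the saddle analysis out to the full sector $|\arg\mu|<\tfrac{n+3}{2n}\pi$ by tracking which saddle is dominant. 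Instead it derives the linear ODEs $\mu F^{(n+1)}+F^{(n)}-(-1)^n nF=0$ and $\mu G^{(n+1)}-(-1)^n nG=0$, identifies the $(n+1)$ formal exponential solutions \`a la Wasow, matches the steepest-descent leading term to a unique such solution along one ray, and then reads off the antistokes lines $\arg\mu=\pm\tfrac{n+3}{2n}\pi$ from the ODE asymptotics. This bypasses exactly the ``bookkeeping'' obstacle you list at the end: the branch choices, the contour deformation through the correct saddles as $\arg\mu$ varies, and the uniform control of subdominant contributions are all absorbed into the classical ODE theory. Your pure steepest-descent route would work, but the ODE shortcut is what makes the sector boundaries and the constants fall out cleanly.
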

\begin{proof}
These results follow  from  standard steepest descent analysis and from
the ordinary differential equation that $F$ and $G$ satisfy, see \S
\ref{A12}.
\end{proof}

\begin{Remark}
We see that $F(\mu)$ and $G(\mu)$ are exponentially small for large $\mu$
when $\arg \mu \in \left (-\frac{(n-1)\pi}{2n} , \frac{(n-1) \pi}{2n}
\right ) $, that is, when $\arg q \in \left (-\frac{(n-1) \pi}{2},
\frac{(n-1) \pi}{2} \right )$.
\end{Remark}

\begin{Definition}
For $\delta > 0$ and $n \ge 2$ we define the sector
$$
\mathcal{S}_\delta := \left \{ q: \arg q \in \left ( - \frac{(n-1) \pi}{2}
+\delta, \frac{(n-1) \pi}{2} -\delta \right ) \right \}.
$$
\end{Definition}

\begin{Lemma}\label{L2.4}
For $n \ge 2$, $q, q' \in e^{i \phi} \mathbb{R}^+ \subset
\mathcal{S}_\delta $, with $0 < |q'| \le |q| < \infty$ and $k \in
\mathbb{Z}^3 $ we have
$$
|\mathcal{G}(q,q';k)| \leq \frac{C_{2}
|q-q'|^{\frac{1}{2n}-\frac{1}{2}}}{\nu^{1/2} |k| |q|^{1/2}},
$$
where $C_{2}$ only depends on $\delta$. For $n=1$, the same inequality
holds for $q, q' \in \mathbb{R}^+$ with $0 < q' \le q$.
\end{Lemma}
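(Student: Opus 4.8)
The plan is to treat $n\ge 2$ and $n=1$ separately, using the integral representation (\ref{kernelG}) in the first case and the explicit Bessel-function formula (\ref{kernelG1}) in the second, and in each case to reduce the desired inequality to the estimate of one explicit one-dimensional integral. Take $n\ge 2$ first. The decisive point is that, since $q$ and $q'$ lie on the \emph{same} ray $e^{i\phi}\RR^+\subset\mathcal{S}_\delta$, the ratio $\gamma=q'/q\in(0,1]$ is a positive real; consequently in (\ref{kernelG}) the quantities $s$, $1-s^{-n}$ and $1-s\gamma^{1/n}$ are all positive reals, while $\arg\mu=\tfrac{1}{n}\arg q=\tfrac{\phi}{n}$. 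From $|\phi|<\tfrac{(n-1)\pi}{2}-\delta$ we get $|\arg\mu|\le\tfrac{(n-1)\pi}{2n}-\tfrac{\delta}{n}$, so $\mu$ ranges over a closed subsector of the region --- identified in the Remark following Lemma~\ref{lemG} --- where $F$ decays exponentially. I would then use (\ref{ar1}) to show that on this subsector the steepest-descent exponent satisfies $\Re z\ge c\,|\mu|^{n/(n+1)}$ with $c=c(\delta)>0$: since $\arg z=\tfrac{n}{n+1}\arg\mu\pm\tfrac{\pi}{n+1}$, the $\delta$-cut in the definition of $\mathcal{S}_\delta$ is precisely what keeps $\arg z$ at distance $\ge\tfrac{\delta}{n+1}$ from $\pm\tfrac{\pi}{2}$. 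Hence $|F(\mu)|\le C\,|\mu|^{(n-2)/(2(n+1))}e^{-c|\mu|^{n/(n+1)}}$ for $|\mu|$ large, and since $F$ is entire, hence bounded for bounded $|\mu|$, one obtains the uniform bound $|\mu^{1/2}F(\mu)|\le C(\delta)$ on the whole subsector.

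Feeding this into (\ref{kernelG}) and using $|q^{1-1/(2n)}|=|q|^{1-1/(2n)}$ gives
\begin{equation*}
|\mathcal{G}(q,q';k)|\le\frac{C(\delta)}{\nu^{1/2}|k|\,|q|^{1-1/(2n)}}\;\gamma^{1/n}\int_1^{\gamma^{-1/n}}(1-s^{-n})^{1/(2n)-1}(1-s\gamma^{1/n})^{-1/2}\,ds.
\end{equation*}
The substitution $s=t\gamma^{-1/n}$ turns $\gamma^{1/n}\int_1^{\gamma^{-1/n}}(\cdots)\,ds$ into $J(\gamma):=\int_{\gamma^{1/n}}^1(1-\gamma t^{-n})^{1/(2n)-1}(1-t)^{-1/2}\,dt$, which converges since both exponents exceed $-1$. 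Writing $1-\gamma t^{-n}=(t^n-\gamma)/t^n$ and $t^n-\gamma=(t-\gamma^{1/n})\sum_{j=0}^{n-1}t^{n-1-j}\gamma^{j/n}$, the elementary two-sided bound $t^{n-1}\le\sum_{j=0}^{n-1}t^{n-1-j}\gamma^{j/n}\le n\,t^{n-1}$ on $[\gamma^{1/n},1]$ gives $\tfrac{t-\gamma^{1/n}}{t}\le 1-\gamma t^{-n}\le n\,\tfrac{t-\gamma^{1/n}}{t}$; since $1/(2n)-1<0$ and $t\le 1$, this yields $(1-\gamma t^{-n})^{1/(2n)-1}\le(t-\gamma^{1/n})^{1/(2n)-1}$, hence, by the affine change $t=\gamma^{1/n}+(1-\gamma^{1/n})u$,
\begin{equation*}
J(\gamma)\le\int_{\gamma^{1/n}}^1(t-\gamma^{1/n})^{1/(2n)-1}(1-t)^{-1/2}\,dt=\mathrm{B}\!\left(\tfrac{1}{2n},\tfrac{1}{2}\right)(1-\gamma^{1/n})^{1/(2n)-1/2},
\end{equation*}
$\mathrm{B}$ denoting the Euler Beta function. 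Finally $1-\gamma^{1/n}\ge\tfrac{1}{n}(1-\gamma)$ and the negativity of the exponent give $J(\gamma)\le n^{1/2-1/(2n)}\,\mathrm{B}(\tfrac{1}{2n},\tfrac{1}{2})\,(1-\gamma)^{1/(2n)-1/2}$; since $q,q'$ are on the same ray, $|q-q'|=|q|(1-\gamma)$, so $(1-\gamma)^{1/(2n)-1/2}=|q-q'|^{1/(2n)-1/2}|q|^{1/2-1/(2n)}$, and on substituting back the powers of $|q|$ collapse to $|q|^{-1/2}$, yielding exactly the stated inequality with $C_2=C(\delta)\,n^{1/2-1/(2n)}\,\mathrm{B}(\tfrac{1}{2n},\tfrac{1}{2})$, which depends only on $\delta$ and the fixed $n$.

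\emph{The case $n=1$.} Here $1/(2n)-1/2=0$, so the claim is $|\mathcal{G}(q,q';k)|\le C_2/(\nu^{1/2}|k|\,q^{1/2})$, equivalently $z\,|\mathcal{G}|\le 2C_2$ with $z=2|k|\sqrt{\nu q}$. By (\ref{kernelG1}), $z\,|\mathcal{G}|=\pi z'\,|J_1(z)Y_1(z')-J_1(z')Y_1(z)|$ with $0<z'\le z$, and this is the standard uniform bound on the Bessel cross-product kernel: one splits according as $z'$ is small, $z'$ is large with $z$ comparable, or $z$ is large, using $J_1(\zeta)\sim\zeta/2$, $Y_1(\zeta)\sim-2/(\pi\zeta)$ as $\zeta\to0^+$ together with $J_1(\zeta)\sim\sqrt{2/(\pi\zeta)}\cos(\zeta-\tfrac{3\pi}{4})$ and $Y_1(\zeta)\sim\sqrt{2/(\pi\zeta)}\sin(\zeta-\tfrac{3\pi}{4})$ for $\zeta\to\infty$ (in the last regime $\pi z'\,|J_1(z)Y_1(z')-J_1(z')Y_1(z)|\sim 2\sqrt{z'/z}\,|\sin(z-z')|\le 2$). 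This computation already appears in \cite{NS16}.

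I expect the main obstacle to be the uniform control of $F$ in the first step: one must verify $\Re z\ge c\,|\mu|^{n/(n+1)}$ with a \emph{single} $c>0$ throughout the closed subsector $|\arg\mu|\le\tfrac{(n-1)\pi}{2n}-\tfrac{\delta}{n}$ --- which is exactly why the aperture of $\mathcal{S}_\delta$ is reduced by $\delta$ relative to the anti-Stokes rays of $F$ --- and then glue this large-$|\mu|$ bound to the trivial bound for bounded $|\mu|$, paying attention to the change of asymptotic form in (\ref{ar1}) across $\arg\mu=0$, which is harmless for an upper bound. The rest, the estimate of $J(\gamma)$, is elementary; its one delicate point is obtaining a constant in $J(\gamma)\le C(1-\gamma)^{1/(2n)-1/2}$ that is uniform both as $\gamma\to 1$ and as $\gamma\to 0$ --- in particular, extracting the \emph{sharp} exponent $1/(2n)-1/2$ --- which the two-sided bound $\tfrac{t-\gamma^{1/n}}{t}\le 1-\gamma t^{-n}\le n\,\tfrac{t-\gamma^{1/n}}{t}$ delivers.
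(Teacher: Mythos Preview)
Your argument is correct and follows the same overall architecture as the paper's proof: for $n\ge 2$ you first bound $|\mu^{1/2}F(\mu)|$ uniformly on the relevant $\mu$-subsector via the asymptotics of Lemma~\ref{lemG}, then estimate the remaining one-dimensional integral in $s$; for $n=1$ both you and the paper defer to the Bessel-function computation in \cite{NS16}.

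The difference lies entirely in how the $s$-integral is estimated. The paper splits $\int_1^{\gamma^{-1/n}}$ at the midpoint $\tfrac{1}{2}(1+\gamma^{-1/n})$ and bounds the two halves separately by ad hoc inequalities, arriving at $C\gamma^{-1/n}(1-\gamma^{1/n})^{1/(2n)-1/2}$ for each. You instead substitute $s=t\gamma^{-1/n}$, use the clean two-sided comparison $\tfrac{t-\gamma^{1/n}}{t}\le 1-\gamma t^{-n}\le n\,\tfrac{t-\gamma^{1/n}}{t}$ to dominate the integrand by a Beta-type kernel, and evaluate the resulting integral exactly as $\mathrm{B}(\tfrac{1}{2n},\tfrac{1}{2})(1-\gamma^{1/n})^{1/(2n)-1/2}$. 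Your route is shorter, avoids the midpoint split, and yields an explicit constant $C_2=C(\delta)\,n^{1/2-1/(2n)}\mathrm{B}(\tfrac{1}{2n},\tfrac{1}{2})$; the paper's route is more hands-on but reaches the same exponent. Both proofs then finish with the same concavity step $1-\gamma^{1/n}\ge\tfrac{1}{n}(1-\gamma)$ to convert to $|q-q'|^{1/(2n)-1/2}$, though the paper leaves this last step implicit.
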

\begin{proof}
The case $n=1$ follows from the behavior of $J_1$ and $Y_1$, see
\cite{NS16}\footnote{In that paper the viscosity $\nu$ was scaled to 1.}.
For $n \ge 2$, it follows from Lemma \ref{lemG} that $|\mu^{1/2} F(\mu)|$
is bounded, with a bound dependent on $\delta$. Below, $C$ is a generic
constant, possibly $\delta$ and $n$ dependent. {F}rom (\ref{kernelG}) we
get
\begin{multline}
|\mathcal{G}(q,q';k)| \leq \frac{C \gamma^{1/n}}{\nu^{1/2} |k|
|q|^{1-1/(2n)}} \Biggl[ \int_{1}^{\frac{1}{2}(1+\gamma^{-1/n})} +
\int_{\frac{1}{2}(1+\gamma^{-1/n})}^{\gamma^{-1/n}} \Biggr] \\
\times (1-s^{-n})^{1/(2n)-1} (1-s\gamma^{1/n})^{-1/2}\,ds =: \frac{C
\gamma^{1/n}}{\nu^{1/2} |k| |q|^{1-1/(2n)}} \Bigl( I_{1} + I_{2} \Bigr) ;\
\ \text{where } \gamma = \frac{q'}{q}
\end{multline}
For $s \in \left (1,\frac{1}{2} (1+\gamma^{-1/n}) \right ]$ we have
\begin{multline}
(1-s^{-n})^{1/(2n)-1} (1-s\gamma^{1/n})^{-1/2} \leq \biggl(
\frac{s-1}{1-s^{-n}} \biggr)^{1-1/(2n)} (s-1)^{1/(2n)-1} \biggl(
\frac{1}{2} - \frac{1}{2} \gamma^{1/n} \biggr)^{-1/2} \\
\leq C \Bigl[ 1 + (s-1)^{1-1/(2n)} \Bigr] (s-1)^{1/(2n)-1} \biggl(
\frac{1}{2} - \frac{1}{2} \gamma^{1/n} \biggr)^{-1/2} \\
\leq C (1-\gamma^{1/n})^{-1/2} \Bigl[ 1 + (s-1)^{1/(2n)-1} \Bigr],
\end{multline}
and for $s \in [\frac{1}{2} (1+\gamma^{-1/n}),\gamma^{-1/n})$,
\begin{multline*}
(1-s^{-n})^{1/(2n)-1} (1-s\gamma^{1/n})^{-1/2} \leq C \Bigl[ 1 +
(s-1)^{1/(2n)-1} \Bigr] (1-s\gamma^{1/n})^{-1/2} \\
\leq C (1-\gamma^{1/n})^{1/(2n)-1} (1-s\gamma^{1/n})^{-1/2}.
\end{multline*}
Thus
\begin{multline*}
I_{1} \leq C (1-\gamma^{1/n})^{-1/2}
\int_{1}^{\frac{1}{2}(1+\gamma^{-1/n})} \Bigl[ 1 + (s-1)^{1/(2n)-1}
\Bigr]\,ds \\
\leq C \gamma^{-1/n} (1-\gamma^{1/n})^{-1/2} \Bigl[ (1-\gamma^{1/n}) +
(1-\gamma^{1/n})^{1/(2n)} \Bigr] \\
\leq C \gamma^{-1/n} (1-\gamma^{1/n})^{1/(2n)-1/2}, \\
I_{2} \leq C (1-\gamma^{1/n})^{1/(2n)-1} \int_{\frac{1}{2}(1
+\gamma^{-1/n})}^{\gamma^{-1/n}} (1-s\gamma^{1/n})^{-1/2}\,ds \\
\leq C \gamma^{-1/n} (1-\gamma^{1/n})^{1/(2n)-1/2}.
\end{multline*}
\end{proof}

\begin{Lemma}\label{lemU0}
(i) For $n \ge 2$ and $0\ne q \in \mathcal{S}_\delta$, we have for $\alpha
\ge 1$,
$$
\| {\hat U}^{(0)} (\cdot, q) \|_{l^1} \le c_1 \| {\hat v}_1 \|_{l^1}
|q|^{-1+1/n} \exp \left [ - c_2 \nu^{n/(n+1)} |q|^{1/(n+1)} \right ],
$$
$$
\| k {\hat U}^{(0)} (\cdot, q) \|_{l^1} \le c_1 \| |k| {\hat v}_1 \|_{l^1}
|q|^{-1+1/n} \exp \left [ - c_2 \nu^{n/(n+1)} |q|^{1/(n+1)} \right ],
$$
where $c_1$ and $c_2$ depend on $\delta$ and $n$. Thus, we have
\begin{equation}\label{eqlemU01}
\int_0^{\infty} e^{-\alpha |q|} \| {\hat U}^{(0)} (\cdot, q) \|_{l^1} d|q|
\le c_1 \| {\hat v}_1 \|_{l^1} \alpha^{-1/n} \Gamma \biggl( \frac{1}{n}
\biggr).
\end{equation}
With $c_1 = 1$ and $q\in\RR^+$, the bound in (\ref{eqlemU01}) holds for $n
=1$ as well. For $n \ge 2$, noting that ${\hat v}_1 (0) =0={\hat f} (0)$,
we have
\begin{equation}\label{eqCGU0}
\int_0^\infty \| {\hat U}^{(0)} (\cdot, q) \|_{l^1} d|q| \le C_G \biggl\|
\frac{{\hat v}_1}{\nu |k|^2} \biggr\|_{l^1} \le C_G \left \{ \| {\hat v}_0
\|_{l^1} \biggl( 1 + \frac{2}{\nu} \| {\hat v}_0 \|_{l^1} \biggr) +
\frac{1}{\nu} \biggl\| \frac{{\hat f}}{|k|^2} \biggr\|_{l^1} \right \},
\end{equation}
where
$$
C_G = \sup_{\phi \in \left [-\frac{n-1}{2 n} \pi + \frac{\delta}{n},
\frac{n-1}{2n} \pi - \frac{\delta}{n} \right ]} n \int_0^\infty s^{-1}|G(s
e^{i \phi} ) | ds.
$$
(ii) If moreover $|k|^{j+2} {\hat v}_0, |k|^{j}{\hat f} \in l^1$ ($j=
0,1$), then
\begin{multline*}
\sup |q|^{1-1/n} (1+|q|^2) e^{-\alpha |q|} \| k^j {\hat U}^{(0)} (\cdot,
q) \|_{l^1} \\
\le 2 c_1 \| |k|^{j} {\hat v}_1 \|_{l^1} \le 2 c_1 \left [ \nu \|
|k|^{j+2} {\hat v}_0 \|_{l^1} + 2 \|{|k|^j \hat v}_0 \|_{l^1} \| |k| {\hat
v}_0 \|_{l^{1}}+ \|  |k|^{j}{\hat f} \|_{l^1} \right ]
\end{multline*}
where the sup is taken over $\mathbb{R}^+$ if $n=1$ and over
$\mathcal{S}_\delta$ if $n>1$.
\end{Lemma}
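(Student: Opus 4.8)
The plan is to reduce everything to the closed-form representations $\hat U^{(0)}(k,q)=\dfrac{\hat v_1(k)}{\nu|k|^2 q}\,G\!\left(\nu|k|^2 q^{1/n}\right)$ for $n\ge 2$ (eq.~(\ref{eqU0})) and $\hat U^{(0)}(k,q)=2\hat v_1(k)\,J_1(z)/z$ with $z=2|k|\sqrt{\nu q}$ for $n=1$ (eq.~(\ref{eqU01})), combined with the properties of $G$ from Lemma~\ref{lemG}. The one auxiliary estimate I would prove first is
$$
|G(\mu)|\le c_1\,|\mu|\,\exp\!\big[-c_2\,|\mu|^{n/(n+1)}\big]\qquad\text{for}\quad\arg\mu\in\Big[-\tfrac{(n-1)\pi}{2n}+\tfrac{\delta}{n},\ \tfrac{(n-1)\pi}{2n}-\tfrac{\delta}{n}\Big],
$$
with $c_1,c_2>0$ depending only on $n,\delta$. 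I would obtain it by patching two regimes: for $|\mu|\le R$, since $G$ is entire with $G(0)=0$ one has $|G(\mu)|\le C_R|\mu|$; for $|\mu|>R$, I would use (\ref{ar2}) together with (\ref{eq:z}), noting that in the indicated $\arg\mu$-range the cosine governing $\Re z$ (resp.\ $\Re\hat z$), namely $\cos\!\big(\tfrac{n}{n+1}\arg\mu+\tfrac{\pi}{n+1}\big)$, ranges over a compact subset of $(0,1]$, so $\Re z\ge c_3|\mu|^{n/(n+1)}$ and hence $|G(\mu)|\le C\,|\mu|^{n/(2(n+1))}e^{-c_3|\mu|^{n/(n+1)}}$; taking $R$ large and $c_2<c_3$ absorbs the algebraic prefactor and merges the two bounds. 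The cruder bound $|G(\mu)|\le c_1|\mu|$ then holds on the same closed sector. The one step needing real care is exactly this: verifying that for $q\in\mathcal{S}_\delta$ and $k\ne 0$ the point $\mu=\nu|k|^2 q^{1/n}$ has $\arg\mu$ in the above range and that the cosine stays bounded away from $0$ uniformly in $k$ — the Remark following Lemma~\ref{lemG} is the input that makes this work.

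Granting that, part~(i) is mechanical. For $q=|q|e^{i\phi}\in\mathcal{S}_\delta$, $k\ne 0$, put $\mu=\nu|k|^2q^{1/n}$, so $\arg\mu=\phi/n$ is in the sector above and $|\mu|=\nu|k|^2|q|^{1/n}$; inserting the $G$-bound into the representation of $\hat U^{(0)}$ and using $|k|\ge 1$ to replace $(\nu|k|^2)^{n/(n+1)}$ by $\nu^{n/(n+1)}$ in the exponent yields
$$
|k|^j\big|\hat U^{(0)}(k,q)\big|\le c_1\,|k|^j|\hat v_1(k)|\,|q|^{-1+1/n}\exp\!\big[-c_2\,\nu^{n/(n+1)}|q|^{1/(n+1)}\big],\qquad j=0,1,
$$
and summing over $k$ gives the two pointwise estimates. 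For~(\ref{eqlemU01}) I would use only $|G(\mu)|\le c_1|\mu|$, which gives $\|\hat U^{(0)}(\cdot,q)\|_{l^1}\le c_1\|\hat v_1\|_{l^1}|q|^{-1+1/n}$, and then integrate along the ray using $\int_0^\infty e^{-\alpha r}r^{1/n-1}\,dr=\Gamma(1/n)\,\alpha^{-1/n}$. For $n=1$ the same bound drops out of the elementary inequality $|2J_1(z)/z|\le 1$ for $z\ge 0$, since then $|\hat U^{(0)}(k,q)|\le|\hat v_1(k)|$ and $\int_0^\infty e^{-\alpha q}\|\hat U^{(0)}(\cdot,q)\|_{l^1}\,dq\le\|\hat v_1\|_{l^1}/\alpha=\|\hat v_1\|_{l^1}\alpha^{-1}\Gamma(1)$.

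For the $\alpha$-free bound (\ref{eqCGU0}) ($n\ge 2$) the device is a change of variables on each ray $q=re^{i\phi}$: set $s=\nu|k|^2 r^{1/n}\in\RR^+$, so $r=(s/\nu|k|^2)^n$, $dr=n\,s^{n-1}(\nu|k|^2)^{-n}ds$ and $\nu|k|^2 q^{1/n}=s\,e^{i\phi/n}$. Then the Jacobian cancels all the $|k|$-dependence except the prefactor, leaving
$$
\int_0^\infty\big|\hat U^{(0)}(k,q)\big|\,d|q|=\frac{|\hat v_1(k)|}{\nu|k|^2}\;n\!\int_0^\infty\frac{|G(s\,e^{i\phi/n})|}{s}\,ds\le\frac{|\hat v_1(k)|}{\nu|k|^2}\,C_G,
$$
where $C_G<\infty$ because $s^{-1}|G(se^{i\psi})|$ is bounded near $s=0$ (as $|G|\lesssim|s|$) and exponentially small for large $s$, uniformly for $\psi$ in the relevant compact set; the $k=0$ mode is absent since $\hat v_1(0)=0$. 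Summing over $k$ gives $\int_0^\infty\|\hat U^{(0)}(\cdot,q)\|_{l^1}\,d|q|\le C_G\big\|\hat v_1/(\nu|k|^2)\big\|_{l^1}$, and the last inequality in~(\ref{eqCGU0}) follows by substituting $\hat v_1=-\nu|k|^2\hat v_0-ik_j P_k[\hat v_{0,j}\hat{*}\hat v_0]+\hat f$ from~(\ref{4}), dividing by $\nu|k|^2$, and applying Young's $l^1$-convolution inequality together with $|P_k\xi|\le|\xi|$ and $|k|\ge 1$.

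Finally, for part~(ii) I would multiply the pointwise bound from~(i) (with its $|k|^j$ factor) by $|q|^{1-1/n}(1+|q|^2)e^{-\alpha|q|}$: the factor $|q|^{1-1/n}$ cancels $|q|^{-1+1/n}$, and since $\alpha\ge 1$ the map $r\mapsto(1+r^2)e^{-\alpha r}$ is decreasing on $\RR^+$, so $(1+|q|^2)e^{-\alpha|q|}\le 1$ and the supremum is at most $c_1\||k|^j\hat v_1\|_{l^1}\le 2c_1\||k|^j\hat v_1\|_{l^1}$ (the factor $2$ is slack; the $n=1$ case is identical, with $|q|^{1-1/n}\equiv 1$ and $|2J_1(z)/z|\le 1$). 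The closing inequality bounding $\||k|^j\hat v_1\|_{l^1}$ ($j=0,1$) again uses the explicit $\hat v_1$ above together with $|k|\le|k-m|+|m|$ and $|P_k\xi|\le|\xi|$, giving $\big\||k|^j\big(k_\ell P_k[\hat v_{0,\ell}\hat{*}\hat v_0]\big)\big\|_{l^1}\le 2\,\||k|^j\hat v_0\|_{l^1}\,\||k|\hat v_0\|_{l^1}$, which combines with the linear and forcing terms to give the stated bound. Overall, once the exponential control of $G$ over the full sector $\mathcal{S}_\delta$ is secured, the rest is routine bookkeeping on the closed-form representation.
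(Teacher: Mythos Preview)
Your proposal is correct and follows essentially the same approach as the paper: the paper's proof is a two-sentence sketch that invokes the representation (\ref{eqU0}), the large-$\mu$ asymptotics (\ref{ar2}) of $G$, the behavior $G(\tilde\mu)\sim C\tilde\mu$ near $0$, and the bound $|2z^{-1}J_1(z)|\le 1$ for $n=1$, and you have carefully filled in exactly these steps, including the sector check on $\arg\mu$, the change of variables yielding $C_G$, and the expansion of $\hat v_1$ via (\ref{4}). The only cosmetic difference is that you obtain $c_1$ rather than $2c_1$ in part~(ii) because your bound $(1+r^2)e^{-\alpha r}\le 1$ for $\alpha\ge 1$ is sharper than needed; as you note, the factor $2$ in the statement is slack.
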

\begin{proof}
The result follows from (\ref{eqU0}) and (\ref{4}) using the asymptotics
of $G$, cf. (\ref{ar2}) and the behavior $G({\tilde \mu}) \sim C {\tilde
\mu}$ near ${\tilde \mu} =0$. For $n=1$, the bound (\ref{eqlemU01})
follows from the fact that $ \left |2 z^{-1}{J_1(z)} \right | \le 1$.
\end{proof}

The following lemma proves that a suitable solution to the integral
equation (\ref{IntUeqn}) gives rise to a solution of NS.
\begin{Lemma}\label{connection}
For any solution ${\hat U} $ of (\ref{IntUeqn}) such that $ \| {\hat U}
(\cdot, q) \|_{l^1} \in L_{1}(\RR^+,e^{-\alpha q}dq)$, the Laplace
transform
$$
{\hat v} (k, t) = {\hat v}_0 (k) + \int_0^\infty {\hat U} (k, q)
e^{-q/t^n} dq
$$
solves (\ref{nseq}) for $t \in \left ( 0, \alpha^{-1/n} \right )$. For
$n=1$, ${\hat v} (k, t)$  is analytic in $t$ for $\Re \frac{1}{t}>
\alpha$.

It will turn out, cf. Lemma \ref{instsmooth} in the appendix, that $|k|^2
{\hat v} (\cdot, t) \in l^1 $ for $t \in \left ( 0, \alpha^{-1/n} \right
)$. Therefore, $v (x, t) = \mathcal{F}^{-1} \left [ {\hat v} (\cdot, t)
\right ] (x)$ is the classical solution of (\ref{nseq0}).
\end{Lemma}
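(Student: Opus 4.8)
The plan is to show that applying the Laplace transform in the variable $\tau=1/t^{n}$ (the variable dual to $q$) to both sides of the integral equation (\ref{IntUeqn}) reproduces exactly the Duhamel identity (\ref{5}), and then to promote the resulting mild solution to a classical one using the instantaneous smoothing estimate of Lemma \ref{instsmooth}. As a preliminary step I would record that, since $\|{\hat U}(\cdot,q)\|_{l^1}\in L_{1}(\RR^+,e^{-\alpha q}dq)$, the integral $\int_0^\infty {\hat U}(k,q)e^{-q\tau}dq$ converges absolutely and uniformly in $k$ whenever $\tau:=1/t^{n}>\alpha$; hence $\hat u(k,t):=\hat v(k,t)-\hat v_0(k)=\mathcal{L}[{\hat U}(k,\cdot)](\tau)$ is well defined, $\hat u(\cdot,t)\in l^1(\ZZ^3)$, and $\hat v(\cdot,t)\in l^1(\ZZ^3)$ for every $t\in(0,\alpha^{-1/n})$.

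\textbf{Identifying $\mathcal{L}[\mathcal{N}[{\hat U}]]$ with the right side of (\ref{5}).} This is the core computation, essentially the inverse of the derivation carried out in \S4 and the Appendix. For the inhomogeneous term, the defining formula (\ref{eqU0}) (resp. (\ref{eqU01}) when $n=1$) says precisely that ${\hat U}^{(0)}(k,\cdot)$ is the inverse Laplace transform of $\hat v_1(k)\bigl(1-e^{-\nu|k|^2\tau^{-1/n}}\bigr)/(\nu|k|^2)$, so $\mathcal{L}[{\hat U}^{(0)}(k,\cdot)](\tau)=\hat v_1(k)(1-e^{-\nu|k|^2 t})/(\nu|k|^2)$. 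For the convolution term, the representation (\ref{kernelG}) of the kernel $\mathcal{G}$ (resp. (\ref{kernelG1}) for $n=1$, via the Green's function of the associated Bessel ODE) is built precisely so that, after the change of variable coupling $q'$ to an intermediate time $s\in(0,t)$, one has the operator identity $\mathcal{L}\bigl[\int_0^{\cdot}\mathcal{G}(\cdot,q';k)\,g(q')\,dq'\bigr](1/t^{n})=\int_0^{t}e^{-\nu|k|^2(t-s)}\,\mathcal{L}[g](1/s^{n})\,ds$. Applying this with $g=\hat H_j(k,\cdot)$ and using that under $\mathcal{L}$ in $q$ the Fourier--Laplace convolution $\ds$ turns into a pointwise product of Laplace transforms while the Fourier convolution $\hat*$ is untouched, we get $\mathcal{L}[\hat H_j(k,\cdot)](1/s^{n})=P_k[\hat v_{0,j}\hat*\hat u+\hat u_j\hat*\hat v_0+\hat u_j\hat*\hat u](k,s)=\hat h_j(k,s)$, where $\hat h_j$ is the quantity appearing in (\ref{hatueq}). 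Combining these, $\mathcal{L}[\mathcal{N}[{\hat U}](k,\cdot)](1/t^{n})$ equals the right-hand side of (\ref{5}), so $\hat u(k,t)$ solves (\ref{5}), i.e. $\hat v$ solves (\ref{nseq}).

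\textbf{Analyticity and upgrade to a classical solution.} For $n=1$, $\tau\mapsto\mathcal{L}[{\hat U}(k,\cdot)](\tau)$ is holomorphic on $\{\Re\tau>\alpha\}$ by the standard analyticity of Laplace transforms (differentiation under the integral sign is legitimate because of the $e^{-\alpha q}$ weight), and $t\mapsto1/t$ maps $\{\Re(1/t)>\alpha\}$ conformally into $\{\Re\tau>\alpha\}$; hence $\hat v(k,t)$ is analytic in $t$ there. Finally, to obtain a classical solution of (\ref{nseq0}): Lemma \ref{instsmooth} supplies $|k|^2\hat v(\cdot,t)\in l^1(\ZZ^3)$ for $t\in(0,\alpha^{-1/n})$; together with $\hat v(\cdot,t)\in l^1$ and the equation $\hat v_t=-\nu|k|^2\hat v-ik_jP_k[\hat v_j\hat*\hat v]+\hat f$ (all terms then in $l^1$), the Fourier series for $v$, $\nabla v$, $D^2v$ and $v_t$ converge absolutely and uniformly, so $v=\mathcal{F}^{-1}[\hat v]$ is $C^1$ in $t$, $C^2$ in $x$, and satisfies (\ref{nseq0}) pointwise.

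\textbf{Main obstacle.} The delicate point is the rigorous justification of the Laplace-inversion step in the second paragraph: interchanging the Bromwich integral with the iterated Fourier/Laplace convolutions over the unbounded $q$-range (the Fubini/Tonelli argument identifying $\mathcal{L}\circ\mathcal{N}$ with the operator in (\ref{5})), and pinning down the Laplace transform of the explicit kernel $\mathcal{G}$ from its contour-integral form (\ref{kernelG}). The exponential $L^1$ weight on ${\hat U}$, the bound on ${\hat U}^{(0)}$ and the kernel estimate of Lemma \ref{L2.4} should provide the domination needed to make all these interchanges licit; the remaining manipulations are routine.
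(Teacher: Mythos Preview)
Your proposal is correct and follows essentially the same route as the paper's proof: Laplace-transform the integral equation termwise, recover the Duhamel formula (\ref{5}) for $\hat u$, and then invoke Lemma~\ref{instsmooth} for the upgrade to a classical solution. The paper carries out your key operator identity
\[
\mathcal{L}\Bigl[\int_0^{\cdot}\mathcal{G}(\cdot,q';k)\,g(q')\,dq'\Bigr](1/t^{n})=\int_0^{t}e^{-\nu|k|^2(t-s)}\,\mathcal{L}[g](1/s^{n})\,ds
\]
by the explicit change of variable $q'/s^{n}\to q'$ in the contour-integral form (\ref{eq124}) of $\mathcal{G}$, introducing the auxiliary kernel $\mathcal{Q}(q,s;k)$ whose Laplace transform is $t\,e^{-\nu|k|^2 t(1-s)}$, and justifying the two Fubini interchanges via (\ref{eq:conv2}); this is exactly the computation you sketch and the obstacle you flag.
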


\begin{proof}
{F}rom (\ref{eqU0}), we obtain
\begin{multline*}
\int_0^\infty e^{-q t^{-n} } {\hat U}^{(0)} (k, q) dq = {\hat v}_1 (k)
\int_0^\infty e^{-q t^{-n}} \frac{1}{2 \pi i} \int_{c-i\infty}^{c+i\infty}
\frac{1-e^{-\nu |k|^2 \tau^{-1/n} } }{\nu |k|^2}e^{q \tau} d\tau  dq \\
= {\hat v}_1 (k) \left ( \frac{1-e^{-\nu |k|^2 t}}{\nu |k|^2} \right ).
\end{multline*}
Furthermore , we may rewrite (\ref{kernelG}) as
\begin{multline}\label{eq124}
\mathcal{G} (q, q'; k) \\
= \frac{1}{2 \pi i} \int_{0}^1 \int_{c-i \infty}^{c+i \infty} \tau^{-1/n}
\left \{ \exp \left [ -\nu |k|^2 \tau^{-1/n} (1-s) + (q-q'/s^n) \tau
\right ] d\tau \right \} ~ds
\end{multline}
since the integral with respect to $\tau$ is identically zero when $ s \in
\left (0, (q'/q)^{1/n} \right )$ (the $\tau$ contour can be pushed to
$+\infty$), we can replace the lower limit in the outer integral in
(\ref{eq124}) by $(q'/q)^{1/n}$. Note that $\| \hat{H}_j (\cdot ,q)
\|_{l^{1}} \in {L}_1 \left ( e^{-\alpha |q|} d|q| \right )$, since
\begin{equation}\label{eq:conv2}
\|F*G\|_{\alpha}\le \|F\|_{\alpha}\|G\|_{\alpha}
\end{equation}
(see\cite{Duke} and also Lemma~\ref{lemBanach} below). Changing variable
$q'/s^n \to q'$ and applying Fubini's theorem we get
\begin{equation}\label{eqHG}
-i k_j \int_0^q {\hat H}_j (k, q') \mathcal{G} (q, q'; k) dq' = \int_0^1
s^n \left \{ \int_0^q \left [-i k_j {\hat H}_j \right ] (k, q' s^n)
\mathcal{Q} (q-q', s; k) dq' \right \} ds
\end{equation}
where for $q > 0$ we have
\begin{equation}
\mathcal{Q} (q,s;k) = \frac{1}{2 \pi i} \int_{c-i \infty}^{c+i \infty}
\exp \left [ -\nu |k|^2 \tau^{-1/n} (1-s) + q \tau) \right ] \tau^{-1/n}
d\tau.
\end{equation}
Laplace transforming (\ref{eqHG}) with respect to $q$, again by Fubini we
have
\begin{multline}
\int_0^\infty e^{-q t^{-n}} \left \{ \int_0^1 \int_0^q \left \{ -ik_j
{\hat H}_j \right \} (k, q's^n) \mathcal{Q} (q-q'; s, k) s^n dq' ds \right
\}~ dq \\
= -i k_j \int_0^1 ~ds ~g(t, s; k) {\hat h}_j (k, st ),
\end{multline}
where ${\hat h}_j (k, t) = \mathcal{L} \left [ {\hat H}_j (k, \cdot)
\right ] (t^{-n})$, ~~$g(t, s; k) = \mathcal{L} \left [ \mathcal{Q}
(\cdot, s; k) \right ] (t^{-n}) $. By assumption, $\| {\hat U} (\cdot, q)
\|_{l^1} \in L_{1} \left (\mathbb{R}^+, e^{-\alpha q} dq \right ) $ and
${\hat v}_0 (k) \in l^1$. {F}rom (\ref{7.2}) and (\ref{eq:conv2}) it
follows that ${\hat H}_j$ is Laplace transformable in $q$ and
$$
{\hat h}_j (k, t) = P_k \left \{ {\hat v}_{0,j} {\hat *} {\hat u} + {\hat
u}_j {\hat *} {\hat v}_0 + {\hat u}_j {\hat *} {\hat u}  \right \} (k, t),
$$
while
$$
g(t, s; k) = t \exp \left [ -\nu |k|^2 t (1-s) \right ].
$$
This leads to
\begin{multline*}
{\hat u} (k, t) = t \int_0^1 e^{-\nu |k|^2 t (1-s)} \left [ -i k_j {\hat
h}_j  \right ] (k, s t) ds + {\hat v}_1 (k) \left ( \frac{1-e^{-\nu |k|^2
t}}{\nu |k|^2} \right ) \\
= \int_0^t e^{-\nu |k|^2 (t -\tau) } \left [ -i k_j {\hat h}_j \right ]
(k, \tau) d\tau + {\hat v}_1 (k) \left ( \frac{1-e^{-\nu |k|^2 t}}{\nu
|k|^2} \right )
\end{multline*}
and thus
$$
{\hat u}_t + \nu |k|^2 {\hat u} = -i k_j {\hat h}_j (k, t) ~+~{\hat
v}_1~~, ~~{\rm with} ~{\hat u} (k, 0) = 0.
$$
Therefore, using expression (\ref{hatueq}) for ${\hat h}_j$, we see that
${\hat v} (k, t) = {\hat u} (k, t) + {\hat v}_0 (k) $ (\ref{nseq}), with
${\hat v} (k, 0) = {\hat v}_0 (k) $. Analyticity in $t$ of this solution
in region $\Re \frac{1}{t} > \alpha$ follows from the representation
(\ref{intro.1.1}). It is clear that $|k|^2 {\hat v} (\cdot, t), {\hat f}
\in l^1 $, ensures that $\mathcal{F}^{-1} \left [ {\hat v} (\cdot, t)
\right ] (x) $ is a classical solution to (\ref{nseq0}).
\end{proof}

\section{Existence of a solution to (\ref{IntUeqn})}
\z First, we prove some preliminary lemmas.

\begin{Lemma}\label{lem0.1}
By standard Fourier theory, if ${\hat v}, {\hat w} \in l^1 \left (
\mathbb{Z}^3 \right )$, then so is ${\hat v}{\hat *} {\hat w}$, and $ \|
{\hat v} {\hat *} {\hat w} \|_{l^1} \le \| {\hat v} \|_{l^1} \| {\hat w}
\|_{l^1}$. \qed
\end{Lemma}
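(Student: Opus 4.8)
The plan is to derive this directly from the definition of Fourier convolution together with the interchange of sums in a double series of nonnegative terms; the statement is nothing more than Young's inequality in the discrete $l^1$ setting. First I would recall that for ${\hat v},{\hat w}:\mathbb{Z}^3\to\mathbb{C}$ the Fourier convolution is
$$
({\hat v}\,{\hat *}\,{\hat w})(k)=\sum_{j\in\mathbb{Z}^3}{\hat v}(k-j)\,{\hat w}(j),
$$
and note that the absolute convergence of this sum for each fixed $k$ will be a byproduct of the estimate below, so it need not be assumed in advance.

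Next I would bound the $l^1$ norm of the convolution by the triangle inequality,
$$
\|{\hat v}\,{\hat *}\,{\hat w}\|_{l^1}=\sum_{k\in\mathbb{Z}^3}\Bigl|\sum_{j\in\mathbb{Z}^3}{\hat v}(k-j)\,{\hat w}(j)\Bigr|\le\sum_{k\in\mathbb{Z}^3}\sum_{j\in\mathbb{Z}^3}|{\hat v}(k-j)|\,|{\hat w}(j)|.
$$
All terms are nonnegative, so the order of summation may be interchanged (Tonelli); performing, for each fixed $j$, the change of index $k\mapsto k-j$, which is a bijection of $\mathbb{Z}^3$, gives
$$
\sum_{j\in\mathbb{Z}^3}|{\hat w}(j)|\sum_{k\in\mathbb{Z}^3}|{\hat v}(k-j)|=\Bigl(\sum_{j\in\mathbb{Z}^3}|{\hat w}(j)|\Bigr)\,\|{\hat v}\|_{l^1}=\|{\hat v}\|_{l^1}\|{\hat w}\|_{l^1}<\infty.
$$
This simultaneously shows that the double series converges, hence that the defining series for $({\hat v}\,{\hat *}\,{\hat w})(k)$ converges absolutely for every $k$, that ${\hat v}\,{\hat *}\,{\hat w}\in l^1(\mathbb{Z}^3)$, and that the asserted inequality holds.

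There is essentially no obstacle here. The only step that warrants a word of justification is the interchange of the two sums, which is legitimate precisely because the summand is nonnegative, so no prior integrability hypothesis is needed beyond the conclusion being proved. Alternatively, one could simply invoke the well-known fact that $l^1(\mathbb{Z}^3)$ is a Banach algebra under convolution; but the self-contained computation above is shorter and is the route I would take.
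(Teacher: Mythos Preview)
Your proof is correct and is exactly the standard argument one would give. The paper itself does not supply a proof of this lemma at all: it is stated as a consequence of ``standard Fourier theory'' and closed immediately with a \qed, so your write-up actually provides more detail than the original.
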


\begin{Lemma}\label{lem0.2}
$$
\| P_k \left [ {\hat w}_j {\hat *} {\hat v} \right ] \|_{l^1} \le 2 \|
{\hat w}_j \|_{l^1} \|{\hat v} \|_{l^1}.
$$
\end{Lemma}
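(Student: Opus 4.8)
The plan is to reduce the inequality to the pointwise (in $k$) estimate $|P_k a|\le 2|a|$ for vectors $a\in\CC^3$, and then to invoke Lemma~\ref{lem0.1} for the Fourier convolution.

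First I would recall from (\ref{8.0}) that $P_k a = a - k\,(k\cdot a)/|k|^2$ for $k\ne 0$ (while $P_0=I$ is harmless, since the relevant Fourier coefficients vanish at $k=0$ by Assumptions~\ref{A11}). By the triangle inequality and Cauchy--Schwarz,
$$
|P_k a| \le |a| + \frac{|k|\,|k\cdot a|}{|k|^2} = |a| + \frac{|k\cdot a|}{|k|} \le |a| + |a| = 2|a| ,
$$
so that $P_k$, acting on $\CC^3$ with the Euclidean norm, has operator norm at most $2$, uniformly in $k\in\ZZ^3$.

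Then, writing $\|\cdot\|_{l^1}$ of a vector-valued sequence as the sum over $k$ of the Euclidean length of the coefficient, I would apply the pointwise bound with $a=(\hat w_j \hat* \hat v)(k)$ followed by Lemma~\ref{lem0.1}:
$$
\| P_k[\hat w_j \hat* \hat v] \|_{l^1} = \sum_{k}\bigl|P_k (\hat w_j \hat* \hat v)(k)\bigr| \le 2\sum_k \bigl|(\hat w_j \hat* \hat v)(k)\bigr| = 2\,\|\hat w_j \hat* \hat v\|_{l^1} \le 2\,\|\hat w_j\|_{l^1}\,\|\hat v\|_{l^1}.
$$
This completes the argument. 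There is no genuine obstacle here; the only remark is that the constant $2$ is not sharp, since $P_k$ is in fact an orthogonal projection and hence has norm $1$, but the crude triangle-inequality estimate above is all that is needed and gives the stated constant directly. If instead the $l^1$ norm on vector-valued sequences is taken componentwise, the same steps go through after replacing Cauchy--Schwarz by $|k\cdot a|\le |k|\,|a|_1$ and $|k|_1\le\sqrt3\,|k|_2$, at the price of a slightly larger numerical constant; I would simply fix the normalization used consistently in the rest of the paper and state the lemma accordingly.
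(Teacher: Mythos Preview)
Your argument is correct and follows essentially the same route as the paper's proof: establish the pointwise bound $|P_k\hat g(k)|\le 2|\hat g(k)|$ from the explicit form (\ref{8.0}) of $P_k$, and then apply Lemma~\ref{lem0.1} to the Fourier convolution. The paper simply asserts the pointwise bound without writing out the triangle inequality/Cauchy--Schwarz step, but the content is identical.
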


\begin{proof}
It is easily seen from the representation of $P_k$ in (\ref{8.0}) that
\begin{equation}\label{Pbound}
| P_k  {\hat g} (k) | \le 2 |{\hat g} (k)|.
\end{equation}
The rest follows from Lemma (\ref{lem0.1}).
\end{proof}

\begin{Lemma}\label{lemC2}
Let $C_2=C_2(\delta,n)$ be given by
\begin{multline*}
C_2 = 2 \sup_{\substack{q, q' \in e^{i \phi} \mathbb{R}^+ \subset
\mathcal{S}_\delta,\, 0 \le |q'| \le |q| \\ k \in \mathbb{Z}^3}} \nu^{1/2}
|k| |q|^{1/2} |q-q'|^{1/2-1/(2 n)} | \mathcal{G} (q, q'; k) |\quad
\mbox{{\rm for} $n \ge 2$}, \\
C_2 = 2 \sup_{\substack{q, q' \in \mathbb{R}^+,\, 0 \le q' \le q \\ k \in
\mathbb{Z}^3}} \nu^{1/2} |k| q^{1/2} | \mathcal{G} (q, q'; k) |\quad
\mbox{{\rm for} $n = 1$}.
\end{multline*}
Then, for $n \ge 2$, we have
\begin{multline}\label{N1}
\| \mathcal{N} [{\hat U}] (\cdot , q) \|_{l^1} \le \frac{C_2}{\nu^{1/2}
|q|^{1/2}} \int_0^{|q|}  (|q|-s)^{-1/2+1/(2n)} \left \{ \| {\hat U} (\cdot
, s e^{i \phi} ) \|_{l^1} \right. \\
\left. * \| {\hat U} (\cdot , s e^{i \phi}) \|_{l^1} + 2 \| {\hat v}_0
\|_{l^1} \| {\hat U}(\cdot , s e^{i \phi}) \|_{l^{1}} \right \} ds + \|
{\hat U}^{(0)} (\cdot, q) \|_{l^1},
\end{multline}
\begin{multline}\label{N2}
\| \mathcal{N} [{\hat U}^{[1]}] (\cdot , q) - \mathcal{N} [{\hat U}^{[2]}]
(\cdot , q) \|_{l^1} \\
\le \frac{C_2}{\nu^{1/2} |q|^{1/2}} \int_0^{|q|} (|q|-s)^{-1/2+1/(2n)}
\left \{ \left ( \| {\hat U}^{[1]} (\cdot , s e^{i \phi}) \|_{l^1} + \|
{\hat U}^{[2]} (\cdot , s e^{i \phi}) \|_{l^1} \right ) \right. \\
\left. * \| {\hat U}^{[1]} (\cdot , s e^{i \phi}) - {\hat U}^{[2]} (\cdot
, s e^{i \phi} )\|_{l^1} + 2 \| {\hat v}_0 \|_{l^1} \| {\hat U}^{[1]}
(\cdot , s e^{i \phi}) -{\hat U}^{[2]} ( \cdot , s e^{i \phi}) \|_{l^1}
\right \} ds.
\end{multline}
For $n=1$, (\ref{N1}) and (\ref{N2}) hold for $q \in \mathbb{R}^+$, {\it
i.e.} when $\phi=0$.
\end{Lemma}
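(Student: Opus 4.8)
The plan is to start from the definition $\mathcal{N}[{\hat U}](k,q) = -ik_j\int_0^q\mathcal{G}(q,q';k){\hat H}_j(k,q')\,dq' + {\hat U}^{(0)}(k,q)$, take $l^1$ norms in $k$, and estimate term by term. First I would move the norm inside the $q'$-integral, getting
$\| \mathcal{N}[{\hat U}](\cdot,q)\|_{l^1} \le \int_0^{|q|}\sup_{k}\big(|k|\,|\mathcal{G}(q,q';k)|\big)\,\|{\hat H}_j(\cdot,q')\|_{l^1}\,d|q'| + \|{\hat U}^{(0)}(\cdot,q)\|_{l^1}$,
where for $n\ge 2$ all of $q,q'$ lie on the ray $e^{i\phi}\RR^+$ and $|q'|$ runs from $0$ to $|q|$. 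The key input is Lemma \ref{L2.4} (equivalently the definition of $C_2$ in the statement), which gives $|k|\,|\mathcal{G}(q,q';k)| \le \tfrac{C_2}{2}\,\nu^{-1/2}|q|^{-1/2}|q-q'|^{-1/2+1/(2n)}$. Substituting this bound pulls the factor $\tfrac{C_2}{\nu^{1/2}|q|^{1/2}}$ outside and leaves a convolution-type integral $\int_0^{|q|}(|q|-s)^{-1/2+1/(2n)}\|{\hat H}_j(\cdot,se^{i\phi})\|_{l^1}\,ds$ (after parametrizing $q' = se^{i\phi}$).

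Next I would estimate $\|{\hat H}_j(\cdot,q')\|_{l^1}$ using the explicit form (\ref{7.2}), ${\hat H}_j = P_k[{\hat v}_{0,j}{\hat *}{\hat U} + {\hat U}_j{\hat *}{\hat v}_0 + {\hat U}_j\ds{\hat U}]$. Applying the pointwise bound $|P_k{\hat g}|\le 2|{\hat g}|$ from (\ref{Pbound}) and the Fourier-convolution inequality of Lemma \ref{lem0.1}, the first two terms each contribute $2\|{\hat v}_0\|_{l^1}\|{\hat U}(\cdot,q')\|_{l^1}$, which sum to $4\|{\hat v}_0\|_{l^1}\|{\hat U}\|_{l^1}$; but since the factor $\tfrac{C_2}{2}$ was already extracted (the $C_2$ in the statement carries an explicit factor $2$), this reconciles with the coefficient $2\|{\hat v}_0\|_{l^1}$ appearing in (\ref{N1}). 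The nonlinear term ${\hat U}_j\ds{\hat U}$, being Fourier convolution followed by Laplace convolution, contributes (after the $P_k$ bound) a term whose $l^1$-norm at $q'$ is at most $2\big(\|{\hat U}(\cdot,\cdot)\|_{l^1} * \|{\hat U}(\cdot,\cdot)\|_{l^1}\big)(q')$ in the Laplace-convolution sense — this is where the inner $*$ in (\ref{N1}) comes from. Collecting everything yields (\ref{N1}). For (\ref{N2}), I would subtract $\mathcal{N}[{\hat U}^{[1]}] - \mathcal{N}[{\hat U}^{[2]}]$, note the ${\hat U}^{(0)}$ terms cancel, and use the algebraic identity ${\hat U}^{[1]}_j\ds{\hat U}^{[1]} - {\hat U}^{[2]}_j\ds{\hat U}^{[2]} = ({\hat U}^{[1]}_j - {\hat U}^{[2]}_j)\ds{\hat U}^{[1]} + {\hat U}^{[2]}_j\ds({\hat U}^{[1]} - {\hat U}^{[2]})$, together with the analogous splitting of the bilinear ${\hat v}_0$-terms; bounding as before produces the factor $\|{\hat U}^{[1]}\|_{l^1} + \|{\hat U}^{[2]}\|_{l^1}$ convolved against $\|{\hat U}^{[1]} - {\hat U}^{[2]}\|_{l^1}$, plus the linear difference term.

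The $n=1$ case is handled identically: Lemma \ref{L2.4} supplies the same kernel bound for $q,q'\in\RR^+$ (the $|q|^{1/2}$ in the denominator is just $q^{1/2}$ there), so one simply restricts to $\phi=0$ and repeats the argument. The main subtlety — really the only place requiring care — is bookkeeping the constants and the two convolutions correctly: one must keep the Fourier convolution (handled pointwise in $q'$ by Lemma \ref{lem0.1}) conceptually separate from the Laplace convolution in $q'$ (which survives as the $*$ in the displayed inequalities), and one must track the factor-of-two conventions so that the extracted constant is exactly the $C_2$ defined in the statement. No genuine analytic obstacle arises here; the lemma is a direct consequence of the kernel estimate of Lemma \ref{L2.4}, the projection bound (\ref{Pbound}), and the submultiplicativity of convolution in $l^1$.
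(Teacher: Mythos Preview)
Your proposal is correct and follows essentially the same route as the paper: bound the kernel via the definition of $C_2$ (equivalently Lemma~\ref{L2.4}), estimate $\|\hat H_j\|_{l^1}$ using the projection bound (\ref{Pbound}) together with Lemma~\ref{lem0.1} (the paper packages these as Lemma~\ref{lem0.2}), and handle (\ref{N2}) via the bilinear splitting identity. Your bookkeeping of the factor of $2$ in $C_2$ against the $4\|\hat v_0\|_{l^1}$ from the two linear terms is exactly right, and your variant of the splitting $({\hat U}^{[1]}_j-{\hat U}^{[2]}_j)\ds{\hat U}^{[1]}+{\hat U}^{[2]}_j\ds({\hat U}^{[1]}-{\hat U}^{[2]})$ differs only cosmetically from the paper's.
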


\begin{proof}
{F}rom Lemma \ref{lem0.2}, we have, for any $q$
$$
\| P_k \left \{ {\hat U}_j \ds  {\hat U} \right \} (k,q ) \|_{l^{1}} \le 2
\| {\hat U} (\cdot, q) \|_{l^1} * \| {\hat U} (\cdot, q)\|_{l^{1}},
$$
and similarly
$$
\| P_k \left \{ {\hat v}_{0,j}{\hat *}{\hat U} (\cdot, q) + {\hat U}_j
(\cdot, q) {\hat *} {\hat v}_{0} \right \} \|_{l^{1}} \le 4 \| {\hat v}_0
\|_{l^1} \| {\hat U} (\cdot, q)\|_{l^1},
$$
and (\ref{N1}) follows.

The second part of the lemma follows by noting that
\begin{equation}\label{Udiff}
{\hat U}_j^{[1]} \ds  {\hat U}^{[1]} -{\hat U}_j^{[2]} \ds {\hat U}^{[2]}
= {\hat U}^{[1]}_j \ds  \left ({\hat U}^{[1]} - {\hat U}^{[2]} \right ) +
\left( {\hat U}^{[1]}_j - {\hat U}^{[2]}_j \right ) \ds {\hat U}^{[2]}.
\end{equation}
Applying Lemma \ref{lem0.2} to (\ref{Udiff}), we obtain
\begin{multline*}
\| P_k \left \{ {\hat U}_j^{[1]} \ds  {\hat U}^{[1]} (\cdot, q) -{\hat
U}_j^{[2]} \ds  {\hat U}^{[2]} (\cdot, q) \right \} \|_{l^{1}} \le  2 \|
{\hat U}^{[1]} (\cdot, q) \|_{l^{1}} * \| {\hat U}^{[1]} (\cdot,
q) - {\hat U}^{[2]} (\cdot, q) \|_{l^1} \\
+ 2 \| {\hat U}^{[2]} (\cdot, q) \|_{l^{1}} * \| {\hat U}^{[1]} (\cdot, q)
- {\hat U}^{[2]} (\cdot, q) \|_{l^{1}},
\end{multline*}
from which (\ref{N2}) follows easily.
\end{proof}

It is convenient to define a number of different $q$-norms, $q \in e^{i
\phi} \mathbb{R}^+ \cup \{0\} \subset \mathcal{S}_\delta$.

\begin{Definition}\label{DefA}
(i) For $\alpha > 0$, $n\ge 2$, we let $\mathcal{A}^{(\alpha)}$ be the set
of analytic functions in $\mathcal{S}_\delta$ with the norm
\begin{equation}\label{8.0.1}
\| {\hat f} \|^{(\alpha)} = \sup_{q \in \mathcal{S}_\delta} |q|^{1-1/n}
(1+|q|^2) e^{-\alpha |q|} \|{\hat f} (\cdot, q) \|_{l^{1}} < \infty,
\end{equation}
while for $n=1$, $\mathcal{A}^{(\alpha)}$ will denote the set of
continuous functions on $[0, \infty)$ with norm $\| \cdot \|^{(\alpha)}$.

(ii) Let $\alpha > 0$, $n \ge 2$, $\delta > 0$. We define a Banach space
$\mathcal{A}_1^{\alpha;\phi} $ of functions along the ray
$|q|e^{i\phi}\in\mathcal{S}_\delta$ with the norm
\begin{equation}\label{8.0.0}
\| {\hat f} \|_1^{\alpha;\phi} = \int_0^\infty e^{-\alpha |q|} \| {\hat f}
(\cdot, |q|e^{i\phi}) \|_{l^{1}} d|q| < \infty.
\end{equation}
We agree to omit the superscript $\phi$ when $\phi=0$ (which is always the
case if $n=1$).
\end{Definition}

\begin{Lemma}\label{lemBanach}
We have the following Banach algebra properties:
\begin{equation}\label{eq:norm1}
\| {\hat f}~ \ds ~ {\hat g} \|_1^{\alpha; \phi} \le \| {\hat f}
\|_1^{\alpha; \phi} \| {\hat g} \|_1^{\alpha; \phi},
\end{equation}
\begin{equation}\label{eq:norm2}
\| {\hat f} ~\ds~ {\hat g} \|^{(\alpha)} \le M_0 \| {\hat f} \|^{(\alpha)}
\| {\hat g} \|^{(\alpha)},
\end{equation}
where
$$
M_0 = 2^{4-1/n} \int_0^\infty \frac{ds}{s^{1-1/n} (1+s^2)}.
$$
\end{Lemma}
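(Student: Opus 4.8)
The plan is to prove the two Banach-algebra inequalities in Lemma \ref{lemBanach} by reducing the Fourier-Laplace convolution $\ds$ to the Laplace convolution in the $q$-variable, then estimating the $l^1$-norm in $k$ with the elementary Young inequality for Fourier convolution (Lemma \ref{lem0.1}). Recall that for fixed $q$, $({\hat f}~\ds~{\hat g})(\cdot,q) = \int_0^q ({\hat f}(\cdot,q')~{\hat *}~{\hat g}(\cdot,q-q'))\,dq'$ (with the integral running along the ray $e^{i\phi}\mathbb{R}^+$ in the $n\ge 2$ case), so by Lemma \ref{lem0.1} and the triangle inequality,
$$
\| ({\hat f}~\ds~{\hat g})(\cdot,q) \|_{l^1} \le \int_0^{|q|} \| {\hat f}(\cdot,q'e^{i\phi}) \|_{l^1}\, \| {\hat g}(\cdot,(|q|-q')e^{i\phi}) \|_{l^1}\, d|q'| = \big( \phi_f * \phi_g \big)(|q|),
$$
where I write $\phi_f(r) := \| {\hat f}(\cdot, re^{i\phi}) \|_{l^1}$ and $*$ is ordinary convolution on $\RR^+$. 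This single observation does the bulk of the work; everything after is a one-variable estimate.

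For \eqref{eq:norm1} I would then integrate against $e^{-\alpha|q|}$ and use the standard fact that the Laplace weight turns convolution into a product: $\int_0^\infty e^{-\alpha r}(\phi_f*\phi_g)(r)\,dr = \big(\int_0^\infty e^{-\alpha r}\phi_f(r)\,dr\big)\big(\int_0^\infty e^{-\alpha r}\phi_g(r)\,dr\big)$, which is exactly $\| {\hat f} \|_1^{\alpha;\phi}\,\| {\hat g} \|_1^{\alpha;\phi}$. This is the inequality \eqref{eq:conv2} already invoked earlier in the paper, so the argument is essentially a restatement; I would just note it follows from Tonelli's theorem applied to the double integral over $\{0\le q'\le q\}$.

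For \eqref{eq:norm2} the work is to control the sup-norm with the polynomial weight $|q|^{1-1/n}(1+|q|^2)$. Setting $r=|q|$ and $\psi_f(r) = r^{1-1/n}(1+r^2)e^{-\alpha r}\phi_f(r)$, so that $\phi_f(r) = r^{-(1-1/n)}(1+r^2)^{-1}e^{\alpha r}\psi_f(r) \le \|{\hat f}\|^{(\alpha)}\, r^{-(1-1/n)}(1+r^2)^{-1}e^{\alpha r}$, I substitute into the convolution bound to get
$$
\phi_f*\phi_g(r) \le \|{\hat f}\|^{(\alpha)}\|{\hat g}\|^{(\alpha)} \int_0^r \frac{e^{\alpha q'}}{(q')^{1-1/n}(1+q'^2)}\cdot \frac{e^{\alpha(r-q')}}{(r-q')^{1-1/n}(1+(r-q')^2)}\,dq'.
$$
Pulling out $e^{\alpha r}$ and multiplying by $r^{1-1/n}(1+r^2)e^{-\alpha r}$, I must show $r^{1-1/n}(1+r^2)\int_0^r [(q')^{1-1/n}(1+q'^2)]^{-1}[(r-q')^{1-1/n}(1+(r-q')^2)]^{-1}\,dq' \le M_0$. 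By symmetry split the integral at $r/2$; on $[0,r/2]$ one has $r-q'\ge r/2$, hence $(r-q')^{1-1/n}(1+(r-q')^2) \ge 2^{-(1-1/n)}\cdot\tfrac12\cdot r^{1-1/n}(1+r^2)$ (using $1+(r-q')^2 \ge \tfrac14(1+r^2)$ for $r-q'\ge r/2$, which needs a small check for small $r$, handled by $1+(r-q')^2\ge 1 \ge \tfrac14(1+r^2)$ when $r\le\sqrt3$ and the quadratic bound otherwise — in all cases a constant at least $\tfrac14$). This cancels the $r^{1-1/n}(1+r^2)$ prefactor up to the constant $2^{1-1/n}\cdot 4$, leaving $\int_0^{r/2}[(q')^{1-1/n}(1+q'^2)]^{-1}\,dq' \le \int_0^\infty [s^{1-1/n}(1+s^2)]^{-1}\,ds$; the $[r/2,r]$ piece gives the same by the substitution $q'\mapsto r-q'$. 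Summing the two halves produces the factor $2$, and $2\cdot 2^{1-1/n}\cdot 4 = 2^{4-1/n}$, which is the stated $M_0$ (note the integral $\int_0^\infty s^{-(1-1/n)}(1+s^2)^{-1}\,ds$ converges since $1-1/n<1$ at the origin and the integrand is $O(s^{-3+1/n})$ at infinity).

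The only mildly delicate point — and the one I would treat carefully rather than wave at — is the constant in $1+(r-q')^2 \ge c(1+r^2)$ uniformly over $r-q'\in[r/2,r]$ and all $r>0$, since a naive homogeneity argument fails for small $r$; but as indicated it is a two-case check and the constant $\tfrac14$ suffices, which is already what the stated $M_0$ absorbs. For $n=1$ the same computation applies verbatim with the exponent $1-1/n=0$, so the polynomial weight is just $(1+r^2)$ and $M_0 = 8\int_0^\infty (1+s^2)^{-1}ds = 4\pi$; the analyticity versus continuity distinction plays no role in these norm estimates. I would present the $n\ge2$ case and remark that $n=1$ is identical.
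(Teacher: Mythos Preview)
Your proof is correct and follows essentially the same route as the paper: reduce to the one-variable Laplace convolution of $l^1$-norms, use Tonelli for \eqref{eq:norm1}, and for \eqref{eq:norm2} split the kernel integral at $|q|/2$ by symmetry and bound the far factor pointwise. Two small remarks: the displayed lower bound with $\tfrac12$ should read $\tfrac14$ (you immediately use the correct constant $4$ anyway), and your two-case check on $1+(r-q')^2\ge\tfrac14(1+r^2)$ is unnecessary---the paper simply notes $1+(r-q')^2\ge 1+|q|^2/4$ for $r-q'\ge r/2$ and then uses $\sup_{r>0}\frac{1+r^2}{1+r^2/4}=4$, which is cleaner.
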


\begin{proof}
In the following, we take $u(s) = \| {\hat f}(\cdot, s e^{i \phi})
\|_{l^1}$ and $v (s) = \| {\hat g} (\cdot, s e^{i\phi}) \|_{l^1} $. For
(\ref{eq:norm1}) we note that for any $L > 0$,
\begin{multline}
\int_0^L e^{-\alpha |q|} \int_0^{|q|} u(s) v (|q|-s) ds ~d|q| \\
= \int_0^L \int_0^{|q|} e^{-\alpha s} e^{-\alpha (|q|-s)} u(s) v (|q|-s)
ds ~d|q| \le \int_0^L e^{-\alpha s} u(s) ds  \int_0^L e^{-\alpha \tau}
v(\tau) d\tau.
\end{multline}
{F}rom (\ref{8.0.1}), we note that
$$
\int_0^{|q|} u(s) v(|q|-s) ds \le \| {\hat f} \|^{(\alpha)} \| {\hat g}
\|^{(\alpha)} e^{\alpha |q|} \int_0^{|q|} \frac{ds}{s^{1-1/n}
(|q|-s)^{1-1/n} [1+s^2] [1+(|q|-s)^2]}.
$$
Finally,
\begin{multline*}
\int_0^{|q|} \frac{ds}{s^{1-1/n} (|q|-s)^{1-1/n} [1+s^2] [1+(|q|-s)^2]} \\
= 2 \int_0^{|q|/2} \frac{ds}{s^{1-1/n} (|q|-s)^{1-1/n} [1+s^2]
[1+(|q|-s)^2]} \\
\le \frac{2^{2-1/n}}{|q|^{1-1/n} (1+|q|^2/4)} \int_0^{|q|/2}
\frac{ds}{s^{1-1/n} [1+s^2]} \le \frac{2^{4-1/n}}{|q|^{1-1/n} (1+|q|^2)}
\int_0^\infty \frac{ds}{s^{1-1/n} [1+s^2]},
\end{multline*}
where we used $\sup \frac{1+|q|^2}{1+|q|^2/4} = 4$.
\end{proof}

\begin{Lemma}\label{NUnormbound}
Let $C_2$ be as in Lemma \ref{lemC2} and $\alpha \ge 1$. The operator
$\mathcal{N} $ in (\ref{IntUeqn}) is well defined on:

(i) $\mathcal{A}_1^{\alpha; \phi}$, where it satisfies the following
inequalities
\begin{equation}
\| \mathcal{N} [{\hat U}] \|_1^{\alpha; \phi} \le C_2 \nu^{-1/2} \Gamma
\biggl( \frac{1}{2n} \biggr) \alpha^{-1/(2n)} \left \{ \left ( \| {\hat U}
\|_1^{\alpha; \phi} \right )^2 + 2 \| {\hat v}_0 \|_{l^1} \| {\hat U}
\|_1^{\alpha; \phi} \right \} + \| {\hat U}^{(0)} \|_1^{\alpha; \phi},
\end{equation}
\begin{multline}
\| \mathcal{N} [{\hat U}^{[1]} ] - \mathcal{N} [ {\hat U}^{[2]}]
\|_1^{\alpha; \phi} \le C_2 \nu^{-1/2} \Gamma \biggl( \frac{1}{2n} \biggr)
\alpha^{-1/(2n)} \\
\times \left \{ \left ( \| {\hat U}^{[1]} \|_1^{\alpha; \phi} + \| {\hat
U}^{[2]} \|_1^{\alpha; \phi} \right ) \| {\hat U}^{[1]} - {\hat U}^{[2]}
\|_1^{\alpha; \phi} + 2 \| {\hat v}_0 \|_{l^{1}} \| {\hat U}^{[1]}-{\hat
U}^{[2]} \|_1^{\alpha; \phi} \right \}.
\end{multline}

(ii) $\mathcal{A}^{(\alpha)}$, where it satisfies the inequalities:
\begin{equation}\label{NUnormsup}
\| \mathcal{N} [{\hat U}] \|^{(\alpha)} \le C_2 C_3 \nu^{-1/2}
\alpha^{-1/(2n)} \left \{ M_0 \left ( \| {\hat U} \|^{(\alpha)} \right )^2
+ 2 \| {\hat v}_0 \|_{l^{1}} \| {\hat U} \|^{(\alpha)} \right \} + \|
{\hat U}^{(0)} \|^{(\alpha)},
\end{equation}
\begin{multline}\label{3.50}
\| \mathcal{N} [{\hat U}^{[1]} ] - \mathcal{N} [ {\hat U}^{[2]}]
\|^{(\alpha)} \le C_2 C_3 \nu^{-1/2} \alpha^{-1/(2n)} \\
\times \left \{ M_0 \left ( \| {\hat U}^{[1]} \|^{(\alpha)} + \| {\hat
U}^{[2]} \|^{(\alpha)} \right ) \| {\hat U}^{[1]} - {\hat U}^{[2]}
\|^{(\alpha)} + 2 \| {\hat v}_0 \|_{l^{1}} \| {\hat U}^{[1]}-{\hat
U}^{[2]} \|^{(\alpha)} \right \},
\end{multline}
where $C_3$ is defined in (\ref{C3def}) and depends on $n$ alone.
\end{Lemma}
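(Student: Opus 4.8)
The plan is to deduce both parts directly from the pointwise-in-$q$ estimates (\ref{N1}) and (\ref{N2}) of Lemma~\ref{lemC2}, by applying the appropriate $q$-norm and using the convolution/Banach-algebra inequalities of Lemma~\ref{lemBanach}. Write $u(s)=\|\hat U(\cdot,se^{i\phi})\|_{l^1}$ (with $\phi=0$ when $n=1$) and set
$$
\Psi(s) = (u*u)(s) + 2\|\hat v_0\|_{l^1}\,u(s),
$$
so that (\ref{N1}) reads $\|\mathcal N[\hat U](\cdot,q)\|_{l^1}\le C_2\nu^{-1/2}|q|^{-1/2}\int_0^{|q|}(|q|-s)^{-1/2+1/(2n)}\Psi(s)\,ds+\|\hat U^{(0)}(\cdot,q)\|_{l^1}$. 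For part~(i) the only extra observation needed is that $|q|-s\le|q|$ on the range of integration, so that $|q|^{-1/2}(|q|-s)^{-1/2+1/(2n)}\le K(|q|-s)$ with $K(r):=r^{-1+1/(2n)}$, and the first term is dominated by $C_2\nu^{-1/2}(K*\Psi)(|q|)$; for part~(ii) it is better to keep the factor $|q|^{-1/2}$ separate.

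Part~(i). Applying $\|\cdot\|_1^{\alpha;\phi}=\|\cdot\|_\alpha$ and the Laplace-convolution bound (\ref{eq:conv2}) (equivalently (\ref{eq:norm1})), $\|\mathcal N[\hat U]\|_1^{\alpha;\phi}\le C_2\nu^{-1/2}\|K\|_\alpha\|\Psi\|_\alpha+\|\hat U^{(0)}\|_1^{\alpha;\phi}$, where $\|\Psi\|_\alpha\le\|u\|_\alpha^2+2\|\hat v_0\|_{l^1}\|u\|_\alpha=(\|\hat U\|_1^{\alpha;\phi})^2+2\|\hat v_0\|_{l^1}\|\hat U\|_1^{\alpha;\phi}$. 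The kernel norm is computed explicitly,
$$
\|K\|_\alpha=\int_0^\infty e^{-\alpha r}\,r^{-1+1/(2n)}\,dr=\Gamma\!\Big(\tfrac1{2n}\Big)\,\alpha^{-1/(2n)},
$$
which is exactly the scalar prefactor in the asserted inequality, and $\|\hat U^{(0)}\|_1^{\alpha;\phi}<\infty$ by (\ref{eqlemU01}). Since $K\in L_1(e^{-\alpha r}\,dr)$ and $\Psi\in L_1(e^{-\alpha q}\,dq)$ (the latter by the algebra property, as $\hat U\in\mathcal A_1^{\alpha;\phi}$ and $\hat v_0\in l^1$), this also shows $\mathcal N$ is well defined on $\mathcal A_1^{\alpha;\phi}$. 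The Lipschitz inequality is obtained verbatim, starting from (\ref{N2}) in place of (\ref{N1}).

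Part~(ii). Now apply $\|\cdot\|^{(\alpha)}$. First bound $\Psi$ pointwise: by the proof of Lemma~\ref{lemBanach}(ii), $(u*u)(s)\le M_0(\|\hat U\|^{(\alpha)})^2\,e^{\alpha s}\big/\big(s^{1-1/n}(1+s^2)\big)$, while trivially $u(s)\le\|\hat U\|^{(\alpha)}e^{\alpha s}\big/\big(s^{1-1/n}(1+s^2)\big)$, whence $\Psi(s)\le B\,e^{\alpha s}\big/\big(s^{1-1/n}(1+s^2)\big)$ with $B=M_0(\|\hat U\|^{(\alpha)})^2+2\|\hat v_0\|_{l^1}\|\hat U\|^{(\alpha)}$. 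Feeding this into (\ref{N1}) and multiplying by $|q|^{1-1/n}(1+|q|^2)e^{-\alpha|q|}$ reduces (\ref{NUnormsup}) to the single scalar estimate
$$
J(Q,\alpha):=Q^{1/2-1/n}(1+Q^2)\int_0^Q(Q-s)^{-1/2+1/(2n)}e^{-\alpha(Q-s)}\,\frac{ds}{s^{1-1/n}(1+s^2)}\ \le\ C_3(n)\,\alpha^{-1/(2n)},\quad\alpha\ge1,
$$
with $C_3$ depending on $n$ alone; this is the constant entering (\ref{NUnormsup}). I would prove it by substituting $r=Q-s$ and splitting $\int_0^Q=\int_0^{Q/2}+\int_{Q/2}^{Q}$. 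On $(0,Q/2)$ the factor $(Q-r)^{-(1-1/n)}(1+(Q-r)^2)^{-1}$ is $\le$ a constant times $Q^{-(1-1/n)}(1+Q^2)^{-1}$, and $\int_0^{Q/2}r^{-1/2+1/(2n)}e^{-\alpha r}\,dr$ is bounded both by $\Gamma(\tfrac12+\tfrac1{2n})\alpha^{-1/2-1/(2n)}$ (governing $Q\gtrsim1/\alpha$, using $\alpha^{-1/2-1/(2n)}\le\alpha^{-1/(2n)}$ for $\alpha\ge1$) and by $(\tfrac12+\tfrac1{2n})^{-1}(Q/2)^{1/2+1/(2n)}$ (governing $Q\lesssim1/\alpha$); taking the minimum makes the resulting $Q$-function $\le C(n)\alpha^{-1/(2n)}$. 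On $(Q/2,Q)$ one uses $r^{-1/2+1/(2n)}\le(Q/2)^{-1/2+1/(2n)}$, $e^{-\alpha r}\le e^{-\alpha Q/2}$ and $\int_{Q/2}^{Q}(Q-r)^{-(1-1/n)}(1+(Q-r)^2)^{-1}\,dr\le\min\{M_1(n),\,n(Q/2)^{1/n}\}$, $M_1(n)=\int_0^\infty\rho^{-1+1/n}(1+\rho^2)^{-1}\,d\rho$; the result is a finite sum of terms of the form $c\,Q^{p}(1+Q^2)e^{-\alpha Q/2}$ with $p\ge1/(2n)$ after using the $n(Q/2)^{1/n}$ bound where needed, and $\sup_{Q>0}Q^{p}(1+Q^2)e^{-\alpha Q/2}\le C_p\alpha^{-1/(2n)}$ for $\alpha\ge1$. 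Collecting the two ranges defines a finite $C_3(n)$ and gives (\ref{NUnormsup}); (\ref{3.50}) follows from (\ref{N2}) identically, $\hat U^{(0)}\in\mathcal A^{(\alpha)}$ by Lemma~\ref{lemU0}(ii), and for $n\ge2$ the analyticity of $\mathcal N[\hat U]$ in $\mathcal S_\delta$ follows from that of $\mathcal G(\cdot,q';k)$ and $\hat U^{(0)}$ there, after deforming the $q'$-contour.

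The bulk of the work, and the main obstacle, is the scalar kernel estimate $\sup_{Q>0}J(Q,\alpha)\le C_3\alpha^{-1/(2n)}$ in part~(ii): one must keep the weight $|q|^{1-1/n}(1+|q|^2)$ and the gain $\alpha^{-1/(2n)}$ under control simultaneously, and a single crude bound for the inner $s$-integral either destroys the $\alpha$-decay or fails to be uniform as $Q\to0$ or $Q\to\infty$ — hence the split into $Q\lesssim1/\alpha$ and $Q\gtrsim1/\alpha$ and the retention of $e^{-\alpha Q/2}$ to absorb polynomial growth. Everything else — part~(i) and both Lipschitz estimates — reduces mechanically to Lemmas~\ref{lemC2}, \ref{lemBanach}, \ref{lemU0} and the elementary computation of $\|K\|_\alpha$.
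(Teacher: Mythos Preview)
Your proposal is correct and follows essentially the same route as the paper: part~(i) uses the same crude bound $|q|^{-1/2}\le(|q|-s)^{-1/2}$ to reduce to an $L^1$-convolution with kernel $r^{-1+1/(2n)}$ (the paper does this via Fubini and the inequality $(s'+s)^{-1/2}\le s'^{-1/2}$), and part~(ii) is the same midpoint split---your substitution $r=Q-s$ with the cut at $r=Q/2$ is exactly the paper's substitution $t=s/|q|$ with the cut at $t=1/2$, and your dichotomy $Q\lessgtr 1/\alpha$ on the near piece matches the paper's case split $\alpha|q|\lessgtr 1$. The only difference is that the paper tracks the constant $C_3$ explicitly as $C_0+2^{3-1/n}C_1$, whereas you establish existence of some $C_3(n)$; this is harmless for all later uses.
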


\begin{proof}
(i) For any $0 < L \le \infty$ and $u \ge 0$ we have
\begin{multline*}
\int_0^L e^{-\alpha |q|} |q|^{-1/2} \left ( \int_0^{|q|}
(|q|-s)^{-1/2+1/(2n)} u(s e^{i\phi} ) ds \right ) d|q| \\
= \int_0^L u(s e^{i \phi} ) e^{-\alpha s} \left ( \int_{s}^L |q|^{-1/2}
(|q|-s)^{-1/2+1/(2n)} e^{-\alpha (|q|-s)} d|q| \right ) ds \\
\le \int_0^L e^{-\alpha s} u(s e^{i\phi})  \left \{ \int_0^L
{s'}^{-1/2+1/(2n)} (s'+s)^{-1/2} e^{-\alpha s'} ds' \right \} ds.
\end{multline*}
Using (\ref{N1}) it follows that
\begin{multline*}
\int_0^\infty e^{-\alpha |q|} \| \mathcal{N} [{\hat U} ] (\cdot, |q| e^{i
\phi}) \|_{l^1} d|q| \\
\le C_2 \nu^{-1/2} \Gamma \biggl( \frac{1}{2n} \biggr) \alpha^{-1/(2n)}
\left ( \left [ \| {\hat U} \|_1^{\alpha; \phi} \right ]^2 + 2 \| v_0
\|_{l^{1}} \| {\hat U} \|_{1}^{\alpha; \phi} \right ) + \| {\hat U}^{(0)}
\|_1^{\alpha; \phi}.
\end{multline*}
{F}rom (\ref{N2}), it now follows that
\begin{multline*}
\int_0^\infty \| \mathcal{N} [{\hat U}^{[1]} ] - \mathcal{N} [ {\hat
U}^{[2]}] \|_{l^1} e^{-\alpha |q|} d|q| \\
\le C_2 \nu^{-1/2} \Gamma \biggl( \frac{1}{2n} \biggr) \alpha^{-1/(2n)}
\left \{ \left ( \| {\hat U}^{[1]} \|_1^{\alpha; \phi} + \| {\hat U}^{[2]}
\|_1^{\alpha; \phi} \right ) \| {\hat U}^{[1]} - {\hat U}^{[2]}
\|_1^{\alpha; \phi} \right . \\
\left. + 2 \| {\hat v}_0 \|_{l^{1}} \| {\hat U}^{[1]}-{\hat U}^{[2]}
\|_1^{\alpha; \phi} \right \}.
\end{multline*}

(ii) We first note that
\begin{multline*}
|q|^{1/2-1/n} \int_0^{|q|} e^{-\alpha (|q|-s)} (|q|-s)^{-1/2+1/(2n)}
s^{-1+1/n} (1+s^2)^{-1} ds \\
= |q|^{1/(2n)} \int_0^1 e^{-\alpha |q| (1-t)} t^{-1+1/n}
(1-t)^{-1/2+1/(2n)} (1+t^2 |q|^2 )^{-1} dt \\
= |q|^{1/(2n)} \left \{ \int_0^{1/2} e^{-\alpha |q| (1-t)}
\frac{t^{-1+1/n} (1-t)^{-1/2+1/(2n)}}{ (1+t^2 |q|^2)} dt + \right. \\
\left . \int_{1/2}^1 e^{-\alpha |q| (1-t)} \frac{t^{-1+1/n}
(1-t)^{-1/2+1/(2n)}}{ (1+t^2 |q|^2) } dt  \right \}
\end{multline*}
\begin{multline}\label{r1}
\le |q|^{1/(2n)} e^{-\alpha |q|/2} \int_0^{1/2} t^{-1+1/n}
(1-t)^{-1/2+1/(2n)} dt \\
+ \frac{2^{1-1/n} |q|^{1/(2n)}}{1+|q|^2/4} \int_{1/2}^1 e^{-\alpha |q|
(1-t)} (1-t)^{-1/2+1/(2n)} dt.
\end{multline}
The first term on the right of (\ref{r1}) is bounded by $n 2^{1/2-3/(2n)}
|q|^{1/(2n)} e^{-\alpha |q|/2}$. For the second term we separate two
cases. Let first $\alpha |q| \le 1$. It is then clear that
\begin{multline*}
|q|^{1/(2n)} \int_{1/2}^1 e^{-\alpha |q| (1-t)} (1-t)^{-1/2+1/(2n)} dt \\
\le |q|^{1/(2n)} \int_{1/2}^1 (1-t)^{-1/2+1/(2n)} dt \le
\frac{2n}{(n+1)\alpha^{1/(2n)}}.
\end{multline*}
Now, if $\alpha |q| > 1$, we have
\begin{multline*}
|q|^{1/(2n)} \int_{1/2}^1 e^{-\alpha |q| (1-t)} (1-t)^{-1/2+1/(2n)} dt =
|q|^{1/(2n)} \int_{0}^{1/2} e^{-\alpha |q| t} t^{-1/2+1/(2n)} dt \\
\le |q|^{1/(2n)} \Gamma \biggl( \frac{1}{2}+\frac{1}{2n} \biggr) \left
[\alpha |q| \right ]^{-1/2-1/(2n)} \le \alpha^{-1/(2n)} \Gamma \biggl(
\frac{1}{2}+\frac{1}{2n} \biggr).
\end{multline*}
Combining these results we get
$$
|q|^{1/(2n)} \int_{1/2}^1 e^{-\alpha |q| (1-t)} (1-t)^{-1/2+1/(2n)} dt \le
\alpha^{-1/(2n)} C_1,
$$
where
$$
C_1 = \max \left \{ \Gamma \biggl( \frac{1}{2}+\frac{1}{2n} \biggr),
\frac{2n}{n+1} \right \}.
$$
Therefore,
\begin{multline}\label{C3def}
\sup_{|q| > 0} \left \{ |q|^{1-1/n} (1+|q|^2) e^{-\alpha |q|} |q|^{-1/2}
\int_0^{|q|} e^{\alpha s} (|q|-s)^{-1/2+1/(2n)} s^{-1+1/n} (1+s^2)^{-1} ds
\right \} \\
\le (C_0 + 2^{3-1/n} C_1) \alpha^{-1/(2n)} \equiv C_3 \alpha^{-1/(2n)},
\end{multline}
where
$$
C_0 = n 2^{1/2-1/n} \left [ \sup_{\gamma > 0} \gamma^{1/(2n)} e^{-\gamma}
+ 4 \sup_{\gamma > 0 } \gamma^{2+1/(2n)} e^{-\gamma} \right ].
$$
From (\ref{N1}) and the definition of $\| \cdot \|^{(\alpha)}$, it follows
that
$$
\| \mathcal{N} [{\hat U} ] \|^{(\alpha)} \le C_2 C_3 \nu^{-1/2}
\alpha^{-1/(2n)} \left [ M_0 \left ( \| {\hat U} \|^{(\alpha)} \right )^2
+ 2 \| {\hat v}_0 \|_{l^{1}} \| {\hat U} \|^{(\alpha)} \right ] + \| {\hat
U}^{(0)} \|^{(\alpha)}.
$$
Inequality (\ref{3.50}) follows similarly.
\end{proof}

\begin{Lemma}\label{inteqn}
The integral equation (\ref{IntUeqn}) has a unique solution in:

(i) the ball of radius $2 \| {\hat U}^{(0)} \|_1^{\alpha; \phi}$ in
$\mathcal{A}_1^{\alpha; \phi}$, if $\alpha$  is large enough so that
\begin{equation}\label{ensure2}
C_2 \nu^{-1/2} \Gamma \biggl( \frac{1}{2n} \biggr) \alpha^{-1/(2n)} \left
( 4 \| {\hat v}_0 \|_{l^{1}} + 4 \| {\hat U}^{(0)} \|_1^{\alpha; \phi}
\right ) < 1.
\end{equation}
Here $C_2$ is the same as in Lemma \ref{lemC2} and depends on $\delta$ and
$n$ for $n \ge 2$. For $n=1$ we have $\phi=0$.

(ii) the ball of radius $2 \| {\hat U}^{(0)} \|^{(\alpha)}$ in
$\mathcal{A}^{(\alpha)}$ if $\alpha$ is large enough so that
\begin{equation}\label{ensure3}
C_2 C_3 \nu^{-1/2} {\alpha}^{-1/(2n)} \left ( 4 \| {\hat v}_0 \|_{l^{1}} +
4 M_0 \| {\hat U}^{(0)} \|^{(\alpha)} \right ) < 1,
\end{equation}
where $C_2$ (defined in Lemma \ref{lemC2}) and  $C_3$ (defined in
(\ref{C3def})) depend on $\delta$ and $n$ for $n \ge 2$.
\end{Lemma}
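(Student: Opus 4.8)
The plan is to prove both statements by the Banach fixed point theorem, exhibiting $\mathcal{N}$ as a strict contraction on a closed ball. The analytic substance has already been isolated: Lemma~\ref{NUnormbound} provides the quadratic bound and the Lipschitz bound for $\mathcal{N}$ in each of the two norms, while Lemma~\ref{lemU0} guarantees that the inhomogeneous term $\hat{U}^{(0)}$ belongs to the relevant space (so the radius $2\|\hat{U}^{(0)}\|$ is finite). Since $\mathcal{A}_1^{\alpha;\phi}$ and $\mathcal{A}^{(\alpha)}$ are Banach spaces, any closed ball in them is a complete metric space, so it suffices to check that $\mathcal{N}$ maps the designated ball into itself and is a contraction there. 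The work that remains is purely arithmetic bookkeeping with the constants, carried out under the standing assumption $\alpha\ge 1$ required by Lemma~\ref{NUnormbound}.

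For part (i), set $R=2\|\hat{U}^{(0)}\|_1^{\alpha;\phi}$ and write $\kappa=C_2\nu^{-1/2}\Gamma(1/(2n))\alpha^{-1/(2n)}$. If $\|\hat{U}\|_1^{\alpha;\phi}\le R$, the first inequality of Lemma~\ref{NUnormbound}(i) gives $\|\mathcal{N}[\hat{U}]\|_1^{\alpha;\phi}\le \kappa R(R+2\|\hat{v}_0\|_{l^1})+R/2$. Condition (\ref{ensure2}) says exactly $2\kappa(R+2\|\hat{v}_0\|_{l^1})<1$, hence $\kappa(R+2\|\hat{v}_0\|_{l^1})<1/2$ and the right-hand side is $<R$; thus $\mathcal{N}$ maps the closed ball $B_R$ into itself. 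For the contraction, if $\hat{U}^{[1]},\hat{U}^{[2]}\in B_R$ the second inequality of Lemma~\ref{NUnormbound}(i) yields $\|\mathcal{N}[\hat{U}^{[1]}]-\mathcal{N}[\hat{U}^{[2]}]\|_1^{\alpha;\phi}\le \kappa(2R+2\|\hat{v}_0\|_{l^1})\|\hat{U}^{[1]}-\hat{U}^{[2]}\|_1^{\alpha;\phi}$, and since $2R+2\|\hat{v}_0\|_{l^1}\le 2R+4\|\hat{v}_0\|_{l^1}$, condition (\ref{ensure2}) again forces the Lipschitz constant to be $<1$. The fixed point theorem then gives a unique solution of (\ref{IntUeqn}) in $B_R$.

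Part (ii) is structurally identical: take $R=2\|\hat{U}^{(0)}\|^{(\alpha)}$ and $\kappa'=C_2C_3\nu^{-1/2}\alpha^{-1/(2n)}$, and use (\ref{NUnormsup}) and (\ref{3.50}) in place of the $\|\cdot\|_1^{\alpha;\phi}$ estimates. The only difference is that the quadratic term now carries the Banach-algebra factor $M_0$ from Lemma~\ref{lemBanach}, so the quantity controlling self-mapping is $2\kappa' (M_0 R+2\|\hat{v}_0\|_{l^1})$ and the one controlling the contraction is $\kappa'(2M_0 R+2\|\hat{v}_0\|_{l^1})$; both are dominated by $\kappa'(4M_0\|\hat{U}^{(0)}\|^{(\alpha)}+4\|\hat{v}_0\|_{l^1})$, which is $<1$ precisely by (\ref{ensure3}). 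Uniqueness is, as in (i), asserted only within the specified ball.

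There is no genuine \emph{obstacle} remaining here — the delicate kernel estimates on $\mathcal{G}$ and the convolution structure were already dispatched in Lemmas~\ref{lemC2}--\ref{NUnormbound}. The one point deserving care is the matching of the factor $4$ appearing in (\ref{ensure2}) and (\ref{ensure3}) to the chosen radius $R=2\|\hat{U}^{(0)}\|$: this built-in slack of a factor $2$ is exactly what simultaneously absorbs the $\kappa R^2$ self-mapping term (via $R/2$ room below $R$) and the $2R$-type Lipschitz term, so that a single smallness condition on $\alpha$ suffices for both conclusions.
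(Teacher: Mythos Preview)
Your proof is correct and follows exactly the same approach as the paper: both apply the Banach fixed point theorem using the estimates of Lemma~\ref{NUnormbound} to show that $\mathcal{N}$ maps the ball of radius $2\|\hat{U}^{(0)}\|$ into itself and is contractive there under conditions (\ref{ensure2}) and (\ref{ensure3}). The paper's proof is a two-sentence summary of precisely the arithmetic you have written out in detail.
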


\begin{proof}
The estimates in Lemma \ref{NUnormbound} imply that $\mathcal{N}$ maps a
ball of size $ 2 \| {\hat U}^{(0)} \|_1^{\alpha; \phi} $ in
$\mathcal{A}_1^{\alpha; \phi}$ back to itself and that $\mathcal{N}$ is
contractive in that ball when $\alpha$ satisfies (\ref{ensure2}). In
$\mathcal{A}^{(\alpha)}$, the estimates of Lemma \ref{NUnormbound} imply
that $\mathcal{N}$ maps a ball of size $ 2 \| {\hat U}^{(0)} \|^{(\alpha)}
$ to itself and that $\mathcal{N}$ is contractive in that ball when
$\alpha$ satisfies (\ref{ensure3}).
\end{proof}

\begin{Remark}
If $\alpha$ satisfies both (\ref{ensure2}) and (\ref{ensure3}), then it
follows from Lemma \ref{connection} and the uniqueness of classical
solution of \ref{nseq} that the solutions ${\hat U}$ in
$\mathcal{A}^{\alpha; \phi}_1 $ and $\mathcal{A}^{(\alpha)} $ are one and
the same.
\end{Remark}

\begin{Lemma}\label{lemqder}
The $q$-derivatives of ${\hat U} (k, q)$ in $\mathcal{A}^{(\alpha)}$ for
$q > 0$ are estimated by:
\begin{multline}\label{Uderbound}
\left \| \frac{\partial^m}{\partial q^m} {\hat U} (\cdot, q) \right
\|_{l^{1}} \le C_m \| {\hat v}_1 \|_{l^1} \frac{q^{-1+1/n}
\omega^{-m}}{1+q^2} e^{\alpha q +\omega \alpha}, \\
{\rm where}~\omega = q/2~{\rm for}~q \le 2,\ \omega = 1~{\rm for}~q > 2.
\end{multline}
\end{Lemma}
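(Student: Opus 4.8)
The plan is to read off the derivative bounds from the Cauchy integral formula, using that ${\hat U}(\cdot,q)$ is an $l^1$-valued \emph{analytic} function of $q$ which, being in $\mathcal{A}^{(\alpha)}$, carries the pointwise control (Definition~\ref{DefA}(i))
$$
\|{\hat U}(\cdot,\zeta)\|_{l^1}\le\|{\hat U}\|^{(\alpha)}\,\frac{e^{\alpha|\zeta|}}{|\zeta|^{1-1/n}(1+|\zeta|^2)}.
$$
For $n\ge2$ the analyticity in the sector $\mathcal{S}_\delta$ is Theorem~\ref{Thm01}(i) / Note~\ref{fn1}. I would begin with the elementary geometry of the disc $D(q,\omega)=\{\zeta:|\zeta-q|\le\omega\}$, with $\omega$ as in (\ref{Uderbound}): since $\omega/q\le\tfrac{1}{2}$ for every $q>0$, $D(q,\omega)$ subtends half-angle at most $\arcsin\tfrac{1}{2}=\pi/6$ about the origin, so $D(q,\omega)\subset\mathcal{S}_\delta$ (we may assume $\delta<\pi/3$), it avoids the origin, and $q/2\le|\zeta|\le q+\omega$ on it.

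The estimate then follows immediately. On $D(q,\omega)$, using $|\zeta|\ge q/2$ and $1+|\zeta|^2\ge\tfrac{1}{4}(1+q^2)$ in the displayed bound,
$$
\sup_{|\zeta-q|=\omega}\|{\hat U}(\cdot,\zeta)\|_{l^1}\le 2^{3-1/n}\,\|{\hat U}\|^{(\alpha)}\,\frac{q^{-1+1/n}}{1+q^2}\,e^{\alpha q+\alpha\omega},
$$
while Cauchy's formula in the Banach space $l^1$, $\partial_q^m{\hat U}(\cdot,q)=\frac{m!}{2\pi i}\oint_{|\zeta-q|=\omega}(\zeta-q)^{-m-1}{\hat U}(\cdot,\zeta)\,d\zeta$, gives $\|\partial_q^m{\hat U}(\cdot,q)\|_{l^1}\le m!\,\omega^{-m}\sup_{|\zeta-q|=\omega}\|{\hat U}(\cdot,\zeta)\|_{l^1}$. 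It only remains to bound $\|{\hat U}\|^{(\alpha)}$ by a constant times $\|{\hat v}_1\|_{l^1}$: by Lemma~\ref{inteqn} one has $\|{\hat U}\|^{(\alpha)}\le2\|{\hat U}^{(0)}\|^{(\alpha)}$, and by Lemma~\ref{lemU0}(i) together with $\sup_{q>0}(1+q^2)e^{-\alpha q}<\infty$ one has $\|{\hat U}^{(0)}\|^{(\alpha)}\le C\,c_1\|{\hat v}_1\|_{l^1}$ with $C=C(n,\delta,\nu,\alpha)$. Collecting the powers of $2$ yields (\ref{Uderbound}) with $C_m$ of order $m!$.

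For $n=1$ the space $\mathcal{A}^{(\alpha)}$ consists only of continuous functions on $[0,\infty)$, and by Note~\ref{fn1} ${\hat U}$ need not be analytic at $q=0$ when the data are not analytic; here one must first establish analyticity of ${\hat U}(\cdot,q)$ in $q$ on a small sector $\{0<|q|,\ |\arg q|<\epsilon\}$, after which the disc estimate above applies with its radius $\omega$ replaced by $\min\{\epsilon q,1\}$, the discrepancy factor $(2\epsilon)^{-m}$ being absorbed into $C_m$. This analyticity can be obtained by rerunning the contraction of Lemma~\ref{inteqn} in a space of $q$-analytic $l^1$-valued functions, using that for $q,q'>0$ the kernel (\ref{kernelG1}) is analytic in $q$ and \emph{vanishes on the diagonal} $q'=q$, while ${\hat U}^{(0)}(k,q)=2{\hat v}_1(k)J_1(z)/z$ is entire in $q$. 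I expect this $n=1$ step to be the main obstacle: fixing the radial and angular extent of the analyticity region and controlling the iterated $q$-derivatives of $\mathcal{G}$ near the diagonal, where $\mathcal{G}$ is only bounded and not smooth down to $q=0$. An alternative that avoids analyticity is to differentiate (\ref{IntUeqn}) directly through the convolution representation (\ref{eqHG}), whose kernel $\mathcal{Q}(q-q',s;k)$ depends on $q$ only through $q-q'$: then $\partial_q^m$ produces a boundary term at $q'=q$ — harmless since ${\hat H}_j(\cdot,0)$ is bounded when $n=1$ — plus convolutions of $\partial_q^{m'}{\hat H}_j$ with $\partial_q^{m-m'}\mathcal{Q}$, which are estimated inductively via the Banach-algebra inequality (\ref{eq:norm2}).
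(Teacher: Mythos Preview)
Your argument for $n\ge 2$ is correct and is exactly the paper's approach: Cauchy's integral formula on the circle of radius $\omega$ ($=q/2$ for $q\le 2$, $=1$ for $q>2$), combined with the pointwise bound that membership in $\mathcal{A}^{(\alpha)}$ gives, and then $\|{\hat U}\|^{(\alpha)}\le 2\|{\hat U}^{(0)}\|^{(\alpha)}\lesssim\|{\hat v}_1\|_{l^1}$ via Lemmas~\ref{inteqn} and~\ref{lemU0}. You have simply written out the geometric check that $D(q,\omega)\subset\mathcal{S}_\delta$ and the chain of constants more explicitly than the paper does; the paper just says ``pick for instance $\delta=\pi/4$'' and cites Lemma~\ref{lemU0}.

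On the $n=1$ case you have in fact been more careful than the paper. The paper's proof invokes analyticity in the sector $\arg q\in[-(n-1)\pi/2+\delta,(n-1)\pi/2-\delta]$, which is vacuous when $n=1$, and does not separately address that case; the lemma as stated and proved is really an $n\ge 2$ statement (consistent with its downstream use in \S8--9, where the numerics take $n=2$). Your observation that for $n=1$ one would first have to establish $q$-analyticity off the real axis---either by rerunning the contraction in an analytic class or by differentiating through the convolution form (\ref{eqHG})---is well taken, but it is additional work that the paper does not supply.
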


\begin{proof}
For $q \le 2$, we use Cauchy's integral formula on a circle of radius
$q/2$ around $q$ and Lemma \ref{lemU0} to bound ${\hat U} $ for $|q| > 0$,
$\arg q \in \left [ -(n-1) \frac{\pi}{2} + \delta, (n-1) \frac{\pi}{2} -
\delta \right ]$ (we may pick for instance $\delta = \frac{\pi}{4}$ to
obtain specific values of constants here). For $q > 2$, the  argument is
similar, now on a circle of radius 1.
\end{proof}

In the following we need bounds on $\| k {\hat U} \|^{(\alpha)}$. We
rewrite (\ref{IntUeqn}) using the divergence-free condition (note that $k
{\hat U}$ is a tensor of rank 2) as
\begin{multline}\label{IntUeqn1}
k {\hat U} (k, q) = -i k \int_0^q \mathcal{G} (q, q'; k) P_k \left \{
{\hat U}_j \ds [ k_j {\hat U} ] + {\hat v}_{0,j} {\hat *} [ k_j {\hat U} ]
\right \} (k, q') dq' + {\hat U}^{(0,1)} (k, p) \\
:= \mathcal{\tilde N} \left [ k {\hat U} \right ] \\
{\rm where }~~{\hat U}^{(0,1)} (k, p) := -i k \int_0^q \mathcal{G} (q, q';
k) P_k \left [ {\hat U}_j {\hat *} [k_j {\hat v}_0] \right ] (k, q') dq' +
k {\hat U}^{(0)} (k, j).
\end{multline}
We now think of  ${\hat U}$ in (\ref{IntUeqn1}) as known; then
$\mathcal{\tilde N}$ becomes linear in $k {\hat U}$.

\begin{Lemma}\label{lemkU}
If $|k|^3 {\hat v}_0 \in l^1 $ and $\alpha$ is large enough so that
(\ref{ensure3}) is satisfied, then
$$
\| |k| {\hat U} \|^{(\alpha)} \le 4 c_1 \left ( \nu \| |k|^3 {\hat v}_0
\|_{l^{1}}  + 2 \| |k| {\hat v}_0  \|_{l^1}^2 + \| |k| {\hat f} \|_{l^1}
\right ) + \| |k| {\hat v}_0 \|_{l^1}.
$$
\end{Lemma}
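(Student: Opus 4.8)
The plan is to estimate $\|k\hat U\|^{(\alpha)}$ directly from the linear fixed-point equation (\ref{IntUeqn1}), using the already-established bounds on $\hat U$ itself and on $\hat U^{(0)}$. Since $\alpha$ satisfies (\ref{ensure3}), Lemma \ref{inteqn}(ii) gives the solution $\hat U$ in the ball of radius $2\|\hat U^{(0)}\|^{(\alpha)}$ in $\mathcal A^{(\alpha)}$, and in particular $\|\hat U\|^{(\alpha)}\le 2\|\hat U^{(0)}\|^{(\alpha)}$ together with the contraction estimate (\ref{NUnormsup})--(\ref{3.50}). First I would record that $\hat U^{(0,1)}$, the inhomogeneous term of (\ref{IntUeqn1}), is controlled: its $k\hat U^{(0)}$ piece is bounded by Lemma \ref{lemU0}(ii) (with $j=1$), and the remaining integral term $-ik\int_0^q\mathcal G\,P_k[\hat U_j\hat *(k_j\hat v_0)]\,dq'$ is estimated exactly as in Lemma \ref{lemC2}/Lemma \ref{NUnormbound}(ii), treating $k_j\hat v_0$ as a fixed $l^1$ multiplier and $\hat U$ as known, so that it contributes $\lesssim C_2C_3\nu^{-1/2}\alpha^{-1/(2n)}\|\hat v_0\|_{l^1}\||k|\hat v_0\|_{l^1}$ up to constants absorbed into $c_1$.

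Next I would apply the operator bound of Lemma \ref{NUnormbound}(ii), but now to the \emph{linear} map $\mathcal{\tilde N}$ acting on $k\hat U$: schematically, $\|\mathcal{\tilde N}[k\hat U]\|^{(\alpha)}\le C_2C_3\nu^{-1/2}\alpha^{-1/(2n)}\bigl(M_0\|\hat U\|^{(\alpha)}+2\|\hat v_0\|_{l^1}\bigr)\||k|\hat U\|^{(\alpha)}+\|\hat U^{(0,1)}\|^{(\alpha)}$, where the Banach-algebra inequality (\ref{eq:norm2}) is used for the $\hat U_j\ds[k_j\hat U]$ term and Lemma \ref{lem0.2} plus the convolution estimate for the $\hat v_{0,j}\hat *[k_j\hat U]$ term. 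Because $\|\hat U\|^{(\alpha)}\le 2\|\hat U^{(0)}\|^{(\alpha)}$ and (\ref{ensure3}) holds, the coefficient multiplying $\||k|\hat U\|^{(\alpha)}$ is $<1$ (indeed $<\tfrac12$ after the same bookkeeping that produced (\ref{ensure3})). Hence the fixed-point identity $k\hat U=\mathcal{\tilde N}[k\hat U]$ gives $\||k|\hat U\|^{(\alpha)}\le 2\|\hat U^{(0,1)}\|^{(\alpha)}$, i.e. $\||k|\hat U\|^{(\alpha)}$ is bounded by twice the inhomogeneous term. Substituting the bound on $\hat U^{(0,1)}$ from the previous paragraph — and using $\hat v_1=\nu\Delta v_0-\mathcal P[v_0\cdot\nabla v_0]+f$ from (\ref{4}), so that $\||k|\hat v_1\|_{l^1}\le\nu\||k|^3\hat v_0\|_{l^1}+2\||k|\hat v_0\|_{l^1}\|\,|k|\hat v_0\|_{l^1}+\||k|\hat f\|_{l^1}$ as already used in Lemma \ref{lemU0}(ii) — yields the claimed inequality, with the extra additive $\||k|\hat v_0\|_{l^1}$ coming from the fact that $k\hat v=k\hat v_0+k\hat u$ and $\hat U$ encodes $\hat u$ rather than $\hat v$.

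The one delicate point I would be careful about is the applicability of the $\mathcal A^{(\alpha)}$ machinery to $k\hat U$: a priori we only know $\hat U\in\mathcal A^{(\alpha)}$, not that $k\hat U\in\mathcal A^{(\alpha)}$, so the argument must first \emph{establish membership}. This I would do by the same contraction-mapping scheme that proved Lemma \ref{inteqn}(ii): under the hypothesis $|k|^3\hat v_0\in l^1$ one checks that $\hat U^{(0,1)}\in\mathcal A^{(\alpha)}$ (the $k\hat U^{(0)}$ part by Lemma \ref{lemU0}(ii), the integral part because $\hat U\in\mathcal A^{(\alpha)}$ and $k_j\hat v_0\in l^1$), and that the linear operator $\mathcal{\tilde N}$ maps $\mathcal A^{(\alpha)}$ into itself with operator norm $<1$; then $k\hat U$, being the unique solution of the linear equation (\ref{IntUeqn1}) that we already know exists pointwise, must coincide with the Banach-space fixed point and therefore lies in the ball of radius $2\|\hat U^{(0,1)}\|^{(\alpha)}$. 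Everything else is a routine repetition of the estimates in Lemmas \ref{lemC2}--\ref{NUnormbound}, so the main obstacle is purely bookkeeping: tracking the constants so that they collapse into the stated $4c_1(\cdots)+\||k|\hat v_0\|_{l^1}$, and making sure the $P_k$-bound (\ref{Pbound}) and the tensor structure of $k\hat U$ do not introduce factors beyond those already absorbed.
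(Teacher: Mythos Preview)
Your approach is essentially identical to the paper's: pass to the linear equation (\ref{IntUeqn1}) for $k\hat U$, bound the operator norm of $\tilde{\mathcal N}$ by $\tfrac12$ using $\|\hat U\|^{(\alpha)}\le 2\|\hat U^{(0)}\|^{(\alpha)}$ together with (\ref{ensure3}), conclude $\||k|\hat U\|^{(\alpha)}\le 2\|\hat U^{(0,1)}\|^{(\alpha)}$, and then estimate the two pieces of $\hat U^{(0,1)}$ separately.  Your extra care about a~priori membership of $k\hat U$ in $\mathcal A^{(\alpha)}$ is a point the paper simply glosses over.

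One correction, though: your explanation of the additive $\||k|\hat v_0\|_{l^1}$ is wrong.  It has nothing to do with the decomposition $\hat v=\hat v_0+\hat u$ --- the lemma concerns $\||k|\hat U\|^{(\alpha)}$, a $q$-weighted norm of $\hat U$ alone, and $\hat v_0$ never enters that way.  In the paper the term comes from the \emph{integral} piece of $\hat U^{(0,1)}$: one gets
\[
2\|\hat U^{(0,1)}\|^{(\alpha)}\le 2\||k|\hat U^{(0)}\|^{(\alpha)}
+4M_0 C_2 C_3\nu^{-1/2}\alpha^{-1/(2n)}\,\||k|\hat v_0\|_{l^1}\,\|\hat U^{(0)}\|^{(\alpha)},
\]
and the second summand is $<\||k|\hat v_0\|_{l^1}$ directly by (\ref{ensure3}).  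Correspondingly, your earlier estimate of that integral piece as ``$\lesssim C_2C_3\nu^{-1/2}\alpha^{-1/(2n)}\|\hat v_0\|_{l^1}\||k|\hat v_0\|_{l^1}$'' has the wrong factor: what appears is $\|\hat U\|^{(\alpha)}$ (hence $\|\hat U^{(0)}\|^{(\alpha)}$), not $\|\hat v_0\|_{l^1}$, and it is the $M_0$-part of (\ref{ensure3}) that absorbs it into the clean $\||k|\hat v_0\|_{l^1}$.  With this bookkeeping fixed, your argument and the paper's coincide.
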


\begin{proof}
{F}rom (\ref{IntUeqn1}), we obtain
\begin{multline*}
\| |k| {\hat U} \|^{(\alpha)} = \| \mathcal{\tilde N} [ |k| {\hat U} ]
\|^{(\alpha)} \\
\le C_2 C_3 \nu^{-1/2} \alpha^{-1/(2n)} \left \{ M_0 \| {\hat U}
\|^{(\alpha)} \| |k| {\hat U} \|^{(\alpha)} + \| {\hat v}_0 \|_{l^{1}} \|
|k| {\hat U} \|^{(\alpha)} \right \} + \| {\hat U}^{(0,1)} \|^{(\alpha)}.
\end{multline*}
Lemma \ref{inteqn}, which applies when $\alpha$ satisfies (\ref{ensure3}),
implies that $\| {\hat U} \|^{(\alpha)} \le 2 \| {\hat U}^{(0)}
\|^{(\alpha)}$ and thus
\begin{multline*}
\| |k| {\hat U} \|^{(\alpha)} \le C_2 C_3 \nu^{-1/2} \alpha^{-1/(2n)} \|
|k| {\hat U} \|^{(\alpha)} \left \{ 2 M_0 \| {\hat U}^{(0)} \|^{(\alpha)}
+ \| {\hat v}_0 \|_{l^{1}} \right \} + \| {\hat U}^{(0,1)} \|^{(\alpha)}
\\
\le \frac{1}{2} \| |k| {\hat U} \|^{(\alpha)} + \| {\hat U}^{(0,1)}
\|^{(\alpha)}.
\end{multline*}
Thus,
\begin{multline*}
\| |k| {\hat U} \|^{(\alpha)} \\
\le 2 \| {\hat U}^{(0,1)} \|^{(\alpha)} \le 2 \| |k| {\hat U}^{(0)}
\|^{(\alpha) } + 4 M_0 C_2 C_3 \nu^{-1/2} \alpha^{-1/(2n)} \| |k| {\hat
v}_0 \|_{l^1} \| {\hat U}^{(0)} \|^{(\alpha)}.
\end{multline*}
Lemma follows from (\ref{ensure3}) and bounds on ${\hat U}^{(0)} $ given
in Lemma \ref{lemU0}.
\end{proof}

\begin{Proposition}\label{propthm01}
Assume ${\hat f} (0) =0 = {\hat v}_0 (0)$ and we define $\bigl\|
\frac{\hat f}{|k|^2} \bigr\|_{l_1} = \sum_{k \in \mathbb{Z}^3 \setminus
\{0\}} \frac{ |{\hat f} |^2}{|k|^2} $. If for $n \ge 2$, $\alpha$
satisfies the condition:
\begin{equation}\label{ensure4}
C_2 \nu^{-1/2} \Gamma \biggl( \frac{1}{2n} \biggr) \alpha^{-1/(2n)}
\biggl\{ 4 \| {\hat v}_0 \|_{l^{1}} + 4 C_G \biggl[ \| {\hat v}_0 \|_{l^1}
\Bigl( 1 + \frac{2}{\nu} \|{\hat v}_0 \|_{l^1} \Bigr) + \frac{1}{\nu}
\biggl\| \frac{{\hat f}}{|k|^2} \biggr\|_{l^1} \biggr] \biggr\} < 1,
\end{equation}
with constants $C_2$ and $C_G$ defined in Lemmas \ref{lemC2} and
\ref{lemU0}, then the integral equation (\ref{IntUeqn}) has a unique
solution in a ball of size $2 \| {\hat U}^{(0)} \|_1^{\alpha; \phi}$ in
$\mathcal{A}_1^{\alpha; \phi}$. If in addition $ |k|^2 {\hat v}_0 \in l^1
$, then for $n \ge 1$ and $\alpha=\alpha_1$ is such that
\begin{equation}\label{ensure5}
C_2 \nu^{-1/2} \Gamma \biggl( \frac{1}{2n} \biggr) \alpha_1^{-1/(2n)}
\biggl\{ 4 \| {\hat v}_0 \|_{l^{1}} + 4 c_1 \Gamma \biggl( \frac{1}{n}
\biggr) \alpha_1^{-1/n} \| {\hat v}_1 \|_{l^1} \biggr\} < 1,
\end{equation}
where
\begin{equation}\label{4}
{\hat v}_1 (k) = \left ( -\nu |k|^2 {\hat v}_0 - i k_j P_k \left [ {\hat
v}_{0,j}{\hat*}{\hat v}_0 \right ] \right ) + {\hat f} (k)
\end{equation}
with $c_1$ defined in Lemma \ref{lemU0}, then the integral equation
(\ref{IntUeqn}) has a unique solution in a ball of size $2 \| {\hat
U}^{(0)} \|_1^{\alpha_1; \phi}$ in $\mathcal{A}_1^{\alpha_1; \phi}$.
\end{Proposition}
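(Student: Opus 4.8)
The plan is to derive Proposition~\ref{propthm01} directly from the contraction-mapping statement of Lemma~\ref{inteqn}(i), feeding it the explicit bounds on $\|{\hat U}^{(0)}\|_1^{\alpha;\phi}$ supplied by Lemma~\ref{lemU0}. Lemma~\ref{inteqn}(i) guarantees a unique solution of (\ref{IntUeqn}) in the ball of radius $2\|{\hat U}^{(0)}\|_1^{\alpha;\phi}$ in $\mathcal{A}_1^{\alpha;\phi}$ whenever $\alpha$ is large enough for (\ref{ensure2}) to hold. Since the quantity on the left of (\ref{ensure2}) is increasing in $\|{\hat U}^{(0)}\|_1^{\alpha;\phi}$, it suffices to bound $\|{\hat U}^{(0)}\|_1^{\alpha;\phi}$ from above by a $q$- and $\phi$-independent expression and to observe that (\ref{ensure4}) (resp. (\ref{ensure5})) is precisely (\ref{ensure2}) with that upper bound in place of $\|{\hat U}^{(0)}\|_1^{\alpha;\phi}$; the implication (\ref{ensure4})$\Rightarrow$(\ref{ensure2}) (resp. (\ref{ensure5})$\Rightarrow$(\ref{ensure2})) follows, and Lemma~\ref{inteqn}(i) finishes the argument.

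For the first claim ($n\ge2$) I would invoke (\ref{eqCGU0}). Since $\alpha\ge1$ by Assumption~\ref{A11}, we have $e^{-\alpha|q|}\le1$, so
\begin{multline*}
\|{\hat U}^{(0)}\|_1^{\alpha;\phi}=\int_0^\infty e^{-\alpha|q|}\|{\hat U}^{(0)}(\cdot,|q|e^{i\phi})\|_{l^1}\,d|q| \\
\le\int_0^\infty\|{\hat U}^{(0)}(\cdot,q)\|_{l^1}\,d|q|\le C_G\Bigl\{\|{\hat v}_0\|_{l^1}\bigl(1+\tfrac2\nu\|{\hat v}_0\|_{l^1}\bigr)+\tfrac1\nu\bigl\|\tfrac{{\hat f}}{|k|^2}\bigr\|_{l^1}\Bigr\},
\end{multline*}
where $\tfrac{{\hat f}}{|k|^2}\in l^1$ because ${\hat f}(0)=0$ and $|k|\ge1$ for $k\ne0$. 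Plugging this bound into (\ref{ensure2}) reproduces exactly (\ref{ensure4}); for $n\ge2$ the constants $C_2$ (Lemma~\ref{lemC2}) and $C_G$ (Lemma~\ref{lemU0}) depend only on $\delta$ and $n$, not on $\phi$, so the conclusion holds along every ray of $\mathcal{S}_\delta$.

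For the second claim I would, assuming in addition $|k|^2{\hat v}_0\in l^1$, first verify ${\hat v}_1\in l^1$ from (\ref{4}): $\nu|k|^2{\hat v}_0\in l^1$ by hypothesis; ${\hat f}\in l^1$ by Assumption~\ref{A11}; and $\|k_j P_k[{\hat v}_{0,j}{\hat *}{\hat v}_0]\|_{l^1}\le2\|{\hat v}_0\|_{l^1}\,\||k|{\hat v}_0\|_{l^1}<\infty$ by Lemmas~\ref{lem0.1} and \ref{lem0.2}, using $\||k|{\hat v}_0\|_{l^1}\le\||k|^2{\hat v}_0\|_{l^1}$ (again $|k|\ge1$, ${\hat v}_0(0)=0$). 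Then (\ref{eqlemU01}) gives $\|{\hat U}^{(0)}\|_1^{\alpha_1;\phi}\le c_1\|{\hat v}_1\|_{l^1}\alpha_1^{-1/n}\Gamma(1/n)$ with $c_1$ as in Lemma~\ref{lemU0} ($c_1=1$ and $\phi=0$ when $n=1$). Substituting this into (\ref{ensure2}) with $\alpha=\alpha_1$ reproduces (\ref{ensure5}), so Lemma~\ref{inteqn}(i) again applies and the proof is complete.

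I do not expect a genuine obstacle: the analytic work sits entirely in Lemmas~\ref{lemC2}, \ref{lemU0}, \ref{NUnormbound} and \ref{inteqn}, and what remains is bookkeeping. The two points deserving a sentence of care are (i) the monotonicity remark that lets one replace $\|{\hat U}^{(0)}\|_1^{\alpha;\phi}$ inside (\ref{ensure2}) by a larger, $q$- and $\phi$-independent quantity without weakening the implication, and (ii) the elementary embeddings $|k|^m{\hat v}_0\in l^1\Rightarrow|k|^{m-1}{\hat v}_0\in l^1$ and ${\hat f}\in l^1\Rightarrow{\hat f}/|k|^2\in l^1$ for zero-mean data on $\mathbb{T}^3$. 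It is also worth noting that (\ref{ensure4}) and (\ref{ensure5}) are genuinely satisfiable: in each, the bracketed factor is $\alpha$-independent while the prefactor carries a negative power of $\alpha$, so the left-hand side tends to $0$ as $\alpha\to\infty$; this non-vacuousness is what yields the guaranteed existence time $\alpha^{-1/n}$ in Theorem~\ref{Thm01}.
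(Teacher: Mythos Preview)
Your proposal is correct and follows exactly the paper's approach: the paper's own proof is the single sentence ``The proof follows from Lemma~\ref{lemU0} since (\ref{ensure4}) and (\ref{ensure5}) imply (\ref{ensure2}), and thus Lemma~\ref{inteqn} applies,'' and you have simply unpacked this by invoking (\ref{eqCGU0}) and (\ref{eqlemU01}) respectively to bound $\|{\hat U}^{(0)}\|_1^{\alpha;\phi}$ and then observing the resulting monotonicity. The extra care you take (verifying ${\hat v}_1\in l^1$, the embeddings for zero-mean data, and non-vacuousness) is welcome detail but not a departure from the paper's route.
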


\begin{proof}
The proof follows from Lemma \ref{lemU0} since (\ref{ensure4}) and
(\ref{ensure5}) imply (\ref{ensure2}), and thus Lemma \ref{inteqn}
applies.
\end{proof}

\noindent{\bf Proof of Theorem \ref{Thm01}}

Proposition \ref{propthm01} gives a unique solution to (\ref{IntUeqn}) in
some small ball in the Banach space $\mathcal{A}_1^{\alpha; \phi}$ for
sufficiently large $\alpha$. {F}rom Lemma \ref{connection}, we see that
${\hat U}$ generates via (\ref{intro.1.1}) a solution ${\hat v}$ to
(\ref{nseq}) for $t \in \left [ 0, \alpha^{-1/n} \right )$. Classical
arguments (presented for completeness in Lemma \ref{instsmooth} in the
Appendix), show that $|k|^2 {\hat v} (\cdot, t) \in l^1$ and hence
$\mathcal{F}^{-1} \left [ {\hat v} (\cdot, t) \right ] (x)$ is a smooth
solution to (\ref{nseq0}) for $t \in \left (0, \alpha^{-1/n} \right )$.
Analyticity in  $t$ for $\Re \frac{1}{t^n} > \alpha$ follows from the
Laplace representation. For optimal analyticity region in $t$, we choose
$n=1$.

It is well known  that (\ref{nseq0}) has locally a unique classical
solution \cite{Temam}, \cite{Doering}, \cite{ConstFoias}. Thus, given
${\hat v}_0, {\hat f} \in l^1$, all solutions obtained via the integral
equation coincide. Furthermore, ${\hat v} (k, t) - {\hat v}_0$ is
inverse-Laplace transformable in $1/t^n$ and the inverse Laplace transform
satisfies (\ref{IntUeqn}). Therefore, no restriction on the size of ball
in spaces $\mathcal{A}^{\alpha; \phi}_1$, $\mathcal{A}^{(\alpha)}$ is
necessary for uniqueness of the solution of (\ref{IntUeqn}).

\begin{Remark}{
\rm The arguments in the proof of Theorem~\ref{Thm01} show that $\| {\hat
v} (\cdot, t) \|_{l^1} < \infty$ over an interval of time implies that the
solution is classical.  This is not a new result.  Standard Fourier
arguments show that, in this case, we have $ \| v (\cdot, t)
\|_{{L}^\infty} < \infty$, {\em i.e.} one of the Prodi-Serrin criteria for
existence of classical solutions \cite{Prodi}, \cite{Serrin} is
satisfied.}
\end{Remark}

\section{Error bounds in a Galerkin approximation involving $[-N, N]^3$ Fourier modes}

\begin{Definition}
We define the operator $\mathcal{N}^{(N)} $ (associated to $\mathcal{N}$)
by
\begin{multline}
\mathcal{N}^{(N)} \left [ {\hat U} \right ] (k, q) \\
= -ik_j \int_0^q  \mathcal{G} (q, q'; k) \mathcal{P}_N P_k \left [ {\hat
U}_j \ds {\hat U} + {\hat v}_{0,j} {\hat *} {\hat U} + {\hat U}_j {\hat *}
{\hat v}_0 \right ] (k, q') dq' + \mathcal{P}_N {\hat U}^{(0)} (k, q),
\end{multline}
where $\mathcal{P}_N$, the Galerkin projection to $[-N, N]^3$ Fourier modes, is given by
$$
\left [ \mathcal{P}_N {\hat U} \right ] (k, q) = {\hat U} (k, q) ~~{\rm
for}~~k \in [-N, N]^3~~,~~~\left [\mathcal{P}_N {\hat U}\right] (k,
q)=~~0~~ {\rm otherwise}.
$$
\end{Definition}

\begin{Lemma}\label{lemNUN}
The integral equation
$$
{\hat U}^{(N)} = \mathcal{N}^{(N)} \left [ {\hat U}^{(N)} \right ]
$$
has a unique solution in $ \mathcal{A}^{\alpha}_1$\footnote{Recall this
means $\mathcal{A}^{\alpha;\phi}_1$ with $\phi=0$} as well as in
$\mathcal{A}^{(\alpha)}$, if $\alpha$ satisfies the conditions in Theorem
\ref{Thm01}.
\end{Lemma}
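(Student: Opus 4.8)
The plan is to mimic exactly the fixed-point argument used for $\mathcal{N}$ in Lemmas \ref{NUnormbound} and \ref{inteqn}, observing that the Galerkin projection $\mathcal{P}_N$ is harmless because it does not increase any of the norms in play. First I would record the elementary fact that $\mathcal{P}_N$ is a contraction on $l^1(\mathbb{Z}^3)$: $\| \mathcal{P}_N {\hat g} \|_{l^1} \le \| {\hat g} \|_{l^1}$ for every ${\hat g}$, since $\mathcal{P}_N$ merely zeroes out modes outside $[-N,N]^3$. Consequently $\| \mathcal{P}_N {\hat g} (\cdot,q) \|_{l^1} \le \| {\hat g}(\cdot,q)\|_{l^1}$ pointwise in $q$, so $\| \mathcal{P}_N {\hat g} \|_1^{\alpha;0} \le \| {\hat g} \|_1^{\alpha;0}$ and $\| \mathcal{P}_N {\hat g} \|^{(\alpha)} \le \| {\hat g} \|^{(\alpha)}$. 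Since $\mathcal{P}_N$ also commutes with the scalar kernel convolution against $\mathcal{G}(q,q';k)$ (it acts independently on each Fourier mode $k$), the operator $\mathcal{N}^{(N)}$ is obtained from $\mathcal{N}$ by inserting $\mathcal{P}_N$ in front of the nonlinearity and in front of ${\hat U}^{(0)}$; in particular the identity $\mathcal{P}_N \circ \mathcal{N}^{(N)} = \mathcal{N}^{(N)}$ holds, and more importantly every estimate in Lemma \ref{lemC2} used to derive \eqref{N1} and \eqref{N2} goes through verbatim with the extra $\mathcal{P}_N$ present, because $| \mathcal{P}_N P_k {\hat g}(k)| \le |P_k {\hat g}(k)| \le 2|{\hat g}(k)|$.

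Next I would transcribe the statements of Lemma \ref{NUnormbound}(i) and (ii) with $\mathcal{N}$ replaced by $\mathcal{N}^{(N)}$: one gets exactly the same inequalities
$$
\| \mathcal{N}^{(N)} [{\hat U}] \|_1^{\alpha} \le C_2 \nu^{-1/2} \Gamma\bigl(\tfrac{1}{2n}\bigr) \alpha^{-1/(2n)} \left\{ \bigl(\| {\hat U}\|_1^{\alpha}\bigr)^2 + 2\| {\hat v}_0\|_{l^1} \| {\hat U}\|_1^{\alpha} \right\} + \| {\hat U}^{(0)} \|_1^{\alpha},
$$
and its Lipschitz counterpart, together with the $\mathcal{A}^{(\alpha)}$ analogues \eqref{NUnormsup}, \eqref{3.50}, because every time $\| \cdot \|_{l^1}$ or $\| \cdot \|^{(\alpha)}$ or $\| \cdot \|_1^\alpha$ of a $\mathcal{P}_N$-projected quantity appears it is bounded by the same norm of the unprojected quantity, and $\| \mathcal{P}_N {\hat U}^{(0)} \|_{(\cdot)} \le \| {\hat U}^{(0)} \|_{(\cdot)}$. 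With those inequalities in hand, the proof of Lemma \ref{inteqn} applies word for word: under condition \eqref{ensure2} (resp. \eqref{ensure3}) — which are precisely the conditions guaranteed by Theorem \ref{Thm01} via Proposition \ref{propthm01} — the map $\mathcal{N}^{(N)}$ sends the ball of radius $2\| {\hat U}^{(0)} \|_1^{\alpha}$ (resp. $2\| {\hat U}^{(0)} \|^{(\alpha)}$) into itself and is a contraction there, so the Banach fixed point theorem yields a unique solution ${\hat U}^{(N)}$ in that ball in $\mathcal{A}_1^\alpha$ (resp. $\mathcal{A}^{(\alpha)}$).

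For uniqueness in the whole space (not merely the ball), I would argue as in the proof of Theorem \ref{Thm01}: the truncated equation corresponds to a Galerkin-truncated Navier–Stokes system, which is a finite-dimensional-in-$k$ (though still $q$-dependent) problem whose solution is again inverse-Laplace-transformable, so any $\mathcal{A}_1^\alpha$ or $\mathcal{A}^{(\alpha)}$ solution must coincide with the one produced by the contraction; alternatively, and more simply, one notes that for modes $|k| > N$ the equation ${\hat U}^{(N)}(k,q) = \mathcal{N}^{(N)}[{\hat U}^{(N)}](k,q)$ forces ${\hat U}^{(N)}(k,q) = 0$ (the right side is $\mathcal{P}_N$-projected hence vanishes there), reducing the problem to a genuinely finite system of coupled integral equations on $[-N,N]^3$ for which the contraction estimates above already give global uniqueness in the relevant weighted-$L^1$ class. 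I do not expect any real obstacle here; the only point requiring a word of care is checking that inserting $\mathcal{P}_N$ does not interfere with the $q$-convolution structure — and it does not, since $\mathcal{P}_N$ acts diagonally in $k$ while $\mathcal{G}(q,q';k)$ and $\ds$ also act mode-by-mode in $k$ — so the estimates of Lemmas \ref{lemC2}, \ref{lemBanach}, \ref{NUnormbound} transfer without change. The whole lemma is thus a corollary of the already-established contraction framework with the trivial observation $\| \mathcal{P}_N \|_{l^1 \to l^1} \le 1$.
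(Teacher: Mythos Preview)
Your proposal is correct and follows exactly the paper's approach: the paper's proof is a single sentence observing that $\mathcal{P}_N$ does not increase $l^1$ norms, so $\mathcal{N}^{(N)}$ inherits the same estimates as $\mathcal{N}$ and the proof of Theorem~\ref{Thm01} part (i) (i.e., Lemmas \ref{NUnormbound} and \ref{inteqn}) carries over verbatim. Your write-up is simply a detailed unpacking of that sentence, including the extra care about whole-space uniqueness, which the paper does not spell out for the truncated equation.
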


\begin{proof}
The proof is very similar to that of Theorem \ref{Thm01} part 1, noting
that the  Galerkin projection $\mathcal{P}_N$ does not increase $l^1$
norms and $\mathcal{N}^{(N)}$ and  $\mathcal{N}$ have similar properties.
\end{proof}

\begin{Lemma}\label{lemTEN}
Assume that $\alpha$ is large enough so that
\begin{equation}\label{ensure3.2}
C_2 C_3 \nu^{-1/2} {\alpha}^{-1/(2n)} \left ( 4 \| {\hat v}_0 \|_{l^{1}} +
4 M_0 \| {\hat U}^{(0)} \|^{(\alpha)} \right ) \leq \frac{1}{2},
\end{equation}
and that $|k|^3 {\hat v}_0,\ |k| {\hat f} \in l^1$. Define the Galerkin
truncation error:
\begin{multline}\label{eqTEN}
T_{E, N} = \mathcal{P}_N {\hat U} - \mathcal{N}^{(N)} \left [
\mathcal{P}_N {\hat U} \right ] = \mathcal{P}_N \mathcal{N} \left [ {\hat
U} \right ] - \mathcal{N}^{(N)} \left [ \mathcal{P}_N {\hat U} \right ] \\
= -i k_j \int_0^q \mathcal{G} (q, q'; k) \mathcal{P}_N P_k \Bigl[ {\hat
v}_{0,j} {\hat *} (I-\mathcal{P}_N) {\hat U} + (I-\mathcal{P}_N) {\hat
U}_j {\hat *} {\hat v}_0 \\
+ (I-\mathcal{P}_N) {\hat U}_j \ds \mathcal{P}_N {\hat U} + \mathcal{P}_N
{\hat U}_j \ds (I-\mathcal{P}_N) {\hat U} + (I-\mathcal{P}_N ) {\hat U}_j
\ds (I-\mathcal{P}_N) {\hat U} \Bigr] (k, q') dq'.
\end{multline}
Then,
\begin{equation*}
\| {\hat U} - {\hat U}^{(N)} \|^{(\alpha)} \le \| (I-\mathcal{P}_N ) {\hat
U} \|^{(\alpha)} + 2 \| T_{E,N} \|^{(\alpha)},
\end{equation*}
where
\begin{multline*}
\| (I-\mathcal{P}_N) {\hat U} \|^{(\alpha)} + 2 \| T_{E,N} \|^{(\alpha)}
\\
\le \frac{1}{N} \left [ 2 c_1 \left ( \nu \| |k|^3 {\hat v}_0 \|_{l^1} + 2
\| |k| {\hat v}_0 \|^2_{l^1} + \| |k| {\hat f} \|_{l^1} \right ) + \| |k|
{\hat v}_0 \|_{l^1} \right ] \\
\times \left \{ 1 + 4 c \| {\hat v}_0 \|_{l^1} + 12 c \left ( \nu \| |k|^2
{\hat v}_0 \|_{l^1} + 2 \| |k| {\hat v}_0 \|_{l^1} \| {\hat v}_0 \|_{l^1}
+ \| {\hat f} \|_{l^1} \right ) \right \}.
\end{multline*}
\end{Lemma}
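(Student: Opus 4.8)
The plan is to proceed in two stages: first establish the error identity $\|\hat U - \hat U^{(N)}\|^{(\alpha)} \le \|(I-\mathcal P_N)\hat U\|^{(\alpha)} + 2\|T_{E,N}\|^{(\alpha)}$ by a standard fixed-point/perturbation argument, then bound each of the two terms on the right by $N^{-1}$ times the claimed explicit constant. For the first stage, write $\hat U - \hat U^{(N)} = (I-\mathcal P_N)\hat U + (\mathcal P_N\hat U - \hat U^{(N)})$ and observe that $\mathcal P_N\hat U - \hat U^{(N)} = \mathcal N^{(N)}[\mathcal P_N\hat U] + T_{E,N} - \mathcal N^{(N)}[\hat U^{(N)}]$ by the definition of $T_{E,N}$ in (\ref{eqTEN}). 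Since $\mathcal N^{(N)}$ is affine with a contractive linear part on $\mathcal A^{(\alpha)}$ — the Lipschitz estimate (\ref{3.50}) holds for $\mathcal N^{(N)}$ as well by Lemma \ref{lemNUN}, with constant $\le 1/2$ under hypothesis (\ref{ensure3.2}) once one uses $\|\hat U^{(N)}\|^{(\alpha)}, \|\mathcal P_N \hat U\|^{(\alpha)} \le 2\|\hat U^{(0)}\|^{(\alpha)}$ — one gets $\|\mathcal P_N\hat U - \hat U^{(N)}\|^{(\alpha)} \le \tfrac12 \|\mathcal P_N\hat U - \hat U^{(N)}\|^{(\alpha)} + \|T_{E,N}\|^{(\alpha)}$, hence $\le 2\|T_{E,N}\|^{(\alpha)}$; the triangle inequality then gives the stated identity.

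For the second stage, bound $\|(I-\mathcal P_N)\hat U\|^{(\alpha)}$ first: since $(I-\mathcal P_N)$ kills all modes with $|k|\le N$, we have $\|(I-\mathcal P_N)\hat W(\cdot,q)\|_{l^1} \le N^{-1}\||k|\hat W(\cdot,q)\|_{l^1}$ for any $\hat W$, so $\|(I-\mathcal P_N)\hat U\|^{(\alpha)} \le N^{-1}\||k|\hat U\|^{(\alpha)}$, and the latter is controlled by Lemma \ref{lemkU}, which applies precisely because $|k|^3\hat v_0\in l^1$ and (\ref{ensure3.2}) implies (\ref{ensure3}). This produces the first bracketed factor $2c_1(\nu\||k|^3\hat v_0\|_{l^1} + 2\||k|\hat v_0\|^2_{l^1} + \||k|\hat f\|_{l^1}) + \||k|\hat v_0\|_{l^1}$. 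Next, for $\|T_{E,N}\|^{(\alpha)}$: every term in the bracket of (\ref{eqTEN}) contains at least one factor $(I-\mathcal P_N)\hat U$, so applying the $l^1$ convolution bounds (Lemmas \ref{lem0.1}, \ref{lem0.2}), the Banach-algebra estimate (\ref{eq:norm2}), and the kernel estimate encoded in $C_3$ (cf. (\ref{C3def})) — exactly as in the proof of Lemma \ref{NUnormbound}(ii) — one extracts a factor $\|(I-\mathcal P_N)\hat U\|^{(\alpha)} \le N^{-1}\||k|\hat U\|^{(\alpha)}$ from each term, multiplied by $\|\hat v_0\|_{l^1}$ or by $\|\hat U\|^{(\alpha)}\le 2\|\hat U^{(0)}\|^{(\alpha)}$, with an overall $C_2 C_3 \nu^{-1/2}\alpha^{-1/(2n)}\le \tfrac12 M_0^{-1}$-type prefactor from (\ref{ensure3.2}) that absorbs the numerical constants. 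Using Lemma \ref{lemU0}(i) (with the given $\hat v_1$) to bound $\|\hat U^{(0)}\|^{(\alpha)}$ and $\||k|\hat U^{(0)}\|^{(\alpha)}$ in terms of $\nu\||k|^2\hat v_0\|_{l^1} + 2\||k|\hat v_0\|_{l^1}\|\hat v_0\|_{l^1} + \|\hat f\|_{l^1}$ then yields the second curly-brace factor $1 + 4c\|\hat v_0\|_{l^1} + 12c(\nu\||k|^2\hat v_0\|_{l^1} + 2\||k|\hat v_0\|_{l^1}\|\hat v_0\|_{l^1} + \|\hat f\|_{l^1})$ after collecting the five terms of (\ref{eqTEN}) (the coefficient $12 = 2\cdot(1+1+1)\cdot 2$ roughly tracking the three cubic terms each contributing up to twice, plus the linear $\hat v_0$ terms).

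The main obstacle will be the bookkeeping in the $\|T_{E,N}\|^{(\alpha)}$ estimate: one must verify that in every one of the five terms of (\ref{eqTEN}) the factor peeled off as $N^{-1}\||k|\hat U\|^{(\alpha)}$ leaves behind only quantities already bounded — either $\|\hat v_0\|_{l^1}$, or $\|\hat U\|^{(\alpha)}$, or $\||k|\hat U^{(0)}\|^{(\alpha)}$ — and in particular that the terms with two $(I-\mathcal P_N)$ factors do not force a second loss (they are simply bounded by pulling out one $(I-\mathcal P_N)$ as the $N^{-1}$ gain and estimating the remaining $(I-\mathcal P_N)\hat U$ trivially by $\hat U$). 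Matching the precise numerical constant $12c$ and confirming that (\ref{ensure3.2}) — the $\le \tfrac12$ version rather than the $<1$ version of the smallness condition — is exactly what is needed so that the geometric-series factor stays bounded by $2$, is the one place where care rather than ingenuity is required; everything else is a direct assembly of Lemmas \ref{lem0.2}, \ref{lemBanach}, \ref{inteqn}, \ref{lemkU}, and \ref{lemU0}.
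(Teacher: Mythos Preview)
Your proposal is correct and follows essentially the same route as the paper: split $\hat U - \hat U^{(N)}$ via $\mathcal P_N$, use the Lipschitz bound (\ref{3.50}) for $\mathcal N^{(N)}$ on the ball of radius $2\|\hat U^{(0)}\|^{(\alpha)}$ under (\ref{ensure3.2}) to get the factor $2$ in front of $\|T_{E,N}\|^{(\alpha)}$, then estimate $T_{E,N}$ term by term as in Lemma~\ref{NUnormbound}(ii) and feed in Lemmas~\ref{lemkU} and~\ref{lemU0}. Two small slips to fix: $\mathcal N^{(N)}$ is quadratic, not affine (though your Lipschitz argument is the right one and does not rely on affineness), and the $\|\hat U^{(0)}\|^{(\alpha)}$ bound you need comes from Lemma~\ref{lemU0}(ii), not (i).
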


\begin{proof}
Clearly,
$$
\| {\hat U} - {\hat U}^{(N)} \|^{(\alpha)} \le \| (I-\mathcal{P}_N) {\hat
U} \|^{(\alpha)} + \| \mathcal{P}_N {\hat U} - {\hat U}^{(N)}
\|^{(\alpha)}.
$$
By (\ref{ensure3.2}), (\ref{eqTEN}) and contractivity of
$\mathcal{N}^{(N)}$,
\begin{multline*}
\| \mathcal{P}_N {\hat U} - {\hat U}^{(N)} \|^{(\alpha)} \le \|
\mathcal{N}^{(N)} [ \mathcal{P}_N {\hat U} ] - \mathcal{N}^{(N)} [ {\hat
U}^{(N)} ] \|^{(\alpha)} + \| T_{E,N} \|^{(\alpha)} \\
\le \frac{1}{2} \| \mathcal{P}_N {\hat U} - {\hat U}^{(N)} \|^{(\alpha)} +
\| T_{E,N} \|^{(\alpha)},
\end{multline*}
so
$$
\| {\hat U} - {\hat U}^{(N)} \|^{(\alpha)} \le \| (I-\mathcal{P}_N) {\hat
U} \|^{(\alpha)} + 2 \| T_{E,N} \|^{(\alpha)}.
$$
Now estimates similar to (\ref{NUnormsup}) imply that
\begin{multline*}
\| T_{E,N} \|^{(\alpha)} \le c \| (I-\mathcal{P}_N) {\hat U} \|^{(\alpha)}
\left [ 2 \| {\hat v}_0 \|_{l^1} + 2 \| \mathcal{P}_N {\hat U}
\|^{(\alpha)} + \| (I - \mathcal{P}_N ) {\hat U} \|^{(\alpha)} \right ] \\
\le c \| (I - \mathcal{P}_N ) {\hat U} \|^{(\alpha)} \left [ 2 \| {\hat
v}_0 \|_{l^1} + 6 \| {\hat U}^{(0)} \|^{(\alpha)} \right ],
\end{multline*}
and Lemma \ref{lemkU} implies that
\begin{multline*}
\| (I - \mathcal{P}_N ) {\hat U} \|^{(\alpha)} \le \frac{1}{N} \| k {\hat
U} \|^{(\alpha)} \\
\le \frac{1}{N} \left [ 2 c_1 \left ( \nu \| |k|^3 {\hat v}_0 \|_{l^{1}} +
2 \| |k| {\hat v}_0  \|_{l^1}^2 + \| |k| {\hat f} \|_{l^1} \right ) + \|
|k| {\hat v}_0 \|_{l^1} \right ].
\end{multline*}
Hence the lemma follows.
\end{proof}

\section{The exponential rate  $\alpha$ and the singularities of $v$}

We have already established that at most subexponential growth of  $\|
{\hat U} (\cdot, q) \|_{l^1} $ implies global existence of a classical
solution to (\ref{nseq0}).

We now look for a converse: suppose (\ref{nseq0}) has a global solution,
is it true that ${\hat U} (\cdot, q)$  always is subexponential in $q$?
The answer is no. For $n=1$, any complex singularity $t_s$ in the
right-half complex $t$-plane of $v(x, t)$ produces exponential growth of
$\hat{U}$ with rate $\Re(1/t_s)$ (oscillatory with a frequency
$\Im(1/t_s)$).

However, if $f = 0$, we will see that for any {\em given global classical}
solution of (\ref{nseq0}), there is a $c>0$ so that for any $t_s$ we have
$|\arg t_s|>c$. This means that for sufficiently large $n$, the function
$v(x,\tau^{-1/n})$ has no singularity in the right-half $\tau$ plane. Then
the inverse Laplace transform
$$
U(x, q) = \frac{1}{2 \pi i} \int_{c-i\infty}^{c+i \infty} \left \{ v (x,
\tau^{-1/n} ) - v_0 (x) \right \} e^{q \tau} d\tau
$$
can be shown to decay for $q$ near $\RR^+$.

We now seek to find conditions for which there are no singularities of $v
(x, \tau^{-1/n})$ in $\left \{ \tau: \Re\, \tau \ge 0, ~\tau \not\in
\mathbb{R}^+ \cup \{0\} \right \}$.

\begin{Lemma}{(Special case of \cite{FoiasTem})}\label{LL1}
If $f=0$ and $v (\cdot, t_0) \in {H}^1 \left (\mathbb{T}^3\right [0, 2\pi]
) $, then $v(x,t)$ is analytic in $x$ and $t$ in the domain $ |\Im ~x_j |
< c \nu |t-t_0| $, $ 0 < |t-t_0| < C  $ for $\arg (t-t_0) \in \left [
-\frac{\pi}{4}, \frac{\pi}{4} \right ]$, where $c$ and $C$ are positive
constants ($C$ depends on $\| v_0 \|_{{H}^1}$ and $\nu$ and bounded away
from 0 when $\| v_0 \|_{{H}^1}$ is bounded).
\end{Lemma}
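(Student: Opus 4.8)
Lemma~\ref{LL1}: for $f=0$ and $v(\cdot,t_0)\in H^1(\TT^3[0,2\pi])$, the solution $v(x,t)$ extends analytically in $x$ and $t$ to the region $|\Im x_j|<c\nu|t-t_0|$, $0<|t-t_0|<C$, $\arg(t-t_0)\in[-\pi/4,\pi/4]$, with $C$ controlled by $\|v_0\|_{H^1}$ and $\nu$.

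The plan is to run the classical analytic-smoothing argument for Navier--Stokes in the Gevrey-type spaces of Foias--Temam, adapted to a complex time direction inside the sector $|\arg(t-t_0)|\le\pi/4$. First I would reduce to $t_0=0$ by translation in time and relabel $v(\cdot,t_0)=:v_0\in H^1$; the whole question is then local analytic regularization starting from an $H^1$ datum. Next I would fix a ray $t=|t|e^{i\theta}$, $|\theta|\le\pi/4$, and introduce the exponential (Gevrey) weight in Fourier space: define, for $\sigma>0$ to be chosen proportional to $\nu|t|$, the norm $\sum_{k}e^{2\sigma|k|}(1+|k|^2)|\hat v(k,t)|^2$, equivalently $\|e^{\sigma(-\Delta)^{1/2}}v\|_{H^1}^2$. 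The point of allowing $\theta\neq0$ is that the heat semigroup $e^{-\nu|k|^2 t}$ still decays like $e^{-\nu|k|^2|t|\cos\theta}$, and $\cos\theta\ge\tfrac{1}{\sqrt2}$ on the sector, so the parabolic smoothing that generates the analyticity radius $\sigma\sim\nu|t|$ survives with constants degraded only by the factor $\cos\theta$. That is exactly why the sector is $[-\pi/4,\pi/4]$ and not wider.

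The key steps, in order: (1) Write the mild (Duhamel) formulation along the ray, $\hat v(k,t)=e^{-\nu|k|^2 t}\hat v_0(k)-\int_0^t e^{-\nu|k|^2(t-s)}\,ik_jP_k[\hat v_j\,\hat*\,\hat v](k,s)\,ds$, with the $s$-contour taken along the same ray. (2) Set up a fixed-point/energy estimate for the quantity $y(t)=\|e^{\sigma(t)(-\Delta)^{1/2}}v(\cdot,t)\|_{H^1}$ with $\sigma(t)=a\nu|t|$ for a small constant $a$; the algebra inequality $\|e^{\sigma(-\Delta)^{1/2}}(v\cdot\nabla v)\|\le C\|e^{\sigma(-\Delta)^{1/2}}v\|_{H^1}^2$ (using $e^{\sigma|k|}\le e^{\sigma|k-m|}e^{\sigma|m|}$ and $H^1$-type multiplicative estimates in $3$D, e.g. $\|fg\|_{H^{1/2}}\lesssim\|f\|_{H^1}\|g\|_{H^1}$) handles the nonlinearity, and the extra $\dot\sigma|k|$ term produced by differentiating the weight is absorbed because $e^{-\nu|k|^2|s|\cos\theta}|k|\le C(\nu|s|\cos\theta)^{-1/2}$ and $\dot\sigma=a\nu$ with $a$ small. (3) A Gronwall/continuity argument then shows $y(t)$ stays bounded (say $\le 2\|v_0\|_{H^1}$) for $0<|t|<C$ with $C=C(\|v_0\|_{H^1},\nu)$, and $C$ stays bounded below as long as $\|v_0\|_{H^1}$ does. (4) Finiteness of $\sum_k e^{2\sigma|k|}(1+|k|^2)|\hat v(k,t)|^2$ with $\sigma=a\nu|t|$ is, by the Paley--Wiener theorem, exactly analyticity of $x\mapsto v(x,t)$ in the strip $|\Im x_j|<c\nu|t|$ with $c=a$ (up to a dimensional factor), and the Duhamel integral depends analytically on $t$ along and across the ray, giving joint analyticity in $(x,t)$ on the stated domain. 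Finally I would note that $v$ so constructed agrees on $\RR^+$ with the classical solution (uniqueness), so the lemma is about the genuine solution, not a new one.

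The main obstacle is step (2)--(3): making the complex-time Duhamel iteration close \emph{uniformly in $\theta\in[-\pi/4,\pi/4]$}, i.e. tracking that every constant that appears (the smoothing constant in $|k|e^{-\nu|k|^2|s|\cos\theta}\le(\text{const})(\nu|s|)^{-1/2}$, the constant in the Gevrey algebra estimate, and the smallness threshold on $a$) can be chosen depending only on $\nu$ and $\|v_0\|_{H^1}$ and not on $\theta$. Off the real axis one cannot use the real energy identity $\tfrac12\frac{d}{dt}\|v\|^2=-\nu\|\nabla v\|^2$ directly, so one must genuinely run it as a contraction/bootstrap on the Duhamel map in the Gevrey space rather than as a differential energy inequality; the $\cos\theta\ge1/\sqrt2$ lower bound is what keeps this under control, and identifying that as the precise reason for the $\pm\pi/4$ aperture is the crux. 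Since the paper cites \cite{FoiasTem} and only needs this as a quotable input, I would present the argument in this compressed form and refer to \cite{FoiasTem} for the detailed Gevrey estimates.
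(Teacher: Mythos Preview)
The paper does not supply its own proof of this lemma: its entire argument is the single line ``See page 71 of \cite{Foias}.'' Your outline is precisely the Foias--Temam Gevrey-regularity argument that underlies that citation, so in substance you have reconstructed what the paper is invoking, and at a level of detail well beyond what the paper itself provides.

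One minor remark on your step (2)--(3): in the original Foias--Temam treatment the complex-time estimate is in fact carried out as a differential inequality along the ray (pairing the equation with $A\,e^{2\sigma A^{1/2}}v$ and taking real parts), not as a Duhamel contraction; the $\pi/4$ aperture arises there from controlling the imaginary contribution of the nonlinear trilinear term by the dissipation, not merely from the lower bound $\cos\theta\ge 1/\sqrt{2}$ on the heat kernel. Your Duhamel/fixed-point variant is a legitimate alternative route and your identification of uniformity in $\theta$ as the crux is correct, but if you want to match the cited reference exactly you would phrase it as an energy-type argument in the Gevrey norm rather than a mild-formulation contraction.
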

See page 71 of \cite{Foias}.

\begin{Lemma}\label{LL2}
(i) Assume $k {\hat v}_0 , {\hat f} \in l^1$ and $\alpha$ is large enough
so that (\ref{ensure2}) holds for $n=1$. The classical solution of
(\ref{nseq0}) has no singularity in $\Re \frac{1}{t} > \alpha$, $x \in
\mathbb{T}^3$.

(ii) Furthermore, for $f=0$ (no forcing), no singularity can exist for
$\arg (t - T_{c,a}) \in (-{\tilde \delta}, {\tilde \delta})$ for any $0 <
{\tilde \delta} < \frac{\pi}{2}$ and any $x \in \mathbb{T}^3$. ($T_{c,a}$
is estimated in terms of $\| v_0 \|_{H^1},\ \nu$, and ${\tilde \delta}$ in
Theorem \ref{Testimate} in the Appendix using standard arguments.)
\end{Lemma}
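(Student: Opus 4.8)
The plan is to treat (i) as an immediate consequence of the integral formulation, and (ii) via a two‑step argument: first gain smallness of the enstrophy at a large real time $T_{c,a}$ (using $f=0$), then continue the solution analytically into the full sector by the sharp Foias--Temam type estimate.

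For (i): since $\alpha$ satisfies (\ref{ensure2}) with $n=1$, Proposition \ref{propthm01} (via Lemma \ref{inteqn}) gives the unique $\hat U\in\mathcal{A}_1^{\alpha}$ solving (\ref{IntUeqn}). For every $k$ and every $t$ in the disk $D_\alpha:=\{t:\Re(1/t)>\alpha\}$ one has $\int_0^\infty|\hat U(k,q)|e^{-q\,\Re(1/t)}\,dq\le\int_0^\infty|\hat U(k,q)|e^{-\alpha q}\,dq<\infty$, and summing over $k$ shows $\sup_{t\in D_\alpha}\|\hat v(\cdot,t)\|_{l^1}<\infty$ for $\hat v(k,t):=\hat v_0(k)+\int_0^\infty\hat U(k,q)e^{-q/t}\,dq$. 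Differentiating under the integral sign (the majorant $q\,e^{-q\Re(1/t)}$ is integrable, locally uniformly in $D_\alpha$) makes each $\hat v(k,\cdot)$ holomorphic on $D_\alpha$; this is exactly the analyticity statement of Lemma \ref{connection} for $n=1$. Hence $t\mapsto\hat v(\cdot,t)$ is a bounded holomorphic $l^1(\ZZ^3)$‑valued map and $v(x,t)=\mathcal{F}^{-1}[\hat v(\cdot,t)](x)$ is continuous in $x$ and holomorphic in $t$ on $D_\alpha$, solving (\ref{nseq}) there by Lemma \ref{connection}. By Theorem \ref{Thm01} it coincides on the real segment $D_\alpha\cap\RR^+=(0,\alpha^{-1})$ with the unique classical solution; since $D_\alpha$ is connected, $v$ is the analytic continuation in $t$ of that classical solution to all of $D_\alpha$, and therefore has no singularity there.

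For (ii), Step 1 is a real‑time enstrophy argument. For $f=0$, Leray's inequality $\|v(\cdot,t)\|_{L^2}^2+2\nu\int_0^t\|\nabla v(\cdot,s)\|_{L^2}^2\,ds\le\|v_0\|_{L^2}^2$ gives $\int_0^\infty\|\nabla v\|_{L^2}^2\,ds<\infty$ and $\|v(\cdot,t)\|_{L^2}\le\|v_0\|_{L^2}e^{-\nu t}$ by Poincar\'e, so $\|\nabla v(\cdot,t)\|_{L^2}$ drops below any prescribed threshold at arbitrarily large times; by the standard small‑$H^1$ global regularity criterion the solution is then classical from that time on and $\|v(\cdot,t)\|_{H^1}\to0$. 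Thus, given $\tilde\delta\in(0,\pi/2)$, there is a real time $T_{c,a}=T_{c,a}(\|v_0\|_{H^1},\nu,\tilde\delta)$ — estimated explicitly in Theorem \ref{Testimate} — with $v(\cdot,t)$ classical for all $t\ge T_{c,a}$ and $\|v(\cdot,T_{c,a})\|_{H^1}\le\eta(\tilde\delta)$, where $\eta(\tilde\delta)$ is the smallness needed in Step 2 and $\eta(\tilde\delta)\to0$ as $\tilde\delta\uparrow\pi/2$. Step 2 is a complex‑time continuation from $w_0:=v(\cdot,T_{c,a})$ (translating time by $T_{c,a}$). Along a ray $t=T_{c,a}+|s|e^{i\phi}$, $|\phi|<\tilde\delta$, the heat semigroup smooths at the rate governed by $\Re(\nu s)=\nu|s|\cos\phi>0$, so $\|e^{\nu s\Delta}\|_{L^2\to H^1}\lesssim(\nu|s|\cos\phi)^{-1/2}$, etc.; this is the sharp form of Lemma \ref{LL1} (which is its $|\phi|\le\pi/4$ specialization, obtained from the cruder bound $\Re(\nu s)\gtrsim\nu|s|$). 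A Picard iteration for $v(\cdot,T_{c,a}+s)=e^{\nu s\Delta}w_0-\int_0^s e^{\nu(s-s')\Delta}\mathcal{P}\,\nabla\!\cdot\!\big(v\otimes v\big)(\cdot,T_{c,a}+s')\,ds'$ in the space of $H^1$‑valued holomorphic functions on the ray then closes for all $|s|$ provided $\|w_0\|_{H^1}\le\eta(\tilde\delta)$ with $\eta(\tilde\delta)$ comparable to $\cos\tilde\delta$, which is why $T_{c,a}$ is chosen $\tilde\delta$‑dependently in Step 1. Uniqueness against the real‑time classical solution and Morera's theorem glue the rays into a single function holomorphic in $t$ on $\{\arg(t-T_{c,a})\in(-\tilde\delta,\tilde\delta)\}$, so no singularity $t_s$ of $v$ can lie in this sector.

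The main obstacle is the tradeoff in Step 2 between the sector half‑angle and the required smallness: the smoothing factor $(\cos\phi)^{-1/2}$ stays finite for every $\phi<\pi/2$ but blows up as $\phi\uparrow\pi/2$, so for each fixed $\tilde\delta<\pi/2$ the fixed point closes only once $\|v(\cdot,T_{c,a})\|_{H^1}$ is below an explicit threshold $\eta(\tilde\delta)\to0$. This is precisely where $f=0$ is essential (cf. the discussion preceding Lemma \ref{LL1} and the fact, noted there, that complex‑time blow‑up occurs for some complex data): only for zero forcing does the cumulative‑dissipation bound force $\|v(\cdot,t)\|_{H^1}\to0$, making the threshold reachable at a finite $T_{c,a}(\tilde\delta)$. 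Making the dependence of $T_{c,a}$ on $\|v_0\|_{H^1}$, $\nu$ and $\tilde\delta$ fully quantitative is the only real work, and the Lemma defers it to Theorem \ref{Testimate} in the Appendix; part (i), by contrast, is needed only to rule out singularities in the tangent disk $D_\alpha$ near the origin, and the two regions are otherwise handled independently.
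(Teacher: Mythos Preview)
Your proof is correct and follows essentially the same approach as the paper. For (i) you reproduce exactly the paper's argument (Laplace representation from Theorem \ref{Thm01}/Lemma \ref{connection} gives analyticity in $\Re(1/t)>\alpha$, uniqueness identifies it with the classical solution); for (ii) your two-step scheme (Leray energy/pigeon-hole to reach small $H^1$ at a real time $T_{c,a}$, then contraction in a weighted space on the complex sector $\tilde S_{\tilde\delta}$) is precisely what the paper does via Theorem \ref{Testimate} and Lemma \ref{lemepsilon} in the Appendix, only you spell out the $\cos\phi$ mechanism more explicitly than the paper's terse reference.
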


\begin{proof}
(i) The assumption implies $v_0 \in {H}^1 (\mathbb{T}^3)$. Since it is
well known (see for instance \cite{Foias}, \cite{Temam},
\cite{ConstFoias}, \cite{Doering}) that a classical solution to
(\ref{nseq0}) is unique, it follows that this solution equals the one
given in Theorem~\ref{Thm01} in the form (\ref{intro.1}). {F}rom standard
properties of Laplace transforms this solution is analytic for $\Re
\frac{1}{t} > \alpha$, where $\alpha$ is given in Theorem \ref{Thm01}.

(ii) We know that under these assumptions $\| v (\cdot, t)\|_{H^1}\to 0$
as $t\to \infty$. There is then a critical time $T_{c,a}$ so that standard
contraction mapping arguments show that $v (\cdot, t)$ is analytic for $t
- T_{c,a} \in {\tilde S}_{\tilde \delta}$ as seen in Theorem
\ref{Testimate} in the Appendix.
\end{proof}

\begin{Corollary}\label{Cr1}
If $f = 0$, for any $v_0$ there exists a $c>0$ so that any singularity
$t_s$ of the solution $v$ of (\ref{nseq0}) is either a positive real time
singularity, or else $|\arg t_s|>c$.
\end{Corollary}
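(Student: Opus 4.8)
The plan is to assemble Corollary~\ref{Cr1} directly from Lemmas~\ref{LL1} and \ref{LL2}, which between them already rule out complex singularities in three overlapping regions whose union is a full neighborhood of the positive real $t$-axis (minus the origin). First I would fix $v_0$ (with $f=0$) and recall that, since $v_0\in H^1(\TT^3)$, the unique classical solution exists on some maximal interval $(0,T_0)$; we must show that no singularity $t_s$ with $\arg t_s$ close to $0$ can occur. Lemma~\ref{LL2}(i) handles the region $\Re\frac1t>\alpha$, i.e.\ a neighborhood of $t=0$ cut off from the imaginary axis: there the solution is analytic, so no $t_s$ lies there. Lemma~\ref{LL1} (local analytic smoothing from any time $t_0$ at which $v\in H^1$) covers a cone $\arg(t-t_0)\in[-\pi/4,\pi/4]$ emanating from each such $t_0$; applied at points $t_0$ on the real axis where the solution is regular, it forbids singularities in a neighborhood of the real axis up to the first possible blow-up. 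Lemma~\ref{LL2}(ii) provides the complementary large-$t$ statement: for $f=0$ one has $\|v(\cdot,t)\|_{H^1}\to 0$, so past the explicitly estimable time $T_{c,a}$ the solution is analytic in a sector $\arg(t-T_{c,a})\in(-\tilde\delta,\tilde\delta)$ for any $\tilde\delta<\pi/2$, again excluding complex singularities near $\RR^+$ for all large $t$.

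The key step is to patch these three regions and extract a single $c>0$. I would argue as follows: suppose $t_s$ is a singularity of $v$ with $t_s\notin\RR^+\cup\{0\}$; we want a lower bound on $|\arg t_s|$. If $|t_s|$ is small, part (i) (or Lemma~\ref{LL1} at $t_0=0^+$, more carefully) forces $\arg t_s$ to be bounded away from $0$; if $|t_s|$ is large, part (ii) forces $|\arg t_s|\ge \tilde\delta$ for a fixed choice of $\tilde\delta$; and for $|t_s|$ in the intermediate compact range, Lemma~\ref{LL1} applied along the real segment where $v$ is still classical, together with compactness, gives a uniform cone of analyticity around that segment, hence again a uniform lower bound on $|\arg t_s|$. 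Taking $c$ to be the minimum of the finitely many angular constants produced in these three regimes yields the claim. I would phrase the intermediate-range argument via a covering: the closed real interval on which $v$ is known classical is covered by finitely many analyticity cones from Lemma~\ref{LL1}, whose union contains a full one-sided complex neighborhood of that interval, and the angular width of this neighborhood at distance $|t_s|$ from the origin is bounded below.

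The main obstacle I anticipate is making the "neighborhood of $\RR^+$" uniform all the way out to $t=\infty$ without assuming global existence a priori — indeed the corollary's content is exactly that the only way regularity can fail on $\RR^+$ is at a \emph{real} time, so the argument must be structured so that, on the maximal existence interval $(0,T_0)$ (whether or not $T_0=\infty$), every \emph{complex} point sufficiently close to $\RR^+$ is a point of analyticity. The delicate bookkeeping is that the constant $C$ in Lemma~\ref{LL1} depends on $\|v(\cdot,t_0)\|_{H^1}$, which is only \emph{locally} bounded on $(0,T_0)$; but that is enough, because for a fixed putative singularity $t_s$ of modulus $R$ we only need the analyticity cones based at real times in $[0,R]\cap(0,T_0)$ — a set on which, away from a possible blow-up at $T_0\le R$, the $H^1$ norm is bounded, so the cones have uniformly positive opening. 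If $T_0\le R$ then $t_s$ of modulus $R$ near the real axis is excluded by combining the cones up to any $t_0<T_0$ with Lemma~\ref{LL1}'s smoothing (which pushes analyticity past $t_0$), the only unavoidable exception being $t_s=T_0\in\RR^+$ itself. Assembling these cases cleanly, and verifying that $\tilde\delta$ from part (ii) and the cone angles from Lemma~\ref{LL1} combine into one explicit $c$, is the substance of the proof; the rest is routine.
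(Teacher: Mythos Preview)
Your overall strategy---Foias--Temam analyticity cones (Lemma~\ref{LL1}) on a compact real interval, patched with the large-$t$ sector from Lemma~\ref{LL2}(ii)---is exactly what the paper does. But the paper does not attempt the finite-blow-up case at all: its proof opens by assuming a classical solution exists on all of $\RR^+$, so that $\|v(\cdot,t)\|_{H^1}$ is \emph{uniformly} bounded on $[0,T_{c,a}+\epsilon]$. Then the constant $C$ in Lemma~\ref{LL1} is uniform there, yielding a single $c_2>0$ and the explicit bound $\tan|\arg t_s|>c_2/(\sqrt{2}\,T_{c,a}+c_2)$. The global-existence case is the only one used downstream (Theorem~\ref{Thm02}), and under that reading your argument is correct and essentially identical to the paper's.

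The genuine gap is in your handling of the case $T_0<\infty$. You claim ``the only unavoidable exception [is] $t_s=T_0\in\RR^+$ itself,'' but the lemmas do not give you this. As $t_0\to T_0^-$ one has $\|v(\cdot,t_0)\|_{H^1}\to\infty$, so the radius $C$ in Lemma~\ref{LL1} shrinks to zero; the union of cones based at $t_0\in(0,T_0)$ therefore need not contain any fixed angular wedge around the real axis near $T_0$, and nothing in Lemmas~\ref{LL1}--\ref{LL2} excludes a sequence of complex singularities $t_s^{(n)}\to T_0$ with $\arg t_s^{(n)}\to 0$. Your compactness/covering argument for the ``intermediate range'' silently assumes a bound on $\sup_{t\in[0,R]\cap(0,T_0)}\|v(\cdot,t)\|_{H^1}$, which fails precisely when $T_0\le R$. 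In short, the per-singularity reading of the corollary is not established by these tools when a real blow-up occurs; the paper sidesteps the issue by proving only the direction it actually needs: if there is no real-time singularity, then a uniform $c>0$ exists.
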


\begin{proof}
If there exists a classical solution on $\RR^+$ then $\| v(\cdot, t)
\|_{H^1}$ is uniformly bounded and by the proof of Lemma~\ref{LL2} (ii)
there is a $T_{c,a}$ (as given in Theorem \ref{Testimate} in the Appendix)
such that $v(\cdot,t)$ is analytic for $\arg (t-T_{c,a}) \in \left [ -
\frac{\pi}{4}, \frac{\pi}{4} \right ]$. Let now $M_1 = \max_{t \in
[0,T_{c,a}+\epsilon]} \| v(\cdot, t) \|_{H^1}$. Then by Lemma \ref{LL1},
for any $t'\in [0,T_{c,a}+\epsilon]$ there exists a $c_2=c_2 (M_1)$ such
that $v$ is analytic in the region $|t-t'|<c_2,\ |\arg(t-t')|\le\pi/4$.
Thus $v$ is analytic in (see Fig.\ref{fig.ns.analytic})
$$
\Bigl\{ t: |t-t'|<c_2, |\arg(t-t')|\le\frac{\pi}{4}, 0 \le t' \le
T_{c,a}+\epsilon \Bigr\} \bigcup\, \Bigl\{ t: |\arg (t -
T_{c,a})|\le\frac{\pi}{4} \Bigr\}.
$$
\begin{figure}[h]
  \centering
  \psfrag{Tca}{\tiny $T_{c,a}$}
  \psfrag{c2}{\tiny $c_2$}
  \includegraphics[scale=0.5]{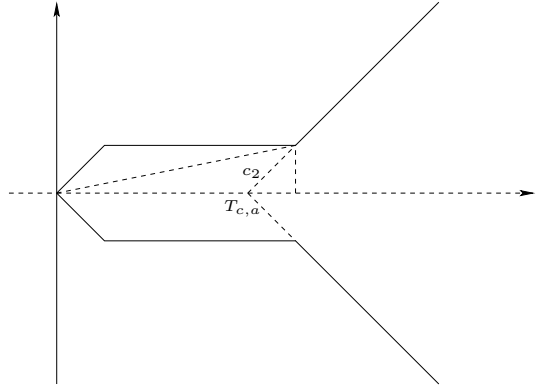}
  \caption{The region of analyticity of $v$.}
  \label{fig.ns.analytic}
\end{figure}%
Thus, if $t_s$ is a singular point of $v$, then $\tan |\arg(t_s)|>c$ where
$$
c = \frac{c_2/\sqrt{2}}{T_{c,a}+c_2/\sqrt{2}} =
\frac{c_2}{\sqrt{2}\,T_{c,a}+c_2}.
$$
\end{proof}

\noindent{\bf Proof of Theorem \ref{Thm02}}

By definition,
$$
\hat{U}(k,q) = \frac{1}{2\pi i} \int_{C} \hat{u}(k,\tau^{-1/n})
e^{q\tau}\,d\tau = \frac{1}{2\pi i} \int_{C} \Bigl[ \hat{v}(k,\tau^{-1/n})
- \hat{v}_0(k) \Bigr] e^{q\tau}\,d\tau,
$$
where the Bromwich contour $C$ lies to the right of all singularities of
$\hat{u}(k,\tau^{-1/n})$ in the complex $\tau$-plane. By
Corollary~\ref{Cr1}, $\hat{u}(k,t)$ has no singularities in the sector
$$
S_{t,\phi} := \Bigl\{ t: |\arg t\,| \le \phi := \tan^{-1} c \Bigr\},
$$
so $\hat{u}(k,\tau^{-1/n})$ has no singularities in the sector
$$
S_{\tau,\phi} := \Bigl\{ \tau: |\arg \tau| < n\phi \Bigr\}.
$$
Clearly, if $n\phi \in (\frac{\pi}{2},\pi)$, then $\hat{u}(k,\tau^{-1/n})$
is analytic in a sector of width between $\pi$ and $2\pi$, and in
particular the Bromwich contour $C$ can be chosen to be the imaginary
axis. With the suitable decay of $\hat{u}(k,\tau^{-1/n})$ at $\tau =
\infty$:
\begin{multline*}
\hat{u}(k,t) = \hat{v}(k,t) - \hat{v}_0(k) = O(t)\qquad \mbox{as } t \to
0,\qquad \mbox{which means that} \\
\hat{u}(k,\tau^{-1/n}) = O(\tau^{-1/n})\qquad \mbox{as } \tau \to \infty,
\end{multline*}
Jordan's lemma applies and $C$ can be deformed to the edges of
the sector $S_{\tau,\phi}$, i.e.
\begin{multline*}
\hat{U}(k,q) = \frac{1}{2\pi i} \biggl\{ \int_{\infty e^{-i n\phi}}^{0} +
\int_{0}^{\infty e^{i n\phi}} \biggr\} \Bigl[ \hat{v}(k,\tau^{-1/n}) -
\hat{v}_0(k) \Bigr] e^{q\tau}\,d\tau \\
= \frac{1}{2\pi i} \biggl\{ \int_{\infty e^{-i n\phi}}^{0} +
\int_{0}^{\infty e^{i n\phi}} \biggr\} \hat{v}(k,\tau^{-1/n})
e^{q\tau}\,d\tau
\end{multline*}
(carefully note that the integral of $\hat{v}_0(k)$ over the contour is
0). Further, as shown in Theorem \ref{Testimate} in the Appendix, there is
a sector ${\tilde S}_{\tilde \delta}$ in the right-half $t$-plane (with
$\phi < {\tilde \delta} < \frac{\pi}{2}$) so that
$$
\|\hat{v}(\cdot,t)\|_{l^1} \le C e^{-\frac{3}{4} \nu \Re\,t}\qquad
\mbox{as $t \to \infty$ in ${\tilde S}_{\tilde \delta}$}.
$$
So
$$
\|\hat{v}(\cdot,\tau^{-1/n})\|_{l^1} \le C e^{-\frac{3}{4} \nu
\Re(\tau^{-1/n})}\qquad \mbox{as $\tau \to 0$ along $e^{\pm i
n\phi}(0,\infty)$},
$$
and the boundedness of $\|\hat{v}(\cdot,\tau^{-1/n})\|_{l^1}$ for large
$|\tau|$ implies that
$$
\|\hat{v}(\cdot,\tau^{-1/n})\|_{l^1} \le C e^{-\frac{3}{4} \nu
\Re(\tau^{-1/n})}\qquad \mbox{for all $\tau \in e^{\pm i
n\phi}(0,\infty)$}.
$$
It follows that
\begin{multline*}
\|\hat{U}(\cdot,q)\|_{l^1} \le C \int_{0}^{\infty} e^{-\frac{3}{4} \nu
\Re(\tau^{-1/n}) + q \Re\,\tau}\,d|\tau| \le C \int_{0}^{\infty}
e^{-\frac{3}{4} \nu r^{-1/n} \cos\phi + q r\cos n\phi}\,dr,
\end{multline*}
and a standard application of the Laplace method (with the change of
variable $r = q^{-n/(n+1)} s$) shows that
$$
\|\hat{U}(\cdot,q)\|_{l^1} \le C_1 e^{-C_2 q^{1/(n+1)}}\qquad \mbox{as } q
\to +\infty.
$$

\section{Estimates of $\alpha$ based the solution of (\ref{IntUeqn}) in
$[0, q_0]$}\label{S6}

Define $\hat{U}^{(a)}$ as in (\ref{eq:eq456}) and $\hat{U}^{(b)} =
\hat{U}-\hat{U}^{(a)}$. Using (\ref{IntUeqn}), it is convenient to write
an integral equation for $\hat{U}^{(b)}$ for $q > q_0$:
\begin{equation}\label{e.1}
\hat{U}^{(b)}(k,q) = -ik_{j} \int_{q_{0}}^{q} \mathcal{G}(q,q';k)
\hat{H}_{j}^{(b)}(k,q')\,dq' + \hat{U}^{(s)}(k,q),
\end{equation}
where
\begin{equation}\label{e.2}
\hat{U}^{(s)}(k,q) = -ik_{j} \int_{0}^{\min\{q,2q_{0}\}}
\mathcal{G}(q,q';k) \hat{H}_{j}^{(a)}(k,q')\,dq' + \hat{U}^{(0)}(k,q),
\end{equation}
and
\begin{gather}
\label{e.3} \hat{H}_{j}^{(a)}(k,q) = P_{k} \Bigl[ \hat{v}_{0,j} \hat{*}
\hat{U}^{(a)} + \hat{U}_{j}^{(a)} \hat{*} \hat{v}_{0} + \hat{U}_{j}^{(a)}
\ds \hat{U}^{(a)} \Bigr](k,q), \\
\label{e.4} \hat{H}_{j}^{(b)}(k,q) = P_{k} \Bigl[ \hat{v}_{0,j} \hat{*}
\hat{U}^{(b)} + \hat{U}_{j}^{(b)} \hat{*} \hat{v}_{0} + \hat{U}_{j}^{(a)}
\ds \hat{U}^{(b)} + \hat{U}_{j}^{(b)} \ds \hat{U}^{(a)} +
\hat{U}_{j}^{(b)} \ds \hat{U}^{(b)} \Bigr](k,q).
\end{gather}
Also, we define ${\hat R}^{(b)} (k, q) = -i k_j {\hat H}_j^{(b)} (k,q)$.
It is to be noted that the support of $\hat{H}^{(a)}$ is $[0, 2q_0]$.
Thus, if $\hat{U}^{(a)}$ is known (computationally or otherwise), then
$\hat{H}^{(a)}$ and therefore $\hat{U}^{(s)} $ are known for all $q$.

\noindent{\bf Proof of Theorem \ref{Thm03}:}
Note that
$$
|\hat{R}^{(b)}(k,q)| \leq 2|k| \Bigl[ 2|\hat{v}_{0}| \hat{*}
|\hat{U}^{(b)}| + 2|\hat{U}^{(a)}| \ds |\hat{U}^{(b)}| + |\hat{U}^{(b)}|
\ds |\hat{U}^{(b)}| \Bigr](k,q),
$$
where $|\cdot|$ is the usual Euclidean norm in $\mathbb{R}^{3}$. By Lemma
\ref{L2.4} we can define a best constant
\begin{equation}\label{sizeG}
B_{0}(k) = \sup_{q_{0} \leq q' \leq q} \Big\{ (q-q')^{1/2-1/(2n)}
|\mathcal{G}(q,q';k)| \Big\}
\end{equation}
and conclude that
\begin{multline*}
|\mathcal{G}(q,q';k) \hat{R}^{(b)}(k,q')|  \leq 2|k|B_{0}(k)
(q-q')^{1/(2n)-1/2} \Bigl[ 2|\hat{v}_{0}| \hat{*} |\hat{U}^{(b)}| +
2|\hat{U}^{(a)}| \ds |\hat{U}^{(b)}| \\
+ |\hat{U}^{(b)}| \ds |\hat{U}^{(b)}| \Bigr](k,q').
\end{multline*}
It follows from Lemma \ref{lem0.1} that
$$
\|\mathcal{G}(q,q';\cdot) \hat{R}^{(b)}(\cdot,q')\|_{l^1} \leq \psi(q-q')
\Bigl[ B_{1} u + B_{2}*u + B_{3} u*u \Bigr](q'),
$$
where $\psi(q) = q^{1/(2n)-1/2}$ and
\begin{multline*}
u(q)  = \|\hat{U}^{(b)}(\cdot,q)\|_{l^1},\qquad B_{1}  = 4\sup_{k \in
\mathbb{Z}^{3}} \big\{ |k|B_{0}(k) \big\} \|\hat{v}_{0}\|_{l^1}, \\
B_{2}(q) = 4\sup_{k \in \mathbb{Z}^{3}} \big\{ |k|B_{0}(k) \big\}
\|\hat{U}^{(a)}(\cdot,q)\|_{l^1},\qquad  B_{3}  = 2\sup_{k \in
\mathbb{Z}^{3}} \big\{ |k|B_{0}(k) \big\}.
\end{multline*}

Taking the $l^1$-norm in $k$ on both sides of \eqref{e.1}, multiplying the
equation by $e^{-\alpha q}$ for some $\alpha \geq \alpha_{0} \geq 0$ and
integrating over the interval $[q_{0},M]$, we obtain
\begin{multline*}
L_{q_{0},M} \leq \int_{q_{0}}^{M} e^{-\alpha q} \int_{q_{0}}^{q}
\psi(q-q') \Bigl[ B_{1} u + B_{2}*u + B_{3} u*u \Bigr](q')\,dq'\,dq +
\int_{q_{0}}^{M} e^{-\alpha q} u^{(s)}(q)\,dq \\
\leq \int_{q_{0}}^{M} \Bigl[ B_{1} u + B_{2}*u + B_{3} u*u \Bigr](q')
\int_{q'}^{M} e^{-\alpha q} \psi(q-q') \,dq\,dq' + \int_{q_{0}}^{M}
e^{-\alpha q} u^{(s)}(q)\,dq \\
\leq \int_{0}^{\infty} e^{-\alpha q} \psi(q) \,dq \int_{q_{0}}^{M}
e^{-\alpha q'} \Bigl[ B_{1} u + B_{2}*u + B_{3} u*u \Bigr](q')\,dq' +
\int_{q_{0}}^{M} e^{-\alpha q} u^{(s)}(q)\,dq,
\end{multline*}
where
\begin{equation}\label{defus}
L_{q_{0},M} := \int_{q_{0}}^{M} e^{-\alpha q} u(q)\,dq,\qquad u^{(s)}(q) =
\|\hat{U}^{(s)}(\cdot,q)\|_{l^1}.
\end{equation}
If we use the fact that
\begin{multline*}
\int_{q_{0}}^{M} e^{-\alpha q'} u*v(q')\,dq'  = \int_{q_{0}}^{M}
e^{-\alpha q'} \int_{q_{0}}^{q'} u(s) v(q'-s)\,ds\,dq' \\
= \int_{q_{0}}^{M} u(s) \int_{s}^{M} e^{-\alpha q'} v(q'-s)\,dq'\,ds
\end{multline*}
for any function $v$ on $[0,M]$ (recall that $u = 0$ on $[0,q_{0}]$), then
\begin{multline}\label{ext3}
L_{q_{0},M}  \leq \int_{0}^{\infty} e^{-\alpha q} \psi(q) \,dq \Bigg\{
\biggl[ B_{1} + \int_{0}^{q_{0}} e^{-\alpha q'} B_{2}(q')\,dq' \biggr]
L_{q_{0},M} + B_{3} L_{q_{0},M}^{2} \Bigg\}  \\ + b\alpha^{-1/2-1/(2n)}
\leq \alpha^{-1/2-1/(2n)} \Bigl[ \epsilon_{1} L_{q_{0},M} + \epsilon
L_{q_{0},M}^{2} \Bigr] + b\alpha^{-1/2-1/(2n)},
\end{multline}
where
\begin{gather}
\label{ext3.1} b = \alpha^{1/2+1/(2n)} \int_{q_{0}}^{\infty} e^{-\alpha q}
u^{(s)}(q)\,dq, \\
\label{ext3.2} \epsilon_{1} = \Gamma \biggl( \frac{1}{2}+\frac{1}{2n}
\biggr) \biggl[ B_{1} + \int_{0}^{q_{0}} e^{-\alpha_{0} q'} B_{2}(q')\,dq'
\biggr],\qquad \epsilon = \Gamma \biggl( \frac{1}{2}+\frac{1}{2n}
\biggr)\, B_{3}.
\end{gather}
For
$$
\epsilon_{1} < \alpha^{1/2+1/(2n)}\qquad \mbox{and}\qquad \bigl(
\epsilon_{1}-\alpha^{1/2+1/(2n)} \bigr)^{2} > 4\epsilon b,
$$
this leads to an estimate for $L_{q_{0},M}$ independent of $M$:
\begin{equation}\label{ext11}
L_{q_{0},M} \leq \frac{1}{2\epsilon} \biggl[ \alpha^{1/2+1/(2n)} -
\epsilon_{1} - \sqrt{\bigl( \epsilon_{1}-\alpha^{1/2+1/(2n)} \bigr)^{2} -
4\epsilon b} \biggr].
\end{equation}
So $\|\hat{U}(\cdot,q)\|_{l^1} \in {L}^{1}(e^{-\alpha q}\,dq)$ and the
solution to (\ref{nseq0}) exists for $t \in (0,\alpha^{-1/n})$, if
$\alpha$ is sufficiently large so that
$$
\alpha \geq \alpha_{0},\qquad \alpha^{1/2+1/(2n)} > \epsilon_{1} +
2\sqrt{\epsilon b}.
$$
Alternatively, we may choose $\alpha_0 = \alpha$, in which case $\alpha$
has to be large enough to satisfy:
$$
\alpha^{1/2+1/(2n)} > \epsilon_{1} + 2\sqrt{\epsilon b}.
$$
This completes the proof of Theorem \ref{Thm03}

\subsection{Further estimates on $\epsilon_1$, $b$ and $\epsilon$}
\label{furtherest}

By Lemma \ref{L2.4},
\begin{equation}\label{ext9.0}
c_{g} = \sup_{\substack{k \in \mathbb{Z}^3 \\ q_0 \leq q' \leq q}} \Big\{
|k|\, q^{1/2} (q-q')^{1/2-1/(2n)} |\mathcal{G} (q,q';k)| \Big\} < \infty,
\end{equation}
and by (\ref{defus}), (\ref{e.2}), Lemma \ref{L2.4}, Lemma \ref{lemU0} and
the compact support of $\hat{H}^{(a)}$,
\begin{equation}\label{ext9.1}
c_{s} = \sup_{q_{0} \leq q} \Big\{ q^{1/2-1/(2n)} u^{(s)}(q) \Big\} <
\infty.
\end{equation}
It follows that
\begin{equation}\label{ext9.2}
b \leq c_{s} \Gamma \biggl( \frac{1}{2}+\frac{1}{2n}, \alpha_{0} q_{0}
\biggr),\qquad \epsilon \leq 2 \Gamma \biggl( \frac{1}{2}+\frac{1}{2n}
\biggr) c_{g} q_{0}^{-1/2},
\end{equation}
where
$$
\Gamma(a,x) = \int_{x}^{\infty} t^{a-1} e^{-t}\,dt
$$
is the incomplete Gamma function, and condition (\ref{ext9}) is satisfied if
\begin{equation}\label{ext9.4}
\alpha > \alpha_{0},\qquad \alpha^{1/2+1/(2n)} > \epsilon_{1} + 2 \biggl[
2\Gamma \biggl( \frac{1}{2}+\frac{1}{2n} \biggr) \Gamma \biggl(
\frac{1}{2}+\frac{1}{2n}, \alpha_{0} q_{0} \biggr) c_{g} c_{s}
\biggr]^{1/2} q_{0}^{-1/4}.
\end{equation}
If on a large subinterval $[q_{d}, q_{0}]$, ${\hat U}^{(a)} (\cdot, q)$,
and therefore ${\hat H}^{(a)}$, decays, cf. the exponential decay in
Theorem \ref{Thm02}, then the estimated $c_s$ is small. Also, $\epsilon_1$
in (\ref{ext3.2}) is small for large $q_0$, ultimately since $B_0(k)$ in
(\ref{sizeG}) is small. It is then clear that $\alpha$ in (\ref{ext9.4})
can be chosen small as well.

\section{Control of numerical errors in $[0, q_0]$ in a discretized
scheme}

The errors in a numerical discretization scheme for 3-D Navier-Stokes
cannot be readily controlled since these depend on derivatives of the
classical solution; and these are not known to exist beyond some initial
time interval. In contrast to physical space approaches, the $q$
derivatives of the solution ${\hat U}$ to (\ref{IntUeqn}), are {\it a
priori} bounded on any interval $[q_m, q_0] \subset \mathbb{R}^+$ for $q_m
> 0$, by Lemma \ref{lemqder}. Further, if $q_m$ is chosen appropriately
small, then the small $t$ expansion of NS solution provides an accurate
representation for ${\hat U} $ and therefore of ${\hat H}_j$ in $[0, q_m]$
to any desired accuracy. Calculating the numerical solution to
(\ref{IntUeqn}) with rigorous error control is relevant in more than one
way.

In \S\ref{S6}, we have shown that control of ${\hat U}$ on a finite
$q$-interval provides sharper estimates on the exponent $\alpha$, and
therefore an improved classical existence time estimate for $v$. If this
estimate exceeds $T_c$, the time beyond which Leray's weak solution
becomes classical again (see the Appendix for a bound on $T_c$) then, of
course, global existence of $v$ follows.

Furthermore, a numerical scheme to calculate (\ref{IntUeqn}), which is
analyzed in this section is interesting in its own right. It provides,
through Laplace transform, an alternative calculation method for
Navier-Stokes dynamics. Evidently, this method is not numerically
efficient to determine $v (x, t)$ for fixed time $t$; nonetheless it may
be advantageous in finding long time averages involving $v$ and $\nabla v$
needed for turbulent flow. These can sometimes be expressed as functionals
of ${\hat U}$.

\begin{Definition}\label{defalphadelta}
We introduce a discrete operator $\mathcal{N}_\delta^{(N)}$ by
\begin{multline}\label{discret2}
\left \{ \mathcal{N}_{\delta}^{(N)} [ \hat{V} ] \right \} \left (k,
m\delta \right ) = - i k_j \sum_{m'=m_s}^{m-1} w^{(1)} (m, m'; k, \delta)
\mathcal{P}_N {\hat H}_{j,\delta}^{(N)} (k, m'\delta)
\\
+ {\hat U}^{(0,N)} (k, m \delta) - i k_j w^{(1,1)} (m, k, \delta)
\mathcal{P}_N {\hat H}_{j, \delta}^{(N)} (k, m\delta),
\end{multline}
where $k \in [-N, N]^3 \setminus \{0 \}$, $\NN \ni m \ge m_s$, $q_m = m_s
\delta$ ($q_m$ is independent of $\delta$) and
\begin{equation}\label{eqU0N}
{\hat U}^{(0,N)} (k, m \delta) = {\hat U}^{(0)} (k, m\delta) - i k_j
\int_0^{q_m} \mathcal{G} (m\delta, q'; k) \mathcal{P}_N \hat{H}_j^{(N)}
(k, q') dq'
\end{equation}
is considered known, while for $m' \ge m_s$,
\begin{multline}\label{discret2.H}
{\hat H}_{j,\delta}^{(N)} (k, m'\delta) = \sum_{k' \in [-N, N]^3 \setminus
\{ 0, k \}} P_k \left [ {\hat v}_{0,j} (k')\hat{V}(k-k', m'\delta) + {\hat
v}_{0} (k') \hat{V}_j (k-k', m'\delta) \right ] \\
+ \sum_{\substack{k' \in [-N, N]^3 \setminus \{ 0, k \} \\
m^{\prime\prime}=m_s..,m'-m_s}} P_k \left [ {\hat V}_{j} (k',
m^{\prime\prime} \delta) {\hat V} (k-k', (m'-m^{\prime\prime}) \delta)
\right ] w^{(2)} (m', m^{\prime\prime}; k, \delta ) \\
+ 2\sum_{l=0}^{m_s-1} w^{(2,l)} (m', k, \delta) P_k \left [ {\hat E}^{(l)}
(k) {\hat *} \hat{V} (k, (m'-l) \delta) \right ].
\end{multline}
\end{Definition}
\z In (\ref{discret2.H}), ${\hat E}^{(l)} (k)$ involves ${\hat v}_0
(k)$--this representation encapsulates the singular contribution of ${\hat
U} (\cdot, q')$ and ${\hat U} (\cdot, q-q')$ when $q'$ and $q-q'$ are
small respectively. The precise form of these functions and of the weights
$w^{(1)} (m, m'; k, \delta)$, $w^{(1,1)} (m, k, \delta)$, $w^{(2)} (m',
m''; k, \delta)$ and $w^{(2,l)} (m', k, \delta)$ generally  depend on the
particular discretization scheme employed to calculate
$\mathcal{N}_\delta^{(N)} [ {\hat U} ]$. Also, note that in
(\ref{discret2.H}), the nonlinear terms in the summation are absent when
$m_s \le m' < 2 m_s$. To simplify the discussion, we do not specify the
weights, but only require that they ensure consistency, namely that in the
formal limit  $\delta \to 0$, the discrete operator ${\mathcal
N}_\delta^{(N)}$ becomes $\mathcal{N}^{(N)}$. Based on behavior of the
kernel $\mathcal{G}$, consistency implies that
\begin{multline}\label{wbounds}
|k| |w^{(1)} (m, m'; k, \delta) | \le \frac{C_1 \delta^{1/(2n)}}{m^{1/2}
(m-m')^{1/2-1/(2n)}} \\
|k w^{(1,1)} | \le C_{1,1} \delta^{1/2+1/(2n)} (m \delta)^{-1/2}~,~
|w^{(2)} | \le C_2 \delta ~,~ |w^{(2,l)}| \le C_3 \delta^{1/n}
(l+1)^{-1+1/n}.
\end{multline}
Consider the solution
\begin{equation}\label{discret3}
{\hat U}^{(N)}_\delta (k, m\delta) = \left \{ \mathcal{N}_{\delta}^{(N)}
\left [\hat {U}^{(N)}_\delta \right ] \right \} \left (k, m\delta \right )
~~{\rm for} ~~m_s \le m , ~~k \in [-N, N]^3,
\end{equation}
where as noted before, $q_m = m_s \delta$ is small enough so that the
known asymptotic series of  $\hat{U}$ at $q=0$ can be used to accurately
calculate ${\hat U}^{(N)}$ and ${\hat H}_j^{(N)}$ for $q < q_m$, and thus
of ${\hat U}^{(0, N)}$ and ${\hat E}^{(l)}$ in (\ref{eqU0N}) and
(\ref{discret2.H}).

\begin{Definition}\label{defconsis}
We let
$$
T_{E, \delta}^{(N)} = \mathcal{N}^{(N)} {\hat U}^{(N)} -
\mathcal{N}^{(N)}_\delta {\hat U}^{(N)}
$$
be the truncation error due to $q$-discretization for a fixed number of
Fourier modes, $[-N, N]^3$. The discretization is consistent (in the
numerical analysis sense) if $T_{E, \delta}^{(N)}$ scales with some
positive power of $\delta$ and involves a finite number of derivatives of
${\hat U}$.
\end{Definition}

\begin{Definition}
We define $\| \cdot \|^{(\alpha,\delta)}$, the discrete analog of $ \|
\cdot \|^{(\alpha)}$, as follows:
$$
\| {\hat f} \|^{(\alpha,\delta)} = \sup_{m \ge m_s} m^{1-1/n}
\delta^{1-1/n} (1+m^2 \delta^2) e^{-\alpha m\delta} \|{\hat f} (\cdot,
m\delta) \|_{l^{1}}.
$$
\end{Definition}

\begin{Remark}{\label{remweight}
\rm More specific bounds on the truncation error depend on the specific
numerical scheme. It is however standard for numerical quadratures to
choose the weights $w^{(j)}$ so that $q$-integration is exact on $q \in
[q_m, q_0]$ for a polynomial of some order $l$. For a general ${\hat V}
(\cdot, q)$, the interpolation errors involve $l+1$ $q$-derivatives. Lemma
\ref{lemqder} guarantees that the derivatives of $\hat{U}$ are
exponentially bounded for large $q$. It follows that $\|
T^{(N)}_{E,\delta} \|^{(\alpha, \delta)} \to 0$ as $\delta \to 0$.}
\end{Remark}

\begin{Remark}
In the rest of this section, with slight abuse of notation, we write $*$
for the discrete summation convolution in $q$-space ({\it i.e.} sum over
$m'$) and $\ds$ for the discrete double, Fourier-Laplace, convolution.
Since the rest of the paper deals with discrete systems, this should not
cause confusion.
\end{Remark}

\begin{Lemma}\label{discHj}
For $m \ge m_s$, ${\hat H}_{j,\delta}^{(N)}(\cdot,m\delta)$ satisfies the
following estimate:
\begin{multline}
\| {\hat H}_{j,\delta}^{(N)} (\cdot, m\delta ) \|_{l^{1}} \\
\le C \frac{ e^{\alpha m \delta}}{ (1+m^2 \delta^2) m^{1-1/n}
\delta^{1-1/n}} \|\hat{U}_{\delta}^{(N)}\|^{(\alpha,\delta)} \left \{
\|\hat{U}_{\delta}^{(N)}\|^{(\alpha,\delta)} + \| {\hat v}_0 \|_{l^1}  +
C_E \right \}
\end{multline}
\z for some known constant $C_E$.
\end{Lemma}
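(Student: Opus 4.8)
The plan is to estimate the $l^1$-norm in $k$ of each of the three groups of terms in (\ref{discret2.H}) (with $\hat V=\hat U_\delta^{(N)}$ and $m'=m\ge m_s$) separately and to check that each is dominated by $C\,e^{\alpha m\delta}[m^{1-1/n}\delta^{1-1/n}(1+m^2\delta^2)]^{-1}$ times, respectively, $\|\hat v_0\|_{l^1}\|\hat U_\delta^{(N)}\|^{(\alpha,\delta)}$, $(\|\hat U_\delta^{(N)}\|^{(\alpha,\delta)})^2$, and $C_E\|\hat U_\delta^{(N)}\|^{(\alpha,\delta)}$. The common tools are the pointwise Hodge bound $|P_k\hat g(k)|\le 2|\hat g(k)|$ of (\ref{Pbound}), Young's inequality $\|\hat v\hat*\hat w\|_{l^1}\le\|\hat v\|_{l^1}\|\hat w\|_{l^1}$ (Lemma~\ref{lem0.1}), the weight bounds (\ref{wbounds}), and the inequality $\|\hat f(\cdot,m\delta)\|_{l^1}\le\|\hat f\|^{(\alpha,\delta)}e^{\alpha m\delta}[(m\delta)^{1-1/n}(1+m^2\delta^2)]^{-1}$ for $m\ge m_s$ read off from Definition~\ref{defDNorm} (note $(m\delta)^{1-1/n}=m^{1-1/n}\delta^{1-1/n}$, so this is exactly the prefactor in the claim). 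With these, the first, purely spatial group has $l^1$-norm $\le 4\|\hat v_0\|_{l^1}\|\hat U_\delta^{(N)}(\cdot,m\delta)\|_{l^1}$, which after inserting the norm inequality is of the asserted form, contributing the factor $\|\hat v_0\|_{l^1}\|\hat U_\delta^{(N)}\|^{(\alpha,\delta)}$.

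For the genuinely quadratic (double-convolution) group I use in addition $|w^{(2)}|\le C_2\delta$ to obtain the bound $2C_2\delta\sum_{m''=m_s}^{m-m_s}\|\hat U_\delta^{(N)}(\cdot,m''\delta)\|_{l^1}\|\hat U_\delta^{(N)}(\cdot,(m-m'')\delta)\|_{l^1}$, which by the norm inequality is $\le 2C_2\delta(\|\hat U_\delta^{(N)}\|^{(\alpha,\delta)})^2 e^{\alpha m\delta}\,\Sigma_m$ with $\Sigma_m=\sum_{m''=m_s}^{m-m_s}[(m''\delta)^{1-1/n}(1+(m''\delta)^2)((m-m'')\delta)^{1-1/n}(1+((m-m'')\delta)^2)]^{-1}$. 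This group is nonempty only for $m\ge 2m_s$ (the nonlinear double sum in (\ref{discret2.H}) is absent for $m<2m_s$), so $q:=m\delta\ge 2q_m$ and the summation variables $m''\delta,(m-m'')\delta$ stay in $[q_m,q]$, away from the endpoint singularities of the continuum kernel. Then $\delta\,\Sigma_m$ is a Riemann sum for $\int_0^q[s^{1-1/n}(1+s^2)(q-s)^{1-1/n}(1+(q-s)^2)]^{-1}ds$; splitting the sum at its midpoint, where the integrand is monotone on each half, and comparing each monotone half-sum with the corresponding integral — the lone overlap term being $O(\delta)$ times a quantity harmless since $q_m$ is fixed and $\delta\le\delta_0$ — reproduces exactly the estimate established in the proof of Lemma~\ref{lemBanach}, namely $\delta\,\Sigma_m\le C[q^{1-1/n}(1+q^2)]^{-1}$. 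This yields the $(\|\hat U_\delta^{(N)}\|^{(\alpha,\delta)})^2$ contribution with the right prefactor.

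The correction group $2\sum_{l=0}^{m_s-1}w^{(2,l)}P_k[\hat E^{(l)}(k)\hat*\hat U_\delta^{(N)}(k,(m-l)\delta)]$ is handled with $|w^{(2,l)}|\le C_3\delta^{1/n}(l+1)^{-1+1/n}$ and the observation that, $\hat E^{(l)}$ being built from $\hat v_0$ and the explicit small-$q$ Taylor coefficients of $\hat U$, $\sup_l\|\hat E^{(l)}\|_{l^1}=:C_E'<\infty$. Its $l^1$-norm is then $\le C\,C_E'\,\delta^{1/n}\sum_{l=0}^{m_s-1}(l+1)^{-1+1/n}\|\hat U_\delta^{(N)}(\cdot,(m-l)\delta)\|_{l^1}$. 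In the scheme the unknown $\hat U_\delta^{(N)}$ enters only at $q\ge q_m$, the smaller arguments being supplied by the known series (cf. (\ref{eqU0N}) and the discussion following (\ref{discret3})), so $m-l\ge m_s$; together with $l\le m_s-1$ this forces $(m-l)\delta\ge q/2$, hence $[(m-l)\delta]^{1-1/n}(1+[(m-l)\delta]^2)\ge c\,q^{1-1/n}(1+q^2)$. The remaining factor $\delta^{1/n}\sum_{l=0}^{m_s-1}(l+1)^{-1+1/n}$ is a Riemann sum for $\int_0^{q_m}s^{-1+1/n}ds=n q_m^{1/n}$ and is bounded by a fixed constant. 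Thus this group is $\le C\,C_E'\,e^{\alpha m\delta}[m^{1-1/n}\delta^{1-1/n}(1+m^2\delta^2)]^{-1}\|\hat U_\delta^{(N)}\|^{(\alpha,\delta)}$. Adding the three groups and absorbing the numerical constants into a single $C$ (and letting $C_E$ be the corresponding multiple of $C_E'$) yields the stated inequality.

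The main obstacle is the quadratic group: obtaining the discrete analog of the convolution estimate (\ref{eq:norm2}) with a constant \emph{uniform in $\delta$}, so that the Riemann-sum-to-integral comparison does not introduce a factor blowing up as $\delta\to 0$. This is exactly where the structural facts that the nonlinear double sum starts at $m_s$ — keeping its endpoints a fixed distance $q_m$ from the $s^{-1+1/n}$-type singularities of the continuum kernel — and is empty unless $m\ge 2m_s$ are essential. The boundary-term bookkeeping of the Riemann sums, the uniform bound $\sup_l\|\hat E^{(l)}\|_{l^1}<\infty$, and the elementary inequality $m-l\ge m/2$ when $m-l\ge m_s$ and $l\le m_s-1$ are all routine.
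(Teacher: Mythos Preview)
Your proof is correct and follows essentially the same route as the paper: both split $\hat H_{j,\delta}^{(N)}$ into the same three groups, bound the linear-in-$\hat v_0$ piece pointwise, reduce the quadratic group to a discrete analogue of the convolution estimate in Lemma~\ref{lemBanach}, and handle the $\hat E^{(l)}$ correction via the weight bound $|w^{(2,l)}|\le C_3\delta^{1/n}(l+1)^{-1+1/n}$ together with an \emph{a priori} bound on $\|\hat E^{(l)}\|_{l^1}$. The only cosmetic differences are that the paper states the two discrete-sum bounds as ``standard integral estimates'' without spelling out the Riemann-sum comparison, and uses $\|\hat E^{(m')}\|_{l^1}\le C_E e^{\alpha_0 m'\delta}$ (with $\alpha_0\le\alpha$) rather than your uniform bound $\sup_l\|\hat E^{(l)}\|_{l^1}<\infty$; since $l\delta\le q_m$ is fixed these are equivalent up to constants.
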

\begin{proof}
Using the properties of discrete convolution we see that
\begin{multline*}
\| P_k \left \{ {\hat v}_{0,j} {\hat *} {\hat U}_\delta ^{(N)} + {\hat
U}_{\delta, j}^{(N)} {\hat *} {\hat v}_{0} + {\hat U}_{\delta, j}^{(N)}
\ds {\hat U}_{\delta}^{(N)} \right \} \|_{l^{1}} \\
\le C \biggl \{ \| {\hat v}_0 \|_{l^1} \|{\hat U}_{\delta}^{(N)} (\cdot,
m\delta ) \|_{l^1} + \delta \sum_{m'=m_s}^{m-m_s} \|{\hat
U}_{\delta}^{(N)} (\cdot, m'\delta ) \|_{l^1} \|{\hat U}_{\delta}^{(N)}
(\cdot, (m-m')\delta ) \|_{l^1} \\
+ \delta^{1/n} \sum_{m'=0}^{m_s-1} (m'+1)^{-1+1/n} \|{\hat
E}^{(m')}\|_{l^1} \|{\hat U}_{\delta}^{(N)} (\cdot, (m-m')\delta )
\|_{l^1} \biggr \} \\
\le C \frac{e^{\alpha m \delta} }{m^{1-1/n} \delta^{1-1/n} (1+m^2
\delta^2)} \left \{ \left ( C_{E}+ \| {\hat v}_0 \|_{l^1} \right ) \|{\hat
U}_{\delta}^{(N)} \|^{(\alpha, \delta)} + \left ( \| {\hat
U}_{\delta}^{(N)} \|^{(\alpha,\delta)} \right )^2 \right \},
\end{multline*}
where, by a standard integral estimate,
\begin{multline*}
\delta^{1-1/n} m^{1-1/n} (1+m^2 \delta^2) \sum_{m'=1}^{m-1} \frac{\delta}{
[\delta m' \delta (m-m')]^{1-1/n} (1+\delta^2 {m'}^2) (1+ \delta^2
{(m-m')}^2 ) } < C, \\
\delta^{1-1/n} m^{1-1/n} (1+m^2 \delta^2) \sum_{m'=0}^{m_s-1}
\frac{\delta}{ [\delta (m'+1) \delta (m-m')]^{1-1/n} (1+\delta^2 (m-m')^2)
} < C,
\end{multline*}
for $C$ independent of $m$, $m'$ and $\delta$. In the above estimates we
have used
$$
\|{\hat E}^{(m')}\|_{l^1} \le C_E e^{\alpha_0 m'\delta}\qquad (\alpha_0
\le \alpha)
$$
which can be obtained from the definition of ${\hat E}^{(m')}$.
\end{proof}

\z Define ${\hat H}_{j,\delta}^{(N,1)}$ and ${\hat H}_{j,\delta}^{(N,2)} $
by substituting ${\hat U}^{(N)}_\delta = {\hat U}^{(N,1)}_\delta $ and
${\hat U}_\delta^{(N,2)}$, respectively, in ${\hat H}_{j,\delta}^{(N)}$.
\begin{Lemma}\label{discHjdiff}
For $m \ge m_s$, we have
\begin{multline*}
\| {\hat H}_{j,\delta}^{(N,1)} (\cdot, m\delta ) - {\hat H}_{j,
\delta}^{(N,2)} (\cdot, m\delta) \|_{l^{1}} \\
\le C \frac{ e^{\alpha m \delta}}{ (1+m^2 \delta^2) m^{1-1/n}
\delta^{1-1/n}} \| {\hat U}^{(N,1)}_\delta - {\hat U}^{(N,2)}_\delta
\|^{(\alpha,\delta)} \\
\times \left \{ \| {\hat U}^{(N,1)}_\delta \|^{(\alpha,\delta)} + \| {\hat
U}^{(N,2)}_\delta \|^{(\alpha,\delta)} + \| {\hat v}_0 \|_{l^1} + C_E
\right \}.
\end{multline*}
\end{Lemma}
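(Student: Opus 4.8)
The plan is to run the argument of Lemma~\ref{discHj} essentially verbatim, after first reducing the difference of the quadratic/bilinear expressions to expressions that are linear in ${\hat U}^{(N,1)}_\delta - {\hat U}^{(N,2)}_\delta$. Concretely, I would start from the discrete analogue of the identity \eqref{Udiff},
\begin{multline*}
{\hat U}_{\delta,j}^{(N,1)} \ds {\hat U}_{\delta}^{(N,1)} - {\hat U}_{\delta,j}^{(N,2)} \ds {\hat U}_{\delta}^{(N,2)} \\
= {\hat U}_{\delta,j}^{(N,1)} \ds \left( {\hat U}_{\delta}^{(N,1)} - {\hat U}_{\delta}^{(N,2)} \right) + \left( {\hat U}_{\delta,j}^{(N,1)} - {\hat U}_{\delta,j}^{(N,2)} \right) \ds {\hat U}_{\delta}^{(N,2)},
\end{multline*}
and observe that the remaining pieces of ${\hat H}_{j,\delta}^{(N)}$ in \eqref{discret2.H} — namely ${\hat v}_{0,j} {\hat *}\, \hat V$, $\hat V_j {\hat *}\, {\hat v}_0$, and the terms $2\sum_l w^{(2,l)} P_k[{\hat E}^{(l)}{\hat *}\,\hat V(k,(m'-l)\delta)]$ — are linear in $\hat V$, so their contribution to the difference is obtained by simply inserting ${\hat U}_{\delta}^{(N,1)} - {\hat U}_{\delta}^{(N,2)}$ in place of $\hat V$; in particular the coefficients ${\hat E}^{(l)}(k)$, which are built only from ${\hat v}_0$, are unchanged.

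Next I would take the $l^1$-norm in $k$ and push it through $P_k$ and the Fourier convolution by Lemmas~\ref{lem0.1} and \ref{lem0.2}, exactly as in Lemma~\ref{discHj}. This turns each term into a discrete $q$-convolution (sum over $m'$) of the sequences $\|{\hat U}_{\delta}^{(N,i)}(\cdot,m'\delta)\|_{l^1}$ and $\|({\hat U}_{\delta}^{(N,1)} - {\hat U}_{\delta}^{(N,2)})(\cdot,m'\delta)\|_{l^1}$, the ${\hat E}^{(l)}$-term contributing the factor $C_E$ through the bound $\|{\hat E}^{(m')}\|_{l^1}\le C_E e^{\alpha_0 m'\delta}$, $\alpha_0\le\alpha$. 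Inserting the definition of $\|\cdot\|^{(\alpha,\delta)}$ (each factor $\|{\hat W}(\cdot,m'\delta)\|_{l^1}$ is at most $\|{\hat W}\|^{(\alpha,\delta)} e^{\alpha m'\delta} (m')^{-1+1/n}\delta^{-1+1/n}(1+m'^2\delta^2)^{-1}$) and then applying the two discrete-sum estimates quoted at the end of the proof of Lemma~\ref{discHj} — those bounding $\delta^{1-1/n} m^{1-1/n}(1+m^2\delta^2)$ times the relevant sums by a constant independent of $m$, $m'$, $\delta$ — collapses everything to the asserted right-hand side, with $\|{\hat U}_{\delta}^{(N,1)} - {\hat U}_{\delta}^{(N,2)}\|^{(\alpha,\delta)}$ factored out and $\|{\hat U}_{\delta}^{(N,1)}\|^{(\alpha,\delta)} + \|{\hat U}_{\delta}^{(N,2)}\|^{(\alpha,\delta)} + \|{\hat v}_0\|_{l^1} + C_E$ collecting the remaining norms.

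There is no genuinely new obstacle here; the proof is a word-for-word transcription of that of Lemma~\ref{discHj} once the splitting above is in place. The only point needing a little care is the bookkeeping: making sure each summand of the split is charged against the correct solution norm, and in particular that the linear ${\hat E}^{(l)}$-contribution and the $\hat v_0$-contributions are attached to the $C_E$ and $\|{\hat v}_0\|_{l^1}$ terms rather than to a factor of a solution norm, so that the final bound is genuinely linear in $\|{\hat U}_{\delta}^{(N,1)} - {\hat U}_{\delta}^{(N,2)}\|^{(\alpha,\delta)}$. Accordingly I would present the displayed identity and then simply write that the remaining estimates are identical to those in the proof of Lemma~\ref{discHj}.
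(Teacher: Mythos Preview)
Your proposal is correct and matches the paper's approach exactly: the paper's entire proof is the single line ``The proof is similar to that of Lemma~\ref{discHj}'', and what you have written is precisely the standard expansion of that remark via the bilinear splitting \eqref{Udiff} followed by the same $l^1$/discrete-convolution estimates.
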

\begin{proof}
The proof is similar to that of Lemma~\ref{discHj}
\end{proof}

\begin{Lemma}\label{lemTENd}
(i) For $C_4$ defined in (\ref{C4def}), assume $\alpha$ is large enough so
that
\begin{equation}\label{ensure6}
2 C_4 \alpha^{-1/2-1/(2n)} \left ( \left [C_E + \| {\hat v}_0 \|_{l^{1}}
\right ] + 2 \| {\hat U}^{(0,N)} \|^{(\alpha, \delta)} \right ) < 1.
\end{equation}
Then, for any $\alpha^{-1} \ge \delta_0 \ge \delta > 0$,
$\mathcal{N}_{\delta}^{(N)}$ is contractive and there is a unique solution
to ${\hat U}^{(N)}_\delta = \mathcal{N}_{\delta}^{(N)} \left [ {\hat
U}^{(N)}_\delta \right ]$, which satisfies the bounds
$$
\| {\hat U}^{(N)}_\delta  (\cdot, m \delta) \|_{l^1} \le \frac{2 e^{\alpha
m \delta}}{m^{1-1/n} \delta^{1-1/n} (1+m^2 \delta^2) } \| {\hat U}^{(0,N)}
\|^{(\alpha, \delta)}.
$$
(ii) If $\alpha$ is such that
\begin{equation}\label{ensure6.2}
2 C_4 \alpha^{-1/2-1/(2n)} \left ( \left [C_E + \| {\hat v}_0 \|_{l^{1}}
\right ] + 2 \| {\hat U}^{(0,N)} \|^{(\alpha, \delta)} \right ) \leq
\frac{1}{2},
\end{equation}
then
$$
\| {\hat U}^{(N)}_\delta  (\cdot, m \delta) - {\hat U}^{(N)} (\cdot,
m\delta) \|_{l^1} \le \frac{2 e^{\alpha m \delta}}{m^{1-1/n}
\delta^{1-1/n} (1+m^2 \delta^2) } \| T_{E,\delta}^{(N)} \|^{(\alpha,
\delta)}.
$$
\end{Lemma}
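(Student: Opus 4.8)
The plan is to treat Lemma~\ref{lemTENd} as the fully discrete counterpart of Lemmas~\ref{inteqn} and~\ref{lemTEN}, the only genuinely new ingredient being a Riemann-sum version of the quadrature estimate~(\ref{C3def}).

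First I would establish the discrete analogue of~(\ref{NUnormsup}): for every ${\hat V}$ with $\|{\hat V}\|^{(\alpha,\delta)}<\infty$,
\[
\|\mathcal{N}_\delta^{(N)}[{\hat V}]\|^{(\alpha,\delta)}\le C_4\,\alpha^{-1/2-1/(2n)}\,\|{\hat V}\|^{(\alpha,\delta)}\bigl(\|{\hat V}\|^{(\alpha,\delta)}+\|{\hat v}_0\|_{l^1}+C_E\bigr)+\|{\hat U}^{(0,N)}\|^{(\alpha,\delta)},
\]
together with the matching Lipschitz bound, $C_4$ being the constant in~(\ref{C4def}). To obtain this, insert the bound of Lemma~\ref{discHj} on $\|{\hat H}^{(N)}_{j,\delta}(\cdot,m'\delta)\|_{l^1}$ into~(\ref{discret2}), use the weight bounds~(\ref{wbounds}) for $w^{(1)}$ and $w^{(1,1)}$, and multiply by the weight $m^{1-1/n}\delta^{1-1/n}(1+m^2\delta^2)e^{-\alpha m\delta}$ that defines $\|\cdot\|^{(\alpha,\delta)}$. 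The powers of $\delta$ then cancel (as consistency forces them to), and the sum over $m'$ becomes, up to a $\delta$- and $N$-independent constant, a Riemann sum for an integral of the same type as in~(\ref{C3def}), which is controlled there; the restriction $\alpha\delta\le\alpha\delta_0\le1$ is exactly what makes the comparison constants uniform in $\delta$ (cf. the integral estimates used in Lemma~\ref{discHj}), and the extra diagonal term coming from $w^{(1,1)}$ is bounded separately using $q=m\delta\ge q_m>0$. The difference estimate follows identically from Lemma~\ref{discHjdiff}.

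Part~(i) is then the usual contraction mapping: under~(\ref{ensure6}) the displayed estimate shows that $\mathcal{N}_\delta^{(N)}$ maps the ball of radius $2\|{\hat U}^{(0,N)}\|^{(\alpha,\delta)}$ in $\|\cdot\|^{(\alpha,\delta)}$ into itself and is a contraction there, so it has a unique fixed point ${\hat U}^{(N)}_\delta$ in that ball with $\|{\hat U}^{(N)}_\delta\|^{(\alpha,\delta)}\le 2\|{\hat U}^{(0,N)}\|^{(\alpha,\delta)}$; dividing out the weight gives the stated pointwise bound. For part~(ii) the stronger hypothesis~(\ref{ensure6.2}) makes the contraction constant on that ball at most $\tfrac12$. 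Using consistency (Definition~\ref{defconsis}) one has, on the grid, ${\hat U}^{(N)}=\mathcal{N}_\delta^{(N)}[{\hat U}^{(N)}]+T_{E,\delta}^{(N)}$; subtracting this from ${\hat U}^{(N)}_\delta=\mathcal{N}_\delta^{(N)}[{\hat U}^{(N)}_\delta]$, taking $\|\cdot\|^{(\alpha,\delta)}$ and invoking the $\tfrac12$-contraction yields $\|{\hat U}^{(N)}_\delta-{\hat U}^{(N)}\|^{(\alpha,\delta)}\le\tfrac12\|{\hat U}^{(N)}_\delta-{\hat U}^{(N)}\|^{(\alpha,\delta)}+\|T_{E,\delta}^{(N)}\|^{(\alpha,\delta)}$, hence $\le2\|T_{E,\delta}^{(N)}\|^{(\alpha,\delta)}$, and dividing by the weight gives the claim. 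One point to verify here is that ${\hat U}^{(N)}$ restricted to the grid lies in the relevant ball; this follows from $\|{\hat U}^{(N)}\|^{(\alpha,\delta)}\le\|{\hat U}^{(N)}\|^{(\alpha)}\le 2\|{\hat U}^{(0)}\|^{(\alpha)}$ (Lemmas~\ref{lemNUN} and~\ref{inteqn}) together with the fact that ${\hat U}^{(0,N)}$ differs from $\mathcal{P}_N{\hat U}^{(0)}$ only by the integral of ${\hat H}^{(N)}_j$ over the fixed short interval $[0,q_m]$.

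The main obstacle is the first step: converting the $m'$-sum into the integral~(\ref{C3def}) with constants independent of both $\delta\le\delta_0$ and $N$, while handling the two singular ends of the kernel (the $(m-m')^{-1/2+1/(2n)}$ factor near $m'=m$ and the $q'^{-1+1/n}$ factor near $q'=q_m$) and the diagonal weight $w^{(1,1)}$ which has no continuous analogue. Once that discrete quadrature estimate is available, the remainder is the routine Banach fixed-point and consistency-error bookkeeping already carried out for $\mathcal{A}^{(\alpha)}$ in \S5--\S6.
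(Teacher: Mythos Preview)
Your proposal is correct and follows essentially the same route as the paper: insert the bound of Lemma~\ref{discHj} into (\ref{discret2}) via the weight estimates (\ref{wbounds}), reduce the resulting $m'$-sum to a discrete analogue of (\ref{C3def}) (this is precisely where (\ref{C4def}) comes from, together with the separate bound $\delta^{1/2+1/(2n)}(m\delta)^{-1/2}\le C\alpha^{-1/2-1/(2n)}$ for the diagonal $w^{(1,1)}$ term using $\alpha\delta\le1$), and then run the contraction argument exactly as in Lemma~\ref{inteqn}; part~(ii) is the same triangle-inequality-plus-$\tfrac12$-contraction computation you wrote. Your extra remark that one must check $\hat U^{(N)}$ restricted to the grid lies in the relevant ball is a point the paper passes over silently but which is indeed needed for the Lipschitz estimate to apply.
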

\begin{proof}
(i) We have
\begin{multline}\label{C4def}
\| \mathcal{N}_{\delta}^{(N)} [ {\hat U}^{(N)}_\delta ] (\cdot,m\delta)
\|_{l^1} \le \| {\hat U}^{(0,N)}(\cdot,m\delta) \|_{l^1}
\\
+ C \sum_{m'=m_s}^{m-1} \frac{\delta^{1/(2n)}}{m^{1/2}
(m-m')^{1/2-1/(2n)}} \|{\hat H}_{\delta}^{(N)} (\cdot, m'\delta)\|_{l^1} +
C \delta^{1/(2n)} m^{-1/2} \|{\hat H}_{\delta}^{(N)} (\cdot,
m\delta)\|_{l^1} \\
\le \frac{ e^{\alpha m \delta}}{ (1+m^2 \delta^2) m^{1-1/n}
\delta^{1-1/n}} \biggl\{ \|{\hat U}^{(0,N)}\|^{(\alpha,\delta)} \\
+ C_4 \alpha^{-1/2-1/(2n)} \| {\hat U}^{(N)}_\delta \|^{(\alpha,\delta)}
\Bigl( \| {\hat U}^{(N)}_\delta \|^{(\alpha,\delta)} + \| {\hat v}_0
\|_{l^1} + C_E \Bigr) \biggr\},
\end{multline}
where, by a standard integral estimate,
\begin{multline*}
\delta^{1-1/n} m^{1-1/n} (1+m^2 \delta^2) \sum_{m'=m_s}^{m-1} \frac{\delta
e^{\alpha (m'-m) \delta}}{[\delta m]^{1/2} [\delta (m-m')]^{1/2-1/(2n)}
[\delta m']^{1-1/n} (1+m'^2 \delta^2)} \\
\le C \alpha^{-1/2-1/(2n)},
\end{multline*}
and
$$
\frac{\delta^{1/2+1/(2n)}}{[\delta m]^{1/2}} \le C \delta^{1/2+1/(2n)} \le
C \alpha^{-1/2-1/(2n)}.
$$
Thus ${\hat U}_{\delta}^{(N)} = \mathcal{N}_{\delta}^{(N)} \left [ {\hat
U}^{(N)}_\delta \right ]$ has a unique solution such that
$$
\| {\hat U}_{\delta}^{(N)} \|^{(\alpha,\delta)} \le 2 \| {\hat U}^{(0,N)}
\|^{(\alpha,\delta)}.
$$
Hence the first part of the lemma follows.

(ii) Under the assumption,
\begin{multline*}
\| {\hat U}^{(N)} - {\hat U}_\delta^{(N)} \|^{(\alpha, \delta)} \le \|
\mathcal{N}_\delta^{(N)} [ {\hat U}^{(N)} ] - \mathcal{N}_\delta^{(N)} [
{\hat U}_\delta^{(N)} ] \|^{(\alpha, \delta)} + \| T_{E,\delta}^{(N)}
\|^{(\alpha, \delta)} \\
\le \frac{1}{2} \| {\hat U}^{(N)} - {\hat U}_\delta^{(N)} \|^{(\alpha,
\delta)} + \| T_{E,\delta}^{(N)} \|^{(\alpha, \delta)}.
\end{multline*}
So
$$
\| {\hat U}^{(N)} - {\hat U}_\delta^{(N)} \|^{(\alpha, \delta)} \le 2 \|
T_{E,\delta}^{(N)} \|^{(\alpha, \delta)}
$$
and the second part of the lemma follows.
\end{proof}

\noindent{\bf Proof of Theorem \ref{Thm04}:}
Note that
$$
{\hat U}_\delta^{(N)} - {\hat U} = {\hat U}_\delta^{(N)} - {\hat U}^{(N)}
+ {\hat U}^{(N)} - {\hat U}.
$$
From Lemmas \ref{lemTEN} and \ref{lemTENd}, it follows that
$$
\| {\hat U}_\delta^{(N)} - {\hat U} \|^{(\alpha, \delta)} \le 2 \| T_{E,
N} \|^{(\alpha,\delta)} + 2 \| T_{E,\delta}^{(N)} \|^{(\alpha, \delta)} +
\| (I-\mathcal{P}_N) {\hat U} \|^{(\alpha,\delta)},
$$
which tends to zero as $N \to \infty$, $\delta \to 0$, by Lemmas
\ref{lemTEN} and \ref{lemTENd}.

\section{Numerical Method}

In this section we describe a numerical scheme for calculating the
solution $\hat{U}_\delta^{(N)}$ over a fixed interval. The procedure can
be further optimized in a number of ways, such as adapting the quadrature
scheme to the features of the kernel.

\subsection{Outline of the Algorithm}
The main algorithm is summarized as follows:
\begin{verbatim}
  initialization;
  startup routine;
  for each time step
    advance the solution using second order Runge-Kutta integration;
  end
  estimate the error and output the results.
\end{verbatim}

\subsection{Startup Routine}
One difficulty in numerically solving \eqref{IntUeqn} is that the equation
is  singular at $q = 0$. To overcome it, we first compute $\hat{u}$ for
small $t$ by solving \eqref{hatueq} using Taylor expansion:
$$
\hat{u}(k,t) = \sum_{m=1}^{\infty} \hat{c}_{m}(k)\, t^{m},
$$
where
\begin{multline*}
\hat{c}_{1} = \hat{v}_{1}, \\
\hat{c}_{m+1} = \frac{1}{m+1} \Biggl[ -\nu |k|^{2} \hat{c}_{m} -
ik_{j}P_{k}\biggl( \hat{v}_{0,j} \hat{*} \hat{c}_{m} + \hat{c}_{m,j}
\hat{*} \hat{v}_{0} + \sum_{\ell=1}^{m-1} \hat{c}_{\ell,j} \hat{*}
\hat{c}_{m-\ell} \biggr) \Biggr],\ m \geq 1.
\end{multline*}
Then $\hat{U}$ is computed for small $q$ by
$$
\hat{U}(k,q) = \sum_{m=1}^{m_{0}} \hat{d}_{m}(k)\, q^{m/n-1},
$$
where\footnote{Note that
$$
\int_{0}^{\infty} \hat{d}_{m} q^{m/n-1} e^{-q/t^{n}}\,dq = \hat{d}_{m}
t^{m} \int_{0}^{\infty} q^{m/n-1} e^{-q}\,dq = \Gamma \biggl( \frac{m}{n}
\biggr) \cdot \hat{d}_{m} t^{m},
$$
so $\Gamma(m/n) \cdot \hat{d}_{m} = \hat{c}_{m}$.}
$$
\hat{d}_{m} = \frac{\hat{c}_{m}}{\Gamma(m/n)}.
$$

\subsection{Second Order Runge-Kutta Integration}
After computing the solution on $[0,q_{m}]$ for some $q_{m} > 0$ by using
Taylor expansions, we solve the integral equation \eqref{IntUeqn} on
$[q_{m},q_{0}]$ using second order Runge-Kutta (predictor-corrector)
method. Since this numerical scheme is preliminary and far from being
optimized, we do not include the details here.

What is worth  mentioning is the evaluation of the functions $F(\mu)$ and
$G(\mu)$. As shown in earlier sections, both $F$ and $G$ are entire
functions and have power series expansions at $\mu = 0$. For small $\mu$,
these expansions converge very rapidly (super-factorially) and provide an
efficient way to evaluate $F$ and $G$. For large $\mu$, however, the
alternating nature of the expansions raises the issue of catastrophic
cancellation, and it is no longer appropriate to use them for numerical
computation. In this regime we use the asymptotic expansions of $F$ and
$G$, which we derive below.

While the complete asymptotics of $F$ and $G$ can be derived using
Laplace's method, a faster and easier way is to use the differential
equations they satisfy. For example, recall that for $n = 2$,
$$
F(\mu) = \frac{1}{2\pi i} (I_{1} - \bar{I}_{1}) = \frac{1}{\pi} \Im I_{1},
$$
where
\begin{displaymath}
  I_{1} = i \int_{0}^{\infty} r^{-1/2} e^{-r-i\mu r^{-1/2}}\,dr.
\end{displaymath}
It is easy to check that $I_{1}$ satisfies the third-order ODE (the same
equation satisfied by $F$)
$$
\mu I_{1}''' + I_{1}'' - 2I_{1} = 0,
$$
and it has the leading order asymptotics
$$
I_{1} \sim 2\sqrt{\frac{\pi}{3}}\, e^{-z},
$$
where
$$
z = 3 \cdot 2^{-2/3} \mu^{2/3} e^{i\pi/3}.
$$
If we make the change of dependent variable
$$
I_{1} = 2\sqrt{\frac{\pi}{3}}\, e^{-z} J_{1}(z),
$$
then $J_{1}$ must have the form
$$
J_{1}(z) = 1 + \sum_{m=1}^{\infty} a_{m} z^{-m},
$$
and it solves the ODE
$$
J_{1}''' - 3J_{1}'' + \Bigl( 3 + \frac{1}{4z^{2}} \Bigr) J_{1}' -
\frac{1}{4z^{2}}\, J_{1} = 0.
$$
It follows that
$$
F(\mu) \sim \frac{2}{\sqrt{3\pi}} \Im \Bigg\{ e^{-z} \biggl( 1 +
\sum_{m=1}^{\infty} a_{m} z^{-m} \biggr) \Bigg\},
$$
where $a_{1},\ a_{2}$, etc. are determined by the recurrence
\begin{multline*}
a_{0} = 1,\qquad a_{1} = -\frac{1}{12}, \\
a_{m} = -\frac{1}{12m} \biggl[ \Bigl( 12m^{2} - 12m + 1 \Bigr) a_{m-1} +
\Bigl( 4m^{3} - 12m^{2} + 9m - 2 \Bigr) a_{m-2} \biggr],\qquad m \geq 2.
\end{multline*}
Similarly,
$$
G(\mu) \sim -\frac{(4\mu)^{1/3}}{\sqrt{3\pi}} \Im \Bigg\{ e^{-z+i\pi/6}
\biggl( 1 + \sum_{m=1}^{\infty} c_{m} z^{-m} \biggr) \Bigg\},
$$
where
\begin{multline*}
c_{0} = 1,\qquad c_{1} = \frac{5}{12},\qquad c_{2} = -\frac{35}{288}, \\
c_{m} = \frac{1}{24m} \biggl[ \Bigl( -48m^{2} + 60m - 2 \Bigr) c_{m-1} +
\Bigl( -32m^{3} + 108m^{2} - 80m + 9 \Bigr) c_{m-2} \\
+ \Bigl( -8m^{4} + 52m^{3} - 102m^{2} + 67m - 14 \Bigr) c_{m-3}
\biggr],\qquad m \geq 3.
\end{multline*}

\section{Preliminary Numerical Results}

For all computations in this section we take $n = 2$. The numerical
results and computation scheme are preliminary. The algorithm has not been
optimized for efficiency, and not all estimates have been rigorously
analyzed yet, and these will be published elsewhere. Nonetheless, the
partial results show some important features of the integral equation
approach.

\subsection{Test Case}
We first tested our code with the following test function:
\begin{multline*}
\mbox{(Kida flow)}:  v = (v^{(1)},v^{(2)},v^{(3)}), \\
v^{(1)}(x_{1},x_{2},x_{3},t) = \frac{\sin x_{1}}{1+t} (\cos 3x_{2} \cos
x_{3} - \cos x_{2} \cos 3x_{3}), \\
v^{(1)}(x_{1},x_{2},x_{3},t) = v^{(2)}(x_{3},x_{1},x_{2},t) =
v^{(3)}(x_{2},x_{3},x_{1},t).
\end{multline*}
The forcing $f$ corresponding to $v$ was generated with $\nu = 1$ and
equation \eqref{IntUeqn} was solved without the knowledge of $v$. The
computed solution was then compared to $v$.

For this test case, the startup routine computed the solution on $[0,q_m]
= [0,0.2]$ using $m_{0} = 8$ terms and the Runge-Kutta solver advanced the
solution to $q_0 = 1$. $2N = 16$ points (i.e. 8 Fourier modes) were used
in each dimension (excluding the extra points for anti-aliasing).

We computed the solution for different step size $\delta$ and the errors
at $q_0$
$$
e_{\delta} = \max_{x \in \mathbb{T}^{3}} |U_{\delta}^{(N)}(x,q_0) -
U(x,q_0)|
$$
are listed in Table \ref{table.err.acc.test}. To ensure the error decays
at the right order $O(\delta^{2})$, we also included in the table the
numerical order of convergence:
$$
\beta_{\delta} = \log_{2} \frac{e_{2\delta}}{e_{\delta}}.
$$
\begin{table}[h]
\centering \caption{Test case: errors at $q_0$.}
\label{table.err.acc.test}
\begin{tabular}{c||c|c}
  \hline\hline
  $\delta$ & $e_{\delta}$ & $\beta_{\delta}$ \\
  \hline
  $1/20$ & 1.3399e-04 & $-$ \\
  $1/40$ & 3.1987e-05 & 2.07 \\
  $1/80$ & 7.1462e-06 & 2.16 \\
  $1/160$ & 1.3620e-06 & 2.39 \\
  \hline\hline
\end{tabular}
\end{table}

\subsection{Kida Flow}
Now we consider the Kida flow with the initial condition
$$
v_{0}^{(1)}(x_{1},x_{2},x_{3},0) = \sin x_{1} (\cos 3x_{2} \cos x_{3} -
\cos x_{2} \cos 3x_{3}).
$$
We computed the solution for $\nu = 0.1$ with zero forcing to $q_0 = 10$
using $2N = 128$ points in each dimension, and step size $\delta = 0.05$.
The parameters for the startup procedure are the same as before: $q_m =
0.2$ and $m_{0} = 8$. To investigate the growth of the solution
$\hat{U}_{\delta}^{(N)}$ with $q$, we computed the $l^{1}$-norm
$$
\| \hat{U}_{\delta}^{(N)}(\cdot,q) \|_{l^{1}} = \sum_{k \in [-N,N]^{3}}
|\hat{U}_{\delta}^{(N)}(k,q)|
$$
and plotted $\| \hat{U}_{\delta}^{(64)}(\cdot,q) \|_{l^{1}}$ vs. $q$ in
Fig.\ref{fig.kn.kida.acc.0f}. For comparison we also included in
Fig.\ref{fig.kn.kida.acc.0f} a plot of the solution to the original
(unaccelerated) equation.
\begin{figure}[h]
\centering \subfigure[]{
  \psfrag{p}{\tiny $p$}
  \psfrag{kn1}{\tiny $\| \hat{U}_{\delta}^{(64)}(\cdot,p) \|_{l^{1}}$}
  \psfrag{Zero forcing}{\tiny Zero forcing, $\nu = 0.1$}
  \includegraphics[scale=0.7]{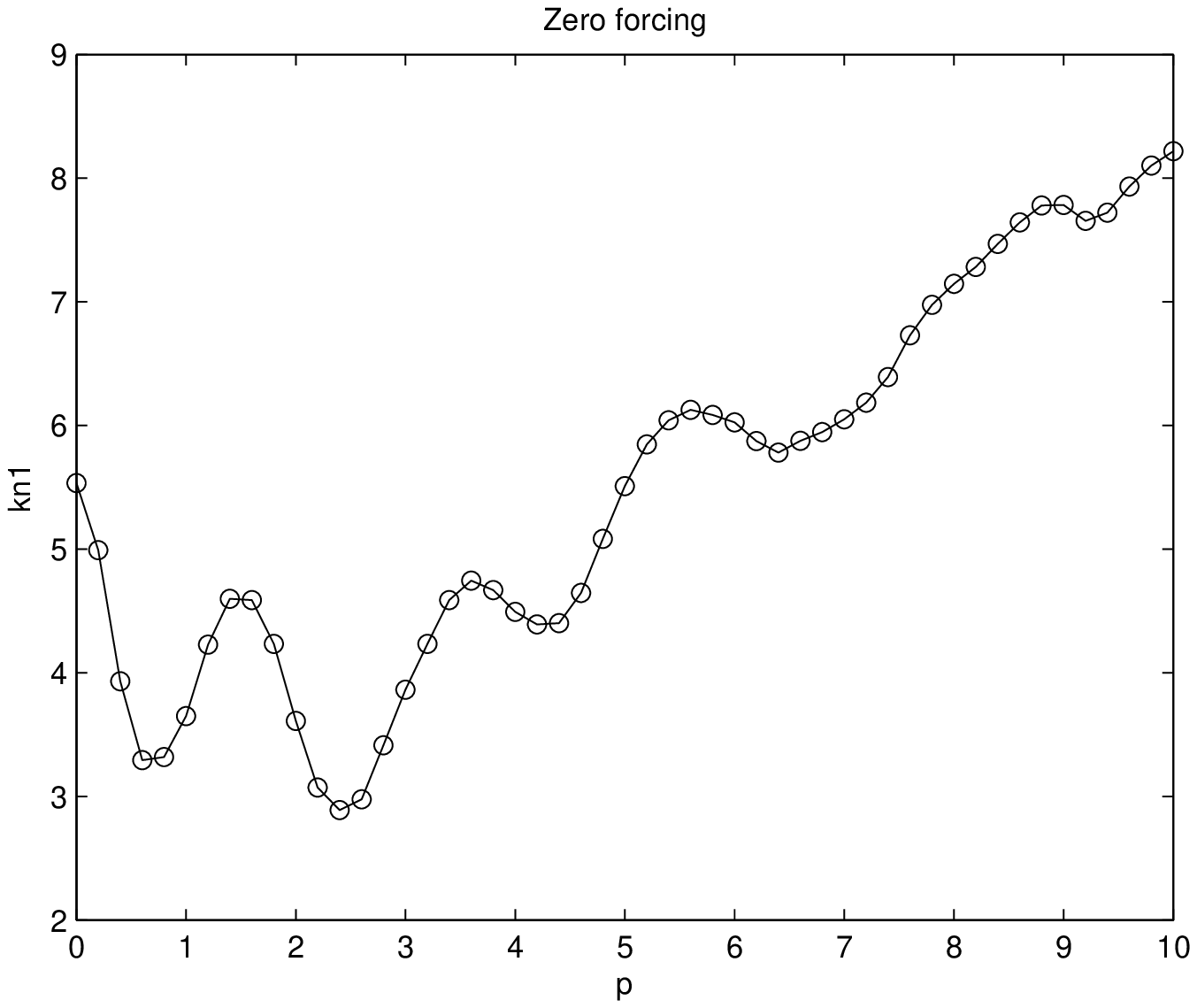}
} \subfigure[]{
  \psfrag{q}{\tiny $q$}
  \psfrag{kn1}{\tiny $\| \hat{U}_{\delta}^{(64)}(\cdot,q) \|_{l^{1}}$}
  \psfrag{Zero forcing}{\tiny Zero forcing, $\nu = 0.1$}
  \includegraphics[scale=0.7]{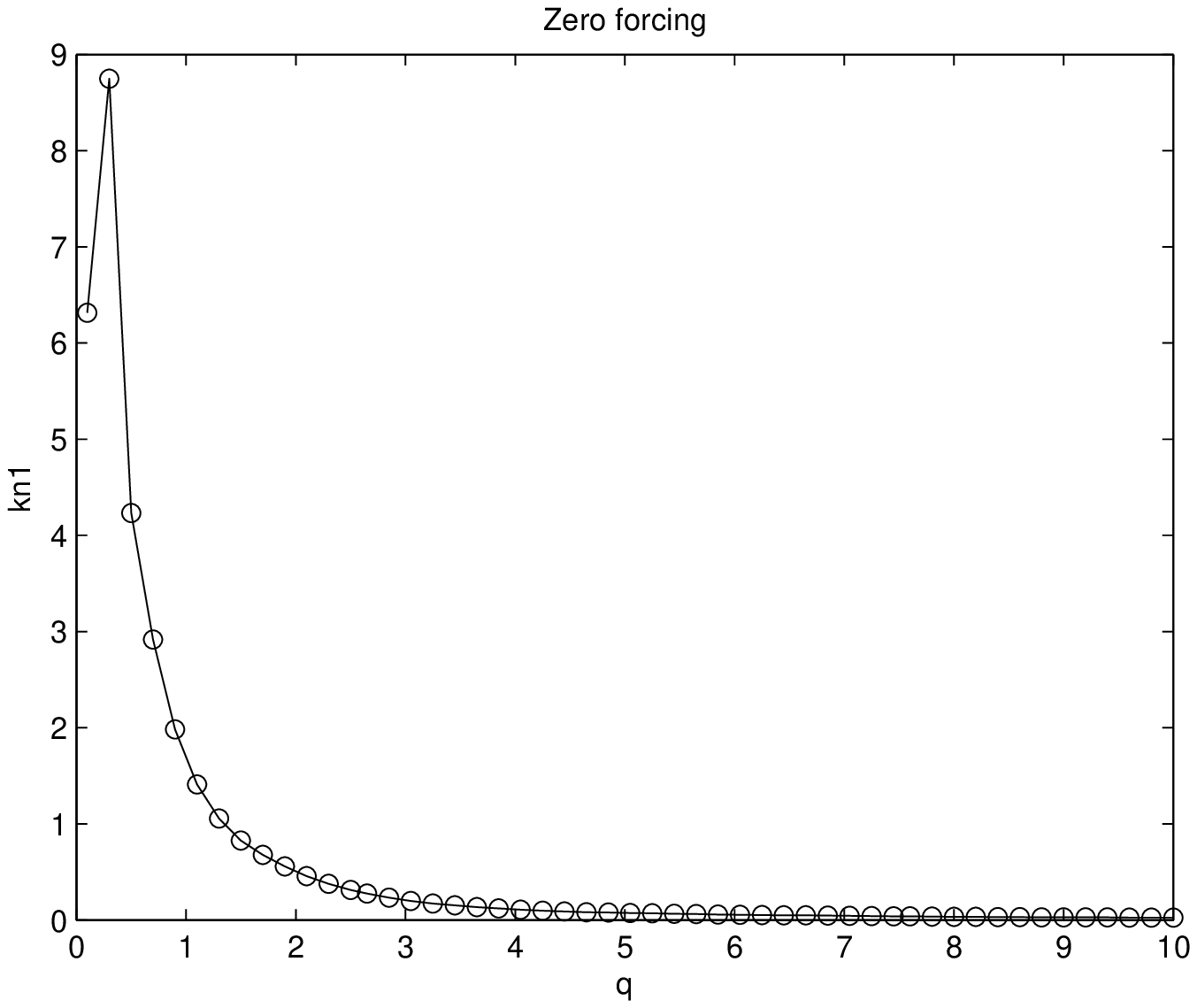}
} \caption{For zero forcing and $\nu = 0.1$: (a). The original
(unaccelerated) equation, $\| \hat{U}_{\delta}^{(64)}(\cdot,p) \|_{l^{1}}$
vs. $p$. (b). Accelerated equation with $n = 2$, $\|
\hat{U}_{\delta}^{(64)}(\cdot,q) \|_{l^{1}}$ vs. $q$.}
\label{fig.kn.kida.acc.0f}
\end{figure}

Fig.\ref{fig.log.kn.kida.acc.0f} shows the plot of $\log \|
\hat{U}_{\delta}^{(64)}(\cdot,q) \|_{l^{1}}$ vs. $q^{1/3}$. Note that $\|
\hat{U}(\cdot,q) \|_{l^{1}} \sim c_{1} e^{-c_{2} q^{1/3}}$ for large $q$,
where $c_{2} = (0.3)^{2/3} 2^{-5/3} 3 \approx 0.42$.
\begin{figure}[h]
\centering \subfigure[]{
  \psfrag{q13}{\tiny $q^{1/3}$}
  \psfrag{log(kn1)}{\tiny $\log \| \hat{U}_{\delta}^{(64)}(\cdot,q) \|_{l^{1}}$}
  \psfrag{Zero forcing}{\tiny Zero forcing, $\nu = 0.1$}
  \includegraphics[scale=0.7]{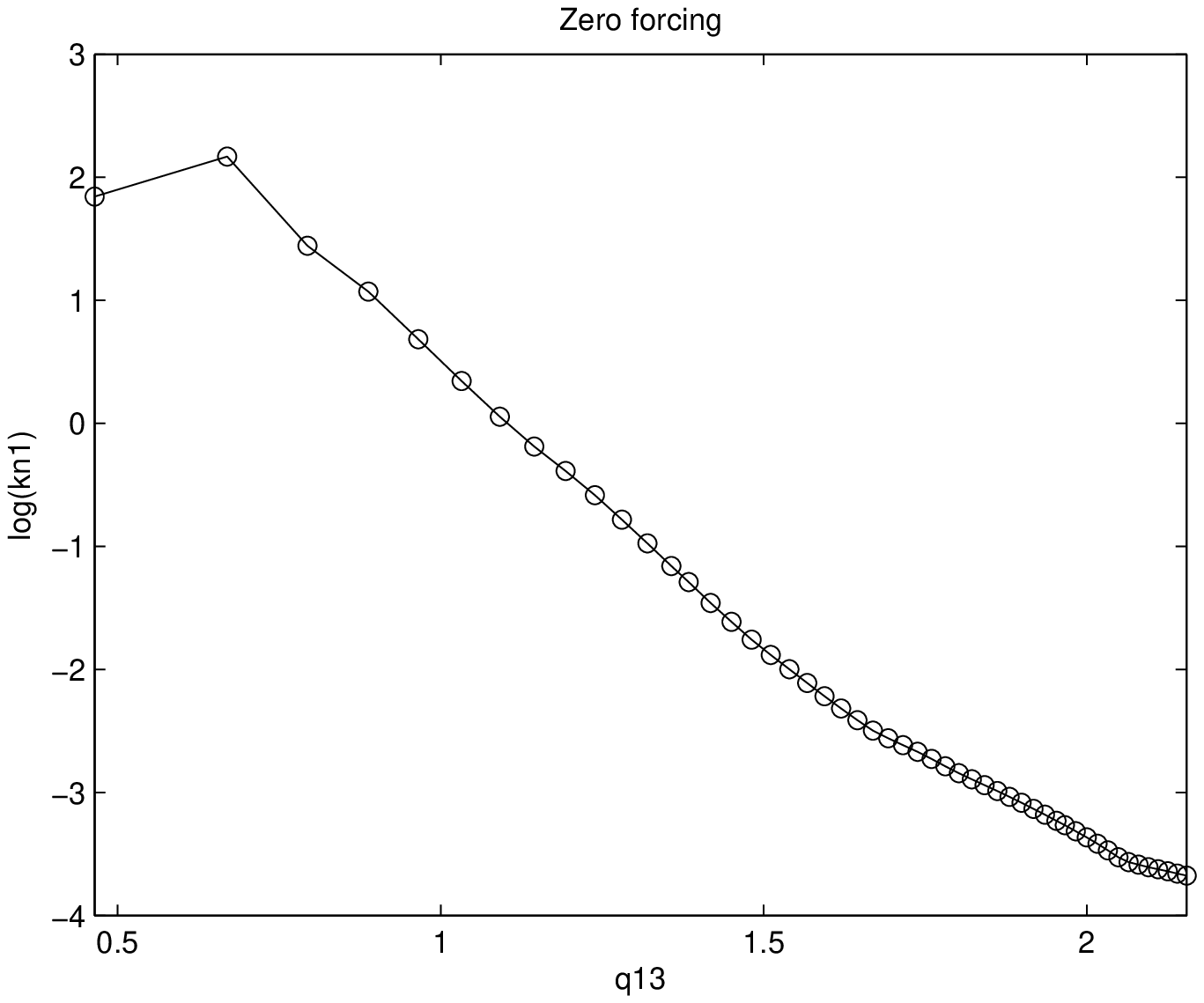}
} \subfigure[]{
  \psfrag{q13}{\tiny $q^{1/3}$}
  \psfrag{dlog(kn1)}{\tiny $\Delta_{-} \Bigl[ \log \| \hat{U}_{\delta}^{(64)}
    (\cdot,s^{3}) \|_{l^{1}} \Bigr] / \Delta s$}
  \psfrag{Zero forcing}{\tiny Zero forcing, $\nu = 0.1$}
  \includegraphics[scale=0.7]{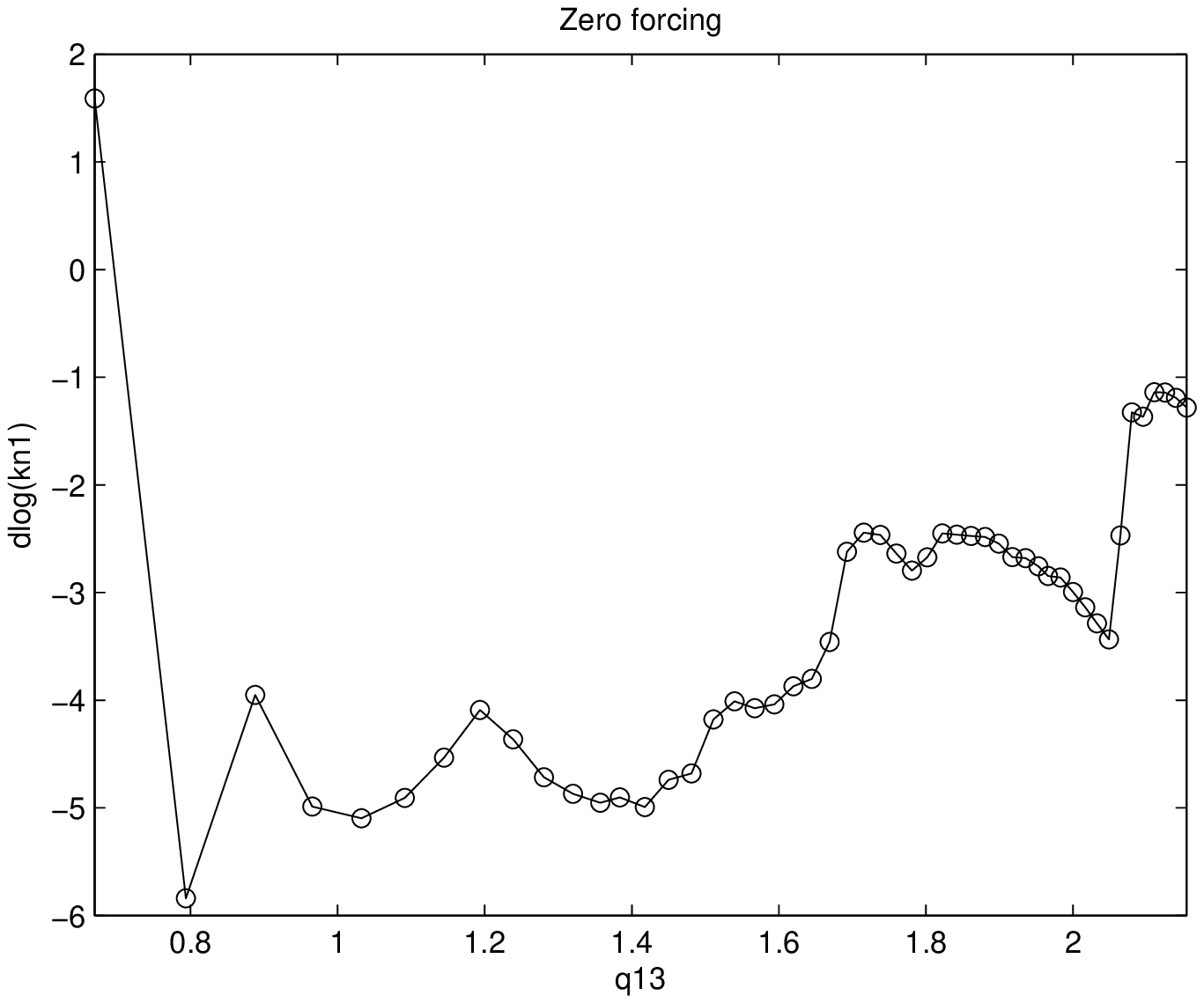}
} \caption{Asymptotic behavior of $\| \hat{U}_{\delta}^{(64)}(\cdot,q)
\|_{l^{1}}$. (a). $\log \| \hat{U}_{\delta}^{(64)}(\cdot,q) \|_{l^{1}}$
vs. $q^{1/3}$. (b) $\Delta_{-} \Bigl[ \log \| \hat{U}_{\delta}^{(64)}
(\cdot,s^{3}) \|_{l^{1}} \Bigr] / \Delta s$ vs. $s$, where $s = q^{1/3}$
and $\Delta_{-}$ is the backward difference operator in $s$.}
\label{fig.log.kn.kida.acc.0f}
\end{figure}

\subsection{Longer Time Existence}
We next computed the constants in estimate \eqref{ext3}. By taking $q_{0}
= 10$ and $\alpha_{0} = 30$, we obtained
$$
b \approx 0,\qquad \epsilon \approx 1.1403,\qquad \epsilon_{1} \approx
13.6921.
$$
This implies the existence of the solution for $\alpha \geq 32.7564$,
which corresponds to an interval of existence $(0,\alpha^{-1/2}) =
(0,0.1747)$.

We compare with a classical estimate of the existence time. The formula
$$
T_{cl} = \frac{1}{c_m\| D^m v_0 \|_{L^2}}
$$
(where $c_m$ is known) was optimized in the range $m>5/2$, giving a
maximal value $T_{cl} \approx 0.01$ at $m \approx 3.2$, about 17 time
shorter than the time obtained from the integral equation.

Furthermore, considerable optimization of code is expected to allow
numerical calculation over much larger $q$-interval.

\section{Appendix}

\subsection{Derivation of the integral equation and of its properties}
\label{A1}

\subsubsection*{The integral equation}
We start with the Fourier transformed equation \eqref{eqn.ns.acc.fu}:
\begin{multline}\label{eqn.ns.acc.fu}
\hat{u}_{t} + \nu |k|^{2} \hat{u}  = -ik_{j} P_{k}[\hat{v}_{0,j} \hat{*}
\hat{u} + \hat{u}_{j} \hat{*} \hat{v}_{0} + \hat{u}_{j} \hat{*} \hat{u}] +
\hat{v}_{1}(k) \\
=: -ik_{j} \hat{h}_{j} + \hat{v}_{1}(k) =: \hat{r} + \hat{v}_{1}(k), \\
\hat{u}(k,0) = 0,
\end{multline}
where
$$
\hat{v}_{1}(k) = \hat{f} (k) - \nu |k|^{2} \hat{v}_{0} - ik_{j}
P_{k}[\hat{v}_{0,j} \hat{*} \hat{v}_{0}].
$$
For $n > 1$, look for a solution in the form
\begin{equation}\label{eqn.ns.acc.int.fu}
\hat{u}(k,t) = \int_{0}^{\infty} \hat{U}(k,q) e^{-q/t^{n}}\,dq
\end{equation}
where
\begin{multline}\label{eqn.ns.acc.int.fr}
\hat{r}(k,t)  = -ik_{j} \hat{h}_{j}(k,t) = -ik_{j} \int_{0}^{\infty}
\hat{H}_{j}(k,q) e^{-q/t^{n}}\,dq \\
=: \int_{0}^{\infty} \hat{R}(k,q) e^{-q/t^{n}}\,dq.
\end{multline}
Inversion of the left side of (\ref{eqn.ns.acc.fu}) and the change of
variable $\tau = t^{-n}$ yield
\begin{multline}\label{A10.88.1}
\hat{u}(k,t)  = \int_{0}^{t} e^{-\nu |k|^{2} (t-s)} \hat{r}(k,s)\,ds +
\int_{0}^{t} e^{-\nu |k|^{2} (t-s)} \hat{v}_{1}(k)\,ds \\
= \int_{0}^{1} t e^{-\nu |k|^{2} t(1-s)} \hat{r}(k,ts)\,ds +
\frac{\hat{v}_{1}(k)}{\nu |k|^{2}} \Bigl( 1-e^{-\nu |k|^{2} t} \Bigr) \\
= \int_{0}^{1} \tau^{-1/n} e^{-\nu |k|^{2} \tau^{-1/n}(1-s)}
\int_{0}^{\infty} \hat{R}(k,q') e^{-q' s^{-n} \tau}\,dq'\,ds +
\frac{\hat{v}_{1}(k)}{\nu |k|^{2}} \Bigl( 1-e^{-\nu |k|^{2} \tau^{-1/n}}
\Bigr) \\
=: I(k,\tau) + J(k,\tau).
\end{multline}
Inverse Laplace transform (formal for now) of $I$ and $J$ yield:
\begin{multline}\label{Gdefine}
\frac{1}{2\pi i} \int_{c-i\infty}^{c+i\infty} I(k,\tau) e^{q\tau}\,d\tau
\\
= \int_{0}^{\infty} \hat{R}(k,q') \int_{0}^{1} \bigg\{ \frac{1}{2\pi i}
\int_{c-i\infty}^{c+i\infty} \tau^{-1/n} e^{-\nu |k|^{2}
\tau^{-1/n}(1-s)+(q-q' s^{-n}) \tau}\,d\tau \bigg\} \,ds\,dq' \\
= \int_{0}^{\infty} \hat{R}(k,q') \int_{0}^{1} (q-q's^{-n})^{1/n-1}
\bigg\{ \frac{1}{2\pi i} \int_{c-i\infty}^{c+i\infty} \zeta^{-1/n}
e^{\zeta-\mu\zeta^{-1/n}}\, d\zeta \bigg\}\,ds\,dq' \\
=: \int_0^\infty \hat{R} (k, q') \mathcal{G} (q, q'; k) dq',
\end{multline}
where
$$
\zeta = (q-q's^{-n}) \tau, \qquad \mu = \nu |k|^{2} (1-s)
(q-q's^{-n})^{1/n},
$$
while
\begin{multline}\label{J123}
\frac{1}{2\pi i} \int_{c-i\infty}^{c+i\infty} J(k,\tau) e^{q\tau}\,d\tau =
\frac{\hat{v}_{1}(k)}{\nu |k|^{2}} \bigg\{ \frac{1}{2\pi i}
\int_{c-i\infty}^{c+i\infty} e^{q\tau} \Bigl( 1-e^{-\nu |k|^{2}
\tau^{-1/n}} \Bigr)\,d\tau \bigg\} \\
= \frac{\hat{v}_{1}(k)}{\nu |k|^{2} q} \bigg\{ \frac{1}{2\pi i}
\int_{c-i\infty}^{c+i\infty} \Bigl( e^{\tilde{\zeta}} -
e^{\tilde{\zeta}-\tilde{\mu}\tilde{\zeta}^{-1/n}} \Bigr)\,d\tilde{\zeta}
\bigg\} =: \hat{U}^{(0)}(k,q),
\end{multline}
where
$$
\tilde{\zeta} = q\tau,\qquad \tilde{\mu} = \nu |k|^{2} q^{1/n}.
$$
The Bromwich contour is homotopic to a contour $C$ from $\infty e^{-i\pi}$
to the left of the origin, ending at $\infty e^{i\pi}$ encircling the
origin, and we finally obtain the integral equation:
$$
\hat{U}(k,q) = \int_{0}^{q} \mathcal{G}(q,q';k) \Bigl[\! -\!ik_{j}
\hat{H}_{j}(k,q') \Bigr]\,dq' + \hat{U}^{(0)}(k,q),
$$
where
$$
\hat{H}_{j}(k,q) = P_{k} \biggl[ \hat{v}_{0,j} \hat{*} \hat{U} +
\hat{U}_{j} \hat{*} \hat{v}_{0} + \hat{U}_{j} \substack{* \\ *} \hat{U}
\biggr](k,q).
$$
Rescaling the integration variable, $s \to s \gamma^{1/n}$, the kernel in
(\ref{Gdefine}) becomes
\begin{multline}\label{Gdefineac}
\mathcal{G}(q,q';k)  = q^{1/n-1} \gamma^{1/n} \int_{1}^{\gamma^{-1/n}}
(1-s^{-n})^{1/n-1} F(\mu)\,ds \\
= \frac{\gamma^{1/n}}{\nu^{1/2} |k| q^{1-1/(2n)}} \int_{1}^{\gamma^{-1/n}}
(1-s^{-n})^{1/(2n)-1} (1-s\gamma^{1/n})^{-1/2} \mu^{1/2} F(\mu)\,ds,
\end{multline}
where
$$
\gamma = \frac{q'}{q},\qquad \mu = \nu |k|^{2} q^{1/n} (1-s\gamma^{1/n})
(1-s^{-n})^{1/n},
$$
and
$$
F(\mu) = \frac{1}{2\pi i} \int_{C} \zeta^{-1/n} e^{\zeta-\mu
\zeta^{-1/n}}\,d\zeta.
$$
Furthermore, from (\ref{J123}) we have
\begin{equation}\label{defU}
\hat{U}^{(0)}(k,q) = \frac{\hat{v}_{1}(k)}{\nu |k|^{2} q} G(\nu |k|^{2}
q^{1/n}),
\end{equation}
where
$$
G(\mu) = -\frac{1}{2\pi i} \int_{C} e^{\zeta-\mu
\zeta^{-1/n}}\,d\zeta,\qquad G(0) = 0.
$$

\subsubsection*{Power series representations of $F$ and $G$}
To show that $F$ is entire, we start with the definition
\begin{equation}\label{defF}
F(\mu) = \frac{1}{2\pi i} \int_{C} \zeta^{-1/n} e^{\zeta} e^{-\mu
\zeta^{-1/n}}\,d\zeta
\end{equation}
and expand $e^{-\mu\zeta^{-1/n}}$ into power series of $\zeta^{-1/n}$ to
obtain
$$
F(\mu) = \frac{1}{2\pi i} \sum_{j=0}^{\infty} \frac{(-1)^{j}}{j!} \mu^{j}
\int_{C} \zeta^{-(j+1)/n} e^{\zeta} \,d\zeta,
$$
where the interchange of order of summation and integration is justified
by the absolute convergence of the series. {F}rom the integral
representation of the Gamma function (see \cite{Abramowitz}) we get
$$
\int_{C} \zeta^{-(j+1)/n} e^{\zeta}\,d\zeta = 2i \sin \biggl(
\frac{j+1}{n}\, \pi \biggr)\, \Gamma \biggl( 1-\frac{j+1}{n} \biggr) =
\frac{2\pi i}{\Gamma((j+1)/n)},
$$
(where in the last step we have used the identity ${\sin(\pi z)}
\Gamma(1-z)\Gamma(z) = {\pi}$) and thus $F$ has the power series
representation
$$
F(\mu) = \sum_{j=0}^{\infty} F_{j} \mu^{j},\qquad \mbox{where } F_{j} =
\frac{(-1)^{j}}{j!\, \Gamma((j+1)/n)}.
$$
Similarly, $G$ is an entire function and has the power series
representation
$$
G(\mu) = \sum_{j=1}^{\infty} G_{j} \mu^{j},\qquad \mbox{where } G_{j} =
-\frac{(-1)^{j}}{j!\, \Gamma(j/n)}.
$$

\subsubsection{The Asymptotics of $F$ and $G$ for  $n \ge 2$ and large
$\mu>0$}\label{A12} Elementary contour deformation and estimates at $0$
show that
$$
F(\mu) = \frac{1}{2\pi i} (I_{1} - \bar{I}_{1}) = \frac{1}{\pi} \Im I_{1},
$$
where
\begin{multline}\label{defI1}
I_1 (\mu) =  \int_{0}^{\infty} r^{-1/n} e^{i\pi/n} \exp\Bigl[ -r-\mu
r^{-1/n} e^{i\pi/n} \Bigr]\,dr \\
= n \mu^{1-2/(n+1)} e^{i\pi/n} \int_{0}^{\infty} s^{n-2} \exp\Bigl[
-\mu^{n/(n+1)} (s^{n} + e^{i\pi/n} s^{-1}) \Bigr]\,ds \\
= n \mu^{1-2/(n+1)} e^{2i\pi/(n+1)} \int_{0}^{\infty} x^{n-2} \exp\Bigl[
-w \varphi(x) \Bigr]\,dx,
\end{multline}
where
$$
w = \mu^{n/(n+1)} e^{i\pi/(n+1)},\qquad \varphi(x) = x^{n} + \frac{1}{x}.
$$
Similarly,
$$
\bar{I}_1 = n \mu^{1-2/(n+1)} e^{-2i\pi/(n+1)} \int_{0}^{\infty} x^{n-2}
\exp\Bigl[ -{\bar{w}} \varphi(x) \Bigr]\,dx.
$$

We now use the Laplace method to obtain the complete asymptotic expansion
of $I_{1}$ for large $w$ with $\arg w \in \left( -\frac{\pi}{2},
\frac{\pi}{2} \right)$ or $\arg \mu \in \left( -\frac{(n+3)\pi}{2n},
\frac{(n-1)\pi}{2n} \right)$. We then show that $I_1$ solves a linear
differential equation. It will follow, from standard results on
asymptotics in ODEs, that the expansion is valid in a wider complex
sector. First, it is easily seen that the only solution to the equation
$$
\varphi'(x) = nx^{n-1} - \frac{1}{x^{2}} = 0
$$
on the positive real axis is $x=x_{0} = n^{-1/(n+1)}$. If we introduce a
new variable
\begin{equation}\label{eqn.xi.x}
\xi = \varphi(x),
\end{equation}
then clearly $\xi$ decreases monotonically from $x = 0^{+}$ to $x =
x_{0}$, where it attains the minimum value
$$
\xi_{0} = \varphi(x_{0}) = n^{-n/(n+1)} (n+1).
$$
We denote this branch of $\varphi^{-1}$ by $x_{1}(\xi)$. Further, as $x$
increases beyond $x = x_{0}$ up to $\infty$, $\xi$ increases from
$\xi_{0}$ to $\infty$. We denote this branch of $\varphi^{-1}$ by
$x_{2}(\xi)$. It follows that
$$
I_{1} = n \mu^{1-2/(n+1)} e^{2i\pi/(n+1)} \Biggl[ -\int_{\xi_{0}}^{\infty}
\frac{x_{1}^{n-2}(\xi)\, e^{-w\xi}}{nx_{1}^{n-1}(\xi) -
x_{1}^{-2}(\xi)}\,d\xi + \int_{\xi_{0}}^{\infty} \frac{x_{2}^{n-2}(\xi)\,
e^{-w\xi}}{nx_{2}^{n-1}(\xi) - x_{2}^{-2}(\xi)}\,d\xi \Biggr].
$$
To find an expansion of $x_{i}(\xi),\ i=1,2$, we note that
$$
\xi - \xi_{0} = \varphi(x) - \varphi(x_{0}) = \sum_{j=2}^{\infty}
\varphi^{(j)}(x_{0}) \frac{(x-x_{0})^{j}}{j!},
$$
and thus
$$
(\xi-\xi_{0}) - \sum_{j=3}^{\infty} \varphi^{(j)}(x_{0})
\frac{(x-x_{0})^{j}}{j!} = \frac{1}{2} \varphi''(x_{0}) (x-x_{0})^{2},
$$
or
\begin{equation}\label{eqnx}
x_{\pm} = x_{0} \pm \sqrt{\frac{2}{\varphi''(x_{0})} \biggl[ (\xi-\xi_{0})
- \sum_{j=3}^{\infty} \varphi^{(j)}(x_{0}) \frac{(x-x_{0})^{j}}{j!}
\biggr]},
\end{equation}
where $x_{-} = x_{1}$ and $x_{+} = x_{2}$. By (\ref{eqnx}) we have
$$
\frac{x_{i}^{n-2}(\xi)}{nx_{i}^{n-1}(\xi)-x_{i}^{-2}(\xi)} =
\sum_{j=-1}^{\infty} b_{j}^{[i]} (\xi-\xi_{0})^{j/2}.
$$
Watson's lemma then implies that
\begin{multline}\label{asympI1}
I_{1}  \sim n \mu^{1-2/(n+1)} e^{2i\pi/(n+1)} e^{-\xi_{0} w}
\sum_{j=-1}^{\infty} \int_{0}^{\infty} \Bigl( b_{j}^{[2]} - b_{j}^{[1]}
\Bigr) \eta^{j/2} e^{-w\eta}\,d\eta\qquad (\eta = \xi-\xi_{0}) \\
\sim n \mu^{1-2/(n+1)} e^{2i\pi/(n+1)} e^{-\xi_{0} w} \sum_{j=-1}^{\infty}
\Bigl( b_{j}^{[2]} - b_{j}^{[1]} \Bigr)\, \Gamma \biggl( 1+\frac{j}{2}
\biggr)\, w^{-1-j/2}.
\end{multline}
We see that
$$
b_{j}^{[2]} - b_{j}^{[1]} =
\begin{cases}
  0 & j \mbox{ even} \\
  2b_{j}^{[2]} & j \mbox{ odd}
\end{cases}.
$$
Similar analysis for $\bar{I}_1$ gives
\begin{multline}\label{asymphatI1}
\bar{I}_{1} \sim n \mu^{1-2/(n+1)} e^{-2i\pi/(n+1)} e^{-\xi_{0} {\bar w}}
\sum_{j=-1}^{\infty} \int_{0}^{\infty} \Bigl( b_{j}^{[2]} - b_{j}^{[1]}
\Bigr) \eta^{j/2} e^{-\bar{w} \eta}\,d\eta \\
\sim n \mu^{1-2/(n+1)} e^{-2i\pi/(n+1)} e^{-\xi_{0} {\bar w}}
\sum_{j=-1}^{\infty} \Bigl( b_{j}^{[2]} - b_{j}^{[1]} \Bigr)\, \Gamma
\biggl( 1+\frac{j}{2} \biggr)\, {\bar w}^{-1-j/2}.
\end{multline}

With $\xi_{0} w$ replaced by $z$, we finally obtain for $\mu$ large and
positive
\begin{multline}\label{eqFasymp}
F(\mu)  = \frac{1}{\pi} \Im I_{1} \\
\sim \frac{n}{\pi} \Im \Bigg\{ \mu^{(n-2)/[2(n+1)]} e^{3i\pi/[2(n+1)]}
e^{-z} \sum_{m=0}^{\infty} 2b_{2m-1}^{[2]} \Gamma \biggl( m+\frac{1}{2}
\biggr)\, \xi_{0}^{m} z^{-m} \Bigg\},
\end{multline}
where
\begin{equation}\label{eqn.xi0.z}
\xi_{0} = n^{-n/(n+1)} (n+1),\qquad z = \xi_{0} \mu^{n/(n+1)}
e^{i\pi/(n+1)}.
\end{equation}
A similar analysis shows that
$$
G(\mu) \sim -\frac{n}{\pi} \Im \Bigg\{ \mu^{n/[2(n+1)]} e^{i\pi/[2(n+1)]}
e^{-z} \sum_{m=0}^{\infty} 2d_{2m-1}^{[2]} \Gamma \biggl( m+\frac{1}{2}
\biggr)\, \xi_{0}^{m} z^{-m} \Bigg\},
$$
where $z,\ \xi_{0}$ are given by \eqref{eqn.xi0.z} and $d_{j}^{[i]}$ are
coefficients of the expansion
$$
\frac{x_{i}^{n-1}(\xi)}{nx_{i}^{n-1}(\xi)-x_{i}^{-2}(\xi)} =
\sum_{j=-1}^{\infty} d_{j}^{[i]} (\xi-\xi_{0})^{j/2}.
$$

To obtain the leading asymptotics of $F$ and $G$, we note that
$$
\frac{x_{2}^{n-2}}{nx_{2}^{n-1}-x_{2}^{-2}} =
\frac{x_{2}^{n-2}}{\varphi'(x_{2})} =
\frac{x_{0}^{n-2}}{\varphi''(x_{0})(x_{2}-x_{0})} + O(1) =
\frac{x_{0}^{n-2}}{\sqrt{2 \varphi''(x_{0})}} (\xi-\xi_{0})^{-1/2} + O(1).
$$
It follows that
$$
b_{-1}^{[2]} = \frac{x_{0}^{n-2}}{\sqrt{2 \varphi''(x_{0})}} =
\frac{1}{\sqrt{2}} n^{3/[2(n+1)]-1} (n+1)^{-1/2}\qquad (\mbox{where }
\varphi''(x_{0}) = n^{3/(n+1)} (n+1)).
$$
Similarly
$$
d_{-1}^{[2]} = \frac{x_{0}^{n-1}}{\sqrt{2 \varphi''(x_{0})}} =
\frac{1}{\sqrt{2}} n^{1/[2(n+1)]-1} (n+1)^{-1/2}.
$$
As a result, we have to the leading order,
\begin{multline*}
F(\mu)  \sim \sqrt{\frac{2}{\pi}}\, n^{3/[2(n+1)]} (n+1)^{-1/2} \Im \Big\{
\mu^{(n-2)/[2(n+1)]} e^{3i\pi/[2(n+1)]} e^{-z} \Big\}, \\
G(\mu) \sim -\sqrt{\frac{2}{\pi}}\, n^{1/[2(n+1)]} (n+1)^{-1/2} \Im \Big\{
\mu^{n/[2(n+1)]} e^{i\pi/[2(n+1)]} e^{-z} \Big\}.
\end{multline*}

\subsubsection*{Differential equations for $F$ and $G$ for $n\in\NN$ and
extended asymptotics} To derive a differential equation satisfied by $F$,
we differentiate (\ref{defF}) $n$ times in $\mu$ (justified by dominated
convergence)
$$
F^{(n)}(\mu) = \frac{(-1)^{n}}{2\pi i} \int_{C} \zeta^{-1/n-1}
e^{\zeta-\mu \zeta^{-1/n}}\,d\zeta.
$$
Integrating by parts once, we get
\begin{multline*}
F^{(n)}(\mu) = \frac{n}{2\pi i} (-1)^{n} \int_{C} \zeta^{-1/n}
e^{\zeta-\mu \zeta^{-1/n}} \biggl( 1 + \frac{\mu}{n} \zeta^{-1/n-1}
\biggr)\,d\zeta \\
= (-1)^{n} n F(\mu) - \mu F^{(n+1)}(\mu),
\end{multline*}
so the differential equation satisfied by $F$ is
$$
\mu F^{(n+1)} + F^{(n)} - (-1)^{n} n F = 0.
$$
Since $G' = F$, the differential equation satisfied by $G$ is
$$
\mu G^{(n+2)} + G^{(n+1)} - (-1)^{n} n G' = 0.
$$
Integrating once and using $G(0) =0$, we obtain
\begin{equation}\label{Gdiff}
\mu G^{(n+1)} - (-1)^{n} n G = 0.
\end{equation}
We can make the same argument for
\begin{equation}\label{defI2}
I_2 (\mu) \equiv \int_{\infty e^{-i\pi}}^{0^+} e^{\zeta - \mu
\zeta^{-1/n}} d\zeta -1
\end{equation}
or for
\begin{equation}\label{defhatI2}
\bar{I}_2 (\mu) \equiv \int_{\infty e^{i\pi}}^{0^+} e^{\zeta - \mu
\zeta^{-1/n}} d\zeta -1.
\end{equation}
It is to be noted that $G(\mu) = \frac{1}{2\pi i} \left [ I_2 (\mu) -
{\bar I}_2 (\mu) \right ]$, while $I_2^\prime (\mu) = I_1 (\mu)$ and
${\bar I}_2^\prime (\mu) = {\bar I}_1 (\mu)$.

Equation (\ref{Gdiff}) has $(n+1)$ independent solutions with the
following asymptotic behavior for large $\mu$ (see \cite{Wasow}):
\begin{equation}\label{genWKB0}
\mu^{n/[2(n+1)]} \exp \left [ - z e^{-i 2 \pi j/(n+1)} \right ] ;\ \ z:=
\xi_0 e^{i \pi/(n+1)} \mu^{n/(n+1)},\ j=0,1,\dotsc,n.
\end{equation}
Thus, there is only one solution with the asymptotic behavior
$$
-\sqrt{\frac{2}{\pi}}\, n^{1/[2(n+1)]} (n+1)^{-1/2} \mu^{n/[2(n+1)]} \exp
\left [ - z \right ]~~{\rm for }~~\arg z = 0
$$
(all solutions independent from it are larger). Since $I_2 (\mu)$ has this
asymptotics in particular for $\arg \mu = - \frac{\pi}{n}$, corresponding
to $\arg z = 0$ as discussed already, $I_2$ is the only solution of
(\ref{Gdiff}) satisfying
\begin{equation}\label{genWKB}
I_2 (\mu) \sim -\sqrt{\frac{2}{\pi}}\, n^{1/[2(n+1)]} (n+1)^{-1/2}
\mu^{n/[2(n+1)]} \exp \left [ -z \right ]~~{\rm for }~~\arg \mu = -
\frac{\pi}{n}.
\end{equation}

As we rotate around in the counter-clockwise direction starting from $\arg
z = 0$ in the complex $z$ (or complex $\mu$) plane, the classical
asymptotics of $I_2$ can only change at antistokes lines. The first
antistokes line is $\arg z = \frac{\pi}{2} + \frac{2\pi}{n+1}$,
corresponding to $\arg \mu = \frac{(n+3)\pi}{2n}$.

Similarly, in a clockwise direction, the first antistokes line is $\arg z
= -\frac{\pi}{2} - \frac{2\pi}{n+1}$, {\it i.e.} $\arg \mu =
-\frac{(n+7)\pi}{2n}$.

Therefore, for $\arg \mu$ $\in$ $\left ( -\frac{(n+7)\pi}{2n},
\frac{(n+3)\pi}{2n} \right )$ the asymptotic expansion $I_2$ is the same.

{F}rom the symmetry between ${\bar I}_2$ and $I_2$, it follows that
\begin{equation}\label{genWKBhat}
{\bar I}_2 (\mu) \sim -\sqrt{\frac{2}{\pi}}\, n^{1/[2(n+1)]} (n+1)^{-1/2}
\mu^{n/[2(n+1)]} \exp \left [ - \bar{z} \right ]
\end{equation}
for $\arg \mu \in \left ( -\frac{(n+3)\pi}{2n}, \frac{(n+7)\pi}{2n} \right
)$. Since $G(\mu) = \frac{1}{2\pi i} \left [ I_2 (\mu) - {\bar I}_2 (\mu)
\right ]$, noting that $I_2 (\mu)$ is dominant for $\arg \mu \in \left (
0, \frac{(n+3)\pi}{2n} \right ) $, it follows that in this range of $\arg
\mu$, $G(\mu) \sim -\frac{i}{2\pi} I_2 (\mu)$. while for $\arg \mu \in
\left ( -\frac{(n+3)\pi}{2n}, 0 \right ) $, since ${\bar I}_2$ is
dominant, $G(\mu) \sim \frac{i}{2\pi} {\bar I}_2 (\mu)$. Lemma~\ref{lemG}
follows.

\subsection{Instantaneous smoothing}
The following result shows that the solution ${\hat v} (k, t)$ obtained
from ${\hat U} (k, q)$  corresponds to a classical solution of
(\ref{nseq0}) for $t \in (0, T]$, {\it i.e.} there is instantaneous
smoothing due to viscous effects. This is a known result (See for instance
\cite{Bertozzi}), but we include it for completeness.

\begin{Lemma}\label{instsmooth}
Assume ${\hat v}_0, {\hat f} \in l^1 (\mathbb{Z}^3)$, where ${\hat v}_0
(0) =0 = {\hat f} (0)$. Assume further that (\ref{nseq0}) has a solution
${\hat v} (k, t) $ with $ \| {\hat v} ( \cdot, t) \|_{l^1} < \infty $ for
$t \in [0, T]$. Then $v (x, t) = \mathcal{F}^{-1} \left [ {\hat v} (\cdot,
t) \right ] (x)$ is a classical solution of  (\ref{nseq0}) for $t \in
\left ( 0, T \right ]$.
\end{Lemma}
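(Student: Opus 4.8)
The plan is to upgrade the hypothesis $\hat v(\cdot,t)\in l^1$ on $[0,T]$ to $|k|^2\hat v(\cdot,t)\in l^1$ for every $t\in(0,T]$; this, together with continuity in $t$, forces $v(\cdot,t)\in C^2(\TT^3)$, $v_t(\cdot,t)\in C(\TT^3)$, and the equation to hold pointwise, i.e. $v$ is a classical solution. The only mechanism needed is parabolic smoothing, quantified by the elementary bound $\sup_{k}|k|^j e^{-\nu|k|^2 t}\le c_j\,t^{-j/2}$ with $c_j=(j/(2e\nu))^{j/2}$, fed into the mild (Duhamel) form of (\ref{nseq}),
\begin{equation*}
\hat v(k,t)=e^{-\nu|k|^2 t}\hat v_0(k)+\int_0^t e^{-\nu|k|^2(t-s)}\bigl(\hat f(k)-ik_j P_k[\hat v_j\,\hat{*}\,\hat v](k,s)\bigr)\,ds ,
\end{equation*}
which is legitimate since $\|\hat v(\cdot,t)\|_{l^1}<\infty$ on $[0,T]$ and, by the same identity, $t\mapsto\hat v(\cdot,t)$ is $l^1$-continuous. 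Write $\bar M:=\sup_{t\in[0,T]}\|\hat v(\cdot,t)\|_{l^1}$ and $M_m(t):=\||k|^m\hat v(\cdot,t)\|_{l^1}$.

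\emph{Stage 1 (one derivative).} Apply $|k|$ to the Duhamel formula. The linear term is bounded by $c_1 t^{-1/2}\|\hat v_0\|_{l^1}$; the forcing term equals $\hat f(k)(1-e^{-\nu|k|^2 t})/(\nu|k|)$, whose $l^1$-norm is $\le\nu^{-1}\|\hat f\|_{l^1}$ because $|k|\ge1$ on $\ZZ^3\setminus\{0\}$; and for the nonlinear term I use $|k_j P_k[\hat v_j\,\hat{*}\,\hat v](k)|\le 2|k|\,(|\hat v|\,\hat{*}\,|\hat v|)(k)\le 2\sum_{k'}(|k'|+|k-k'|)|\hat v(k')||\hat v(k-k')|$ together with the split $|k|^2 e^{-\nu|k|^2(t-s)}=|k|e^{-\nu|k|^2(t-s)}\cdot|k|$, landing on a bound $\le C c_1\int_0^t (t-s)^{-1/2}M_0(s)M_1(s)\,ds$. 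With $M_0\le\bar M$ this is the weakly singular linear Volterra inequality
\begin{equation*}
M_1(t)\le c_1 t^{-1/2}\|\hat v_0\|_{l^1}+\nu^{-1}\|\hat f\|_{l^1}+C\bar M\int_0^t (t-s)^{-1/2}M_1(s)\,ds ,
\end{equation*}
and the generalized Gronwall lemma for weakly singular kernels (iterate, using that $s^{-1/2}$ convolved with $s^{-1/2}$ is constant and the resulting Neumann series converges) gives $M_1(t)\le C_1 t^{-1/2}$ on $(0,T]$, with $C_1$ depending only on $\|\hat v_0\|_{l^1},\|\hat f\|_{l^1},\bar M,\nu,T$.

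\emph{Stage 2 (two derivatives, hence classical).} Fix $t_0\in(0,T)$ and restart the Duhamel formula at $t_0$; apply $|k|^2$. The linear term is now $\||k|^2 e^{-\nu|k|^2(t-t_0)}\hat v(\cdot,t_0)\|_{l^1}\le c_1(t-t_0)^{-1/2}M_1(t_0)$, finite and $L^1$ in $t$ near $t_0$ exactly because Stage 1 gives $M_1(t_0)<\infty$; the forcing term is again $\le\nu^{-1}\|\hat f\|_{l^1}$ (here $|k|^2\cdot(\nu|k|^2)^{-1}=\nu^{-1}$, so no weight on $\hat f$ is needed — this is precisely where the merely-$l^1$ hypothesis on $\hat f$ prevents pushing to higher order); and for the nonlinear term the split $|k|^3 e^{-\nu|k|^2(t-s)}=|k|e^{-\nu|k|^2(t-s)}\cdot|k|^2$ with $\||k|^2(|\hat v|\,\hat{*}\,|\hat v|)\|_{l^1}\le 2M_0M_2+2M_1^2$ reduces matters, via $M_0\le\bar M$ and $M_1(s)\le C_1 s^{-1/2}\le C_1 t_0^{-1/2}$ for $s\ge t_0$, to a weakly singular Volterra inequality for $M_2$ on $[t_0,T]$ with $L^1$-in-$t$ inhomogeneity; the same Gronwall lemma gives $M_2(t)<\infty$ for $t\in(t_0,T]$, and since $t_0\in(0,T)$ is arbitrary, $|k|^2\hat v(\cdot,t)\in l^1$ for all $t\in(0,T]$. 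Then $v(\cdot,t)\in C^2(\TT^3)$; reading $\hat v_t$ off (\ref{nseq}) — whose right side lies in $l^1$, as $\||k|(\hat v_j\,\hat{*}\,\hat v)\|_{l^1}\le 2 M_1 M_0<\infty$ and $\hat f\in l^1$ — gives $v_t(\cdot,t)\in C(\TT^3)$ with the PDE holding pointwise; continuity in $t$ of each term follows from the Duhamel representation, so $v$ is $C^1$ in $t$, i.e. the asserted classical solution of (\ref{nseq0}) on $(0,T]$.

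The main obstacle is a circularity in each Gronwall step: one must know a priori that $M_1(t)$, resp.\ $M_2(t)$, is finite for each $t$ in order to invoke the lemma, which is exactly what is being established. I would remove it in the standard way (cf.\ \cite{Bertozzi}): either run the entire argument for the Galerkin-truncated solutions, for which all weighted norms are finite a priori, derive bounds independent of the truncation, and pass to the limit by Fatou; or invoke local-in-time solvability of (\ref{nseq}) in the time-weighted spaces $\{u:\sup_{0<t\le\tau}t^{m/2}\||k|^m\hat u(\cdot,t)\|_{l^1}<\infty\}$ (a routine contraction argument) together with uniqueness of classical solutions of (\ref{nseq0}) to identify the given $\hat v$ with that smoother solution, continuing the smoothing interval forward using the uniform local existence time determined by $\bar M$.
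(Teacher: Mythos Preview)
Your proposal is correct and follows the same two-stage bootstrap via Duhamel with a restart at an intermediate time, but the mechanics differ from the paper's proof in a way worth noting.

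The paper avoids the weakly singular Gronwall step altogether. It works with the pointwise-in-$k$ envelope $\hat w_\epsilon(k)=\sup_{\epsilon\le t\le T}|\hat v(k,t)|$ and uses the bound $\int_0^t|k|^2e^{-\nu|k|^2(t-\tau)}d\tau\le\nu^{-1}$ so that, in Stage~1, the nonlinear term is controlled directly by $\nu^{-1}\,\hat w_0\hat*\hat w_0$ and one obtains $\||k|\hat w_{\epsilon/2}\|_{l^1}\le 2\nu^{-1}\|\hat w_0\|_{l^1}^2+C\epsilon^{-1/2}\|\hat v_0\|_{l^1}+\nu^{-1}\|\hat f/|k|\|_{l^1}$ with no self-reference and hence no circularity. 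In Stage~2 the paper also exploits the divergence-free condition to write $k_jP_k[\hat v_j\hat*\hat v]=P_k[\hat v_j\hat*(k_j\hat v)]$, shifting the full derivative onto one convolution factor, which makes the $|k|^2$ estimate go through with only $\hat w_{\epsilon/2}\hat*(|k|\hat w_{\epsilon/2})$ on the right. The price for this cleaner route is that one needs $\hat w_0\in l^1$ (pointwise sup in $t$, then summed in $k$), which does not follow from $\sup_t\|\hat v(\cdot,t)\|_{l^1}<\infty$ alone; the paper obtains it from the Laplace representation of $\hat v$, so its proof is tied to the ambient construction.

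Your route, by contrast, uses only the stated hypothesis and is therefore more self-contained; the circularity you flag is real but your proposed cures (Galerkin truncation with uniform bounds and Fatou, or a contraction in the time-weighted space $\sup_t t^{m/2}\||k|^m\hat u(\cdot,t)\|_{l^1}$ plus uniqueness) are standard and would close the argument. Either way the conclusion $|k|^2\hat v(\cdot,t)\in l^1$ for $t\in(0,T]$ follows, which is all that is needed.
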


\begin{proof}
It suffices to show $|k|^2 {\hat v} (\cdot, t) \in l^1$ for $t \in (0, T]
$ since this implies $v \in {C}^2$ and usual arguments imply that $v$
satisfies (\ref{nseq0}).

Consider the time interval $[\epsilon, T]$ for $\epsilon \ge 0$, $T <
\alpha^{-1/n}$. Define
$$
{\hat w}_\epsilon (k) = \sup_{\epsilon \le t \le T} |{\hat v}| (k, t).
$$
Since $|{\hat v} (k, t)| \le \int_0^\infty | {\hat U} (k, q)| e^{-\alpha
q} dq$, ${\hat w}_0$ (or ${\hat w}_\epsilon$) satisfies
$$
\| {\hat w}_0 \|_{l^1} \le \int_0^\infty \| {\hat U} (\cdot, q) \|_{l^1}
e^{-\alpha q} dq.
$$
On $[\epsilon, T]$ for $\epsilon > 0$, (\ref{nseq}) implies
\begin{equation}\label{intvk}
{\hat v} (k, t) = -i k_j \int_0^t e^{-\nu |k|^2 (t-\tau)} P_k \left (
{\hat v}_j {\hat *} {\hat v}  \right ) (k, \tau) d\tau + {\hat v}_0
e^{-\nu |k|^2 t} + \frac{{\hat f}}{\nu |k|^2} \left (1-e^{-\nu |k|^2 t}
\right ).
\end{equation}
Therefore,
$$
|k| |{\hat v}| (k, t) \le 2 \left \{ {\hat w}_0 {\hat *} {\hat w}_0 \right
\} \int_0^t |k|^2 e^{-\nu |k|^2 (t-\tau)} d\tau  + |k| {\hat v}_0 e^{-\nu
|k|^2 t} + \frac{|{\hat f}|}{\nu |k|} \left (1-e^{-\nu |k|^2 t} \right ).
$$
Since $\int_0^t \nu |k|^2 e^{-\nu |k|^2 (t-\tau)} d\tau \le 1$, it follows
that
\begin{equation}\label{khatw}
|k| {\hat w}_{\epsilon/2} \le \frac{2}{\nu} \left \{ {\hat w}_0 {\hat *}
{\hat w}_0 \right \} + \sqrt{\frac{2}{\nu \epsilon}} \left ( \sup_{\gamma
> 0} \gamma e^{-\gamma^2} \right ) | {\hat v}_0 | + \biggl| \frac{{\hat
f}}{\nu |k|} \biggr|.
\end{equation}
Using now the bounds on ${\hat w}_0$ we get
$$
\| |k| {\hat w}_{\epsilon/2} \|_{l^1} \le \frac{2}{\nu} \left \{ \| {\hat
U} (\cdot, q) \|^{\alpha}_1 \right \}^2  + \frac{C}{\epsilon^{1/2}
\nu^{1/2}} \| {\hat v}_0 \|_{l^1} + \nu^{-1} \biggl\| \frac{{\hat f}}{|k|}
\biggr\|_{l^1}.
$$
The evolution of ${\hat v}$ is autonomous in time, and thus, for $t \in
\left [ \frac{\epsilon}{2}, T \right ]$ we have
\begin{multline}\label{ml1}
{\hat v} (k, t) = -i \int_{\epsilon/2}^t e^{-\nu |k|^2 (t-\tau)} P_k \left
( {\hat v}_j {\hat *} [k_j {\hat v} ] \right ) (k, \tau) d\tau \\
+ {\hat v} (k, \epsilon/2) e^{-\nu |k|^2 (t-\epsilon/2)} + {\hat f} (k)
\frac{1-e^{-\nu |k|^2 (t-\epsilon/2 )} }{\nu |k|^2},
\end{multline}
where we used the divergence condition $k \cdot {\hat v} (k, t) =0$.
Multiplying (\ref{ml1}) by $|k|^2$ and using (\ref{khatw}), it follows
that for $t \in [ \epsilon, T]$ we have
\begin{multline*}
|k|^2 |{\hat v} (k, t)| \le 2 {\hat w}_{\epsilon/2} {\hat *} \left [ |k|
{\hat w}_{\epsilon/2} \right ] \int_{\epsilon/2}^t |k|^2 e^{-\nu |k|^2
(t-\tau)} d\tau \\
+ \frac{1}{\nu (t-\epsilon/2)} \left ( \sup_{\gamma > 0} \gamma
e^{-\gamma} \right ) | {\hat v} (k, \epsilon/2) | +  \frac{| {\hat f}
|}{\nu},
\end{multline*}
implying that
$$
\| |k|^2 {\hat w}_\epsilon \|_{l^1} \le \frac{2}{\nu} \| {\hat
w}_{\epsilon/2} \|_{l^1} \| |k| {\hat w}_{\epsilon/2} \|_{l^1} +
\frac{C}{\epsilon \nu} \| {\hat w}_{\epsilon/2} \|_{l^1}  + \frac{ \|
{\hat f} \|_{l^1} }{\nu}.
$$
Since $\epsilon > 0$ is arbitrary, it follows that $|k|^2 {\hat v} (\cdot,
t) \in l^1$ for $t \in (0, T]$.
\end{proof}

\subsection{Estimate of $T_c$ beyond which Leray's weak solution becomes
classical} It is known that (\ref{nseq}) is equivalent to the integral
equation
\begin{multline}\label{NS1}
{\hat v} (k, t) = \int_0^t e^{-\nu |k|^2 (t-\tau) } P_k \left [ - i k_j
{\hat v}_j {\hat *} {\hat v} \right ] (k, \tau)\,d\tau + e^{-\nu |k|^2 t}
{\hat v}_0
\\
\equiv \mathcal{F} \left \{ \mathcal{N} \left [ v \right ] (\cdot, t)
\right \} (k).
\end{multline}
Applying $\mathcal{F}^{-1}$ in $k$ to (\ref{NS1}), it follows that
\begin{equation}\label{NS1p}
v (x, t) = e^{\nu t \Delta} v_0 - \int_0^t e^{\nu (t-\tau) \Delta}
\mathcal{P} \left [ (v \cdot \nabla) v \right ] \equiv \mathcal{N} \left [
v \right ] (x, t).
\end{equation}

We first determine the value of $\epsilon$ such that, if $\| v_0 \|_{H^1}
\le \epsilon$, then classical solutions $v (\cdot, t)$ to Navier-Stokes
exist for all time. The argument holds for real $t$ as well as in
$$
{\tilde S}_{\tilde \delta} := \Bigl\{ t: \arg t \in ( - {\tilde \delta},
{\tilde \delta} ) \Bigr\},
$$
where $ 0 < {\tilde \delta} < \frac{\pi}{2} $. Sectorial existence of
analytic solution in $t$ with exponential decay for large $|t|$ was useful
in proving Theorem \ref{Thm02}. We denote by $\mathcal{A}_t $ the class of
functions analytic in $t$ for $t \in {\tilde S}_{\tilde \delta}$ for $0 <
|t| < T$.

We consider the space of functions
$$
X \equiv \left \{ \mathcal{A}_{t} H^1_x \right \} \cap \left \{ L_{|t|}^2
H^2_x \right \}:= \Big ( \mathcal{A}_{t} \otimes H^1(\mathbb{T}^3 [0,
2\pi])\Bigr)\,\,\cap \Bigl( L^2 \left [ e^{i \phi} (0,T) \right ] \otimes
H^2(\mathbb{T}^3 [0, 2\pi])\Bigr),
$$
where $t = |t| e^{i \phi} $, $|\phi| < {\tilde \delta}$, and the weighted
norm
$$
\| v \|_{X} = \sup_{t \in {\tilde S}_{\tilde \delta}, 0 < |t| < T} \| e^{
\frac{3}{4} \nu t} v (\cdot, t)\|_{H^1_x} + \sup_{|\phi| < {\tilde
\delta}} \left \{ \int_0^T \| e^{\frac{3}{4} \nu t} v (\cdot, |t| e^{i
\phi}) \|_{H^2_x}^2 d|t| \right \}^{1/2}.
$$
Note that
$$
\| f \|_{H^1_x} = \left ( \sum_{k} (1+|k|^2) |{\hat f} (k)|^2 \right
)^{1/2},\qquad \| f \|_{H^2_x} = \left ( \sum_{k} (1+|k|^4) |{\hat f} (k)
|^2 \right )^{1/2},
$$
and ${\hat f}$ is the Fourier-Transform of $f$.

The arguments below are an adaptation of classical arguments, see
\cite{Tao}. We introduce exponential weights in time, allowing for
estimates independent of $T$, and extend the analysis to a complex sector.

\begin{Lemma}\label{lemu0}
For $v_0 \in H^1_x $, with zero average over $\mathbb{T}^3 [0, 2\pi]$ we
have
$$
\| e^{\nu t \Delta} v_0 \|_X \le c_1 \| v_0 \|_{H^1_x},
$$
where $c_1 = \left ( 1 + \sqrt{ \frac{2}{\nu \cos {\tilde \delta} } }
\right )$.
\end{Lemma}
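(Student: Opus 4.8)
The plan is to pass to Fourier space and bound the two terms of $\|\cdot\|_X$ separately, the key input being that the zero-average hypothesis forces $\hat{v}_0(0)=0$, so every mode $k$ present in $v_0$ satisfies $|k|\ge 1$. Since $e^{\nu t\Delta}v_0$ has Fourier coefficients $e^{-\nu|k|^2 t}\hat{v}_0(k)$, the weighted quantity entering the norm has coefficients $e^{\frac34\nu t}e^{-\nu|k|^2 t}\hat{v}_0(k)$, and $\bigl|e^{\frac34\nu t-\nu|k|^2 t}\bigr|^2 = e^{-\nu\,\Re t\,(2|k|^2-3/2)}$. For $t\in {\tilde S}_{\tilde\delta}$ one has $\Re t>0$, and for $|k|\ge 1$ one has $2|k|^2-3/2\ge 1/2>0$, so this exponential factor is $\le 1$ pointwise in $k$ and $t$; this is exactly the mechanism by which the exponential weight $e^{\frac34\nu t}$ is absorbed by the heat semigroup.

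First I would treat the $\sup_t\|\cdot\|_{H^1_x}$ term: by the pointwise bound above, $\|e^{\frac34\nu t}e^{\nu t\Delta}v_0\|_{H^1_x}^2 = \sum_k(1+|k|^2)|\hat{v}_0(k)|^2 e^{-\nu\Re t(2|k|^2-3/2)} \le \sum_k(1+|k|^2)|\hat{v}_0(k)|^2 = \|v_0\|_{H^1_x}^2$ uniformly in $t$, which accounts for the $1$ in $c_1$. Next, for the $L^2_{|t|}H^2_x$ term, write $t=|t|e^{i\phi}$ so that $\Re t=|t|\cos\phi$ and integrate the Gaussian in $|t|$, extending the integral to $[0,\infty)$: $\int_0^T e^{-\nu|t|\cos\phi(2|k|^2-3/2)}\,d|t| \le \bigl(\nu\cos\phi(2|k|^2-3/2)\bigr)^{-1}$. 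Using the elementary inequalities $2|k|^2-3/2\ge \tfrac12|k|^2$ and $(1+|k|^4)/|k|^2\le 1+|k|^2$ (both valid precisely because $|k|\ge1$), this gives $\int_0^T\|e^{\frac34\nu t}e^{\nu t\Delta}v_0\|_{H^2_x}^2\,d|t| \le \frac{2}{\nu\cos\phi}\sum_k(1+|k|^2)|\hat{v}_0(k)|^2 \le \frac{2}{\nu\cos{\tilde\delta}}\|v_0\|_{H^1_x}^2$, where the last step uses that $\cos$ is decreasing on $[0,\pi/2]$ and $|\phi|<{\tilde\delta}<\pi/2$. Taking square roots, then the supremum over $\phi$, yields the $\sqrt{2/(\nu\cos{\tilde\delta})}$ contribution, and adding the two bounds produces exactly $c_1\|v_0\|_{H^1_x}$.

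There is essentially no genuine obstacle: the estimate is a direct computation, and the only points needing care are the two elementary inequalities above, whose validity rests on $|k|\ge 1$ and hence on the zero-average assumption, and the monotonicity of $\cos$ used to replace $\cos\phi$ by $\cos{\tilde\delta}$. Note that since the $|t|$-integral was bounded by the integral over all of $(0,\infty)$, the resulting estimate is in fact uniform in $T$, as needed for the $T$-independent contraction argument that follows.
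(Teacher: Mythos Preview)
Your proof is correct and follows essentially the same approach as the paper's: pass to Fourier space, use the zero-average hypothesis to restrict to $|k|\ge 1$, bound the $H^1$ sup by the pointwise inequality $e^{-\nu\,\Re t\,(2|k|^2-3/2)}\le 1$, and bound the $L^2_{|t|}H^2_x$ term by integrating the exponential in $|t|$ and using the elementary inequality $(1+|k|^4)/(2|k|^2-3/2)\le 2(1+|k|^2)$. The only cosmetic difference is that the paper first argues on the real axis and then remarks that along a ray $e^{i\phi}\mathbb{R}^+$ one simply replaces $\nu$ by $\nu\cos\phi$, whereas you carry the $\cos\phi$ through from the start; the content is identical.
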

\begin{proof}
First, take $f = v_0$ and $t \in [0, T]$. Note that zero average implies
${\hat f}(0) = 0$; so we only need to consider $|k| \ge 1$.
\begin{equation}
\lvert e^{\frac{3}{2} \nu t} \rvert \| e^{\nu t \Delta} f \|^2_{H^1_x} \le
\sum_{k \ne 0} (1+|k|^2) e^{-2 \nu (|k|^2-3/4) t} |{\hat f}_k |^2  \le
\sum_{k \ne 0} (1+|k|^2) |{\hat f}_k|^2 = \| f \|_{H^1_x}^2.
\end{equation}
Also, note that
\begin{multline}\label{bbound}
\int_0^T \| e^{\frac{3}{4} \nu t} e^{\nu t \Delta} f \|_{H^2_x}^2 dt \le
\sum_{k \ne 0 } (1+|k|^4) |{\hat f}_k |^2 \left ( \int_0^T e^{-\nu (2
|k|^2 -\frac{3}{2}) t} dt \right ) \\
\le \sum_{k\ne 0} \frac{1+|k|^4}{\nu (2 |k|^2 -\frac{3}{2})} |{\hat f}_k
|^2 \le \frac{2}{\nu} \| f \|^2_{H^1_x}.
\end{multline}
If $t \in {\tilde S}_{\tilde \delta}$, we integrate along the ray $|t|
e^{i \phi}$. It is clear all the steps go through when $\nu$ is replaced
by $\nu \cos \phi$. A bound, uniform in $ {\tilde S}_{\tilde \delta}$, is
obtained by replacing $\frac{2}{\nu}$ in (\ref{bbound}) by $\frac{2}{\nu
\cos {\tilde \delta}}$. The result follows.
\end{proof}

\begin{Lemma}\label{lemuf}
If $e^{\frac{3}{4} \nu t} F \in L_{|t|}^2 L_x^2 $ uniformly in  $\phi \in
(-{\tilde \delta}, {\tilde \delta} )$, then
\begin{equation}
\biggl\| \int_0^t e^{\nu (t-\tau) \Delta} F (x, \tau) d\tau \biggr\|_{X}
\le c_2 \sup_{|\phi| < {\tilde \delta}} \| e^{\frac{3}{4} \nu t} F \|_{
L_{|t|}^2 L_x^2 },
\end{equation}
with
$$
c_2 = \left ( \frac{2 \sqrt{2}}{\sqrt{\nu \cos {\tilde \delta} }} +
\frac{4 \sqrt{2}}{\nu \cos {\tilde \delta} } \right ).
$$
\end{Lemma}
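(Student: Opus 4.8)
The plan is to pass to Fourier space and estimate the two pieces of the $X$-norm separately. Write $w(x,t)=\int_0^t e^{\nu(t-\tau)\Delta}F(x,\tau)\,d\tau$, so $\hat w(k,t)=\int_0^t e^{-\nu|k|^2(t-\tau)}\hat F(k,\tau)\,d\tau$. In the situation where this lemma is applied $F$ is a Hodge-projected quantity, hence has zero spatial average, so $\hat F(0,t)=0$, $\hat w(0,t)=0$, and it suffices to treat $|k|\ge 1$ (this restriction is essential: without it the $k=0$ mode of $w$ grows linearly and the weighted $X$-norm is not controlled). The one algebraic fact driving everything is that for $|k|\ge1$ one has $\nu|k|^2-\tfrac34\nu\ge\tfrac14\nu|k|^2>0$. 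Inserting the weight, $g_k(t):=e^{\frac34\nu t}\hat w(k,t)=\int_0^t e^{-(\nu|k|^2-\frac34\nu)(t-\tau)}G_k(\tau)\,d\tau$ with $G_k(\tau):=e^{\frac34\nu\tau}\hat F(k,\tau)$, i.e.\ $g_k=K_k*G_k$ on $[0,T]$ where $K_k(s)=e^{-(\nu|k|^2-\frac34\nu)s}\mathbf{1}_{s\ge0}$ satisfies $\|K_k\|_{L^1(0,\infty)}=(\nu|k|^2-\frac34\nu)^{-1}\le 4/(\nu|k|^2)$. Also $\sum_{k\ne0}\int_0^T e^{\frac32\nu\tau}|\hat F(k,\tau)|^2\,d\tau=\|e^{\frac34\nu t}F\|_{L^2_{|t|}L^2_x}^2$ by Parseval.

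For the $L^2_{|t|}H^2_x$ piece I would use Young's inequality for convolutions, $\|K_k*G_k\|_{L^2}\le\|K_k\|_{L^1}\|G_k\|_{L^2}$, giving $\int_0^T|g_k(t)|^2\,dt\le\frac{16}{\nu^2|k|^4}\int_0^T e^{\frac32\nu\tau}|\hat F(k,\tau)|^2\,d\tau$; multiplying by $1+|k|^4\le2|k|^4$ and summing over $k\ne0$ yields $\int_0^T e^{\frac32\nu t}\|\hat w(\cdot,t)\|_{H^2_x}^2\,dt\le\frac{32}{\nu^2}\|e^{\frac34\nu t}F\|_{L^2_{|t|}L^2_x}^2$, i.e.\ a bound $\tfrac{4\sqrt2}{\nu}$ on this contribution. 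For the $\sup_t e^{\frac34\nu t}\|\cdot\|_{H^1_x}$ piece I would instead use Cauchy--Schwarz pointwise in $t$: $|g_k(t)|^2\le\big(\int_0^t e^{-2(\nu|k|^2-\frac34\nu)s}\,ds\big)\big(\int_0^T e^{\frac32\nu\tau}|\hat F(k,\tau)|^2\,d\tau\big)\le\frac{2}{\nu|k|^2}\int_0^T e^{\frac32\nu\tau}|\hat F(k,\tau)|^2\,d\tau$, so multiplying by $1+|k|^2\le2|k|^2$, summing, and taking $\sup_t$ gives a bound $\tfrac{2}{\sqrt\nu}\le\tfrac{2\sqrt2}{\sqrt\nu}$ on this contribution. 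Adding the two contributions gives the asserted constant (with room to spare).

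To obtain the estimate in the sector $\tilde{S}_{\tilde{\delta}}$, I would repeat the argument along a ray $t=|t|e^{i\phi}$, $|\phi|<\tilde{\delta}$: the only change is that $e^{-\nu|k|^2(t-\tau)}$ contributes the factor $e^{-\nu\cos\phi\,|k|^2(|t|-|\tau|)}$, so $\nu$ is replaced throughout by $\nu\cos\phi\ge\nu\cos\tilde{\delta}$; taking the supremum over $\phi$ replaces $\nu$ by $\nu\cos\tilde{\delta}$ everywhere and produces $c_2=\frac{2\sqrt2}{\sqrt{\nu\cos\tilde{\delta}}}+\frac{4\sqrt2}{\nu\cos\tilde{\delta}}$.

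The only genuine subtlety is the $H^2_x$ (maximal-regularity) bound: a naive pointwise-in-$t$ estimate for that piece would produce a spurious factor of $T$, and it is precisely the convolution structure together with $L^1*L^2\hookrightarrow L^2$ that yields a $T$-independent constant. Everything else is bookkeeping of the exponential weights and the elementary inequality $\nu|k|^2-\frac34\nu\ge\frac14\nu|k|^2$ for $|k|\ge1$.
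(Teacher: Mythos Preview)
Your argument is correct and in fact slightly sharper on the $H^1_x$ piece (you get $2/\sqrt{\nu}$ rather than $2\sqrt{2}/\sqrt{\nu}$). The zero-mean assumption you flag is indeed required; the paper uses it implicitly through Poincar\'e's inequality.

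The paper, however, takes a different route. Rather than working mode-by-mode in Fourier space, it argues via the PDE: $v=\int_0^t e^{\nu(t-\tau)\Delta}F\,d\tau$ solves $v_t-\nu\Delta v=F$ with $v(\cdot,0)=0$, and multiplying by $v^*$ (and separately taking a gradient and multiplying by $\nabla v^*$) yields the differential inequality
\[
\frac{d}{dt}\|v\|_{H^1_x}^2+\tfrac32\nu\|v\|_{H^1_x}^2+\tfrac{\nu}{4}\|Dv\|_{H^1_x}^2\le \tfrac{8}{\nu}\|F\|_{L^2_x}^2,
\]
from which both pieces of the $X$-norm follow by integrating (with the weight $e^{\frac32\nu t}$). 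The sector extension is then handled exactly as you do, by replacing $\nu$ with $\nu\cos\tilde\delta$.

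The two approaches are equivalent here because the operator is the heat semigroup and the Fourier modes decouple; your convolution-kernel argument (Young for the $L^2_tH^2_x$ part, Cauchy--Schwarz for the $\sup_t H^1_x$ part) is the Fourier-side transcription of the energy method. What your approach buys is transparency about where each power of $\nu$ comes from and the observation that Young's inequality is exactly what prevents a spurious factor of $T$ in the maximal-regularity estimate. What the paper's energy-method approach buys is that it would generalize immediately to variable-coefficient or non-periodic settings where the Fourier diagonalization is unavailable.
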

\begin{proof}
We first show this for $t \in [0, T]$. The function
$$
v (x, t) = \int_0^t e^{\nu (t-\tau) \Delta} F (x, \tau) d\tau
$$
satisfies
\begin{equation}\label{A.veq}
v_t - \nu \Delta v = F,\qquad v(x, 0)=0.
\end{equation}
Multiplying (\ref{A.veq}) by $v^*$, the conjugate of $v$, integrating over
$x \in \mathbb{T}^3 [0, 2\pi]$ and combining with the equation for $v^*$
we obtain
\begin{equation}\label{A.veq1}
\frac{d}{dt} \| v (\cdot, t) \|^2_{L^2_x}  + 2 \nu \| D v (\cdot, t)
\|^2_{L^2_x}  \le \frac{4}{\nu} \| F (\cdot, t) \|^2_{L^2_x} +
\frac{\nu}{4} \| v (\cdot, t) \|^2_{L^2_x}.
\end{equation}
Similarly, taking the gradient in $x$ of (\ref{A.veq}), taking the dot
product with $\nabla v^*$ and combining with the equation satisfied by
$\nabla v^*$, we obtain
\begin{equation*}
\frac{d}{dt} \| D v (\cdot, t) \|^2_{L^2_x } + 2 \nu \| D^2 v (\cdot, t)
\|_{L^2_x}^2  = \int_{\mathbb{T}^3} (D F) \cdot (Dv^*) dx +
\int_{\mathbb{T}^3} (D F^*) \cdot (Dv ) dx.
\end{equation*}
Integration by parts and Cauchy's inequality give
\begin{equation}\label{A.veq2}
\frac{d}{dt} \| D v (\cdot, t) \|_{L^2_x }^2 + 2 \nu \| D^2 v (\cdot, t)
\|_{L^2_x}^2  \le \frac{4}{\nu} \| F (\cdot, t) \|^2_{L^2_x} +
\frac{\nu}{4} \| \Delta v (\cdot, t) \|^2_{L^2_x}.
\end{equation}
Combining (\ref{A.veq1}) and (\ref{A.veq2}) and using Poincar\'e's
inequality, we have
\begin{equation}\label{eq1UF}
\frac{d}{dt} \| v (\cdot, t) \|_{H^1_x}^2 + \frac{3}{2} \nu \| v (\cdot,
t) \|_{H^1_x}^2 + \frac{\nu}{4} \| D v (\cdot, t) \|^2_{H^1_x} \le
\frac{8}{\nu} \| F (\cdot, t) \|^2_{L^2_x}.
\end{equation}
Therefore, using (\ref{eq1UF}) and the fact that $v(x, 0)=0$,
$$
\| e^{\frac{3}{4} \nu t} v (\cdot, t) \|_{H^1_x}^2 \le \frac{8}{\nu}
\int_0^t \| e^{\frac{3}{4} \nu \tau} F (\cdot, \tau) \|^2_{L^2_x} d\tau.
$$
Hence,
\begin{equation}\label{A.veq3}
\sup_{t \in [0, T]} \| e^{\frac{3}{4} \nu t} v (\cdot, t) \|_{H^{1}_x} \le
\frac{2\sqrt{2}}{ \sqrt{\nu}} \| e^{\frac{3}{4} \nu t} F \|_{L^2_{|t|}
L^2_x}.
\end{equation}
Integration of (\ref{eq1UF}), using $v(x,0)=0$, gives
$$
\int_0^t \| e^{\frac{3}{4} \nu \tau} v (\cdot, \tau) \|^2_{H^2_x} d\tau
\le \frac{32}{\nu^2} \int_0^t \| e^{\frac{3}{4} \nu \tau} F (\cdot, \tau)
\|^2_{L^2_x} d\tau.
$$
Therefore, for $t \in [0, T]$, we obtain
\begin{equation}\label{A.veq4}
\left [ \int_0^t \| e^{\frac{3}{4} \nu \tau} v (\cdot, \tau) \|^2_{H^2_x}
d\tau \right ]^{1/2} \le \frac{4 \sqrt{2}}{\nu} \left [ \int_0^t \|
e^{\frac{3}{4} \nu \tau} F (\cdot, \tau) \|^2_{L^2_x} d\tau \right
]^{1/2}.
\end{equation}
Now (\ref{A.veq3}) and (\ref{A.veq4}) together imply
\begin{multline}\label{ineq1}
\sup_{t \in [0, T] } \biggl \{ \sum_{k\ne 0} (1+|k|^2) \biggl |
e^{\frac{3}{4} \nu t} t \int_0^1 e^{-\nu |k|^2 t (1-s)} {\hat F} (k, t s)
ds \biggr |^2 \biggr \}^{1/2} \\
+ \biggl \{ \int_0^T d|t| \sum_{k\ne 0} (1+|k|^4) \biggl | e^{\frac{3}{4}
\nu t} t \int_0^1 {\hat F} (k, t s) e^{-\nu |k|^2 t (1-s)} ds \biggr |^2
\biggr \}^{1/2} \\
\le \biggl ( \frac{2 \sqrt{2}}{\sqrt{\nu}} + \frac{4 \sqrt{2}}{\nu} \biggr
) \biggl \{ \int_0^T \sum_{k \ne 0} |e^{\frac{3}{4} \nu t} {\hat F}|^2 (k,
t) |dt| \biggr \}^{1/2},
\end{multline}
and replacing $t \in [0, T]$ by $t \in e^{i \phi} [0, T] \in {\tilde
S}_{\tilde \delta}$ is equivalent to replacing $\nu $ by $\nu \cos {\tilde
\delta}$.
\end{proof}

\begin{Lemma}\label{lemFest}
If $F = - \mathcal{P} \left [ v \cdot \nabla v \right ]$, then for $v \in
X$, and $t \in e^{i \phi} [0, T] \subset {\tilde S}_{\tilde \delta}$,
$$
\sup_{|\phi| < {\tilde \delta}} \| e^{\frac{3}{4} \nu t} F \|_{L^2_{|t|}
L^2_x} \le c_3 \| v \|_{X}^2,
$$
where $c_3 = \frac{c_4^{3/2}}{ (3 \nu \cos {\tilde \delta} )^{1/4} }$ for
$t \in {\tilde S}_{\tilde \delta}$, and $c_4$ is the Sobolev constant
bounding  $\| \cdot \|_{L^6}$ by $\| \cdot \|_{H^1} $ (see for instance
\cite{adam}, page 75).
\end{Lemma}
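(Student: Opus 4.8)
The plan is to reduce the space--time bound to a pointwise‑in‑$t$ estimate of $\|F(\cdot,t)\|_{L^2_x}$ via H\"older and Sobolev inequalities, and then to absorb the temporal exponential weight using the two pieces of the $X$‑norm separately. Since $\mathcal{P}$ is an orthogonal projection in $L^2$, it is a contraction there, so $\|F(\cdot,t)\|_{L^2_x}\le\|(v\cdot\nabla)v(\cdot,t)\|_{L^2_x}$. Pointwise $|(v\cdot\nabla)v|\le|v|\,|\nabla v|$, hence by H\"older with exponents $6$ and $3$, $\|(v\cdot\nabla)v\|_{L^2_x}\le\|v\|_{L^6_x}\|\nabla v\|_{L^3_x}$. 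The embedding $H^1(\mathbb{T}^3)\hookrightarrow L^6$ with constant $c_4$ gives $\|v\|_{L^6_x}\le c_4\|v\|_{H^1_x}$, while interpolating $\|\nabla v\|_{L^3_x}\le\|\nabla v\|_{L^2_x}^{1/2}\|\nabla v\|_{L^6_x}^{1/2}$ and using $\|\nabla v\|_{L^6_x}\le c_4\|\nabla v\|_{H^1_x}\le c_4\|v\|_{H^2_x}$, $\|\nabla v\|_{L^2_x}\le\|v\|_{H^1_x}$, yields
\[
\|F(\cdot,t)\|_{L^2_x}\le c_4^{3/2}\,\|v(\cdot,t)\|_{H^1_x}^{3/2}\,\|v(\cdot,t)\|_{H^2_x}^{1/2}.
\]

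Next, fix $\phi\in(-\tilde\delta,\tilde\delta)$ and integrate along the ray $t=|t|e^{i\phi}$, on which $|e^{\frac34\nu t}|=e^{\frac34\nu|t|\cos\phi}$. Squaring the pointwise bound and multiplying by $e^{\frac32\nu|t|\cos\phi}$,
\[
e^{\frac32\nu|t|\cos\phi}\|F\|_{L^2_x}^2\le c_4^3\Bigl(e^{\frac34\nu|t|\cos\phi}\|v\|_{H^1_x}\Bigr)^{3}e^{-\frac34\nu|t|\cos\phi}\|v\|_{H^2_x}.
\]
From the first term in the definition of $\|\cdot\|_X$ we have $e^{\frac34\nu|t|\cos\phi}\|v\|_{H^1_x}\le\|v\|_X$; applying this to the cube, then writing $e^{-\frac34\nu|t|\cos\phi}=e^{-\frac32\nu|t|\cos\phi}\cdot e^{\frac34\nu|t|\cos\phi}$ and using Cauchy--Schwarz in $|t|$,
\[
\int_0^T e^{\frac32\nu|t|\cos\phi}\|F\|_{L^2_x}^2\,d|t|\le c_4^3\|v\|_X^3\Bigl(\int_0^\infty e^{-3\nu|t|\cos\phi}\,d|t|\Bigr)^{1/2}\Bigl(\int_0^T e^{\frac32\nu|t|\cos\phi}\|v\|_{H^2_x}^2\,d|t|\Bigr)^{1/2}.
\]
The first integral equals $(3\nu\cos\phi)^{-1}$ and the second is at most $\|v\|_X^2$ by the $L^2_{|t|}H^2_x$ part of the norm, so the left side is bounded by $c_4^3(3\nu\cos\phi)^{-1/2}\|v\|_X^4$. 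Taking square roots, using $\cos\phi\ge\cos\tilde\delta$, and taking the supremum over $\phi$ gives $\|e^{\frac34\nu t}F\|_{L^2_{|t|}L^2_x}\le c_4^{3/2}(3\nu\cos\tilde\delta)^{-1/4}\|v\|_X^2=c_3\|v\|_X^2$, as claimed.

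The H\"older/Sobolev chain and the evaluation of the Gaussian‑type integral are routine; the only point requiring care is the bookkeeping of the exponential weights. One must choose the $3/2$‑power interpolation above (rather than, say, $\|v\|_{H^1}\|v\|_{H^2}$), so that after extracting the cube through the $X$ sup‑norm the residual temporal weight is exactly $e^{-3\nu|t|\cos\phi}$, which is integrable on $\mathbb{R}^+$ and produces the stated constant with the correct power of $3\nu\cos\tilde\delta$. It is also worth recording that the spatial inequalities are applied at fixed (complex) $t$ in the $x$‑variable and are insensitive to $t$ lying off the real axis; the sole effect of the sector is the replacement of $\nu$ by $\nu\cos\phi\ge\nu\cos\tilde\delta$, exactly as in Lemmas~\ref{lemu0} and~\ref{lemuf}.
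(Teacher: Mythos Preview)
Your proof is correct and follows essentially the same approach as the paper: the same H\"older split $\|v\,|\nabla v|\|_{L^2_x}\le\|v\|_{L^6_x}\|\nabla v\|_{L^3_x}$ with the $L^2$--$L^6$ interpolation on $\nabla v$ and the Sobolev embedding $H^1\hookrightarrow L^6$, and the same Cauchy--Schwarz in time that produces the factor $(3\nu\cos\tilde\delta)^{-1/4}$. The only cosmetic difference is ordering: the paper first passes to an $L^4_{|t|}L^2_x$ norm and then extracts the $L^\infty_t$ pieces, whereas you extract the sup first and then apply Cauchy--Schwarz; the ingredients and the resulting constant are identical.
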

\begin{proof}
First consider $t \in [0, T]$. H\"{o}lder's inequality implies
$$
\| e^{\frac{3}{4} \nu t} F \|^2_{L^2_{|t|} L^2_x} \le \left [ \int_0^T |
e^{- 3 \nu \tau}| d|\tau| \right ]^{1/2} \left [ \int_0^T \|
e^{\frac{3}{2} \nu \tau} | F (\cdot, \tau) | \|^4_{L^2_x} d|\tau| \right
]^{1/2}.
$$
Hence,
$$
\| e^{\frac{3}{4} \nu t} F \|_{L^2_{|t|} L^2_x} \le \frac{1}{(3
\nu)^{1/4}} \| e^{\frac{3}{2} \nu t} F \|_{L^4_{|t|} L^2_x}.
$$
If we replace $t \in [0, T]$ by $t \in {\tilde S}_{\tilde \delta}$ in this
argument, the effect is simply that  $\frac{1}{(3 \nu)^{1/4}}$ gets
replaced by $ \frac{1}{(3 \nu \cos {\tilde \delta} )^{1/4}}$.

For nonnegative $u$, $w$, repeated use of H\"{o}lder's inequality gives
\begin{multline*}
\int_{\TT^3} w^2 u^2 dx \le \left ( \int_{\TT^3} w^6 dx \right )^{1/3}
\left ( \int_{\TT^3} u^3 dx \right )^{2/3} \\
\le \left \{ \int_{\TT^3} w^6 dx \right \}^{1/3} \left \{ \int_{\TT^3}
u^{2} dx \right \}^{1/2} \left \{ \int_{\TT^3} u^{6} dx \right \}^{1/6}
\le \| w \|_{L_x^6}^2 \| u \|_{L_x^2} \| u \|_{L_x^6}.
\end{multline*}
Therefore, it follows that
$$
\|e^{\frac{3}{2} \nu t} F (\cdot, t) \|_{L^2_x} \le \| e^{\frac{3}{2} \nu
t} |v (\cdot, t)| |\nabla v (\cdot, t)| \|_{L^2_x} \le \| e^{\frac{3}{4}
\nu t} v \|_{L_x^6} \| e^{\frac{3}{4} \nu t} \nabla  v \|_{L_x^2}^{1/2} \|
e^{\frac{3}{4} \nu t} \nabla v \|_{L_x^6}^{1/2},
$$
and
$$
\| e^{\frac{3}{2} \nu t} F \|_{L^4_{|t|} L^2_x} \le \| e^{\frac{3}{4} \nu
t} v \|_{L^\infty_{|t|} L_x^6 } \| e^{\frac{3}{4} \nu t} \nabla v
\|^{1/2}_{L^{\infty}_{|t|} L_x^2} \| e^{\frac{3}{4} \nu t} \nabla v
\|^{1/2}_{L_{|t|}^2 L_x^6}.
$$
Using Sobolev inequalities, we have
$$
\| v (\cdot, t) \|_{L_x^6} \le c_4 \| v (\cdot, t) \|_{H^1_x},
$$
$$
\| D v (\cdot, t) \|_{L_x^6} \le c_4 \| D v (\cdot, t) \|_{H^1_x}.
$$
Thus
$$
\| e^{\frac{3}{2} \nu t} F \|_{L^4_{|t|} L^2_x} \le c_4^{3/2} \|
e^{\frac{3}{4} \nu t} v \|^{3/2}_{L^\infty_{|t|} H^1_x} \| e^{\frac{3}{4}
\nu t} D v \|^{1/2}_{L_{|t|}^2 H^1_x} \le c_4^{3/2} \| v \|_X^2.
$$
Therefore,
$$
\| e^{\frac{3}{4} \nu t} F \|_{L^2_{|t|} L^2_x} \le \frac{c_4^{3/2} }{(3
\nu \cos {\tilde \delta} )^{1/4}} \| v \|_{X}^2.
$$
Since the right hand side is independent of $\phi$, taking the supremum of
the left side over $\phi$ for $|\phi| < {\tilde \delta}$, the Lemma
follows.
\end{proof}

\begin{Lemma}\label{lemN}
The operator $\mathcal{N}$ defined in (\ref{NS1p}) satisfies the following
estimate:
\begin{multline*}
\| \mathcal{N} [ v ] \|_X \le c_1 \| v_0 \|_{H^1_x} + c_2 c_3 \| v \|_X
^2, \\
\| \mathcal{N} [ v^{(1)} ] - \mathcal{N} [ v^{(2)} ] \|_X \le c_2 c_3
\left ( \| v^{(1)} \|_{X} + \| v^{(2)} \|_X \right ) \| v^{(1)} - v^{(2)}
\|_X.
\end{multline*}
\end{Lemma}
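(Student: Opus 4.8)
The plan is to derive both inequalities directly from the three preceding lemmas. First I would write $\mathcal{N}[v](x,t) = e^{\nu t\Delta} v_0 + \int_0^t e^{\nu(t-\tau)\Delta} F(\cdot,\tau)\,d\tau$ with $F = -\mathcal{P}[(v\cdot\nabla)v]$, and apply the triangle inequality in the norm of $X$. The linear term $e^{\nu t\Delta} v_0$ is bounded by $c_1\|v_0\|_{H^1_x}$ by Lemma \ref{lemu0}. For the Duhamel term, Lemma \ref{lemuf} reduces the estimate to bounding $\sup_{|\phi|<\tilde\delta}\|e^{\frac{3}{4}\nu t} F\|_{L^2_{|t|}L^2_x}$ by $c_2$ times that quantity; since $\mathcal{P}$ is the orthogonal projection onto divergence-free fields it has operator norm $1$ on $L^2_x$, so $\|e^{\frac{3}{4}\nu t}F\|_{L^2_{|t|}L^2_x}\le \|e^{\frac{3}{4}\nu t}|v|\,|\nabla v|\|_{L^2_{|t|}L^2_x}$, which Lemma \ref{lemFest} bounds by $c_3\|v\|_X^2$. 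Combining the two contributions gives $\|\mathcal{N}[v]\|_X\le c_1\|v_0\|_{H^1_x}+c_2 c_3\|v\|_X^2$.

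For the Lipschitz (contraction) estimate I would use that the linear terms cancel, so that $\mathcal{N}[v^{(1)}]-\mathcal{N}[v^{(2)}]$ equals the Duhamel integral of $-\mathcal{P}\big[(v^{(1)}\cdot\nabla)v^{(1)}-(v^{(2)}\cdot\nabla)v^{(2)}\big]$. Then I would insert the elementary bilinear identity $(v^{(1)}\cdot\nabla)v^{(1)}-(v^{(2)}\cdot\nabla)v^{(2)} = (v^{(1)}\cdot\nabla)(v^{(1)}-v^{(2)}) + \big((v^{(1)}-v^{(2)})\cdot\nabla\big)v^{(2)}$, apply Lemma \ref{lemuf} to each resulting term, and finish with a bilinear version of Lemma \ref{lemFest}, namely $\sup_{|\phi|<\tilde\delta}\|e^{\frac{3}{4}\nu t}\mathcal{P}[(w\cdot\nabla)z]\|_{L^2_{|t|}L^2_x}\le c_3\|w\|_X\|z\|_X$ for arbitrary $w,z\in X$. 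Applying this with $(w,z)=(v^{(1)},v^{(1)}-v^{(2)})$ and $(w,z)=(v^{(1)}-v^{(2)},v^{(2)})$ and summing yields the stated bound $c_2 c_3(\|v^{(1)}\|_X+\|v^{(2)}\|_X)\|v^{(1)}-v^{(2)}\|_X$.

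The one place that needs a small argument — and the step I expect to be the main obstacle — is justifying this bilinear form of Lemma \ref{lemFest} with the same constant $c_3$. This amounts to rereading the proof of that lemma and noting that each H\"older/Sobolev step distributes correctly: the key inequality there, $\int_{\TT^3} w^2 u^2\,dx\le \|w\|_{L^6_x}^2\|u\|_{L^2_x}\|u\|_{L^6_x}$, when applied to the product $|w|\,|\nabla z|$, assigns one field to the ``$w$'' slot and the gradient of the other to the ``$u$'' slots, so the $H^1_x$-norms of $w$ and of $z$ enter separately and no larger constant is produced. All the remaining ingredients — the time-weight manipulations, the H\"older split in $|t|$, and the passage to the complex sector $\tilde S_{\tilde\delta}$ by replacing $\nu$ with $\nu\cos\tilde\delta$ — are identical to what was already carried out in Lemmas \ref{lemuf} and \ref{lemFest}, so no genuinely new estimate is required.
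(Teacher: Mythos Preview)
Your proposal is correct and follows essentially the same route as the paper: decompose $\mathcal{N}[v]$ into the linear heat flow and the Duhamel term, apply Lemmas~\ref{lemu0}, \ref{lemuf}, \ref{lemFest} in sequence, and for the difference estimate use the bilinear splitting of $v^{(1)}\cdot\nabla v^{(1)}-v^{(2)}\cdot\nabla v^{(2)}$ (the paper uses the equivalent variant $(v^{(1)}-v^{(2)})\cdot\nabla v^{(1)}+v^{(2)}\cdot\nabla(v^{(1)}-v^{(2)})$) together with the same three lemmas. Your explicit discussion of why the bilinear form of Lemma~\ref{lemFest} holds with the same constant $c_3$ is a point the paper leaves implicit, and your reasoning there is sound.
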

\begin{proof}
Note that
$$
\mathcal{N} \left [ v \right ] = e^{\nu t \Delta} v_0 + \int_0^t e^{\nu
(t-\tau)\Delta} F (\cdot, \tau) d\tau,
$$
where $F = -\mathcal{P} \left [ v \cdot \nabla v \right ]$. By Lemmas
\ref{lemu0}, \ref{lemuf} and \ref{lemFest} it follows that
$$
\| \mathcal{N} \left [ v \right ] \|_X  \le c_1 \| v_0 \|_{H^1_x} + c_2
c_3 \| v \|_X^2.
$$
For the second part, we note that
$$
v^{(1)} \cdot \nabla v^{(1)} - v^{(2)} \cdot \nabla v^{(2)} = \Bigl (
v^{(1)} - v^{(2)} \Bigr ) \cdot \nabla v^{(1)} + v^{(2)} \cdot \Bigl (
\nabla v^{(1)} - \nabla v^{(2)} \Bigr ).
$$
Using Lemmas \ref{lemu0}, \ref{lemuf} and \ref{lemFest} again, we obtain
the desired estimate.
\end{proof}

\begin{Lemma}\label{lemepsilon}
If
$$
\| v_0 \|_{H^1_x} < {\hat \epsilon} \equiv \frac{1}{4 c_1 c_2 c_3} =
\frac{ 3^{1/4} \nu^{7/4} [ \cos {\tilde \delta} ]^{7/4} }{ 8 \sqrt{2}\,
c_4^{3/2} (\sqrt{\nu \cos {\tilde \delta}} + \sqrt{2} ) (2+ \sqrt{\nu \cos
{\tilde \delta}}) },
$$
$v (x, t)$ exists in $X$ for any $T$. $v (\cdot, t)$ is analytic in $t \in
{\tilde S}_{\tilde \delta}$ and decays exponentially in that sector as
$|t| \to \infty$, with
$$
\| v  (\cdot, t) \|_{H^1_x}  < 2 c_1 {\hat \epsilon} e^{-\frac{3}{4} \nu
\Re t}.
$$
Further, this solution is smooth in $x$. If
$$
\| v_0 \|_{H^1_x} < \epsilon_0 \equiv \frac{3^{1/4} \nu^{7/4}}{8
\sqrt{2}\, c_4^{3/2} (\sqrt{\nu} + \sqrt{2} ) (2 + \sqrt{\nu} ) },
$$
then $v (x,t)$ is a classical solution for all $t \in \mathbb{R}^+$.
\end{Lemma}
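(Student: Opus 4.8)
The plan is to solve the fixed-point equation $v=\mathcal{N}[v]$ in the Banach space $X$ by the contraction mapping principle, using only the two inequalities of Lemma~\ref{lemN}, and then to upgrade the resulting mild solution. Fix an arbitrary $T>0$ and work in the closed ball $\mathcal{B}=\{v\in X:\|v\|_X\le R\}$, where $R:=2c_1\|v_0\|_{H^1_x}$. By the first estimate in Lemma~\ref{lemN}, for $v\in\mathcal{B}$ one has $\|\mathcal{N}[v]\|_X\le c_1\|v_0\|_{H^1_x}+c_2c_3R^2=c_1\|v_0\|_{H^1_x}\bigl(1+4c_1c_2c_3\|v_0\|_{H^1_x}\bigr)$, which is $\le R$ precisely when $\|v_0\|_{H^1_x}\le(4c_1c_2c_3)^{-1}={\hat\epsilon}$; hence $\mathcal{N}$ maps $\mathcal{B}$ into itself. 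By the second estimate, for $v^{(1)},v^{(2)}\in\mathcal{B}$, $\|\mathcal{N}[v^{(1)}]-\mathcal{N}[v^{(2)}]\|_X\le 2c_2c_3R\,\|v^{(1)}-v^{(2)}\|_X=4c_1c_2c_3\|v_0\|_{H^1_x}\,\|v^{(1)}-v^{(2)}\|_X$, which is a strict contraction when $\|v_0\|_{H^1_x}<{\hat\epsilon}$. Hence there is a unique $v\in\mathcal{B}$ with $v=\mathcal{N}[v]$. Neither $R$ nor the contraction factor depends on $T$, and restricting a solution to a shorter interval only decreases its $X$-norm, so the solutions obtained for increasing $T$ are mutually consistent and paste together into one solution valid for every $T$, i.e.\ for all $t$.

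Analyticity in $t$ on ${\tilde S}_{\tilde\delta}$ is built into $X$ through the factor $\mathcal{A}_t$ and is preserved by $\mathcal{N}$: $e^{\nu t\Delta}v_0$ is analytic for $\Re t>0$ since each Fourier mode $e^{-\nu|k|^2t}\hat v_0(k)$ is entire and the series converges locally uniformly, while the Duhamel term $\int_0^t e^{\nu(t-\tau)\Delta}F\,d\tau$ (integrated along the ray, with $F=-\mathcal{P}[v\cdot\nabla v]$) inherits analyticity from that of the heat semigroup and absolute convergence of the integral, its norm being controlled by Lemma~\ref{lemuf}. The exponential decay is then read off the first part of the $X$-norm: $\|e^{\frac34\nu t}v(\cdot,t)\|_{H^1_x}\le\|v\|_X\le R<2c_1{\hat\epsilon}$, so $\|v(\cdot,t)\|_{H^1_x}<2c_1{\hat\epsilon}\,|e^{-\frac34\nu t}|=2c_1{\hat\epsilon}\,e^{-\frac34\nu\Re t}$.

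For smoothness in $x$ and the classical-solution conclusion I would invoke Lemma~\ref{instsmooth}. The $L^2_{|t|}H^2_x$ component of the $X$-norm puts $v(\cdot,t)\in H^2(\TT^3)$ for a sequence $t_m\downarrow0$, and since $\sum_{k\in\ZZ^3}(1+|k|^2)^{-2}<\infty$ we have the embedding $H^2(\TT^3)\hookrightarrow l^1(\ZZ^3)$, so $\|\hat v(\cdot,t_m)\|_{l^1}<\infty$. Applying Lemma~\ref{instsmooth} after the time-shift $t\mapsto t-t_m$ (using autonomy of the equation) on each interval $[t_m,T]$ yields $|k|^2\hat v(\cdot,t)\in l^1$ for $t>t_m$; letting $t_m\to0$ and $T\to\infty$ shows $v=\mathcal{F}^{-1}[\hat v]$ is a classical solution of (\ref{nseq0}) for all $t>0$. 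Finally, the assertion with $\epsilon_0$ is the same argument carried out with $\phi\equiv0$: on the real ray every occurrence of $\nu\cos{\tilde\delta}$ in Lemmas~\ref{lemu0}, \ref{lemuf} and \ref{lemFest} becomes $\nu$, so that $(4c_1c_2c_3)^{-1}$ becomes exactly $\epsilon_0$, and the same contraction produces a global classical solution on $\RR^+$ whenever $\|v_0\|_{H^1_x}<\epsilon_0$.

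The main obstacle is not the contraction itself, which is immediate from Lemma~\ref{lemN} once the radius $2c_1\|v_0\|_{H^1_x}$ is chosen, but the regularity upgrade: one must locate times arbitrarily close to $0$ at which $\hat v\in l^1$ and feed these into Lemma~\ref{instsmooth} in a manner compatible with its hypotheses, and one must verify that the sectorial constants degenerate to the advertised $\epsilon_0$ as ${\tilde\delta}\to0$.
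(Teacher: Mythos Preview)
Your argument is essentially the paper's: contraction of $\mathcal{N}$ in $X$ via Lemma~\ref{lemN} on the ball of radius $2c_1\|v_0\|_{H^1_x}$, uniformity of the estimates in $T$, analyticity and decay read off the weighted norm, and $\epsilon_0$ obtained as $\tilde\delta\to 0$. The only difference is the smoothing step: the paper simply cites known $H^1$ instant-smoothing results (Foias--Temam, or Theorem~\ref{Thm01}), whereas your route through $L^2_{|t|}H^2_x$, the embedding $H^2(\TT^3)\hookrightarrow l^1(\ZZ^3)$, and Lemma~\ref{instsmooth} is more self-contained but, as you correctly flag, Lemma~\ref{instsmooth} requires $\hat v\in l^1$ on the \emph{whole} interval $[t_m,T]$, not just at $t_m$ --- to close that loop cleanly you need one appeal to local $l^1$-wellposedness (Theorem~\ref{Thm01}) starting from $v(\cdot,t_m)\in l^1$, after which uniqueness identifies the two solutions and Lemma~\ref{instsmooth} applies.
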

\begin{proof}
If $\| v_0 \|_{H^1_x} < {\hat \epsilon}$, Lemma \ref{lemN} implies that
the operator $\mathcal{N}$ (defined in Lemma \ref{lemN}) is contractive
and hence a solution to Navier-Stokes equation exists in $X$. Since the
estimates are uniform in $t$, it follows that this solution exists for all
$t \in {\tilde S}_{\tilde \delta}$. Known results (or Theorem \ref{Thm01}
above) imply that if the initial data is in $H^1_x$, then the solution
becomes smooth (in fact, analytic for periodic data, \cite{FoiasTem})
instantly, and thus it is a classical solution when $t > 0$. Analyticity
and decay in $t$ follows from  the definition of $X$, the arbitrariness in
the choice of $T$ and the observation that $\mathcal{N}$ in Lemma
\ref{lemN} is contractive in a ball of radius $2 c_1 \|v_0 \|_{H^1_x}$.
Further, by taking the $\lim_{\tilde \delta \to 0^+} {\hat \epsilon} =
\epsilon_0$, we obtain the less restrictive condition on $\| v_0
\|_{H^1_x}$ that ensures existence of classical solution only for $t \in
\mathbb{R}^+$.
\end{proof}

\begin{Lemma}\label{lemH2}
If $\| v_0 \|_{H^2_x} \le \epsilon_2$ for sufficiently small $\epsilon_2$,
$$
\| v (\cdot, t) \|_{H^2_x} \le 2 c_1 \| v_0 \|_{H^2_x} e^{-\frac{3}{4} \nu
\Re t}
$$
for any $t \in {\tilde S}_{\tilde \delta}$.
\end{Lemma}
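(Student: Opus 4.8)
The plan is to repeat, one spatial derivative higher, the fixed‑point scheme of Lemmas \ref{lemu0}--\ref{lemepsilon}. Introduce the Banach space $X_{2}$ of functions analytic in $t \in {\tilde S}_{\tilde \delta}$ with values in $H^{2}_{x}$, equipped with
\[
\|v\|_{X_{2}} = \sup_{t \in {\tilde S}_{\tilde \delta},\, 0 < |t| < T} \bigl\| e^{\frac{3}{4}\nu t} v(\cdot,t) \bigr\|_{H^{2}_{x}} + \sup_{|\phi| < {\tilde \delta}} \Bigl( \int_{0}^{T} \bigl\| e^{\frac{3}{4}\nu t} v(\cdot,|t|e^{i\phi}) \bigr\|_{H^{3}_{x}}^{2} \, d|t| \Bigr)^{1/2},
\]
and realize $v$ as the fixed point in $X_{2}$ of the operator $\mathcal{N}$ of (\ref{NS1p}). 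The proof needs the $H^{2}$ analogues of Lemmas \ref{lemu0}, \ref{lemuf} and \ref{lemFest}, after which the contraction argument and the identification of the solution are routine.

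First I would prove $\| e^{\nu t \Delta} v_{0} \|_{X_{2}} \le c_{1} \| v_{0} \|_{H^{2}_{x}}$ with the same constant $c_{1}$ as in Lemma \ref{lemu0}. Since $e^{\nu t \Delta}$ acts in Fourier space as $e^{-\nu |k|^{2} t}$, this is the computation of Lemma \ref{lemu0} verbatim, with the weights $(1+|k|^{2})$, $(1+|k|^{4})$ replaced by $(1+|k|^{4})$, $(1+|k|^{6})$: one uses $e^{-2\nu(|k|^{2}-3/4)t} \le 1$ for the supremum term and the elementary inequality $(1+|k|^{6}) \le 2(1+|k|^{4})(2|k|^{2}-\tfrac{3}{2})$, valid for $|k| \ge 1$, for the integral term, and the passage to the sector again amounts to replacing $\nu$ by $\nu \cos {\tilde \delta}$. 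Next, taking one more spatial derivative in the energy identities (\ref{A.veq1})--(\ref{A.veq2}) and running the same Poincar\'e manipulation gives the Duhamel estimate $\bigl\| \int_{0}^{t} e^{\nu(t-\tau)\Delta} F \, d\tau \bigr\|_{X_{2}} \le c_{2} \sup_{|\phi| < {\tilde \delta}} \| e^{\frac{3}{4}\nu t} F \|_{L^{2}_{|t|} H^{1}_{x}}$, the analogue of Lemma \ref{lemuf}.

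The substantive step is the nonlinear bound: for $F = -\mathcal{P}[v \cdot \nabla v]$,
\[
\sup_{|\phi| < {\tilde \delta}} \bigl\| e^{\frac{3}{4}\nu t} F \bigr\|_{L^{2}_{|t|} H^{1}_{x}} \le C \| v \|_{X_{2}}^{2} .
\]
Because $H^{2}(\mathbb{T}^{3}) \hookrightarrow L^{\infty}$ in three dimensions, products are now easy: $\mathcal{P}$ is bounded on $H^{1}_{x}$ and the product estimate $\| g h \|_{H^{1}_{x}} \le C \| g \|_{H^{2}_{x}} \| h \|_{H^{1}_{x}}$ holds, so writing $e^{\frac{3}{4}\nu t} F = -\mathcal{P}\bigl[ v \cdot (e^{\frac{3}{4}\nu t} \nabla v) \bigr]$ one gets $\| e^{\frac{3}{4}\nu t} F(\cdot,t) \|_{H^{1}_{x}} \le C \, e^{\frac{3}{4}\nu \Re t} \| \nabla v(\cdot,t) \|_{H^{2}_{x}} \| v(\cdot,t) \|_{H^{1}_{x}}$. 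Integrating in $|t|$, pulling $\sup_{t} \| v(\cdot,t) \|_{H^{1}_{x}}$ out of the $L^{2}_{|t|}$ integral, and using $\| \nabla v \|_{H^{2}_{x}} \le \| v \|_{H^{3}_{x}}$ together with $e^{-\frac{3}{4}\nu \Re t} \le 1$ on ${\tilde S}_{\tilde \delta}$, one obtains $\| e^{\frac{3}{4}\nu t} F \|_{L^{2}_{|t|} H^{1}_{x}} \le C \bigl( \sup_{t} \| v(\cdot,t) \|_{H^{2}_{x}} \bigr) \| e^{\frac{3}{4}\nu t} v \|_{L^{2}_{|t|} H^{3}_{x}} \le C \| v \|_{X_{2}}^{2}$; the difference identity $v^{(1)} \cdot \nabla v^{(1)} - v^{(2)} \cdot \nabla v^{(2)} = (v^{(1)}-v^{(2)}) \cdot \nabla v^{(1)} + v^{(2)} \cdot \nabla(v^{(1)}-v^{(2)})$ used in the proof of Lemma \ref{lemN} gives the matching Lipschitz estimate. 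The one point requiring care is the bookkeeping of which factor carries the time‑exponential weight, so that exactly the two seminorms composing $\| \cdot \|_{X_{2}}$ appear; the simplification relative to Lemma \ref{lemFest} is precisely that in $H^{2}$ no Sobolev interpolation into $L^{6}_{x}$ is needed, so I expect this step to be more transparent than its $H^{1}$ counterpart.

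Combining the three estimates as in Lemma \ref{lemN} yields $\| \mathcal{N}[v] \|_{X_{2}} \le c_{1} \| v_{0} \|_{H^{2}_{x}} + C \| v \|_{X_{2}}^{2}$ and the Lipschitz bound, so there is $\epsilon_{2} > 0$ (which we may also take below ${\hat \epsilon}$) such that, for $\| v_{0} \|_{H^{2}_{x}} \le \epsilon_{2}$, $\mathcal{N}$ is a contraction on the ball of radius $2 c_{1} \| v_{0} \|_{H^{2}_{x}}$ in $X_{2}$. Its fixed point is a global classical solution of (\ref{nseq0}), hence by uniqueness of classical solutions it coincides with the solution furnished by Lemma \ref{lemepsilon}. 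Membership in that ball, together with the weight $e^{\frac{3}{4}\nu t}$ in the definition of $\| \cdot \|_{X_{2}}$, gives $\| v(\cdot,t) \|_{H^{2}_{x}} \le 2 c_{1} \| v_{0} \|_{H^{2}_{x}} e^{-\frac{3}{4}\nu \Re t}$ for every $t \in {\tilde S}_{\tilde \delta}$, which is the assertion of the lemma.
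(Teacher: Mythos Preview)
Your proposal is correct and follows exactly the approach the paper takes: the paper's entire proof is the single remark that the argument of Lemma~\ref{lemepsilon} goes through verbatim with $X$ replaced by $\{\mathcal{A}_t H^2_x\} \cap \{L^2_{|t|} H^3_x\}$, which is your $X_2$. You have simply fleshed out the details, including the helpful observation that at this regularity level the nonlinear estimate is cleaner because $H^2(\mathbb{T}^3)$ is an algebra, so the $L^6$ interpolation of Lemma~\ref{lemFest} is no longer needed.
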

\begin{proof}
This is similar to the proof of  Lemma \ref{lemepsilon} with $X$ replaced
by
$$
X \equiv \left \{ \mathcal{A}_{t} H^2_x \right \} \cap \left \{ L_{|t|}^2
H^3_x \right \}:= \Big ( \mathcal{A}_{t} \otimes H^2(\mathbb{T}^3 [0,
2\pi])\Bigr)\,\,\cap \Bigl( L^2 \left [ e^{i \phi} (0,T) \right ] \otimes
H^3(\mathbb{T}^3 [0, 2\pi]) \Bigr),
$$
for $|\phi| < {\tilde \delta}$.
\end{proof}

\begin{Theorem}\label{Testimate}
A weak solution to (\ref{nseq0}) becomes classical when $t > T_c$, where
$$
T_c = \frac{256 E c_4^3 (\sqrt{\nu} + \sqrt{2} )^2 (2+\sqrt{\nu})^2 }{
3^{1/2} \nu^{9/2}}.
$$
This solution is analytic in $t$ for $(t - T_{c,a}) \in {\tilde S}_{\tilde
\delta}$, where
$$
T_{c,a} = \frac{256 E c_4^3 (\sqrt{\nu \cos {\tilde \delta}} + \sqrt{2}
)^2 (2+\sqrt{\nu \cos {\tilde \delta} })^2}{ 3^{1/2} \nu [ \nu \cos
{\tilde \delta} ]^{7/2} }.
$$
Further, for any constant $C$, there exists $T_2$ so that for $ (t-T_2)
\in {\tilde S}_{\tilde \delta}$,
$$
\| {\hat v} (\cdot, t) \|_{l^1} < C \exp \left [ -\frac{3}{4} \nu \Re \{
t-T_{2} \}  \right ].
$$
\end{Theorem}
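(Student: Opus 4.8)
The plan is to combine Leray's cumulative dissipation bound with the small-data results of Lemmas~\ref{lemepsilon} and~\ref{lemH2}. Write $E=\tfrac12\|v_0\|_{L^2}^2$ for the initial kinetic energy; only $L^2$ regularity of $v_0$ is needed. A Leray weak solution $v$ obeys the energy inequality $\tfrac12\|v(\cdot,t)\|_{L^2}^2+\nu\int_0^t\|\nabla v(\cdot,s)\|_{L^2}^2\,ds\le E$, whence $\int_0^\infty\|\nabla v(\cdot,s)\|_{L^2}^2\,ds\le E/\nu$ and $v(\cdot,s)\in H^1_x$ for a.e.\ $s$. First I would fix $t^\ast>T_c$ and apply the mean-value (pigeonhole) principle on $(0,t^\ast)$: there is $t_0\in(0,t^\ast)$ with $\|\nabla v(\cdot,t_0)\|_{L^2}^2\le\tfrac1{t^\ast}\int_0^{t^\ast}\|\nabla v\|_{L^2}^2\,ds\le\tfrac{E}{\nu t^\ast}<\tfrac{E}{\nu T_c}$. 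Since $v_0$ has zero average, Poincar\'e on $\TT^3[0,2\pi]$ gives $\|v\|_{L^2}\le\|\nabla v\|_{L^2}$, so $\|v(\cdot,t_0)\|_{H^1_x}^2\le 2\|\nabla v(\cdot,t_0)\|_{L^2}^2<\tfrac{2E}{\nu T_c}$. The whole point is that $T_c$ is exactly the value for which $\tfrac{2E}{\nu T_c}=\epsilon_0^2$, where $\epsilon_0$ is the threshold in Lemma~\ref{lemepsilon}; hence $\|v(\cdot,t_0)\|_{H^1_x}<\epsilon_0$.

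Since the NS evolution is autonomous, I would then apply Lemma~\ref{lemepsilon} to the shifted initial value $v(\cdot,t_0)$, obtaining a classical solution for $t-t_0\in\RR^+$. By uniqueness of classical solutions this agrees with the given Leray solution for $t>t_0$, so the latter is classical for $t>t^\ast$; as $t^\ast>T_c$ was arbitrary, it is classical for every $t>T_c$. For the analyticity statement I would repeat this with $T_{c,a}$ chosen so that $\tfrac{2E}{\nu T_{c,a}}=\hat\epsilon^2$, where $\hat\epsilon$ is the sectorial threshold of Lemma~\ref{lemepsilon} (the one carrying the $\cos{\tilde\delta}$ factors): picking $t_0<T_{c,a}$ with $\|v(\cdot,t_0)\|_{H^1_x}<\hat\epsilon$ and noting that the sector $\{t:\arg(t-t_0)\in(-{\tilde\delta},{\tilde\delta})\}$ contains $\{t:\arg(t-T_{c,a})\in(-{\tilde\delta},{\tilde\delta})\}$, Lemma~\ref{lemepsilon} yields analyticity of $v$ in $t-T_{c,a}\in{\tilde S}_{\tilde\delta}$ together with the decay $\|v(\cdot,t)\|_{H^1_x}<2c_1\hat\epsilon\,e^{-\frac34\nu\Re(t-t_0)}$ there. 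Substituting the explicit forms of $\epsilon_0$, $\hat\epsilon$ from Lemma~\ref{lemepsilon} (equivalently of $c_1c_2c_3$) reproduces the displayed formulas for $T_c$ and $T_{c,a}$.

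For the final $l^1$ bound I would pass to $H^2$. Instantaneous smoothing (Lemma~\ref{instsmooth}) makes $v(\cdot,t)$ smooth for $t>t_0$, and the $H^1$ decay already established forces $\|v(\cdot,t)\|_{H^1_x}\to0$. A short-time smoothing estimate bounds $\|v(\cdot,T'+1)\|_{H^2_x}$ by a fixed multiple of $\|v(\cdot,T')\|_{H^1_x}$, so, given any $C$, one can choose $T'$ large enough that $T_2:=T'+1$ satisfies both $\|v(\cdot,T_2)\|_{H^2_x}\le\epsilon_2$ (the threshold of Lemma~\ref{lemH2}) and $2c_1 c\,\|v(\cdot,T_2)\|_{H^2_x}<C$, where $c$ is the dimensional constant in the embedding $\|\hat w\|_{l^1}\le c\,\|w\|_{H^2_x}$ on $\TT^3[0,2\pi]$ (Cauchy--Schwarz, using $\sum_k(1+|k|^4)^{-1}<\infty$ in three dimensions). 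Then Lemma~\ref{lemH2} with initial time $T_2$ gives $\|v(\cdot,t)\|_{H^2_x}\le 2c_1\|v(\cdot,T_2)\|_{H^2_x}\,e^{-\frac34\nu\Re(t-T_2)}$ for $t-T_2\in{\tilde S}_{\tilde\delta}$, hence $\|\hat v(\cdot,t)\|_{l^1}\le c\,\|v(\cdot,t)\|_{H^2_x}<C\,e^{-\frac34\nu\Re(t-T_2)}$, as claimed.

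The main obstacle I anticipate is bookkeeping rather than conceptual. One must be careful that at the selected time $t_0$ the solution genuinely lies in $H^1_x$ with the \emph{strict} small-norm bound --- this uses the a.e.\ regularity of Leray solutions together with the strict inequality $t^\ast>T_c$, and, for the sectorial version, the elementary remark that $\int_0^{T_{c,a}}\|\nabla v\|_{L^2}^2\,ds<E/\nu$ unless the solution is already stationary (hence smooth). Equally one must check that the $\cos{\tilde\delta}$-dependence of the constants $c_1$, $c_2$, $c_3$ propagates through the algebra to give precisely the stated $T_{c,a}$; the remainder is a direct chaining of the lemmas already proved.
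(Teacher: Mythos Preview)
Your argument is correct and follows essentially the same route as the paper: Leray's dissipation bound plus a pigeonhole argument to find a time with small $H^1$ norm, then Lemma~\ref{lemepsilon} (real and sectorial versions) applied from that time, with the constants $T_c$, $T_{c,a}$ chosen so the pigeonhole output matches the thresholds $\epsilon_0$, $\hat\epsilon$.

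The only genuine difference is in how you reach $H^2$-smallness for the final $l^1$ decay. You invoke a separate short-time smoothing estimate $\|v(\cdot,T'+1)\|_{H^2_x}\le C\|v(\cdot,T')\|_{H^1_x}$ and then let $T'\to\infty$ along the $H^1$ decay. The paper instead observes that the $X$-norm bound from Lemma~\ref{lemepsilon} already contains the piece $\int_{T_1}^\infty\|v(\cdot,t)\|_{H^2_x}^2\,dt\le(2c_1\epsilon_0)^2$, and applies a second pigeonhole to this integral to extract $T_2$ with $\|v(\cdot,T_2)\|_{H^2_x}<\epsilon_2$. The paper's route is slightly more self-contained (no extra smoothing lemma needed), while yours makes the mechanism for choosing $T_2$ in terms of the prescribed constant $C$ more explicit. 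Both are valid and lead to the same conclusion via Lemma~\ref{lemH2} and the embedding $\|\hat v\|_{l^1}\le c_5\|v\|_{H^2_x}$.
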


\begin{proof}
Leray's energy estimate implies
$$
\| \nabla v \|_{L^2_{|t|} L^2_x} \le \sqrt{\frac{E}{\nu}},
$$
where $E = \frac{1}{2} \| v_0 \|^2_{L^2_x}$. From a standard pigeon-hole
argument, it follows that there exists $T_1 \in (0, T]$ so that
$$
\| \nabla v (\cdot, T_1) \|_{L^2_x} \le \sqrt{\frac{E}{\nu T}}.
$$
Therefore, Poincar\'e's inequality implies
$$
\| v (\cdot, T_1) \|_{H^1_x} \le \sqrt{\frac{2 E}{\nu T}}.
$$
This means there exists some $T_1 \in [0, T_c]$, where
$$
T_c = \frac{256 E c_4^3 (\sqrt{\nu} + \sqrt{2} )^2 (2+\sqrt{\nu})^2 }{
3^{1/2} \nu^{9/2}}
$$
for which
$$
\| v (\cdot, T_1) \|_{H^1_x} < \frac{ 3^{1/4} \nu^{7/4}}{8 \sqrt{2}
c_4^{3/2} (\sqrt{\nu} + \sqrt{2}) (2+\sqrt{\nu})}.
$$
Replacing $t$ by $t-T_1$ in Lemma \ref{lemepsilon}, we see that the
solution is classical and smooth for $t - T_1 \in \mathbb{R}^+$, therefore
necessarily for $t > T_c$.

Further, from these  arguments, it is clear that there exists a $T_{1,a}
\in  [ 0, T_{c,a} ]$ so that
$$
\| v (\cdot, T_{1,a}) \|_{H^1_x} \le \frac{ 3^{1/4} [\nu \cos {\tilde
\delta}]^{7/4}}{8 \sqrt{2} c_4^{3/2} (\sqrt{\nu \cos {\tilde \delta} } +
\sqrt{2} ) (2+\sqrt{\nu \cos {\tilde \delta}})}.
$$
Replacing $t$ by $t-T_{1,a}$ in Lemma (\ref{lemepsilon}), we see that the
classical solution is analytic in $t - T_{1, a} \in {\tilde S}_{\tilde
\delta}$ (which includes the region $t - T_{c,a} \in {\tilde S}_{\tilde
\delta}$).

Further, since for $t > T_1$ we have
$$
\int_{T_1}^\infty \| v (\cdot, t) \|_{H^2_x}^2 dt \le \sup_{T > T_1} \| v
\|_X^2 \le (2 c_1 \epsilon_0)^{2},
$$
it follows from a pigeon-hole argument that given $\epsilon_2$, there
exists a $T_2 > T_1$ such that
$$
\| v (\cdot, T_2) \|_{H^2_x} < \epsilon_2.
$$
From Lemma \ref{lemH2}, it follows that $v$ exists for $t - T_2 \in
{\tilde S}_{\tilde \delta}$ and
$$
\| v (\cdot, t) \|_{H^2_x} < 2 c_1 \epsilon_2 e^{-\frac{3}{4} \nu \Re
(t-T_2)}.
$$
The last part of the theorem follows from (recall ${\hat v}(0) = 0$)
$$
\| {\hat v} (\cdot, t) \|_{l^1} \le c_5 \| |k|^2 {\hat v} (\cdot, t)
\|_{l^2} \le c_5 \| v (\cdot, t) \|_{H^2_x}.
$$
\end{proof}

\begin{Remark}{
\rm The decay rate $e^{-\frac{3}{4} \nu t}$ for $\| {\hat v} (\cdot, t)
\|_{l^1}$ is not sharp. A more refined argument can be given, to estimate
away the nonlinear terms and obtain a $e^{-\nu t}$ decay.}
\end{Remark}

\section{Acknowledgments.}
This work was supported in part by the National Science Foundation
(DMS-0406193, DMS-0601226, DMS-0600369 to OC and DMS-0405837, DMS-0733778
to S.T). We are grateful to P. Constantin for giving us useful references
and to Alexey Cheskidov for pointing out that estimates of the time beyond
which weak Leray solutions becomes classical are easy to obtain.

\vfill \eject
\end{document}